\newcommand{\R}{\mathbf{R}}
\newcommand{{\ba}}{\bf a}
\newcommand{\ve}{\varepsilon}
\newcommand{\la}{\lambda}
\newcommand{\ga}{\gamma}
\newcommand{\Ga}{\Gamma}
\newcommand{\pa}{\partial}
\newcommand{\ra}{\rightarrow}
\newcommand{\del}{\delta}
\newcommand{\al}{\alpha}
\newcommand{\e}{\mathrm{e}}
\newcommand{\be}{\begin{equation}}
\newcommand{\ee}{\end{equation}}
\newtheorem{lem}{Lemma}{\bf}{\it}
{\it}{\rm}
\newtheorem{rem}{Remark}{\it}{\rm}
{\it}{\rm}
\newtheorem{theorem}{Theorem}
\newtheorem{proposition}{Proposition}
\numberwithin{theorem}{section}
\numberwithin{lem}{section}
\numberwithin{equation}{section}
\numberwithin{proposition}{section}
\numberwithin{corollary}{section}
\title[LSW Model]{On Global Asymptotic Stability for the LSW Model with subcritical initial data}
\author{Joseph G. Conlon and Michael Dabkowski }
\address{(Joseph G. Conlon): University of Michigan\\ Department of Mathematics\\ Ann Arbor,
  MI 48109-1109}
\email{conlon@umich.edu}
\address{(Michael Dabkowski): University of Michigan-Dearborn \\ Department of Mathematics and Statistics\\ Dearborn,
  MI 48128}
\email{mgdabkow@umich.edu}
\keywords{nonlinear pde, coarsening}
\subjclass{35F05,  82C70, 82C26}
\begin{document}

\maketitle

\begin{abstract}
The main result of the paper is a global asymptotic stability result for solutions to the 
Lifschitz-Slyozov-Wagner (LSW) system of equations. This extends  some local asymptotic stability results of Niethammer-Vel\'{a}zquez (2006). The method of proof is along similar lines to the one used in a previous paper  of the authors. This previous paper proves global asymptotic stability for a class of infinite dimensional dynamical systems for which no Lyapounov function is (apparently) available.
\end{abstract}

\section{Introduction.}
In this paper we shall be principally concerned with studying the large time behavior of solutions to the  Lifschitz-Slyozov-Wagner (LSW) equations \cite{ls,w}. The LSW equations occur in a variety of contexts \cite{pego, penrose} as a mean field approximation for the evolution of particle clusters of various volumes. Clusters of volume $x>0$ have density $c(x,t)\ge 0$ at time $t>0$. The density evolves according to a linear law, subject to the linear mass conservation constraint as follows:
\begin{eqnarray}
\frac{\pa c(x,t)}{\pa t}&=& \frac{\pa}{\pa x}\Big( \Big( 1- \big( x L^{-1}(t)\big)^{\alpha} \Big) c(x,t)\Big),  \quad x>0, \label{U1}\\
\int_0^\infty x c(x,t) dx&=& 1. \label{V1} 
\end{eqnarray}
We shall require the parameter $\al$ in (\ref{U1}) to lie in the interval $0<\al\le 1$, whence the system (\ref{U1}), (\ref{V1}) includes the standard LSW model corresponding to $\al=1/3$, and the much simpler Carr-Penrose model \cite{cp} corresponding to $\al=1$. 

One wishes then to solve (\ref{U1}) for $t>0$ and initial condition $c(x,0) = c_0(x) \ge 0, \ x > 0$, subject to the constraint (\ref{V1}).  The parameter $L(t) > 0$ in (\ref{U1}) is determined by the constraint (\ref{V1}) and is therefore given by the formula,
\be \label{W1}
L(t)^{\al} = \int^\infty_0 \ x^{\al} c(x,t)dx \Big/ \int^\infty_0 c(x,t) dx.
\ee
 Existence and uniqueness of solutions to (\ref{U1}), (\ref{V1}) with given initial data $c_0(\cdot)$ satisfying the constraint has been proven in 
\cite{laurencot}
for integrable functions $c_0(\cdot)$, and in \cite{np2} for initial data such that $c_0(x)dx$ is an arbitrary Borel probability measure with compact support. In \cite{np3}   the methods of \cite{np2} are further developed to prove existence and uniqueness for initial data such that $c_0(x)dx$ is a Borel probability measure with finite first moment.

The system (\ref{U1}), (\ref{V1}) can be interpreted as an evolution equation for the probability density function (pdf) of random variables. Thus let us assume that the initial data $c_0(x)\ge 0, \ x\ge0,$ for (\ref{U1}), (\ref{V1}) satisfies $\int_0^\infty c_0(x) \ dx<\infty$. The conservation law (\ref{V1}) implies that the mean $\langle X_0\rangle$ of $X_0$ is finite, and this is the only absolute requirement on the variable $X_0$. If for $t>0$ the variable $X_t$ has pdf $c(\cdot,t)/\int_0^\infty c(x,t) \ dx$ then (\ref{U1}) with $L(t)=\langle X_t^\al\rangle$ is an evolution equation for the pdf of $X_t$. We can also see that
\be \label{X1}
\frac{d}{dt}\langle X_t\rangle \ = \  c(0,t)\Big/\left[\int_0^\infty c(x,t) \ dx\right]^2 \ ,
\ee
whence the function $t\ra \langle X_t\rangle$ is increasing.  In \cite{c2} it was shown that for a wide range of initial data $c_0(\cdot)$, there exists positive constants $C_1,C_2$, depending only on the initial data,  such that
\be \label{Y1}
C_1T \ \le \langle X_T\rangle \ \le \ C_2T \quad {\rm for  \ \ }T\ge 1 \ .
\ee
We shall show here that for initial data variables $X_0$ which are {\it subcritical} \cite{cn}, the limit $\lim_{T\ra\infty}   \langle X_T\rangle/T$ exists, and may be computed in terms of the corresponding self-similar solution to (\ref{U1}), (\ref{V1}). 

Let $X$ be a non-negative random variable. With $X$ we may associate a {\it beta function} \cite{c2} with domain $[0,\|X\|_\infty)$ defined by
\be \label{Z1}
\beta_X(x) \ = \ \frac{d}{dx}E[X-x \ | \ X>x]+1 \ = \ \frac{c_X(x)h_X(x)}{w_X(x)^2} \ ,
\ee
 where the functions $c_X(\cdot),w_X(\cdot),h_X(\cdot)$ are given by 
 \be \label{AA1}
 c_X(\cdot) \ = \ {\rm pdf \ of \ } X, \quad w_X(x)=P(X>x), \quad h_X(x)=E[X-x; \ X>x] \ .
 \ee
 The variable $X$ is said to be subcritical if $\lim_{x\ra\|X\|_\infty}\beta_X(x)=\beta_0$ exists and $0<\beta_0<1$. It is easy  to see from (\ref{Z1}) that if $X$ is subcritical then  $\|X\|_\infty<\infty$.  Furthermore, $w_X(x)\simeq (\|X\|_\infty-x)^p$ as $x\ra\|X\|_\infty$, where $\beta_0=p/(p+1)$.  
 
 For each $\beta$ in the interval $0<\beta\le 1$ the system (\ref{U1}), (\ref{V1}) has a unique self-similar solution. The corresponding random variable  $\mathcal{X}_\beta$ , normalized to have mean $1$, has the property  $\lim_{x\ra\|\mathcal{X}_\beta\|_\infty}\beta_{\mathcal{X}_\beta}(x)=\beta$. The solution to (\ref{U1}), (\ref{V1}) with initial data variable $X_0$ a constant times $\mathcal{X}_\beta$  is then given from (\ref{X1}), (\ref{Z1}) by
 \be \label{AB1}
X_t \ = \  \langle X_t\rangle \mathcal{X}_\beta \ , \quad \frac{d}{dt}  \langle X_t\rangle \ = \ \beta_{\mathcal{X}_\beta}(0) \ .
 \ee
 Our main result is that (\ref{AB1}) holds approximately at large time if the initial data for (\ref{U1}), (\ref{V1}) is subcritical.
 \begin{theorem}
 Assume the initial data  for (\ref{U1}), (\ref{V1}) has compact support, and that the corresponding random variable $X_0$ has continuous beta function $\beta_{X_0}:[0,\|X_0\|_\infty)\ra\R$  which satisfies $\lim_{x\ra\|X_0\|_\infty}\beta_{X_0}(x)=\beta$  with $0<\beta<1$. Let $X_t$ be the random variable corresponding to the solution $c(\cdot,t)$ of (\ref{U1}), (\ref{V1}) at time $t>0$.  Then
 \be \label{AC1}
 \frac{X_t}{\langle X_t\rangle} \xrightarrow{D} \mathcal{X}_\beta \ \ {\rm as \ \ } t\ra\infty \ , \quad 
 \lim_{t\ra\infty} \frac{d}{dt}  \langle X_t\rangle \ = \ \beta_{\mathcal{X}_\beta}(0) \ ,
 \ee
 where $\xrightarrow{D}$ denotes convergence in distribution. 
 
  If the function $x\ra \beta_{X_0}(x)$ is H\"{o}lder continuous at $x=\|X_0\|_\infty$,  then there exist constants $C,\nu>0$ such that 
 \be \label{AD1}
\left|\frac{d}{dt}\langle X_t\rangle - \beta_{\mathcal{X}_\beta}(0)\right| \ \le \ \frac{C}{1+t^\nu} \ , \quad t\ge 0 \ .
\ee
The cdf of $X_t/\langle X_t\rangle$ has a corresponding rate of convergence to the cdf of $\mathcal{X}_\beta$. In particular, for any $\del$ with $0<\del<1$, one has
 \begin{multline} \label{AE1}
 P\left(\mathcal{X}_\beta>x[1+C_\del/(1+ t^\nu)]\right) \ \le \  P\left(\frac{X_t}{\langle X_t\rangle}>x\right) \\
  \le \ P\left(\mathcal{X}_\beta>\frac{x}{1+C_\del/(1+ t^\nu)}\right) \ , \quad
 {\rm for \ } 0\le x\le (1-\del)\|\mathcal{X}_\beta\|_\infty \ , \ t\ge 0 \ ,
 \end{multline}
 where $C_\del>0$ is a constant depending on $\del$. 
 \end{theorem}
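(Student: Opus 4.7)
The plan is to recast the dynamics in terms of the beta function, which is the natural invariant object for this problem: since $\beta_{\lambda X}(x) = \beta_X(x/\lambda)$, the convergence in distribution (\ref{AC1}) is equivalent to convergence of the rescaled beta function $\xi \mapsto \beta_{X_t}(\xi\langle X_t\rangle)$ to $\beta_{\mathcal{X}_\beta}(\xi)$ (using $\langle \mathcal{X}_\beta\rangle = 1$). Moreover, a short computation from (\ref{X1}) and (\ref{Z1}) gives $d\langle X_t\rangle/dt = \beta_{X_t}(0)$, so the second assertion in (\ref{AC1}) is the pointwise statement $\beta_{X_t}(0) \to \beta_{\mathcal{X}_\beta}(0)$. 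The entire theorem therefore reduces to suitable convergence of the rescaled beta function. The starting point for the analysis is that (\ref{U1}) can be rewritten as a pure transport equation for the survival function $w_{X_t}(x) = P(X_t > x)$,
\begin{equation*}
\partial_t w_{X_t}(x) = \left(1 - (x/L(t))^\alpha\right) \partial_x w_{X_t}(x),
\end{equation*}
so that $w_{X_t}$ is constant along the characteristic curves $dx/dt = (x/L(t))^\alpha - 1$. Since $L(t)^\alpha = E[X_t^\alpha] \le \|X_t\|_\infty^\alpha$, the endpoint $\|X_t\|_\infty$ is advected outward along its own characteristic, and characteristics starting inside the support either cross $L(t)$ or track the edge.

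The first step is to check that the subcritical exponent is preserved by the evolution. Starting from the edge expansion $w_{X_0}(x) \sim A_0 (\|X_0\|_\infty - x)^p$ with $p = \beta/(1-\beta)$, one transports the expansion along the edge characteristic to obtain $w_{X_t}(x) \sim A(t)(\|X_t\|_\infty - x)^p$ for all $t \ge 0$, whence $\beta_{X_t}(x) \to \beta$ as $x \to \|X_t\|_\infty$. Under the H\"older hypothesis on $\beta_{X_0}$ near the endpoint, the next-order error in this edge expansion carries through with a quantitative polynomial rate, matching the edge profile of the self-similar solution $\mathcal{X}_\beta$ to that precision.

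The central analytical step is to show that this preserved edge information forces the rescaled beta function to converge to $\beta_{\mathcal{X}_\beta}$ globally on $[0, \|\mathcal{X}_\beta\|_\infty)$; this is where the absence of a Lyapounov functional makes the problem delicate and where the method of the authors' previous paper must be invoked. In self-similar coordinates $\xi = x/\langle X_t\rangle$ the evolution becomes autonomous up to lower-order terms, with $\mathcal{X}_\beta$ as the unique fixed profile whose beta function has limit $\beta$ at its endpoint. Since $\langle X_t\rangle \asymp t$ by (\ref{Y1}), edge information propagates across the whole support in a bounded logarithmic time scale in the new variables. The main obstacle, which I expect to dominate the proof, is the coupled long-time control of the free boundary $\|X_t\|_\infty$ and of $L(t)$: one must rule out long-time oscillation of the ratio $\|X_t\|_\infty/L(t)$ and show it tends to $\|\mathcal{X}_\beta\|_\infty$, ideally by establishing a contraction or monotonicity estimate for a weighted norm of the difference $\beta_{X_t}(\xi\langle X_t\rangle) - \beta_{\mathcal{X}_\beta}(\xi)$ that closes along the characteristic flow. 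Once this global convergence of the rescaled beta functions is in hand, both assertions in (\ref{AC1}) follow, and the quantitative bounds (\ref{AD1}) and (\ref{AE1}) are obtained by tracking how the H\"older-quantified edge error is transported inward and decays like $t^{-\nu}$ uniformly on compact subintervals $[0, (1-\delta)\|\mathcal{X}_\beta\|_\infty]$.
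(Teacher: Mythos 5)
Your preliminary reductions are sound: the identity $\frac{d}{dt}\langle X_t\rangle = \beta_{X_t}(0)$ does follow from (\ref{X1}) and (\ref{Z1}), and recasting (\ref{AC1}) as convergence of the rescaled beta function is exactly the right point of view, as is the observation that the edge exponent $p=\beta/(1-\beta)$ is transported along characteristics of (\ref{AZ8}). But the central analytical step---converting preserved edge information into global convergence of the rescaled profile---is left entirely to ``the method of the authors' previous paper must be invoked,'' which is not a proof but an acknowledgment that the hard part is missing. Moreover, the mechanism you sketch for closing the argument (a contraction or monotonicity estimate for a weighted norm of the difference $\beta_{X_t}(\xi\langle X_t\rangle)-\beta_{\mathcal{X}_\beta}(\xi)$, closing along the characteristic flow) is not what the paper does, and it is far from clear that such a weighted-norm contraction exists: the abstract specifically notes that no Lyapounov function appears to be available for this system.

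The route the paper actually takes is quite different in its machinery. The LSW system on the normalized support $[0,1]$ is mapped (via the change of variables in \S 2, culminating in (\ref{H2})) to a transport PDE (\ref{L1}) for a new unknown $\xi(y,t)$ on $[\varepsilon_0,\infty)$, for which the mass conservation constraint becomes the statement that a positive functional $I(\xi(\cdot,t),\eta(t))$ of the form (\ref{P2}) satisfies the first-order equation (\ref{R2}). The dynamics are then rewritten as a differential delay equation (\ref{O5}) for $\log I(t)$, whose key structural property---that the delayed nonlinearity $f(t,v_t(\cdot))$ has the right sign on the sets $\{v_t\le 1\}$ and $\{v_t\ge 1\}$---is established in Proposition 5.1 by solving a family of optimal control problems (the Hamilton--Jacobi analysis in \S 6, using bang-bang controls), under the structural hypotheses (\ref{I4}) and (\ref{S5}) on $h(\cdot)$, which Lemma 8.2 verifies for the LSW nonlinearity. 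Propositions 7.1--7.2 and Lemma 7.2 then convert this sign information into control of the oscillation of $I(\cdot)$ over unit time intervals, Lemma 7.1 translates that into convergence of $\xi(\cdot,t)\to\xi_p(\cdot)$, and Theorem 8.1 undoes the transformation to recover convergence of $\kappa(t)$ and of the beta function, with the H\"older/quantitative versions handled via Lemma 2.1 and (\ref{BF2}). None of this structure---the conserved functional $I$, the DDE reformulation, the control-theoretic monotonicity, the discrete-time oscillation bound---appears in your sketch, so the proposal should be regarded as identifying the right target but not supplying the argument.
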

 Previous results imply that (\ref{AC1}) holds in certain cases. In particular, the results of \cite{cp} imply that (\ref{AC1}) holds when $\al=1$. For $\al=1/3$ Theorem 4.1 of \cite{nv1} implies that (\ref{AC1}) holds if $\beta<1$ is sufficiently small. Theorem 3.2 of \cite{nv1} implies that (\ref{AC1}) holds for any $\beta<1$, provided the initial data random variable $X_0$ for (\ref{U1}), (\ref{V1}) satisfies  the additional  condition
 $\|\beta_{X_0}(\|\mathcal{X}_\beta\|^{-1}_\infty\|X_0\|_\infty\cdot)-\beta_{\mathcal{X}_\beta}(\cdot)\|_\infty<\del(\beta)$, where $\del(\beta)>0$ depends on $\beta$ (see Remark 7 in $\S8$).

 The first step in proving Theorem 1.1 is to observe following \cite{np1} that if the initial data 
 $c_0(\cdot)$ for (\ref{U1}), (\ref{V1}) has compact support then $c(\cdot,t)$  has compact support for all $t>0$. Then by means of a transformation (see $\S8$) one can normalize the support for all time to the interval $[0,1]$. In that case the LSW equations may be written as
\begin{equation} \label{A1}
\frac{\pa w(x,t)}{\pa t}+ \left[\phi(x)-\kappa(t)\psi(x)\right] \frac{\pa w(x,t)}{\pa x}=w(x,t), \quad 0\le x<1, \ t\ge 0,
\end{equation}
with the mass conservation law
\begin{equation} \label{B1}
 \int_{0}^1  w(x,t)  \ dx=1 \  ,  \quad t\ge 0.
\end{equation} 
The functions $\phi(\cdot), \ \psi(\cdot)$ are given by the formulae
\be \label{C1}
\phi(x) \ = \ x^\al-x, \quad \psi(x) \ = \ 1-x^\al \ , \quad 0\le x\le 1 \ .
\ee
 We shall study solutions to (\ref{A1}), (\ref{B1})  for more general $\phi(\cdot) \ \psi(\cdot)$ than (\ref{C1}) with $0<\al\le1$. In particular, we require $\phi(\cdot) \ \psi(\cdot)$ to satisfy the conditions
\begin{eqnarray}
\quad &\phi(x) \  {\rm is \  concave \  and \  satisfies \ }& \quad \phi(0)=\phi(1)=0, \ -1< \phi'(1)<0.  \label{D1}\\
\quad &\psi(x)  \  {\rm is \  convex \  and \  satisfies \ }&\quad \psi(1)=0,  \ \  \psi'(1)<0, \ \psi''(1)-\phi''(1)>0.  \label{E1}
\end{eqnarray}

The fundamental quantity of study in the generalized LSW model (\ref{A1}), (\ref{B1}) is the (renormalized) cluster density $c(x,t)\ge 0$ at volume $x\in[0,1]$ and time $t>0$.  The function $w(\cdot,t)$ in (\ref{A1}), (\ref{B1}) is the integral of $c(\cdot,t)$ and is therefore non-negative decreasing with $\lim_{x\ra 1}w(x,t)=0$.  We define $w(\cdot,t)$ in terms of $c(\cdot,t)$, and in addition the function $h(\cdot,t)$ by
\be \label{F1}
w(x,t) \ = \ \int_x^1c(x',t) \ dx' \ , \quad h(x,t) \ = \ \int_x^1 w(x',t) \ dx' \ , \quad 0\le x<1 \ .
\ee
The conservation law (\ref{B1}) is evidently the same as $h(0,t)=1$. Conditions on the initial data for (\ref{A1}), (\ref{B1}) are given in terms of the {\it beta function} for $c(\cdot,t)$ defined by
\be \label{G1}
\beta(x,t) \ = \ \frac{c(x,t)h(x,t)}{w(x,t)^2} \ , \quad 0\le x\le 1 \ .
\ee
We assume that $\beta(\cdot,0)$ has the property
\be \label{H1}
\lim_{x\ra 1}\beta(x,0) \ = \ \beta_0>0 \ .
\ee
It is easy to see that $\beta_0\le 1$, and we say that the initial data for (\ref{A1}), (\ref{B1}) is {\it subcritical} if $\beta_0<1$.  The main result of \cite{cn} is a {\it weak} asymptotic stability result for solutions to (\ref{A1}), (\ref{B1}) with initial data satisfying (\ref{H1}). Further assumptions on the functions $\phi(\cdot), \ \psi(\cdot)$ beyond (\ref{D1}), (\ref{E1}) are required to prove this.  In the case of subcritical initial data these are as follows:
 \be \label{I1}
 \phi(\cdot), \ \psi(\cdot)  \ {\rm are \ } C^3 \ {\rm on \ } (0,1]  \ {\rm \ and \ } \phi'''(x)\ge0, \  \psi'''(x)\le 0  \   {\rm for \ } 0<x\le 1.
 \ee
 Evidently (\ref{I1}) holds for the functions (\ref{C1}) with $0<\al\le1$. 
\begin{theorem} Let $w(x,t), \ x,t\ge 0$, be the solution to (\ref{A1}), (\ref{B1}) with coefficients satisfying (\ref{D1}), (\ref{E1}) and assume that the initial data $w(\cdot,0)$ has beta function $\beta(\cdot,0)$ satisfying (\ref{H1}) with $0<\beta_0<1$.  Then there is a positive constant $C_1$ depending only on the initial data such that $\kappa(t)\ge C_1$ for all $t\ge  0$. If in addition (\ref{I1}) holds, then there is a positive constant $C_2$  depending only on the initial data such that $\kappa(t)\le C_2$ for all $t\ge  0$ and
\be \label{J1}
\lim_{T\ra \infty}\frac{1}{T}\int_0^T \kappa(t) \ dt \ = \ [1/\beta_0-\phi'(1)-1]/|\psi'(1)| \ .
\ee
\end{theorem}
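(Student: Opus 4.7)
My plan is to derive a pointwise formula for $\kappa(t)$ from the conservation law, use it to obtain the lower bound $\kappa(t)\ge C_1$ using only (\ref{D1})--(\ref{E1}), and then invoke a sharp tail analysis of $w(x,t)$ near $x=1$ under the additional hypothesis (\ref{I1}) to extract both the upper bound and the Ces\`aro identity (\ref{J1}). Differentiating $\int_0^1 w(x,t)\,dx=1$ in $t$, substituting (\ref{A1}) for $\pa_t w$, and using $\pa_x w=-c$ from (\ref{F1}) yields
\[
\kappa(t)\int_0^1\psi(x)\,c(x,t)\,dx \ = \ 1 + \int_0^1\phi(x)\,c(x,t)\,dx.
\]
Since $\phi,\psi\ge 0$ on $[0,1]$ under (\ref{D1})--(\ref{E1}) and $c\ge 0$, both sides are non-negative and the right-hand side is bounded below by $1$. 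Thus $\kappa(t)\ge 1/\int_0^1 \psi c\,dx \ge 1/(\|\psi\|_\infty w(0,t))$, and a uniform upper bound on the total cluster number $w(0,t)=\int_0^1 c\,dx$ (available from earlier a priori estimates for subcritical initial data, cf.\ \cite{cn}) gives $\kappa(t)\ge C_1>0$ using only (\ref{D1})--(\ref{E1}) and (\ref{H1}).

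For the upper bound and (\ref{J1}) the key is a quantitative tail expansion of $w$ near $x=1$. The subcritical hypothesis (\ref{H1}) together with (\ref{G1}) suggests that $w(x,t)\sim A(t)(1-x)^p$ as $x\to 1^-$, with exponent $p=\beta_0/(1-\beta_0)$ and $A(t)=\lim_{x\to 1}w(x,t)/(1-x)^p$. Plugging this ansatz into (\ref{A1}) and using the expansions $\phi(x)=|\phi'(1)|(1-x)+O((1-x)^2)$ and $\psi(x)=|\psi'(1)|(1-x)+O((1-x)^2)$ yields, at leading order in $(1-x)^p$, the scalar ODE
\[
\frac{A'(t)}{A(t)} \ = \ 1 + p|\phi'(1)| - p|\psi'(1)|\,\kappa(t).
\]
Integrating from $0$ to $T$ and dividing by $T$,
\[
\frac{1}{T}\int_0^T \kappa(t)\,dt \ = \ \frac{1+p|\phi'(1)|}{p|\psi'(1)|} \ - \ \frac{\log[A(T)/A(0)]}{T\,p|\psi'(1)|}.
\]
If $A(t)$ stays uniformly between two positive constants the second term vanishes as $T\to\infty$, and the leading constant simplifies via $1/p=1/\beta_0-1$ and $|\phi'(1)|=-\phi'(1)$ to $[1/\beta_0-\phi'(1)-1]/|\psi'(1)|$, which is precisely (\ref{J1}). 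The upper bound $\kappa(t)\le C_2$ is read off from the pointwise form of the ODE, once $A$ is bounded below and $A'$ is known not to be too negative.

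The main obstacle is establishing that $A(t)$ remains in a fixed compact subset of $(0,\infty)$ throughout the evolution, i.e.\ that the power-law tail $(1-x)^p$ is quantitatively preserved by the flow. The third-derivative sign conditions in (\ref{I1}) are precisely what is needed to control the next-to-leading Taylor remainders of $\phi,\psi$ near $x=1$ and to propagate monotonicity properties of the relevant ratios involving $c,w,h$ along characteristics, thereby preventing $A(t)$ from drifting to $0$ or $\infty$. This step should be viewed as a quantitative strengthening, in a one-sided neighborhood of $x=1$, of the weak asymptotic stability proved in \cite{cn}; everything else in the argument is essentially bookkeeping around the identity derived in the first paragraph.
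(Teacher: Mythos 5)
Your opening identity
\[
\kappa(t)\int_0^1\psi(x)c(x,t)\,dx \ = \ 1 + \int_0^1\phi(x)c(x,t)\,dx
\]
is correct, and your ansatz-derived ODE $A'(t)/A(t) = 1+p|\phi'(1)|-p|\psi'(1)|\kappa(t)$ is in fact exact for $A(t)=\lim_{x\to1}w(x,t)/(1-x)^p$ (when that limit exists), and it is equivalent to the relation the paper works with: after the transformation of $\S 2$ one has $A(t)\propto 1/I(\xi(\cdot,t),\eta(t))$, and your ODE is precisely (\ref{Q1})/(\ref{AF3}). So the conditional derivation of (\ref{J1}) given that $T^{-1}\log A(T)\to 0$ is sound and matches the paper's strategy in spirit.

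The genuine gap is that you treat the statement $T^{-1}\log A(T)\to 0$ as something to be ``read off'' or dismissed as ``bookkeeping,'' whereas it is the entire content of the paper's argument. The paper accomplishes it by (i) rewriting the dynamics for $\xi(y,t)$ via the change of variables in $\S 2$ so that the conservation law is encoded in the functional $I(\zeta(\cdot),\eta)$, (ii) proving the two-sided a priori bound $c_{M,\ga}\eta^\ga\le I\le C_\ga\eta^{-\ga}$ for every $\ga>0$ (Lemma 2.1), (iii) proving global existence with uniform $W^{1,\infty}$ bounds on $\xi(\cdot,t)$ and positivity of $1+D\xi$ (Proposition 3.1, which is where the decreasing/convexity structure of $h(\cdot)$ coming from (\ref{I1}) enters, via Lemmas 2.2--2.4), and (iv) showing $\eta(t)\ge ce^{-Ct}$ so that Lemma 2.1 forces $|\log I(t)|\le\nu t$ for every $\nu>0$ (Theorem 3.1). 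None of this reduces to a ``one-sided quantitative strengthening'' of a tail expansion; the uniform bound on $D\xi$ is essential and does not follow from a tail ansatz. Moreover, under only (\ref{H1}) -- without H\"{o}lder continuity at $x=1$ -- the limit $A(t)$ need not exist at all (the initial data may only satisfy $w(x,0)=(1-x)^{p+o(1)}$), so the paper's use of $I(\zeta,\eta)$ with the $\ga>0$ estimates is not just a reformulation but a necessary regularization of your ansatz.

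Two further points. First, your lower bound $\kappa(t)\ge 1/(\|\psi\|_\infty w(0,t))$ needs a uniform upper bound on $w(0,t)$, and this is not automatic: $w(0,t)=e^t w(F(0,t),0)$ is a $\infty\cdot 0$ limit, and the mass constraint $\int_0^1 w\,dx=1$ only gives $w(0,t)\ge 1$, not an upper bound. You invoke \cite{cn} but should say which estimate you mean; the paper's own route is through (\ref{AS2}) of Lemma 2.2 applied in Proposition 3.1. Second, the pointwise bound $\kappa(t)\le C_2$ for \emph{all} $t\ge 0$ is not ``read off from the ODE for $A$'' -- the ODE only controls time-averages of $\kappa$. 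The paper gets the pointwise bound from (\ref{AU2}), which crucially uses the lower bound $\inf[1+D\xi(\cdot,t)]>0$ from Proposition 3.1; this is a derivative bound on the profile, not information about the single scalar $A(t)$.
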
 

In \cite{cn} we were also able to prove a {\it strong} asymptotic stability result for solutions to (\ref{A1}), (\ref{B1}) in the case when $\phi(\cdot), \ \psi(\cdot)$ are quadratic:
\begin{theorem} Assume that the functions $\phi(\cdot), \ \psi(\cdot)$ are quadratic, and that  the initial data $w(\cdot,0)$ for (\ref{A1}), (\ref{B1})  has beta function $\beta(\cdot,0)$ satisfying
(\ref{H1}).    Then setting $\kappa=[1/\beta_0-\phi'(1)-1]/|\psi'(1)|$, one has for $\beta_0<1$, 
\be \label{K1}
\lim_{t\ra\infty} \kappa(t ) =   \kappa, \quad \lim_{t\ra\infty} \|\beta(\cdot,t)-\beta_\kappa(\cdot)\|_\infty=0,
\ee
where $\beta_\kappa(\cdot)$ is the beta function of the time independent solution $w_\kappa(\cdot)$ of (\ref{A1}). 
\end{theorem}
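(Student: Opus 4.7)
The plan is to work directly with the beta function $\beta(x,t)$ as the dynamical variable and exploit the algebraic simplifications afforded by the quadratic assumption on $\phi,\psi$. Starting from (\ref{A1}) together with the definitions (\ref{F1}), (\ref{G1}), I would derive a first-order transport equation for $\beta(\cdot,t)$ along characteristics $\dot X(t)=\phi(X(t))-\kappa(t)\psi(X(t))$. In the quadratic regime $\phi(x)-\kappa\psi(x)$ is a quadratic polynomial in $x$ vanishing at $x=1$ and at exactly one interior point $x_*(\kappa)\in(0,1)$, a stable equilibrium of the frozen autonomous flow; the characteristic ODE becomes a Riccati equation, solvable in closed form in terms of the scalar history $\kappa(\cdot)$. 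The subcritical boundary condition (\ref{H1}) propagates along the characteristic terminating at $x=1$, so $\lim_{x\to 1}\beta(x,t)=\beta_0$ for every $t\ge 0$.

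By Theorem 1.2 we already have $C_1\le\kappa(t)\le C_2$ uniformly in $t$ together with the Ces\`aro limit $T^{-1}\int_0^T\kappa(t)\,dt\to\kappa$. With these bounds in hand I would derive uniform-in-$t$ bounds and equicontinuity for $\beta(\cdot,t)$ on $[0,1]$, producing by Arzel\`a--Ascoli a nonempty compact $\omega$-limit set for $\{\beta(\cdot,t)\}$ in $C([0,1])$. The core step is to identify the elements of this $\omega$-limit set: integrating the beta evolution equation backwards from $x=1$ along characteristics, anchored at $\beta(1,t)=\beta_0$, the Riccati structure should force any subsequential limit $(\beta_*,\kappa_*)=\lim_n(\beta(\cdot,t_n),\kappa(t_n))$ to satisfy the steady-state beta equation, so that $\beta_*=\beta_{\kappa_*}$.

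The pinning of the boundary value $\beta_{\kappa_*}(1)=\beta_0$ together with the explicit formula $\kappa=[1/\beta_0-\phi'(1)-1]/|\psi'(1)|$ supplied by Theorem 1.2 shows that the steady state is uniquely determined by its boundary beta value, hence $\kappa_*=\kappa$ and $\beta_*=\beta_\kappa$. Since every subsequential limit coincides, the subsequential convergence upgrades to full convergence, yielding simultaneously $\kappa(t)\to\kappa$ and $\|\beta(\cdot,t)-\beta_\kappa(\cdot)\|_\infty\to 0$.

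The principal obstacle, as flagged in the abstract, is the absence of a Lyapunov function: one must extract genuine pointwise convergence of $\kappa(t)$ from the Ces\`aro information of Theorem 1.2 and from asymptotic shape information on $\beta(\cdot,t)$. I expect the quadratic hypothesis to be essential here through a quantitative slow-variation bound on $d\kappa/dt$, obtained by differentiating the conservation law (\ref{B1}) in $t$ and using the Riccati form of the characteristic ODE to express $d\kappa/dt$ as a continuous functional of $\beta(\cdot,t)$ vanishing on the steady-state manifold $\{\beta_\kappa\}$. Combined with the Ces\`aro mean, this slow variation rules out sustained oscillation of $\kappa(t)$ between distinct steady states, closing the argument without recourse to a monotone functional.
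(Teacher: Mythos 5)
Your plan runs along a genuinely different route from the paper's. Theorem~1.3 is proved in \cite{cn} (and the present paper extends it via the transformation of $\S2$ to the PDE (\ref{L1}), the differential delay equation of $\S5$, and the optimal-control monotonicity of $\S6$ feeding into the contraction estimate of $\S7$). Your proposal instead works directly with $\beta(x,t)$ and a compactness/$\omega$-limit argument. The two are not variants of the same idea: the paper's strategy is precisely designed to sidestep the absence of a Lyapunov function by establishing convergence of the conserved-quantity functional $I(\cdot)$ through a monotonicity structure, whereas your plan implicitly relies on a LaSalle-type invariance principle that, as a rule, does need such a monotone quantity to work.

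The concrete gap is in the sentence ``the Riccati structure should force any subsequential limit $(\beta_*,\kappa_*)$ to satisfy the steady-state beta equation.'' Nothing in the argument justifies this, and it is exactly the difficulty the abstract flags. The value $\beta(x,t)$ for $x$ bounded away from $1$ is a functional of the entire history of $\kappa(\cdot)$ during the (unboundedly long) time the backward characteristic through $(x,t)$ spent in $[x,1]$; merely knowing $\kappa(t_n)\to\kappa_*$ and $\beta(\cdot,t_n)\to\beta_*$ does not force $\beta_*$ to be an equilibrium, because there is no monotone quantity which would constrain the invariant set. (For an autonomous finite-dimensional ODE $\dot x=f(x)$ the $\omega$-limit set is invariant but can be a periodic orbit; with history dependence as here, even invariance is delicate.) Your fallback paragraph does not repair this: Ces\`aro convergence $T^{-1}\int_0^T\kappa\,dt\to\kappa$ together with $d\kappa/dt\to0$ does not imply $\kappa(t)\to\kappa$ --- e.g.\ $\kappa(t)=\kappa+\sin\sqrt t$ has vanishing derivative and vanishing Ces\`aro deviation yet does not converge. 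Moreover, you never actually use the quadratic hypothesis beyond observing that the characteristic ODE is Riccati; in the paper, quadraticity is used through the fact that it makes the transformed drift $h(\cdot)$ constant, which is what makes the monotonicity conditions (\ref{I4}), (\ref{S5}) and hence the control-theoretic contraction in $\S7$ available. Without an analogue of that step, the identification of $\omega$-limit points with steady states remains unsubstantiated.
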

Evidently Theorem 1.3 applies to the Carr-Penrose model where $\al=1$.
Our main goal here will be to show that the conclusions of Theorem 1.3  also hold for a class of non-quadratic functions $\phi(\cdot), \ \psi(\cdot)$, which includes the functions (\ref{C1}) with $0<\al<1$.  Theorem 1.1 is then an easy consequence.  

We accomplish this by establishing asymptotic stability results for a class of PDE similar to the one studied in \cite{cd}.
In particular, we consider for some $\ve_0>0$ the evolution PDE
 \be \label{L1}
 \frac{\pa \xi(y,t)}{\pa t} -h(y) -h(y)\frac{\pa\xi(y,t)}{\pa y} +\rho(\xi(\cdot,t))\left[\xi(y,t)-y\frac{\pa \xi(y,t)}{\pa y}\right] \ = \ 0, \quad y>\ve_0, \ t>0,
  \ee
  with given non-negative  initial data $\xi(y,0), \ y> \ve_0$.  We assume as in \cite{cd} that the function $h:[\ve_0,\infty)\ra\R$ has the properties:
  \be \label{M1}
  h(\cdot) \ {\rm is \ continuous  \ positive \ and \ } \lim_{y\ra\infty}h(y)=h_\infty>0 \ .
  \ee
 For an equilibrium of (\ref{L1}) to exist corresponding to $\rho=1/p>0$ we need the equilibrium function $\xi_p(y), \ y>\ve_0,$ to satisfy
  \be \label{N1}
  -h(y) -h(y)\frac{d\xi_p(y)}{d y} +\frac{1}{p}\left[\xi_p( y)-y\frac{d \xi_p(y)}{d y}\right] \ = \ 0, \quad  \lim_{y\ra\infty}\xi_p(y)=ph_\infty \ .
  \ee 
We can easily solve  (\ref{N1}) by using the integrating factor
\be \label{O1}
\tilde{h}(y) \ = \ K\exp\left[\int_{\ve_0}^y\frac{dy'}{ph(y')+y'}\right] \  \quad {\rm for  \ some  \ constant  \ } K  ,
\ee 
where $K$ is chosen so that  $\lim_{y\ra\infty}\tilde{h}(y)/y=1$.  Then we have that
\be \label{P1}
\xi_p(y) \ = \ \tilde{h}(y)\int_y^\infty \frac{ph(y')}{\left(ph(y')+y'\right)\tilde{h}(y')} \ dy' \  .
\ee
It follows from (\ref{M1}), (\ref{P1}) that $\lim_{y\ra\infty}\xi_p(y)=ph_\infty$, whence (\ref{N1}) implies that $\lim_{y\ra\infty}yd\xi_p(y)/dy=0$. Furthermore, if the function $h(\cdot)$ is decreasing then $ph_\infty\le \xi_p(y)\le ph(y)$ for all $y> \ve_0$. 

Following \cite{cd}, we impose conditions on  the functional $\rho(\cdot)$  so that  the equilibrium $\xi_p$ is a global attractor for (\ref{L1}).  To do this we assume there is a positive functional $I(\cdot)$ on continuous positive functions $\zeta:[\ve_0,\infty)\ra\R$ with the property that
\be \label{Q1}
\frac{1}{p}\frac{d}{dt} I(\xi(\cdot,t)) \ = \ \left[\rho(\xi(\cdot,t))-\frac{1}{p}\right]I(\xi(\cdot,t)) \  \quad {\rm for \ solutions \ } \xi(\cdot,t) \ {\rm of \ (\ref{L1})}  \ . 
\ee
From (\ref{Q1}) it follows that if we can show that $\log I(\xi(\cdot,t))$ remains bounded as $t\ra\infty$ then $\rho(\xi(\cdot,t))$ converges as $t\ra\infty$ to $1/p$ in the averaged sense
\be \label{R1}
\lim_{T\ra\infty}\frac{1}{T}\int_0^T \rho(\xi(\cdot,t)) \ dt \ = \ \frac{1}{p} \ .
\ee
Let $[\cdot,\cdot]$ denote the Euclidean inner product on $L^2([\ve_0,\infty))$ and $dI(\zeta(\cdot)):[\ve_0,\infty)\ra\R$ the gradient of the functional $I(\cdot)$ at $\zeta(\cdot)$. Then from(\ref{L1}), (\ref{Q1}) we have that
\be \label{S1}
p\left[\rho(\zeta(\cdot))-\frac{1}{p}\right]I(\zeta(\cdot)) \ = \ [dI(\zeta(\cdot)), h+hD\zeta]+\rho(\zeta(\cdot))[dI(\zeta(\cdot)), yD\zeta-\zeta] \ .
\ee
We conclude from (\ref{S1}) that
\be \label{T1}
\rho(\zeta(\cdot)) \ = \ \frac{I(\zeta(\cdot))+[dI(\zeta(\cdot)), h+hD\zeta]}{pI(\zeta(\cdot))+[dI(\zeta(\cdot)), \zeta-yD\zeta]}  \ .
\ee

In $\S2$ we shall use a transformation introduced in \cite{cn} to relate (\ref{A1}), (\ref{B1}) to (\ref{L1}), (\ref{T1}). The limit (\ref{J1}) of Theorem 1.2 then follows from (\ref{R1}) by showing the boundedness of $\log I(\xi(\cdot,t))$ as $t\ra\infty$.

\vspace{.1in}

\section{Derivation of the  PDE (\ref{L1}) from the LSW Model}
We shall use the transformation introduced in $\S6$ of \cite{cn} to obtain (\ref{L1}) from (\ref{A1}). 
We first recall that the solution $w(x,t)$ of (\ref{A1}) is given by $w(x,t)=e^tw(F(x,t),0)$ where  $F(x,t)$ is the solution to the initial value problem
\begin{eqnarray} \label{A2}
\frac{\pa F(x,t)}{\pa t}+ \left[\phi(x)-\kappa(t)\psi(x)\right] \frac{\pa F(x,t)}{\pa x}&=&0, \quad 0\le x<1, \ t\ge 0,
\\
F(x,0) &=& x, \quad 0\le x<1. \nonumber
\end{eqnarray}
For $t\ge 0$ let $u(t)$ be the function
\be \label{B2}
u(t)= \exp\left[\int_0^t \{\phi'(1)-\psi'(1)\kappa(s)\} \  ds \right] \ ,
\ee
and $f(x), \ 0\le x< 1$, be the function defined by
\be \label{C2}
\frac{d}{dx}\log f(x) \ = \ -\frac{\psi'(1)}{\psi(x)} \ ,  \ 0\le x<1, \qquad \lim_{x\ra 1} (1-x)f(x) \ = \ 1.
\ee
Evidently  $f:[0,1)\ra\R$ is a strictly increasing function satisfying $f(0)>0$ and  $\lim_{x\ra 1} f(x)=\infty$. We define now the domains $\mathcal{D}=\{ (x,u)\in\R^2: 0<x<1, \ u>0\}$ and  $\hat{\mathcal{D}}=\{ (z,u)\in\R^2: z>f(0)u, \ u>0\}$. Then the transformation $(z,u)=(f(x)u, u)$ maps $\mathcal{D}$ to $\hat{\mathcal{D}}$. Let $g:\hat{\mathcal{D}}\ra\R$ be the function
\be \label{D2}
g(z,u) \ = \  uf(x)\left[\phi'(1)-\frac{\psi'(1)\phi(x)}{\psi(x)}\right] \  , \quad z=f(x)u, \ 0<x<1, \  u>0 \ ,
\ee
and $\hat{F}(z,t)$  the solution to the initial value problem
\begin{eqnarray} \label{E2}
\frac{\pa \hat{F}(z,t)}{\pa t}+ g(z,u(t))\frac{\pa \hat{F}(z,t)}{\pa z}&=&0, \quad z>f(0)u(t), \ t\ge 0,
\\
\hat{F}(z,0) &=& z, \quad z>f(0). \nonumber
\end{eqnarray}
The solutions $F(x,t)$ of (\ref{A2}) and $\hat{F}(z,t)$ of (\ref{E2}) are related by the identity
\be \label{F2}
f(F(x,t)) \ = \ \hat{F}( f(x)u(t),t) \ , \quad 0<x<1,  \  t>0. 
\ee
It follows from (\ref{D1}), (\ref{E1}) (\ref{C2}), (\ref{D2}) that the function $g(\cdot,u)$ is negative and
\be \label{G2}
-\lim_{z\ra\infty}\frac{g(z,u)}{u} \ = \  \frac{\psi''(1)\phi'(1)-\psi'(1)\phi''(1)}{2\psi'(1)} \ = \ \al_0>0 \ .
\ee
In the case of $\phi(\cdot), \ \psi(\cdot)$  being quadratic functions then $-g(\cdot,u)/u\equiv\al_0$. 

We write
\be \label{H2}
\hat{F}(z,t) \ = \ z+\al_0u(t)\xi\left(\frac{z}{\al_0 u(t)} ,t\right) \ , \quad z>f(0)u(t) \ .
\ee
Then setting $y=z/\al_0u(t)=f(x)/\al_0$ we see from (\ref{E2}) that $\xi(y,t)$ satisfies the PDE (\ref{L1}) in the domain $\{(y,t): \ y>f(0)/\al_0, \ t>0\}$, where the functions $h(\cdot), \rho(\cdot)$ are given by the formulae
\be \label{I2}
h(y) \ = \ \frac{f(x)}{\al_0}\left[\frac{\psi'(1)\phi(x)}{\psi(x)}-\phi'(1)\right] \ , \quad \rho(\xi(\cdot,t)) \ = \ \frac{1}{u(t)}\frac{du(t)}{dt} \ .
\ee
Evidently for the function $h(\cdot)$ of (\ref{I2}) one has $h_\infty=1$ in (\ref{M1}). 
Next for any $p>0$ we consider (\ref{A1}), (\ref{B1}) with initial data $w(\cdot,0)$ given by
\be \label{J2}
w(x,0) \ = \ \frac{K_p}{f(x)^p} \ , \quad {\rm where \ }  K_p\int_0^1\frac{dx}{f(x)^p} \ = \ 1 \ .
\ee
It is easy to see that the $\beta_0$ of (\ref{H1}) corresponding to (\ref{J2}) is given by $\beta_0=p/(1+p)$.  The conservation law for solutions to (\ref{A1}), (\ref{B1}) becomes then
\be \label{K2}
1 \ = \ e^t\int_0^1w(F(x,t),0) \ dx \ = \ \
e^tK_p\int_0^1\frac{dx}{f(F(x,t))^p} \ 
= \ e^tK_p\int_0^1\frac{dx}{\hat{F}(f(x)u(t),t)^p}  \ ,
\ee
where we have used (\ref{F2}).  Let $I(\cdot)$ be the functional defined by
\be \label{L2}
I(\zeta(\cdot)) \ = \ \frac{K_p}{\al_0^p}\int_{\ve_0}^\infty \frac{a(y)}{[b(y)+\zeta(y)]^p} \ dy \ ,
\ee
where $a(\cdot), b(\cdot), \ \ve_0$ are given by the formulae
\be \label{M2}
a(y) \ = \ \frac{dx}{dy} \ = \ -\frac{\psi(x)}{y\psi'(1)} \ , \quad b(y) \ = \ y \ , \quad \ve_0 \ = \ f(0)/\al_0 \ .
\ee
Observe that $\lim_{y\ra\infty} y^2a(y)=1/\al_0$, whence the integral in (\ref{L2}) is finite for all non-negative $\zeta:(0,\infty)\ra\R$.  
Evidently (\ref{K2}) is the same as 
\be \label{N2}
\frac{e^t}{u(t)^p}I(\xi(\cdot,t)) \ = \ 1 \ .
\ee
Differentiating (\ref{N2}), we see using the formula for $\rho(\cdot)$ in (\ref{I2}) that (\ref{Q1}) holds. We conclude that the function $\xi(\cdot,t), \ t>0,$ defined in (\ref{H2}) is  a solution to the evolution equation (\ref{L1}) with $h(\cdot)$ given by (\ref{I2}), $\rho(\cdot)$  by (\ref{T1}), and $I(\cdot)$ by (\ref{L2}), (\ref{M2}). The initial data for (\ref{L1}) is $\xi(\cdot,0)\equiv 0$.   

We have a similar development when the initial data for (\ref{A1}), (\ref{B1}) satisfies (\ref{H1}) with $\beta_0=p/(1+p)$. Let the function $\tilde{w}_0:(0,1]\ra\R$ be defined by
\be \label{O2}
\tilde{w}_0\left(\frac{f(0)}{f(x)}\right) \ = \ w(x,0) \ , \quad 0\le x<1 \ ,
\ee
and the functional $I(\cdot)$ by
\be \label{P2}
I(\zeta(\cdot),\eta) \ = \ \int_{\ve_0}^\infty \eta^{-p}\tilde{w}_0\left(\frac{f(0)\eta}{\al_0[y+\zeta(y)]}\right) a(y) \ dy \ ,
\ee
where $a(\cdot)$ is given by (\ref{M2}). The conservation law (\ref{B1}) is then expressed in terms of the functional $I(\cdot)$ by 
\be \label{Q2}
\frac{e^t}{u(t)^p}I(\xi(\cdot,t),\eta(t)) \ = \ 1 \ , \quad {\rm where \ } \eta(t)=u(t)^{-1} \ .
\ee
Differentiating (\ref{Q2}), we have from (\ref{I2}) that
\be \label{R2}
\frac{1}{p}\frac{d}{dt} I(\xi(\cdot,t),\eta(t))) \ = \ \left[\rho(\xi(\cdot,t),\eta(t))-\frac{1}{p}\right]I(\xi(\cdot,t))  \ . 
\ee
We can obtain from (\ref{R2}) a formula for $\rho(\cdot)$ similar to (\ref{T1}).  Thus we have from (\ref{L1}), (\ref{I2}), (\ref{R2}) that
\begin{multline} \label{S2}
p\left[\rho(\zeta(\cdot),\eta)-\frac{1}{p}\right]I(\zeta(\cdot),\eta) \ = \\
 [d_\zeta I(\zeta(\cdot),\eta), h+hD\zeta]+\rho(\zeta(\cdot),\eta)[d_\zeta I(\zeta(\cdot),\eta), yD\zeta-\zeta] -\eta\rho(\zeta(\cdot),\eta)\pa I(\zeta(\cdot),\eta)/\pa \eta \ ,
\end{multline}
where $d_\zeta I(\zeta(\cdot),\eta)$ denotes gradient with respect to the function $\zeta(\cdot)$. 
We conclude from (\ref{S2}) that $\rho(\cdot)$ is given by the formula
\be \label{T2}
\rho(\zeta(\cdot),\eta) \ = \ \frac{I(\zeta(\cdot),\eta)+[d_\zeta I(\zeta(\cdot),\eta), h+hD\zeta]}{pI(\zeta(\cdot),\eta)+[d_\zeta I(\zeta(\cdot),\eta), \zeta-yD\zeta]+\eta\pa I(\zeta(\cdot),\eta)/\pa \eta}  \ .
\ee
In the case $h(\cdot)\equiv1$ and $\zeta(\cdot)$ a constant function, (\ref{I2}), (\ref{T2}) with $I(\cdot)=G_1(\cdot)$ yield equation (3.23) of \cite{cn}. Similarly (\ref{L1}), (\ref{T2}) yield equation (3.22) of \cite{cn}. 
Note also from (\ref{J1}), (\ref{B2}) that $\eta(t)$ converges to zero  as $t\ra\infty$ at exponential rate $1/p$. Letting $\eta\ra0$ in (\ref{P2}), (\ref{T2}) we obtain the situation of the previous paragraph. 

We shall obtain some boundedness and regularity properties of the function $I(\cdot)$ given by (\ref{P2}), assuming only that $\beta(\cdot,0)$ is continuous,  satisfies  (\ref{H1}) and in addition the inequality
\be \label{U2}
0 \ < \ \inf\beta(\cdot,0) \ \le \ \sup\beta(\cdot,0) \ <1  \ .
\ee
Observe that
\be \label{V2}
0 \ < \ \frac{f(0)\eta}{\al_0[y+\zeta(y)]} \ \le \ 1 \quad {\rm if \ } 0<\eta\le 1, \ \zeta(y)\ge0, \ y\ge \ve_0 \ .
\ee
Hence $I(\zeta(\cdot),\eta)$ is  well defined by (\ref{P2}) provided $\zeta(\cdot)$ is a non-negative function and $0<\eta\le 1$.  In order to use our assumptions (\ref{H1}), (\ref{U2}) on the beta function,  we make a change of variable  in the integral  (\ref{P2}). Let $\Ga:\R^+\times (0,1]\times[0,1)\ra[0,1)$ be the function defined by
\be \label{W2}
f(\Ga(\zeta,\eta,x)) \ =  \ \frac{f(x)+\al_0\zeta}{\eta} \ , \quad \zeta\ge0, \ 0<\eta\le 1, \ 0\le x<1 \  .
\ee
From (\ref{C2}), (\ref{W2}) we see that 
\be \label{X2}
\lim_{\eta\ra 0}\eta^{-1}[1-\Ga(\zeta,\eta,x)] \ = \ \frac{1}{f(x)+\al_0\zeta} \ .
\ee
 Setting $y=f(x)/\al_0$ in (\ref{P2})  we have that
\be \label{Y2}
I(\zeta(\cdot),\eta) \ = \ \eta^{-p}\int_0^1 w(\Ga(\zeta(f(x)/\al_0),\eta,x),0) \ dx \ .
\ee

We recall from \cite{c2} how $\beta(\cdot,0)$ is related to the function $w(\cdot,0)$. 
Let $X$ be the random variable with cdf determined by $w(\cdot,0)$, so
  \be \label{Z2}
  P(X>x) \ = \ \frac{w(x,0)}{w(0,0)} \ , \quad 0<x<1 \ .
  \ee
The beta function is related to the cdf of $X$ by the formula
 \be \label{AA2}
 \beta(x,0) \ = \ 1+\frac{d}{dx} E[X-x \ | \ X>x] \ , \quad 0\le x<1 \ .
 \ee 
The function $h(\cdot,0)$ defined by (\ref{F1}) can be expressed as the expectation value
\be \label{AB2}
h(x,0) \ = \ w(0,0)E[X-x; \ X>x] \ .
\ee
Hence on integration of (\ref{AA2}) we obtain the identity
\be \label{AC2}
E[X-x \ | \ X>x] \ = \ \frac{h(x,0)}{w(x,0)} \ = \  \int_x^1 [1-\beta(x',0)] \ dx' \ .
\ee
Integrating (\ref{AC2}) yields the formula
\be \label{AD2}
h(x,0) \ = \ h(0,0)\exp\left[-\int_0^x dx'\Big/\int_{x'}^1[1-\beta(x'',0)] \ dx'' \right] \  .
\ee
\begin{lem}
Assume $\beta(\cdot,0)$ is continuous and satisfies (\ref{H1}), (\ref{U2}). Then for any $M,\ga>0$ there exist positive constants $ C_\ga$, depending only on $\ga$,  and $c_{M,\ga}$, depending only on $M,\ga$, such that
\be \label{AE2}
c_{M,\ga}\eta^{\ga} \ \le \  I(\zeta(\cdot),\eta) \ \le \ C_\ga \eta^{-\ga} \quad {\rm for \ } 
0\le\zeta(\cdot)\le M, \ 0<\eta\le 1\  .
\ee
If $\beta(x,0), \ 0\le x<1,$ is H\"{o}lder continuous at $x=1$ then the inequality (\ref{AE2}) also holds with $\ga=0$.
\end{lem}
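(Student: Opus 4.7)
The plan is to evaluate the integral in (\ref{P2}) via the representation (\ref{Y2}), combining sharp two-sided asymptotics of $w(\cdot,0)$ near $x=1$ (derived from (\ref{AD2}) and the hypothesis (\ref{H1})) with the explicit form of $\Gamma(\cdot,\cdot,\cdot)$ given by (\ref{W2}).

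First I would establish pointwise bounds on $w(x,0)$. With $p=\beta_0/(1-\beta_0)$, I claim that for every $\epsilon>0$ there exist constants $A_\epsilon,B_\epsilon>0$ with
\[
A_\epsilon(1-x)^{p+\epsilon}\;\le\; w(x,0)\;\le\; B_\epsilon(1-x)^{p-\epsilon},\quad 0\le x<1.
\]
By (\ref{AC2}), $h(x,0)/w(x,0)=\int_x^1[1-\beta(x'',0)]dx''$, so once this inner integral is controlled, (\ref{AD2}) yields a bound on $w$. From (\ref{H1}), for any $\epsilon'>0$ there is $\del>0$ with $(1-\beta_0-\epsilon')(1-x')\le \int_{x'}^1[1-\beta(x'',0)]dx''\le (1-\beta_0+\epsilon')(1-x')$ on $(1-\del,1)$, while (\ref{U2}) together with continuity provides a uniform positive lower bound on the inner integral on $[0,1-\del]$. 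Inserting these into (\ref{AD2}) converts the exponential into a power of $(1-x)$ with exponent $1/(1-\beta_0\mp\epsilon')=p+1\pm O(\epsilon')$, and dividing by $\int_x^1(1-\beta)dx''$ removes one power, yielding the asserted bounds. If $\beta(\cdot,0)$ is H\"older continuous at $x=1$, the difference $1/\int_{x'}^1(1-\beta)dx''-1/((1-\beta_0)(1-x'))$ is an integrable function of $x'$, so the outer integral equals $(p+1)\log(1/(1-x))$ up to a bounded error, producing the same comparison with $\epsilon=0$.

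Second, the normalization $\lim_{x\to1}(1-x)f(x)=1$ from (\ref{C2}) together with the defining relation (\ref{W2}) yields a threshold $\eta_0=\eta_0(M)>0$ such that for $0<\eta\le\eta_0$, $0\le\zeta\le M$, $0\le x<1$,
\[
\tfrac{1}{2}\,\eta/(f(x)+\al_0\zeta)\;\le\;1-\Gamma(\zeta,\eta,x)\;\le\;2\,\eta/(f(x)+\al_0\zeta);
\]
this is immediate from inverting $f(\Gamma)=(f(x)+\al_0\zeta)/\eta$ and using $f^{-1}(s)\sim 1-1/s$ as $s\to\infty$. Writing $\tilde\zeta(x)=\zeta(f(x)/\al_0)$ and inserting both steps into (\ref{Y2}), for $0<\eta\le\eta_0$ one obtains
\[
A'_\epsilon\eta^{\epsilon}\!\int_0^1\!(f(x)+\al_0\tilde\zeta(x))^{-(p+\epsilon)}dx\;\le\; I(\zeta,\eta)\;\le\; B'_\epsilon\eta^{-\epsilon}\!\int_0^1\!(f(x)+\al_0\tilde\zeta(x))^{-(p-\epsilon)}dx.
\]
The upper integral is dominated by $\int_0^1 f(x)^{-(p-\epsilon)}dx$, which is finite since $f(x)\ge f(0)>0$ and $f(x)^{-(p-\epsilon)}\sim(1-x)^{p-\epsilon}$ is integrable near $x=1$. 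The lower integral is bounded below by restriction to $x\in[0,1/2]$, where $f(x)+\al_0\tilde\zeta(x)\le f(1/2)+\al_0M$, producing a positive constant depending only on $M$ and $\epsilon$. Setting $\epsilon=\ga$ gives (\ref{AE2}) on $\eta\le\eta_0$; for $\eta\in[\eta_0,1]$, $\eta^{-p}$ is bounded and $w(\Gamma,0)$ lies between two positive constants (uniformly in $\zeta$, since $\Gamma$ ranges over a compact subset of $[0,1)$), so (\ref{AE2}) holds on $[\eta_0,1]$ trivially. The H\"older case follows by taking $\epsilon=0$ throughout, as the refined Step 1 bound provides a uniform two-sided $(1-x)^p$ comparison.

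The principal technical obstacle is the first step: (\ref{H1}) is only an asymptotic statement at $x=1$, and the loss of a small exponent $\epsilon$ in the power-law bounds on $w(\cdot,0)$ is exactly what produces the $\eta^{\pm\ga}$ factor in (\ref{AE2}), since such a loss propagates unchanged through the remaining estimates. The H\"older improvement is possible precisely because the logarithmically divergent term in (\ref{AD2}) then has an integrable correction.
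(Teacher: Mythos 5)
Your argument follows essentially the same route as the paper's: use (H1), (U2) together with (AD2), (AC2) to obtain two-sided power-law bounds on $w(\cdot,0)$ near $x=1$ with a loss of $\pm\epsilon$ in the exponent, combine with the asymptotic $1-\Gamma(\zeta,\eta,x)\sim\eta/(f(x)+\alpha_0\zeta)$ from (\ref{C2}), (\ref{W2}), (\ref{X2}), and insert into (\ref{Y2}). (The paper phrases the intermediate step as bounds on $h(\Gamma,0)$ in (\ref{AG2}) and then passes to $w$ via (\ref{AC2}); you bound $w(x,0)$ directly. This is a cosmetic difference.)

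One small slip needs repair: in the case $\eta\in[\eta_0,1]$ you assert that ``$\Gamma$ ranges over a compact subset of $[0,1)$,'' hence $w(\Gamma,0)$ is bounded below by a positive constant. This is false: for fixed $\eta$ and $\zeta\le M$, as $x\to 1$ we have $f(\Gamma)=(f(x)+\alpha_0\zeta)/\eta\to\infty$, so $\Gamma\to 1$ and $w(\Gamma,0)\to 0$. The desired lower bound on $I(\zeta(\cdot),\eta)$ is nevertheless correct: restrict the $x$-integral in (\ref{Y2}) to, say, $[0,1/2]$, on which $f(\Gamma)\le (f(1/2)+\alpha_0 M)/\eta_0$ and hence $\Gamma$ stays in a compact subset of $[0,1)$; the upper bound is immediate from $w(\Gamma,0)\le w(0,0)$. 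With that fix, the proof is sound.
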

\begin{proof}
It follows from (\ref{B1}) with $t=0$ that for $0<\del\le \eta\le 1$ the inequality (\ref{AE2}) holds  with constants depending only on $\del,M$.  Therefore   we may restrict ourselves to the situation where $0<\eta<\del$ and $\del>0$ can be arbitrarily small.  Next observe from (\ref{H1}) that for any $\ga, \ 0<\ga<1$, there exists $\ve_\ga>0$ such that
\begin{multline} \label{AF2}
 \frac{1}{p+1+\ga}[1-z] \ \ \le \ \int_z^1 [1-\beta(x',0)] \ dx'  \\
  \le \ \frac{1}{p+1-\ga}[1-z]  \quad {\rm for \ }  \ 1-\ve_\ga\le z<1.
\end{multline}
Hence from (\ref{W2}), (\ref{AD2}), (\ref{AF2}) there exists for any $0<\ga\le 1$ constants $\del_\ga>0$ and $c_\ga,C_\ga>0$ such  that
\begin{multline} \label{AG2}
c_\ga[1-\Ga(\zeta,\eta,x)]^{p+1+\ga} \ \le \ h(\Ga(\zeta,\eta,x),0) \\
 \le \ C_\ga[1-\Ga(\zeta,\eta,x)]^{p+1-\ga} \quad {\rm for \ } \zeta\ge 0, \ 0<\eta\le \del_\ga, \ 0\le x<1 \ .
\end{multline}
The inequality (\ref{AE2}) follows now from (\ref{X2}), (\ref{Y2}), (\ref{AC2}), (\ref{AG2}). 

If $\beta(x,0), \ 0\le x<1,$ is H\"{o}lder continuous at $x=1$ with exponent $q, \ 0<q\le 1,$ then there exist constants $C_1,C_2>0$ such that
\begin{multline} \label{AH2}
 \frac{1}{p+1}[1-z]\frac{1}{1+C_1(1-z)^q} \ \le \ \int_z^1 [1-\beta(x',0)] \ dx'  \\
  \le \ \frac{1}{p+1}[1-z]\left\{1+C_2(1-z)^q\right\}  \quad {\rm for \ }  \ 0\le z<1.
\end{multline}
One easily sees  that the inequality (\ref{AG2}) with $\ga=0$ follows from (\ref{AH2}), whence (\ref{AE2}) with $\ga=0$ also holds. 
\end{proof}
We obtain an expression for $d_\zeta I(\zeta(\cdot),\eta)$  by differentiating (\ref{Y2}). To do this we use the formula
\be \label{AI2}
-\frac{\pa w(x,0)/\pa x}{w(x,0)} \ = \ \beta(x,0)\Big/ \int_x^1 [1-\beta(x',0)] \ dx'  \ .
\ee
We have also from (\ref{C2}), (\ref{W2}) that
\be \label{AJ2}
\frac{\pa \Ga(\zeta,\eta,x)}{\pa \zeta} \ = \ -\frac{\al_0\psi(\Ga(\zeta,\eta,x))}{\eta\psi'(1)f(\Ga(\zeta,\eta,x))} \ .
\ee
It follows from (\ref{X2}), (\ref{AJ2}) that
\be \label{AK2}
\frac{\pa \Ga(\zeta,\eta,x)}{\pa \zeta} \ \simeq \ \al_0\eta(1-x)^2 \quad {\rm as \ } \eta\ra 0 \ .
\ee
  Differentiating (\ref{Y2})  we conclude from (\ref{W2}), (\ref{AI2}), (\ref{AJ2}) that
 \begin{multline} \label{AL2}
  d_\zeta I(\zeta(\cdot),\eta;f(x)/\al_0) \ = \\
   -\eta^{-p}\frac{\al_0g(\Ga(\zeta(f(x)/\al_0),\eta,x))}{f(x)+\al_0\zeta(f(x)/\al_0)}w(\Ga(\zeta(f(x)/\al_0),\eta,x),0) a(f(x)/\al_0) \ , \quad 0\le x<1 \ ,
  \end{multline}
  where the function $g(\cdot)$ is given by the formula
  \be \label{AM2}
  g(x) \ = \ -\psi(x)\beta(x,0)\Big/\psi'(1)\int_x^1 [1-\beta(x',0)] \ dx'  \ .
  \ee
  Observe from (\ref{U2}), (\ref{AM2})  that
\be \label{AN2}
\inf g(\cdot)>0 \quad {\rm and \ } \lim_{x\ra1}g(x) \ = \ p \ .
\ee
We see from (\ref{AL2}), (\ref{AM2}) that $d_\zeta I(\zeta(\cdot),\eta)$ is a negative function for all non-negative $\zeta(\cdot)$ and $0<\eta\le 1$. 

We can obtain similar  formulas for $\pa I(\zeta(\cdot),\eta)/\pa \eta$ and for the denominator in the expression (\ref{T2}) for $\rho(\zeta(\cdot),\eta)$. First observe from (\ref{C2}), (\ref{W2}) that
\be \label{AO2}
\frac{\pa \Ga(\zeta,\eta,x)}{\pa \eta} \ = \ \frac{\psi(\Ga(\zeta,\eta,x))}{\eta\psi'(1)} \ .
\ee
Now differentiating (\ref{Y2}) with respect to $\eta$  we have from (\ref{AI2}), (\ref{AM2}), (\ref{AO2}) that
 \begin{multline} \label{AP2}
  pI(\zeta(\cdot),\eta) +\eta\frac{\pa I(\zeta(\cdot),\eta)}{\pa \eta} \ = \\
  \eta^{-p}\int_0^1 g(\Ga(\zeta(f(x)/\al_0),\eta,x))w(\Ga(\zeta(f(x)/\al_0),\eta,x),0) \ dx \  .
  \end{multline}
Note that the RHS of (\ref{AP2}) is positive. From (\ref{AL2}), (\ref{AP2}) we conclude that the 
denominator in the expression (\ref{T2}) is given by the formula
\begin{multline} \label{AQ2}
pI(\zeta(\cdot),\eta)+\eta\frac{\pa I(\zeta(\cdot),\eta)}{\pa \eta}+[d_\zeta I(\zeta(\cdot),\eta), \zeta-yD\zeta]   \ = \ \\
 \eta^{-p}\int_0^1 \frac{g(\Ga(\zeta(f(x)/\al_0),\eta,x))}{f(x)+\al_0\zeta(f(x)/\al_0)}f(x)\left[1+D\zeta(f(x)/\al_0)\right] \\
 \times w(\Ga(\zeta(f(x)/\al_0),\eta,x),0) \ dx \
  = \  - [d_\zeta I(\zeta(\cdot),\eta), y(1+D\zeta)] \ .
\end{multline}
\begin{lem}
Assume $\beta(\cdot,0)$ is continuous and satisfies (\ref{H1}), (\ref{U2}). Let $\zeta:[\ve_0,\infty)\ra\R$ be a $C^1$ non-negative function such that $\inf D\zeta(\cdot)\ge-1$, and $0<\eta\le 1$. Then the function $\rho(\cdot,\cdot)$ of (\ref{T2}) satisfies the inequality
\be \label{AR2}
\rho(\zeta(\cdot),\eta) \ \ge \ -\sup_{y>\ve_0}\frac{h(y)}{y} \ .
\ee
Suppose  there is a constant $M$ such that $\sup D\zeta(\cdot)\le M$.  Then there exists $\del_M>0$, depending only on $M$, such that (\ref{AR2}) can be improved to
\be \label{AS2}
\rho(\zeta(\cdot),\eta) \ \ge \ \del_M-\sup_{y>\ve_0}\frac{h(y)}{y} \ .
\ee
Furthermore, there exists $\del_M,K_M>0$, depending only on $M$, such that
\be \label{AT2}
\rho(\zeta(\cdot),\eta) \ \ge \ \del_M \quad {\rm provided \ } \inf \zeta(\cdot) \ \ge \ K_M \ .
\ee

Suppose there is a constant $m>0$ such that $\inf[1+ D\zeta(\cdot)]\ge m$.  For any $M>0$ there is a constant $C_{m,M}$, depending only on $m,M$, such that
\be \label{AU2}
\rho(\zeta(\cdot),\eta) \ \le \ C_{m,M}  \quad {\rm provided \ } \sup \zeta(\cdot) \ \le \ M \ .
\ee
\end{lem}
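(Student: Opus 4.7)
The plan is to derive all four inequalities from one clean rewriting of $\rho$. Formula (\ref{AL2}) shows $d_\zeta I(\zeta(\cdot),\eta)$ is pointwise negative, so setting $A=-[d_\zeta I,y(1+D\zeta)]$ and $B=-[d_\zeta I,h(1+D\zeta)]$, the hypothesis $\inf D\zeta\ge -1$ makes $A,B\ge 0$. Identity (\ref{AQ2}) rewrites the denominator of (\ref{T2}) as $A$ and its numerator as $I-B$, giving
\be \label{AV2}
\rho(\zeta(\cdot),\eta) \ = \ \frac{I-B}{A} \ = \ \frac{I}{A}-\frac{B}{A}.
\ee
Using the change of variable $y=f(x)/\al_0$, formulas (\ref{AL2}) and (\ref{AP2}) provide explicit $x$-integrals for $A$ and $B$ weighted by $w(\Ga,0)/(f(x)+\al_0\zeta(y))$, which ties them directly to $I=\eta^{-p}\int_0^1 w(\Ga,0)\,dx$. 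Throughout one uses that the function $g$ of (\ref{AM2}) is continuous on $[0,1]$ with $g(1)=p$ by (\ref{AN2}), so $0<\inf g\le \sup g<\infty$.

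For (\ref{AR2}), I would note that $B/A$ is a weighted average of $h(y)/y$ against the non-negative measure $y|d_\zeta I(y)|(1+D\zeta(y))\,dy$, hence $B/A\le \sup_{y>\ve_0} h(y)/y$. Since $I/A\ge 0$, (\ref{AV2}) yields $\rho\ge -\sup_{y>\ve_0} h(y)/y$.

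For (\ref{AS2}), I would bound $A$ from above using $1+D\zeta\le 1+M$, $f(x)/(f(x)+\al_0\zeta(y))\le 1$ and $g\le \sup g$ in the explicit $x$-integral for $A$, yielding $A\le (1+M)(\sup g)I$, so $I/A\ge \del_M:=1/[(1+M)\sup g]$; combined with the (\ref{AR2}) argument this gives (\ref{AS2}). For (\ref{AT2}), I would add an upper bound on $B$: since $h\le H:=\sup h<\infty$ by (\ref{M1}), and $\al_0/(f(x)+\al_0\zeta(y))\le 1/K_M$ whenever $\zeta\ge K_M$, the analogous $x$-integral for $B$ yields $B\le [H(1+M)(\sup g)/K_M]\,I$. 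Taking $K_M=2H(1+M)\sup g$ forces $B\le I/2$, whence $\rho=(I-B)/A\ge I/(2A)\ge \del_M/2$.

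For (\ref{AU2}), since $B\ge 0$, $\rho\le I/A$ and it suffices to bound $A$ from below. The monotonicity of $t\mapsto t/(t+c)$ together with $\zeta\le M$ and $f(x)\ge f(0)$ gives $f(x)/(f(x)+\al_0\zeta(y))\ge f(0)/(f(0)+\al_0 M)$, while $1+D\zeta\ge m$ and $g\ge \inf g$ yield $A\ge m(\inf g)[f(0)/(f(0)+\al_0 M)]\,I$, and hence $\rho\le C_{m,M}$ for the stated constant. The only step of substance is the initial collapse of (\ref{T2}) to (\ref{AV2}); everything else is bookkeeping that relies on nothing beyond boundedness of $g$ and $h$ and the monotonicity of $t\mapsto t/(t+c)$.
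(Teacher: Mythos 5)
Your argument is correct and follows essentially the same route as the paper's: both reduce (\ref{T2}) via (\ref{AQ2}) to $\rho = (I-B)/A$ with $A,B\geq 0$, obtain (\ref{AR2}) from the pointwise comparison $h(y)\leq y\,\sup_{y>\ve_0}h(y)/y$ (so that $B\leq A\sup h/y$), and prove (\ref{AS2}), (\ref{AT2}), (\ref{AU2}) by bounding $A$ and $B$ above and below against $I$ using (\ref{AL2}), the $x$-integral in (\ref{AQ2}), and the bounds $0<\inf g\leq \sup g<\infty$, $\sup h<\infty$. The paper packages these same comparisons as its intermediate estimates (\ref{AV2}), (\ref{AW2}) on $\int_{\ve_0}^\infty|d_\zeta I|\,dy$ and $\int_{\ve_0}^\infty y|d_\zeta I|\,dy$, whereas you write the bounds directly from the explicit $x$-integrals, but the underlying computation is the same.
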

\begin{proof}
The inequality (\ref{AR2}) follows immediately from (\ref{T2}), (\ref{AQ2}) on using the positivity of $I(\zeta(\cdot),\eta)$ and the negativity of the function $d_\zeta I (\zeta(\cdot),\eta):[\ve_0,\infty)\ra\R$. To prove the remaining inequalities we observe from (\ref{Y2}), (\ref{AL2}), (\ref{AN2}) there are constants $C,c>0$ such that
\be \label{AV2}
\int_{\ve_0}^\infty |d_\zeta I(\zeta(\cdot),\eta;y)| \ dy \ \le \  \frac{C}{\inf_{y>\ve_0}[y+\zeta(y)]}I(\zeta(\cdot),\eta)  \ , 
\ee
and also
\be \label{AW2}
c\inf_{y\ge\ve_0}\frac{y}{y+\zeta(y)}I(\zeta(\cdot),\eta) \ \le \ \int_{\ve_0}^\infty |d_\zeta I(\zeta(\cdot),\eta;y)|y \ dy \ \le \  CI(\zeta(\cdot),\eta) \ .
\ee
Then (\ref{AS2}) follows from the upper bound in (\ref{AW2}). 
We see from (\ref{AV2}) that it is possible to choose $K_M$ sufficiently large, depending on $M$, so that
$0\ge[d_\zeta I(\zeta(\cdot),\eta), h+hD\zeta]\ge -I(\zeta(\cdot),\eta)/2$ when $\inf \zeta(\cdot) \ \ge \ K_M$. From the upper bound in (\ref{AW2}) it follows that $0\le- [d_\zeta I(\zeta(\cdot),\eta), y(1+D\zeta)]\le C[1+M]I(\zeta(\cdot),\eta)$.  Evidently we may choose $\del_M=1/2C[1+M]$ in (\ref{AT2}). The inequality (\ref{AU2}) is a consequence of the lower bound in (\ref{AW2}). 
\end{proof}
Next we  estimate some second derivatives of $I(\zeta(\cdot),\eta)$ with respect to $\zeta(\cdot)$.
To carry this out we first observe from (\ref{C2}), (\ref{W2}) that
\be \label{AX2}
\frac{\pa}{\pa x}\Ga(\zeta(f(x)/\al_0),\eta,x) \ = \ 
\frac{f(x)\psi(\Ga(\zeta,\eta,x))}{\eta\psi(x)f(\Ga(\zeta,\eta,x))} \left[1+D\zeta(f(x)/\al_0)\right]  \ .
\ee
 We conclude from (\ref{AL2}), (\ref{AX2}) that
 \begin{multline} \label{AY2}
  d_\zeta I(\zeta(\cdot),\eta;f(x)/\al_0) \left[1+D\zeta(f(x)/\al_0)\right]  \ = \\
-\eta^{-p} \frac{\psi(x)a(f(x)/\al_0)}{\psi'(1)f(x)} \frac{\pa}{\pa x} w(\Ga(\zeta(f(x)/\al_0),\eta,x),0)   \ . 
 \end{multline}
 Suppose now that $\zeta,G:[\ve_0,\infty)\ra\R$ are non-negative $C^1$ functions with bounded first derivatives. We have from (\ref{AY2}) upon integration by parts that
 \begin{multline} \label{AZ2}
 [d_\zeta I(\zeta(\cdot),\eta), G(1+D\zeta)] \ = \
  \eta^{-p} \frac{\psi(0)G(f(0)/\al_0)}{\psi'(1)f(0)}w(\Ga(\zeta(f(0)/\al_0),\eta,0),0) \\
  +\eta^{-p}\int_0^1\left\{\frac{[\psi'(x)+\psi'(1)]G(f(x)/\al_0)}{\psi'(1)f(x)}
  -\frac{G'(f(x)/\al_0)}{\al_0}\right\}w(\Ga(\zeta(f(x)/\al_0),\eta,x),0) \ dx \ .
 \end{multline}  
 \begin{lem}
 Assume $\beta(\cdot,0)$ is continuous and satisfies (\ref{H1}), (\ref{U2}). Let $G:[\ve_0,\infty)\ra\R$ be a $C^1$ non-negative function with bounded first derivative. Define $H_G(\zeta(\cdot),\eta)$ for non-negative $C^1$ functions $\zeta:[\ve_0,\infty)\ra\R^+$ with bounded derivative and $0<\eta\le 1$ by $H_G(\zeta(\cdot),\eta)=-[d_\zeta I(\zeta(\cdot),\eta), G(1+D\zeta)]$.  Given  any $\ga>0$ there is a constant $C_{\ga}$ such that
 \be \label{BA2}
 |H_G(\zeta_1(\cdot),\eta)-H_G(\zeta_2(\cdot),\eta)| \ \le C_{\ga}\eta^{-\ga}\|\zeta_1(\cdot)-\zeta_2(\cdot)\|_\infty \ ,
 \ee
 for all non-negative $\zeta_j(\cdot), \ j=1,2.$, and also
 \be \label{BB2}
 |H_G(\zeta(\cdot),\eta_1)-H_G(\zeta(\cdot),\eta_2)| \ \le C_{\ga}\eta_1^{-(\ga+1)}|\eta_1-\eta_2| \ ,
 \ee
 for all $\zeta(\cdot)\ge0, \ 0<\eta_1<\eta_2\le 1$.
 \end{lem}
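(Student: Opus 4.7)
The strategy is to start from the integration-by-parts identity (\ref{AZ2}), which expresses $H_G(\zeta(\cdot),\eta) = -[d_\zeta I(\zeta(\cdot),\eta), G(1+D\zeta)]$ entirely as a boundary contribution at $x=0$ plus an integral over $x\in[0,1]$ of a smooth bounded kernel $K(x)$ against $w(\Ga(\zeta(f(x)/\al_0),\eta,x),0)$. The crucial observation is that after this integration by parts $D\zeta$ no longer appears: the only dependence of $H_G$ on $\zeta$ is through the pointwise values $\zeta(f(x)/\al_0)$ inside $\Ga$. This reduces both (\ref{BA2}) and (\ref{BB2}) to controlling, respectively, the first variations of $w(\Ga(\cdot,\cdot,x),0)$ in its first and second arguments, multiplied by the prefactor $\eta^{-p}$.

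For (\ref{BA2}) I would invoke the chain rule $\pa_\zeta w(\Ga,0) = \pa_x w(\Ga,0)\cdot\pa_\zeta\Ga$, with $\pa_x w(\Ga,0)$ controlled by (\ref{AI2}) and $\pa_\zeta\Ga$ by (\ref{AJ2}). Combining the asymptotic bounds on the tail of $\beta(\cdot,0)$ from (\ref{AF2}), on $h(\cdot,0)$ from (\ref{AG2}), and the analogous bound $w(\Ga,0)\le C_\ga(1-\Ga)^{p-\ga}$ derived from (\ref{AD2}), together with the estimate $1-\Ga\le C\eta/(f(x)+\al_0\zeta)$ that follows from (\ref{X2}), I obtain a pointwise bound of the form
\be
\eta^{-p}|\pa_\zeta w(\Ga(\zeta,\eta,x),0)| \ \le \ \frac{C_\ga\eta^{-\ga}}{(f(x)+\al_0\zeta)^{p+1-\ga}} \ .
\ee
Since $f(x)\ge f(0)>0$ and the right-hand side is integrable on $[0,1]$ (using $f(x)\simeq 1/(1-x)$ as $x\ra1$), applying the mean value theorem along the segment $t\mapsto (1-t)\zeta_2(\cdot)+t\zeta_1(\cdot)$ inside $H_G$ then yields (\ref{BA2}).

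For (\ref{BB2}) I would differentiate $H_G$ with respect to $\eta$, producing two terms. The derivative $-p\eta^{-p-1}$ of the prefactor contributes $\eta^{-1}H_G$, bounded by $C_\ga\eta^{-1-\ga}$ via the uniform estimate $|H_G(\zeta,\eta)|\le C_\ga\eta^{-\ga}$ (which follows from the same asymptotic bounds applied directly to the integrand of (\ref{AZ2})). The derivative of $w(\Ga,0)$ via (\ref{AO2}) contributes a term with $|\pa_\eta\Ga|\simeq (1-\Ga)/\eta$ and $|\pa_x w(\Ga,0)|\le C_\ga(1-\Ga)^{p-1-\ga}$, which after multiplication by $\eta^{-p}$ and integration in $x$ is also bounded by $C_\ga\eta^{-1-\ga}$. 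Hence $|\pa_\eta H_G(\zeta,\eta)|\le C_\ga\eta^{-1-\ga}$ uniformly in $0<\eta\le 1$; since this bound is decreasing in $\eta$, the fundamental theorem of calculus gives
\be
|H_G(\zeta,\eta_1)-H_G(\zeta,\eta_2)| \ \le \ |\eta_2-\eta_1|\sup_{\eta\in[\eta_1,\eta_2]}|\pa_\eta H_G(\zeta,\eta)| \ \le \ C_\ga\eta_1^{-1-\ga}|\eta_1-\eta_2| \ ,
\ee
which is (\ref{BB2}).

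The main technical hurdle is bookkeeping the cancellation between the singular prefactor $\eta^{-p}$, the vanishing $w(\Ga,0)\simeq \eta^p/(f(x)+\al_0\zeta)^p$, and the fact that each differentiation strips off one factor of $(1-\Ga)\simeq\eta/(f(x)+\al_0\zeta)$. The $\ga$-loss in both estimates is intrinsic to the approach: it enters solely through the two-sided tail bounds (\ref{AF2}) and (\ref{AG2}), which hold only up to an arbitrarily small $\ga>0$ under the mere continuity hypothesis on $\beta(\cdot,0)$; under the H\"older condition at $x=1$ the same scheme would yield the sharper bounds with $\ga=0$.
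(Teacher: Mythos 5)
Your proof follows essentially the same route as the paper's: it starts from the integration-by-parts identity (\ref{AZ2}) so that only the pointwise values $\zeta(f(x)/\al_0)$ survive (the $D\zeta$ having been eliminated), then applies the chain rule via (\ref{AI2}) and (\ref{AJ2}) together with the tail asymptotics (\ref{AF2}), (\ref{AG2}) and (\ref{X2}) to bound the resulting variation of $w(\Ga(\cdot,\eta,\cdot),0)$. The paper records exactly this computation in (\ref{BC2})--(\ref{BD2}) for the boundary term (your pointwise bound $\eta^{-p}|\pa_\zeta w(\Ga,0)|\le C_\ga\eta^{-\ga}(f(x)+\al_0\zeta)^{-(p+1-\ga)}$ is the same estimate written out more explicitly) and handles (\ref{BB2}) likewise using (\ref{AO2}).
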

 \begin{proof}
  We use the representation (\ref{AZ2}) and the fundamental theorem of calculus to prove (\ref{BA2}).  For the first term on the RHS of (\ref{AZ2}) we have
 \begin{multline} \label{BC2}
 w(\Ga(\zeta_1(f(0)/\al_0),\eta,0),0)-w(\Ga(\zeta_2(f(0)/\al_0),\eta,0),0) \ = \\
- \int_0^1d\la \   \frac{g(X(\la))}{\ve_0+\la\zeta_1(\ve_0)+(1-\la)\zeta_2(\ve_0)} \  w(X(\la),0)\left[\zeta_1(\ve_0)-\zeta_2(\ve_0)\right]  \ ,
 \end{multline}
 where $X(\la)=\Ga(\la\zeta_1(\ve_0)+(1-\la)\zeta_2(\ve_0),\eta,0)$. It follows from (\ref{W2}), (\ref{AC2}), (\ref{AG2}), (\ref{BC2}) there is for any $\ga>0$ a constant $C_{1,\ga}$ such that
 \be \label{BD2}
 \left|w(\Ga(\zeta_1(\ve_0),\eta,0),0)-w(\Ga(\zeta_2(\ve_0),\eta,0),0)\right| \ \le \ 
 C_{1,\ga}\eta^{p-\ga}\left|\zeta_1(\ve_0)-\zeta_2(\ve_0)\right|  \ .
 \ee
 We can make a similar argument to estimate the integral on the RHS of (\ref{AZ2}), whence (\ref{BA2}) follows. We prove (\ref{BB2}) in the same way as (\ref{BA2})  by using (\ref{AO2}). \end{proof}
 \begin{rem}
 Note that we only use the integrability of $\psi'(\cdot)$ in our estimate, so $\psi'(x)$ can diverge as $x\ra 0$, as in the case of the LSW functions (\ref{C1}).  Similarly we only need the function $y\ra G'(y)$ to be integrable close to $y=\ve_0$. Hence the result of Lemma 2.3 applies to the function $G(\cdot)=h(\cdot)$. 
 \end{rem}
 In Lemma 2.2 we obtained bounds on $\rho(\zeta(\cdot),\eta)$ which are uniform for $0<\eta\le 1$. Next we show that $\lim_{\eta\ra 0}\rho(\zeta(\cdot),\eta)$ exists. 
\begin{lem}
Assume $\beta(\cdot,0)$ is continuous and satisfies (\ref{H1}), (\ref{U2}). For $m,M>0$, let $S_{m,M}$ be the set of non-negative $C^1$ functions $\zeta:[\ve_0,\infty)\ra\R$  such that $1+\inf D\zeta(\cdot)\ge m$, and $\|\zeta(\cdot)\|_{1,\infty}\le M$, where $\|\cdot\|_{1,\infty}$ is defined by (\ref{E3}). Then the function $\rho(\cdot,\cdot)$ of (\ref{T2}) satisfies the limit
\be \label{BE2}
\lim_{\eta\ra 0}\sup\{\left|\rho(\zeta(\cdot),\eta)-\rho_p(\zeta(\cdot))\right| \ : \ \zeta(\cdot)\in S_{m,M}\} \ = \ 0 \ ,
\ee
where $\rho_p(\zeta(\cdot))$ is given by the RHS of (\ref{T1}) with $I(\zeta(\cdot))$ as in (\ref{L2}), (\ref{M2}).  If the function $x\ra\beta(x,0)$ is H\"{o}lder continuous at $x=1$ of order $\ga>0$ then there exists a constant $C_{m,M}$, depending only on $m,M$, such that
\be \label{BF2}
\sup\{\left|\rho(\zeta(\cdot),\eta)-\rho_p(\zeta(\cdot))\right| \ : \ \zeta(\cdot)\in S_{m,M}\} \ \le \  C_{m,M} \ \eta^\ga 
\quad {\rm for \  0<\eta\le 1} \ .
\ee
\end{lem}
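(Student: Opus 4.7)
The strategy is to view both $\rho(\zeta(\cdot),\eta)$ in (\ref{T2}) and $\rho_p(\zeta(\cdot))$ in (\ref{T1}) as ratios whose numerator equals $I+[d_\zeta I, h(1+D\zeta)]$ and whose denominator equals $-[d_\zeta I, y(1+D\zeta)]$ (the latter identification coming from (\ref{AQ2})); both are expressed as integrals over $[0,1]$ involving $\eta^{-p}w(\Ga(\zeta,\eta,x),0)$ via the representations (\ref{Y2}), (\ref{AL2}), (\ref{AP2}). The key limit to establish is therefore
\begin{equation*}
\eta^{-p}\, w(\Ga(\zeta(f(x)/\al_0),\eta,x),0) \ \ra \ \frac{A}{(f(x)+\al_0\zeta(f(x)/\al_0))^p} \ \text{ as } \eta\ra 0,
\end{equation*}
uniformly in $x\in[0,1)$ and $\zeta\in S_{m,M}$, where $A\DEF\lim_{x\ra 1}w(x,0)/(1-x)^p$. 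Once this is in hand, the substitution $y=f(x)/\al_0$ identifies the resulting limiting numerator and denominator with those built from the functional $I$ of (\ref{L2}), (\ref{M2}); the overall prefactor $A/K_p$ cancels in the ratio, yielding exactly $\rho_p(\zeta(\cdot))$.

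The displayed limit has two ingredients. First, from (\ref{W2}) and (\ref{X2}), $1-\Ga(\zeta,\eta,x)=\eta(f(x)+\al_0\zeta)^{-1}(1+o(1))$ as $\eta\ra 0$, uniformly in $x\in[0,1)$ and $\zeta\in S_{m,M}$. Second, the hypothesis (\ref{H1}) combined with (\ref{AD2}) and $w=-\pa h/\pa x$ forces $w(x,0)=A(1-x)^p(1+o(1))$ as $x\ra 1$ for a unique positive constant $A$; under the Hölder hypothesis of (\ref{BF2}), estimate (\ref{AH2}) upgrades this to $w(x,0)=A(1-x)^p(1+O((1-x)^\ga))$, which yields the $O(\eta^\ga)$ rate in the displayed limit. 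Uniformity in $\zeta$ is automatic once $\zeta$ is bounded, since $\zeta$ enters only through the bounded shift $\al_0\zeta$ in the argument of $w$, and the bound $\|\zeta\|_{1,\infty}\le M$ confines this shift to a fixed range.

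To pass from the pointwise limit to convergence of the full integrals, one applies dominated convergence: the proof of Lemma 2.1, specifically the bound (\ref{AG2}), supplies a $\zeta$-uniform and $\eta$-uniform integrable majorant for $\eta^{-p}w(\Ga,0)$. The auxiliary factor $g(\Ga)$ appearing in (\ref{AL2}), (\ref{AP2}) tends uniformly to $p$ by (\ref{AN2}) (with $O(\eta^\ga)$ rate under the Hölder hypothesis). Both numerator and denominator of (\ref{T2}) therefore converge uniformly on $S_{m,M}$ to their $\rho_p$ analogues. By (\ref{AW2}) of Lemma 2.2 combined with the constraint $\inf[1+D\zeta]\ge m$, the denominator is bounded below by a positive constant depending only on $m,M$; the ratio consequently converges uniformly and the Hölder rate passes through intact, giving (\ref{BF2}).

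The principal obstacle is this uniformity in $\zeta\in S_{m,M}$. The asymptotic analysis of $w(\Ga,0)$ near $x=1$ is delicate because $w(\Ga,0)$ itself is of size $\eta^p$ there, so a non-uniform correction to the expansion of $\beta(\cdot,0)$ at $x=1$ could in principle survive in the limiting ratio; continuity (respectively Hölder continuity) of $\beta(\cdot,0)$ at $x=1$ is precisely what controls this error, and the two constraints defining $S_{m,M}$ play complementary roles: $\|\zeta\|_{1,\infty}\le M$ keeps the $\zeta$-shift in $\Ga$ bounded, while $\inf[1+D\zeta]\ge m$ prevents the denominator of (\ref{T2}) from collapsing.
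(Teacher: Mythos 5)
Your proposal contains a genuine gap in the argument for the limit (\ref{BE2}), though the Hölder case (\ref{BF2}) is handled correctly.

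The problematic step is the claim that ``the hypothesis (\ref{H1}) combined with (\ref{AD2}) and $w=-\pa h/\pa x$ forces $w(x,0)=A(1-x)^p(1+o(1))$ as $x\ra 1$ for a unique positive constant $A$.'' This is false under mere continuity of $\beta(\cdot,0)$ at $x=1$. From (\ref{AD2}), the condition $\beta(x,0)\ra\beta_0=p/(p+1)$ only implies that $\int_0^x dx'\big/\int_{x'}^1[1-\beta(x'',0)]\,dx''$ equals $(p+1)\log(1/(1-x))$ plus a correction whose derivative is $o(1/(1-x'))$; that correction need not converge as $x\ra 1$ (take, for instance, $\beta(x,0)-\beta_0\sim c/\log(1/(1-x))$, which gives a $\log\log$ divergence). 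Consequently $w(x,0)=L(1/(1-x))\,(1-x)^p$ where $L$ is merely slowly varying, not convergent, and your ``unique positive constant $A$'' does not exist. Your key limit $\eta^{-p}w(\Ga(\zeta,\eta,x),0)\to A\,(f(x)+\al_0\zeta)^{-p}$ therefore fails. Under the Hölder hypothesis the correction integral does converge with rate $O((1-x)^q)$ and your argument is sound, so (\ref{BF2}) is fine, but (\ref{BE2}) is the genuinely more delicate part of this lemma.

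The paper sidesteps this precisely by never extracting the constant $A$: instead of normalizing by $\eta^{-p}$ alone, it divides the integrand at $x$ by its value at $x=0$, forming $\al(\zeta(\cdot),\eta,x)=w(\Ga(\zeta_x,\eta,x),0)/w(\Ga(\zeta_0,\eta,0),0)$ in (\ref{BH2}), and uses (\ref{BI2}) to write this ratio purely in terms of $\beta(\cdot,0)$. Since both arguments of $w$ tend to $1$ at comparable rates $\propto\eta$, the slowly varying factor in $w(x,0)$ cancels, and the pointwise limit (\ref{BJ2}) holds under continuity alone; the corresponding cancellation then happens identically in numerator (\ref{BO2}) and denominator of (\ref{T2}) before one ever takes a quotient. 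You partially anticipate the issue when you observe that $A/K_p$ ``cancels in the ratio,'' but you still pass through the nonexistent limit $A$. To repair your argument you would replace $A$ by the $\eta$-dependent quantity $\eta^{-p}w(\Ga(\zeta(\ve_0),\eta,0),0)\,(\ve_0+\zeta(\ve_0))^p$ and observe that this common factor cancels between numerator and denominator before taking $\eta\ra 0$ — which is essentially the paper's computation.
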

\begin{proof}
We already saw in Lemma 2.1 that the function $I(\zeta(\cdot),\eta)$ can diverge as $\eta\ra 0$. We can however obtain a finite limit by introducing a suitable normalization. Thus we  show that
\begin{multline} \label{BG2}
\lim_{\eta\ra 0}\sup\Bigg\{\Bigg|\frac{\eta^p}{w(\Ga(\zeta(f(0)/\al_0),\eta,0),0)}I(\zeta(\cdot),\eta) \\
-\left[\ve_0+\zeta(\ve_0)\right]^p \frac{\al_0^pI_p(\zeta(\cdot))}{K_p}\Bigg|  \ 
 : \ 0\le \zeta(\cdot)\le M \Bigg\} \ = \ 0 \ ,
\end{multline}
where $I_p(\zeta(\cdot))$ is the function $I(\zeta(\cdot))$ of (\ref{L2}), (\ref{M2}).  To see (\ref{BG2}) we observe from (\ref{Y2}) that
\begin{multline} \label{BH2}
\frac{\eta^p}{w(\Ga(\zeta(f(0)/\al_0),\eta,0),0)}I(\zeta(\cdot),\eta) \ = \\ 
\int_0^1 \frac{w(\Ga(\zeta(f(x)/\al_0),\eta,x),0)}{w(\Ga(\zeta(f(0)/\al_0),\eta,0),0)} \ dx \ = \ 
\int_0^1 \al(\zeta(\cdot),\eta,x) \ dx \ .
\end{multline}
From (\ref{AC2}), (\ref{AD2}) we have that
\begin{multline} \label{BI2}
 \al(\zeta(\cdot),\eta,x) \ = \
  \exp\left[-\int_{\Ga(\zeta(f(0)/\al_0),\eta,0)}^{\Ga(\zeta(f(x)/\al_0),\eta,x)} dx'\Big/\int_{x'}^1[1-\beta(x'',0)] \ dx'' \right]  \times \\ 
  \frac{\int_{\Ga(\zeta(f(0)/\al_0),\eta,0}^1 [1-\beta(x',0)] \ dx'}{\int_{\Ga(\zeta(f(x)/\al_0),\eta,x)}^1 [1-\beta(x',0)] \ dx'} \ .
\end{multline}
From (\ref{H1}), (\ref{X2}), (\ref{BI2}) we see that
\be \label{BJ2}
\lim_{\eta\ra 0}  \al(\zeta(\cdot),\eta,x) \ = \ \left(\frac{f(0)+\al_0\zeta(f(0)/\al_0)}{f(x)+\al_0\zeta(f(x)/\al_0)}\right)^p \ .
\ee
Note that for any $M>0$ and $\del$ with $0<\del<1$, the limit in (\ref{BJ2}) is uniform for $[\zeta(\cdot),x]$ in the set $\{[\zeta(\cdot),x]: \ 0\le \zeta(\cdot)\le M, \ 0\le x\le 1-\del\}$.  We also have that for any $\ga$ with $0<\ga<p$ there exists a constant $C_{M,\ga}$  such that
\be \label{BK2}
0 \ \le  \al(\zeta(\cdot),\eta,x) \ \le \ C_{M,\ga}(1-x)^{p-\ga} \quad {\rm for \ } 0<\eta\le 1, \ 0\le\zeta(\cdot)\le M \ .
\ee
We conclude from (\ref{L2}), (\ref{M2}), (\ref{BH2}), (\ref{BJ2}), (\ref{BK2}) that
\begin{multline} \label{BL2}
\lim_{\eta\ra 0}\frac{\eta^p}{w(\Ga(\zeta(f(0)/\al_0),\eta,0),0)}I(\zeta(\cdot),\eta) \ = \\ 
\left[f(0)/\al_0+\zeta(f(0)/\al_0)\right]^p\int_0^1\frac{dx}{\left[f(x)/\al_0+\zeta(f(x)/\al_0)\right]^p} \ = \\ 
\left[f(0)/\al_0+\zeta(f(0)/\al_0)\right]^p\int_{\ve_0}^\infty \frac{a(y)}{[b(y)+\zeta(y)]^p} \ dy  \ = \ 
\left[\ve_0+\zeta(\ve_0)\right]^p \frac{\al_0^pI_p(\zeta(\cdot))}{K_p} \ .
\end{multline} 
The limit in (\ref{BL2}) is uniform for $\zeta(\cdot)$ in the set $0\le \zeta(\cdot)\le M$, whence (\ref{BG2}) follows. 

To show convergence of the terms involving $d_\zeta I$ in (\ref{T2}) we observe from (\ref{AL2}) that
 \begin{multline} \label{BM2}
  \frac{\eta^p}{w(\Ga(\zeta(f(0)/\al_0),\eta,0),0)}d_\zeta I(\zeta(\cdot),\eta;f(x)/\al_0) \ = \\
   -\frac{\al_0g(\Ga(\zeta(f(x)/\al_0),\eta,x))}{f(x)+\al_0\zeta(f(x)/\al_0)}\al(\zeta(\cdot),\eta,x)  a(f(x)/\al_0) \ , \quad 0\le x<1 \ .
  \end{multline}
  We conclude from (\ref{AN2}), (\ref{BJ2}), (\ref{BM2}) that
  \begin{multline} \label{BN2}
  \lim_{\eta\ra 0} 
    \frac{\eta^p}{w(\Ga(\zeta(f(0)/\al_0),\eta,0),0)}d_\zeta I(\zeta(\cdot),\eta;f(x)/\al_0) \ =   \\
     -\frac{p\left[f(0)/\al_0+\zeta(f(0)/\al_0)\right]^p}{\left[f(x)/\al_0+\zeta(f(x)/\al_0)\right]^{p+1}}a(f(x)/\al_0) \ = \\
      \left[\ve_0+\zeta(\ve_0)\right]^p \frac{\al_0^pd_\zeta I_p(\zeta(\cdot));f(x)/\al_0)}{K_p} \ .
  \end{multline}
  Let $G:[\ve_0,\infty)\ra\R$ be a $C^1$ function with bounded first derivative. It follows from (\ref{BN2}) that
  \begin{multline} \label{BO2}
  \lim_{\eta\ra 0}  \frac{\eta^p}{w(\Ga(\zeta(f(0)/\al_0),\eta,0),0)}[d_\zeta I(\zeta(\cdot),\eta), G(1+D\zeta)] \\ = \
  \left[\ve_0+\zeta(\ve_0)\right]^p \frac{\al_0^p}{K_p}  [d_\zeta I_p(\zeta(\cdot)), G(1+D\zeta)]  \ .
  \end{multline}
  The limit in (\ref{BO2}) is uniform for non-negative functions $\zeta(\cdot)$ satisfying  $\|\zeta(\cdot)\|_{1,\infty}\le M$.  Now (\ref{BE2}) follows from the lower bound in (\ref{AW2}) and (\ref{BG2}), (\ref{BO2}) upon using the formula (\ref{AQ2}) for the denominator of (\ref{T2}). 
  
  To prove (\ref{BF2}) we observe that if the function $x\ra\beta(x,0)$ is H\"{o}lder continuous at  $x=1$ of order $\ga>0$ then there is a constant $C_1>0$ such that
 \begin{multline} \label{BP2}
  \frac{1}{[1+C_1\eta^\ga]}\left(\frac{f(0)+\al_0\zeta(f(0)/\al_0)}{f(x)+\al_0\zeta(f(x)/\al_0)}\right)^p \    \le \ \al(\zeta(\cdot),\eta,x) \\
   \le \ [1+C_1\eta^\ga]\left(\frac{f(0)+\al_0\zeta(f(0)/\al_0)}{f(x)+\al_0\zeta(f(x)/\al_0)}\right)^p
   \quad {\rm for \ } \zeta(\cdot)\ge 0, \ 0<\eta\le 1, \ 0\le x<1 \ .
  \end{multline}
  By carrying through the argument for the proof of (\ref{BE2}) and using (\ref{BP2})  we obtain (\ref{BF2}). 
 \end{proof}
 We can use the method in the proof of Lemma 2.4 to improve on the result of Lemma 2.1. 
 \begin{lem}
Assume $\beta(\cdot,0)$ is continuous and satisfies (\ref{H1}), (\ref{U2}). Then for any $M,\ga>0$ there exist positive constants  $C_{M,\ga},c_{M,\ga}$, depending only on $M,\ga$, such that
\be \label{BQ2}
c_{M,\ga}\left(\frac{\eta_1}{\eta_2}\right)^{\ga} \ \le \  \frac{I(\zeta_1(\cdot),\eta_1)}{I(\zeta_2(\cdot),\eta_2)} \ \le \ C_{M,\ga} \left(\frac{\eta_2}{\eta_1}\right)^{\ga} \quad {\rm for \ } 
0\le\zeta_1(\cdot), \ \zeta_2(\cdot)\le M, \ 0<\eta_1\le \eta_2\le 1\  .
\ee
\end{lem}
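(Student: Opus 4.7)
The strategy is to refine the argument of Lemma 2.1 by bounding the ratio $I(\zeta_1(\cdot),\eta_1)/I(\zeta_2(\cdot),\eta_2)$ directly, rather than estimating numerator and denominator independently. The starting point is the factorization already used in the proof of Lemma 2.4,
\[
I(\zeta(\cdot),\eta) \;=\; \eta^{-p}\,w(\Ga(\zeta(\ve_0),\eta,0),0)\,J(\zeta(\cdot),\eta), \qquad J(\zeta(\cdot),\eta) \;=\; \int_0^1 \al(\zeta(\cdot),\eta,x)\,dx,
\]
which isolates all of the potentially singular $\eta$-dependence into the single factor $w(\Ga,0)$. The proof then reduces to uniform estimates on each of the three factors on the right.

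The first step is to show that $J(\zeta(\cdot),\eta)$ is bounded above and below by positive constants depending only on $M$. The upper bound follows immediately from (\ref{BK2}). For the lower bound, I repeat the uniform convergence argument behind Lemma 2.4: the pointwise limit (\ref{BJ2}) holds uniformly for $0\le\zeta(\cdot)\le M$ and $0\le x\le 1-\del$, and the dominant bound (\ref{BK2}) controls the tail, so $J(\zeta(\cdot),\eta)\to [\ve_0+\zeta(\ve_0)]^p\al_0^p I_p(\zeta(\cdot))/K_p$ uniformly for $0\le\zeta(\cdot)\le M$ as $\eta\to 0$. Since this limit is bounded below by a positive $M$-dependent constant, the same lower bound holds on $J$ for all $\eta$ below some threshold $\eta_*$; and for $\eta\in[\eta_*,1]$ a continuity-and-compactness argument on $\al$ (as a function of $\zeta(\ve_0),\,\zeta(f(x)/\al_0),\,\eta,\,x$ on $[0,M]^2\times[\eta_*,1]\times[0,1-\del]$) supplies a positive lower bound on $\int_0^{1-\del}\al\,dx$.

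The second step is to estimate $1-\Ga(\zeta(\ve_0),\eta,0)$. From (\ref{C2}), $(1-x)f(x)\to 1$ as $x\to 1$, so the defining identity $f(\Ga)=(f(0)+\al_0\zeta(\ve_0))/\eta$ gives the uniform comparability $c_M\eta\le 1-\Ga(\zeta(\ve_0),\eta,0)\le C_M\eta$ for $0\le\zeta(\cdot)\le M,\ 0<\eta\le 1$. The third step is to estimate the ratio $w(\Ga_1,0)/w(\Ga_2,0)$ where $\Ga_i=\Ga(\zeta_i(\ve_0),\eta_i,0)$. By (\ref{AI2}), $\log w(\cdot,0)$ has derivative $-\beta(x,0)/\int_x^1[1-\beta(x',0)]\,dx'$, which by (\ref{AF2}) lies in $[(p-\ga)/(1-x),(p+\ga)/(1-x)]$ for $x$ sufficiently close to $1$, and is bounded on compact subsets of $[0,1)$. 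Integrating from $\min(\Ga_1,\Ga_2)$ to $\max(\Ga_1,\Ga_2)$ and absorbing any contribution away from $x=1$ into multiplicative constants then yields
\[
c_{M,\ga}\min\!\Big(R^{p-\ga},R^{p+\ga}\Big)\;\le\;\frac{w(\Ga_1,0)}{w(\Ga_2,0)}\;\le\;C_{M,\ga}\max\!\Big(R^{p-\ga},R^{p+\ga}\Big),\qquad R=\frac{1-\Ga_1}{1-\Ga_2}.
\]

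Combining: Step 2 gives $R=\Theta\,\eta_1/\eta_2$ for some $\Theta\in[c_M,C_M]$, and since $\eta_1/\eta_2\le 1$ one checks directly that $\min(R^{p-\ga},R^{p+\ga})\ge c_{M,\ga}(\eta_1/\eta_2)^{p+\ga}$ and $\max(R^{p-\ga},R^{p+\ga})\le C_{M,\ga}(\eta_1/\eta_2)^{p-\ga}$. Substituting into
\[
\frac{I(\zeta_1(\cdot),\eta_1)}{I(\zeta_2(\cdot),\eta_2)} \;=\;\left(\frac{\eta_2}{\eta_1}\right)^{\!p}\frac{w(\Ga_1,0)}{w(\Ga_2,0)}\,\frac{J(\zeta_1(\cdot),\eta_1)}{J(\zeta_2(\cdot),\eta_2)}
\]
and using the uniform $J$-bound from Step 1 yields the required inequality $c_{M,\ga}(\eta_1/\eta_2)^\ga\le I_1/I_2\le C_{M,\ga}(\eta_2/\eta_1)^\ga$. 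The main technical obstacle is the \emph{bulk} contribution in Step 3: (\ref{AF2}) only controls $\beta/\int[1-\beta]$ close to $x=1$, so when some $\Ga_i$ lies outside the corresponding neighborhood one must separately handle the contribution of $\log w$ on $[0,1-\ve_\ga]$, using continuity and strict positivity of $w(\cdot,0)$ on any compact subset of $[0,1)$ to absorb it into the $M,\ga$-dependent constants.
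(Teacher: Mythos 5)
Your proof is correct and follows essentially the same route as the paper's: you start from the factorization $I(\zeta(\cdot),\eta)=\eta^{-p}w(\Ga(\zeta(\ve_0),\eta,0),0)\int_0^1\al\,dx$ from (\ref{BH2}) and bound the ratio of the $w$-factors by $(\eta_1/\eta_2)^{p\pm\ga}$ via the log-derivative (\ref{AI2}) and (\ref{AF2}), exactly as the paper does through the ``representation analogous to (\ref{BI2})'' that yields (\ref{BS2}). The only (welcome) difference is that you make explicit the two-sided bound on the $\al$-integrals, which the paper leaves tacit.
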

\begin{proof}
From (\ref{BH2}) we have that
\be \label{BR2}
\frac{I(\zeta_1(\cdot),\eta_1)}{I(\zeta_2(\cdot),\eta_2)}  \ = \ 
\left(\frac{\eta_2}{\eta_1}\right)^p\frac{w(\Ga(\zeta_1(f(0)/\al_0),\eta_1,0),0)}{w(\Ga(\zeta_2(f(0)/\al_0),\eta_2,0),0)}\frac{\int_0^1\al(\zeta_1(\cdot),\eta_1,x) \ dx}{\int_0^1\al(\zeta_2(\cdot),\eta_2,x) \ dx} \ .
\ee 
Using the representation analogous to (\ref{BI2}) for $\al(\zeta(\cdot),\eta,x)$, we see that 
\be \label{BS2}
c\left(\frac{\eta_1}{\eta_2}\right)^{p+\ga} \ \le \ \frac{w(\Ga(\zeta_1(f(0)/\al_0),\eta_1,0),0)}{w(\Ga(\zeta_2(f(0)/\al_0),\eta_2,0),0)} \ \le \ C\left(\frac{\eta_1}{\eta_2}\right)^{p-\ga} \ ,
\ee
where $c,C>0$ depend only on $M,\ga>0$. 
\end{proof}

 \vspace{.2in}
 
 \section{Proof of Theorem 1.2}
 We shall prove Theorem 1.2 using the setup developed in $\S2$. In order to carry this out we shall need to assume that the function $u(\cdot)$ of (\ref{B2}) satisfies $\inf u(\cdot)\ge 1$ and also that $\beta(\cdot,0)$ satisfies (\ref{U2}) as well as  (\ref{H1}). The lower bound on $\inf u(\cdot)$ and the requirement that (\ref{U2}) holds are not actually additional constraints beyond (\ref{H1}) on the initial data.  We can see this from the arguments in $\S2$ of \cite{cn}. In fact there exists $T_0>0$ such that $\inf_{t\ge T_0}u(t)\ge 1$ and $\beta(\cdot,T_0)$ satisfies (\ref{U2}).  Hence the formula (\ref{P2}) for $I(\xi(\cdot,t),\eta(t))$ with $\eta(t)=u(t)^{-1}$  is well-defined provided $t\ge T_0$ and $\xi(\cdot,t)$ is a non-negative function. 
 
 Let $\eta:[0,\infty)\ra(0,1]$ be a continuous strictly positive function. We study solutions to (\ref{L1}) with $\rho(\xi(\cdot,t))=\rho(\xi(\cdot,t),\eta(t)), \ t\ge 0,$ where $\rho(\zeta(\cdot),\eta)$ is given by the formula (\ref{T2}) and $I(\zeta(\cdot),\eta)$ by (\ref{P2}). We first consider the linear PDE
 \be \label{A3}
 \frac{\pa \xi(y,t)}{\pa t} -h(y) -h(y)\frac{\pa\xi(y,t)}{\pa y} +\rho(t)\left[\xi(y,t)-y\frac{\pa \xi(y,t)}{\pa y}\right] \ = \ 0, \quad y>\ve_0, \ t>0,
  \ee
 where $\rho:[0,\infty)\ra\R$ is assumed to be a known function.  The initial value problem for (\ref{A3}) can be solved globally in time by the method of characteristics provided
 \be \label{B3}
 \inf \rho(\cdot) \ \ge \ -\frac{h(\ve_0)}{\ve_0} \ .
 \ee
 If (\ref{B3}) holds the characteristic $y(\cdot)$, defined as the solution to the terminal value problem
\be \label{C3}
\frac{dy(s)}{ds} \ = \ -h(y(s))-\rho(s)y(s) \ , \quad 0\le s<t, \ y(t)=y \ ,
\ee
has the property that for $y\ge \ve_0$ then $y(s)\ge \ve_0, \ 0\le s< t$.  The solution to (\ref{A3}) with given initial data $\xi(y,0), \ y\ge \ve_0,$ is then expressed in terms of the characteristic (\ref{C3})  by
\be \label{D3}
\xi(y,t) \ = \ \exp\left[-\int_0^t \rho(s) \ ds\right]\xi(y(0),0)+\int_0^t ds \  h(y(s))\exp\left[-\int_s^t \rho(s') \ ds'\right]  \ . 
\ee
It follows from (\ref{M1}), (\ref{D3}) that if the initial data $\xi(\cdot,0)$ is non-negative, then the function $\xi(\cdot,t)$ is non-negative for all $t>0$.

We prove a local existence and uniqueness theorem for the initial value problem for (\ref{L1}), where the function $t\ra\rho(\xi(\cdot,t))$ in (\ref{L1}) is replaced by the function $t\ra\rho(\xi(\cdot,t),\eta(t))$,  with $\rho(\zeta(\cdot),\eta)$  given by (\ref{T2}). We shall follow the same line of argument as in Lemma 2.1 of \cite{cd}. Thus following (1.9) of \cite{cd}, we define for $m=1,2,\dots,$  norms on $C^m$ functions $\zeta:[\ve_0,\infty)\ra\R$  by
\be \label{E3}
\|\zeta(\cdot)\|_{m,\infty} \ = \ \sup_{\ve_0\le y<\infty}\sum_{k=0}^m y^k\left|\frac{d^k\zeta(y)}{dy^k}\right|  \ .
\ee
\begin{lem}
Let $h:[\ve_0,\infty)\ra\R$ be a  continuous positive function, which is $C^1$ on $(\ve_0,\infty)$, and satisfies $\|h(\cdot)\|_\infty<\infty, \  \int_{\ve_0}^{\ve_0+1}|h'(y)| \ dy+\sup_{y\ge\ve_0+1}y|h'(y)|<\infty, \  \sup_{y\ge\ve_0} [h(y)/y]=h(\ve_0)/\ve_0$. Assume $\beta(\cdot,0)$ is continuous and satisfies (\ref{H1}), (\ref{U2}),  $I(\zeta(\cdot),\eta)$ is given by (\ref{Y2}), and $\eta:[0,\infty)\ra(0,1]$ is a continuous strictly positive function.  Let $\rho(\zeta(\cdot),\eta)$ be defined by (\ref{T2}), and consider the initial value problem for (\ref{A3}) with $\rho(t)=\rho(\xi(\cdot,t),\eta(t))$.  Given $C^1$ non-negative initial data $\xi(y,0), \ y\ge \ve_0$, such that $\|\xi(\cdot,0)\|_{1,\infty}<\infty$ and $\inf D\xi(\cdot,0)\ge -1$, there exists for some $T>0$ a unique non-negative solution $\xi(y,t), \ y\ge\ve_0, \ 0\le t\le T,$ such that $\|\xi(\cdot,t)\|_{1,\infty}<\infty$ and $\inf D\xi(\cdot,t)\ge -1, \ 0\le t\le T$.
\end{lem}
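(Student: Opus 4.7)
The argument proceeds by a Picard fixed-point construction, closely paralleling Lemma 2.1 of \cite{cd}. Fix constants $M_0\ge\|\xi(\cdot,0)\|_{1,\infty}$ and $m_0\in(0,1+\inf D\xi(\cdot,0)]$, and for $T>0$ let $X_T$ denote the complete metric space of continuous maps $\tilde\xi:[\ve_0,\infty)\times[0,T]\to[0,\infty)$ that are $C^1$ in $y$, with $\|\tilde\xi(\cdot,t)\|_{1,\infty}\le 2M_0$ and $1+\inf_y D\tilde\xi(y,t)\ge m_0/2$ for all $t\in[0,T]$, equipped with the $L^\infty$-in-space, uniform-in-time metric. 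Define $\Phi:X_T\to X_T$ by letting $\Phi(\tilde\xi)(\cdot,t)$ be the solution of the linear problem (\ref{A3}) with time-dependent coefficient $\rho(t)=\rho(\tilde\xi(\cdot,t),\eta(t))$ from (\ref{T2}) and initial datum $\xi(\cdot,0)$, produced explicitly via the characteristic formula (\ref{D3}).

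The first step is to verify $\Phi$ maps $X_T$ into itself for $T$ small. Lemma 2.2 (using $\eta(t)\ge\eta_{\min}>0$ on $[0,T]$ by continuity of $\eta(\cdot)$) yields uniform bounds $-h(\ve_0)/\ve_0\le\rho(\tilde\xi(\cdot,t),\eta(t))\le C_{m_0,M_0}$, so (\ref{B3}) holds and characteristics are globally well-defined. Non-negativity of $\Phi(\tilde\xi)$ is immediate from (\ref{D3}) since $h\ge0$ and $\xi(\cdot,0)\ge0$. Differentiating (\ref{A3}) in $y$ produces the transport equation
\[
\partial_t D\xi \ - \ [h(y)+\rho(t)y]\,\partial_y D\xi \ = \ h'(y)(1+D\xi),
\]
so along the characteristic $y(\cdot)$ of (\ref{C3}) one obtains
\[
(1+D\xi)(y(s),s) \ = \ (1+D\xi(y(0),0))\exp\!\left[\int_0^s h'(y(r))\,dr\right].
\]
The hypotheses on $h'$ give $\bigl|\int_0^s h'(y(r))\,dr\bigr|\le Cs$ uniformly over characteristics, so the constraint $1+\inf D\xi\ge m_0/2$ is preserved for $T$ small; analogous ODE-type estimates on $\xi$ and $yD\xi$ derived from (\ref{D3}) maintain $\|\Phi(\tilde\xi)(\cdot,t)\|_{1,\infty}\le 2M_0$.

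For the contraction, given $\tilde\xi_1,\tilde\xi_2\in X_T$, the Lipschitz estimate (\ref{BA2}) of Lemma 2.3 (applied with $G\equiv1$ and $G=h$ for the numerator of $\rho$, together with the analogous computation for the denominator of (\ref{T2}) via (\ref{AP2})--(\ref{AQ2})), combined with the $\eta$-uniform lower bound on $I(\cdot,\eta)$ from Lemma 2.1, yields
\[
\bigl|\rho(\tilde\xi_1(\cdot,t),\eta(t))-\rho(\tilde\xi_2(\cdot,t),\eta(t))\bigr| \ \le \ C\,\|\tilde\xi_1(\cdot,t)-\tilde\xi_2(\cdot,t)\|_\infty,
\]
with $C=C(m_0,M_0,\eta_{\min})$. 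Inserting this into the difference of the two characteristic representations (\ref{D3}) and estimating the shift between the respective characteristics by Gr\"onwall produces $\|\Phi(\tilde\xi_1)-\Phi(\tilde\xi_2)\|_{X_T}\le C'T\,\|\tilde\xi_1-\tilde\xi_2\|_{X_T}$, a strict contraction for $T$ small enough. Banach's theorem then furnishes a unique fixed point, which is the desired solution.

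The principal technical obstacle is the borderline case $\inf D\xi(\cdot,0)=-1$, where Lemma 2.2's upper bound on $\rho$ is not directly available. This is handled by first proving the lemma under the strict condition $\inf D\xi(\cdot,0)>-1$ and then approximating: the identity for $1+D\xi$ along characteristics shows that $\{1+D\xi=0\}$ is flow-invariant, so solutions with initial data satisfying $\inf D\xi(\cdot,0)\ge-1+\epsilon$ admit uniform-in-$\epsilon$ bounds on compact sets away from this invariant set and converge as $\epsilon\to0$ to a solution with the weak constraint preserved.
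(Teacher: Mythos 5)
Your proposal takes a genuinely different route from the paper's. You run a Picard iteration on the space $X_T$ of trajectories $\tilde\xi(\cdot,\cdot)$, whereas the paper instead iterates on the scalar function $\rho:[0,T]\to\R$: given $\rho(\cdot)$ near $\rho_0=\rho(\xi(\cdot,0),\eta(0))$, one solves the \emph{linear} problem (\ref{A3}) via the characteristic formula (\ref{D3}) to produce $\xi(\cdot,\cdot)$, then sets $K\rho(t)=\rho(\xi(\cdot,t),\eta(t))$ and shows $K$ is a contraction of the ball $\mathcal{E}_{\ve,T}=\{\rho\in C([0,T]):\rho(0)=\rho_0,\ \|\rho-\rho_0\|_\infty\le\ve\}$. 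Working with the scalar $\rho$ rather than the full trajectory simplifies matters: $\mathcal{E}_{\ve,T}$ is transparently complete in the sup metric, and the Lipschitz estimates (\ref{S3}), (\ref{BA2}) together with the characteristic estimates (\ref{K3}), (\ref{M3}) deliver the contraction directly.

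The real gap is your treatment of the borderline case $\inf D\xi(\cdot,0)=-1$, where your parameter $m_0\in(0,1+\inf D\xi(\cdot,0)\,]$ cannot be chosen and your space $X_T$ is empty. The proposed approximation argument is not carried out and, as stated, will not close: the Lipschitz constant for $\rho$ on $X_T$ is obtained from the lower bound $1+\inf D\tilde\xi\ge m_0/2$ (which controls the denominator of (\ref{T2}) via (\ref{AW2})), so the contraction constant and hence the existence time $T(\epsilon)$ degrade as $\epsilon\to0$; nothing you have written shows $T(\epsilon)$ stays bounded below. The paper sidesteps this entirely. The key observation is that $1+D\xi(\cdot,0)\ge0$ cannot vanish identically (otherwise $\xi(\cdot,0)$ is affine decreasing, contradicting nonnegativity on $[\ve_0,\infty)$), so the denominator $-[d_\zeta I(\xi(\cdot,0),\eta(0)),\,y(1+D\xi(\cdot,0))]$ is strictly positive; and (\ref{AS2}) --- which only requires an \emph{upper} bound $\sup D\xi(\cdot,0)\le M$, a consequence of $\|\xi(\cdot,0)\|_{1,\infty}<\infty$ --- already gives $\rho_0>-h(\ve_0)/\ve_0$ without any strict lower bound on $1+D\xi(\cdot,0)$. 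For small $T$ the continuity estimates (\ref{I3}), (\ref{K3}), (\ref{L3}) then keep the denominator of $\rho(\xi(\cdot,t),\eta(t))$ close to its (positive) initial value, which is all the contraction needs. If you want to retain your trajectory-space setup, replace the membership constraint $1+\inf D\tilde\xi\ge m_0/2$ in $X_T$ with $\|\tilde\xi(\cdot,t)-\xi(\cdot,0)\|_\infty\le\delta$ for a suitable small $\delta$, and derive the denominator lower bound from this closeness rather than from $m_0$; that restores uniformity at the borderline.
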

\begin{proof}
Since $\xi(\cdot,0)$ is a non-negative function we have that $1+D\xi(\cdot,0)\ge0$, but is not identically zero. Hence the denominator $[d_\zeta I(\xi(\cdot,0),\eta(0)),y(1+D\xi(\cdot,0))]$ in the formula (\ref{T2}) is positive. Since $\|\xi(\cdot,0)\|_{1,\infty}<\infty$ we have from (\ref{AS2}) of Lemma 2.2  that $\rho_0=\rho(\xi(\cdot,0),\eta(0))> -h(\ve_0)/\ve_0$. For $T>0$ and $0<\ve<[\rho_0+h(\ve_0)/\ve_0]/2$,  let $\mathcal{E}_{\ve,T}$ be the metric space of continuous functions $\rho:[0,T]\ra\R$ such that  $\rho(0)=\rho_0$ and  $\|\rho(\cdot)-\rho_0\|_\infty\le\ve$. If $\rho(\cdot)\in\mathcal{E}_{\ve,T}$ then $\inf\rho(\cdot)\ge -h(\ve_0)/\ve_0+\ve$ and the function $\xi(\cdot,t)$ is well defined by (\ref{D3}).  We define the function $K\rho:[0,T]\ra\R$ by
\be \label{F3}
K\rho(t) \ = \ \rho(\xi(\cdot,t),\eta(t)) \ , \ \ 0\le t\le T, \quad  {\rm where \ } \xi(\cdot,t) \ {\rm is \  given \  by \  (\ref{D3}).}
\ee
Evidently Lemma 2.2 implies that $\inf K\rho(\cdot)\ge -h(\ve_0)/\ve_0$, and fixed points of $K$ correspond to solutions $\xi(\cdot,\cdot)$ of (\ref{A3}) with $\rho(t)=\rho(\xi(\cdot,t),\eta(t))$.   

We first show that $K$ maps $\mathcal{E}_{\ve,T}$ to itself provided $\ve,T>0$ are sufficiently small. To do this we use the representation
\begin{multline} \label{G3}
I(\xi(\cdot,t),\eta(t))-I(\xi(\cdot,0),\eta(0)) \ = \\
 \int_0^1 d\la \ \frac{\pa I(\xi(\cdot,t),\la\eta(t)+(1-\la)\eta(0))}{\pa \eta}[\eta(t)-\eta(0)] + \\
 \int_0^1 d\la \  [d_\zeta I(\la\xi(\cdot,t)+(1-\la)\xi(\cdot,0),\eta(0)),\xi(\cdot,t)-\xi(\cdot,0)]  \ .
\end{multline}
We have from Lemma 2.1, (\ref{Y2}), (\ref{AP2}) there is a constant $C_1$ such that
\be \label{H3}
\left|\eta\frac{\pa I(\zeta(\cdot),\eta)}{\pa\eta}\right| \ \le \ C_1I(\zeta(\cdot),\eta)) \ \le \ C_1C_\ga\eta^{-\ga} \ .
\ee
It follows then from (\ref{AV2}), (\ref{G3}), (\ref{H3}) there is for any $\ga>0$ a constant $C_\ga$ such that
\begin{multline} \label{I3}
\left|I(\xi(\cdot,t),\eta(t))-I(\xi(\cdot,0),\eta(0))\right| \ \le \\
  \frac{C_\ga}{\min[\eta(0),\eta(t)]^{\ga}}\left[\frac{|\eta(t)-\eta(0)|}{\min[\eta(0),\eta(t)]}+\|\xi(\cdot,t)-\xi(\cdot,0)\|_\infty\right] \ .
\end{multline}
Since the function $\eta(\cdot)$ is continuous and $\eta(0)>0$ the first term on the RHS of (\ref{I3}) can be made arbitrarily small for $0\le t\le T$ by choosing $T>0$ sufficiently small.  
We estimate the second term on the RHS of (\ref{I3}) by using (\ref{D3}).  Thus from (\ref{C3}) we have that
\be \label{J3}
\xi(y,0)-\xi(y(0),0) \ = \ -\int_0^t ds \ [h(y(s))+\rho(s)y(s)]D\xi(y(s),0) \ ,
\ee
whence we conclude that for all $\rho(\cdot)$ such that $\|\rho(\cdot)-\rho_0\|_\infty\le\ve$ there is a constant $C_{1,\ve}$ such $|\xi(y,0)-\xi(y(0),0)|\le C_{1,\ve}t\|\xi(\cdot,0)\|_{1,\infty}$.
Now from (\ref{J3}) and the boundedness of $h(\cdot)$ it follows there is a constant $C_{2,\ve}$ such that
\be \label{K3}
\|\xi(\cdot,t)-\xi(\cdot,0)\|_\infty \ \le \  C_{2,\ve}t\left[\ \|\xi(\cdot,0)\|_{1,\infty}+1\right] \ , \quad 0\le t\le T\le 1 \ .
\ee
It follows from (\ref{K3}) that the  second term on the RHS of (\ref{I3}) can be made arbitrarily small for $0\le t\le T$ by choosing $T>0$ sufficiently small. In order to estimate $|K\rho(t)-\rho_0|$ we also need to estimate differences for the function $H_G$ of Lemma 2.3 when $G(y)=h(y)$ and $G(y)=y , y\ge\ve_0$. The inequalities (\ref{BA2}), (\ref{BB2}) hold for both of these functions (see especially the remark following Lemma 2.3).  We have therefore for any $\ga>0$ there exists a constant $C_\ga$ such that
\begin{multline} \label{L3}
\left|H_G(\xi(\cdot,t),\eta(t))-H_G(\xi(\cdot,0),\eta(0))\right| \ \le \\
  \frac{C_\ga}{\min[\eta(0),\eta(t)]^{\ga}}\left[\frac{|\eta(t)-\eta(0)|}{\min[\eta(0),\eta(t)]}+\|\xi(\cdot,t)-\xi(\cdot,0)\|_\infty\right] \ .
\end{multline}
From (\ref{I3}), (\ref{K3}), (\ref{L3}) we conclude for given $\ve>0$, there exists $T>0$ such that if $\rho(\cdot)\in\mathcal{E}_{\ve,T}$  then $K\rho(\cdot)\in\mathcal{E}_{\ve,T}$. 

We may make a similar argument to show that $K:\mathcal{E}_{\ve,T}\ra\mathcal{E}_{\ve,T}$ is a contraction mapping provided $T>0$ is sufficiently small. 
Let $\xi_j(\cdot,t), \ j=1,2, \ 0\le t\le T,$ denote the functions (\ref{D3}) with $\rho(\cdot)=\rho_j(\cdot)\in \mathcal{E}_{\ve,T}, \ j=1,2$. We shall show that
\be \label{M3}
\|\xi_1(\cdot,t)-\xi_2(\cdot,t)\|_\infty \ \le \  C_{3,\ve}t\|\rho_1(\cdot)-\rho_2(\cdot)\|_\infty\left[ \  \|\xi(\cdot,0)\|_{1,\infty}+1\right] \ , \quad 0\le t\le T,
\ee
for a constant $C_{3,\ve}$ depending only on $\rho_0,\ve,$ provided $T\le 1$. To see this first observe from (\ref{C3}) the identity
\be \label{N3}
y(s) \ = \ \exp\left[\int_s^t\rho(s') \ ds'\right]y+\int_s^t \exp\left[\int_s^{s'}\rho(s'') \ ds''\right]h(y(s')) \ ds' \ .
\ee
Since $h(\cdot)$ is non-negative and bounded there exist constants $C_\ve,c_\ve>0$ depending only on $\ve,\rho_0$ such that for $0<T\le 1$,
\be \label{O3}
c_\ve y \ \le y(s) \ \le \ C_\ve y \quad {\rm for \ } y\ge \ve_0, \ 0\le s\le t\le T, \  \rho(\cdot)\in \mathcal{E}_{\ve,T} \ .
\ee
 Next we estimate the difference of the characteristics $y_j(\cdot), \ j=1,2,$ which are the solutions to (\ref{C3}) corresponding to $\rho_j(\cdot), \ j=1,2$ respectively.   Setting $z(s)=y_1(s)-y_2(s), \ s\le t,$ we have from (\ref{C3}) that
\be \label{P3}
\frac{dz(s)}{ds} \ = \ -[g(s)+\rho_1(s)]z(s)-[\rho_1(s)-\rho_2(s)]y_2(s) \ , \quad 0\le s<t, \ z(t)=0 \ ,
\ee
where the function $g(\cdot)$ is given by
\be \label{Q3}
g(s) \ = \ \int_0^1 d\la \  h'(\la y_1(s)+(1-\la)y_2(s)) \  , \quad 0\le s\le t \ .
\ee
Evidently $g(s)=g(s,y)$ depends on the terminal condition $y_1(t)=y_2(t)=y> \ve_0$ as well as on $s$, and it is possible that $g(s,y)$ becomes unbounded as $y\ra\ve_0$.  However, in view of our assumptions on $h'(\cdot)$ and the inequality $\inf\rho_j(\cdot)\ge -h(\ve_0)/\ve_0+\ve,\  j=1,2,$ there is a constant $M_\ve$ independent of $T$  such that
\be \label{Q*3}
\int_0^t|g(s,y)| \ ds \ \le \ M_\ve \quad {\rm for \ } y>\ve_0, \ 0<t\le T \ .
\ee
The solution to (\ref{P3}) has the representation
\be \label{R3}
z(s) \ = \ \int_s^t \exp\left[\int_s^{s'} \{g(s'')+\rho_1(s'')\} \ ds''\right][\rho_1(s')-\rho_2(s')]y_2(s')\ ds' \ .
\ee
We now can estimate the LHS of (\ref{M3}) from (\ref{D3})  by using the fundamental theorem of calculus and (\ref{O3}), (\ref{Q*3}), (\ref{R3}).  We conclude from (\ref{M3}) that $K$ is a contraction if $T>0$ is sufficiently small, by using the inequality (\ref{BA2}) and the inequality
\be \label{S3}
 |I(\zeta_1(\cdot),\eta)-I(\zeta_2(\cdot),\eta)| \ \le C_{\ga}\eta^{-\ga}\|\zeta_1(\cdot)-\zeta_2(\cdot)\|_\infty \ ,
\ee
which follows from (\ref{AV2}). 

It follows from the contraction mapping theorem there exists for sufficiently small  $T>0$ a unique  $\rho(\cdot)\in \mathcal{E}_{\ve,T}$ such that $K\rho(\cdot)=\rho(\cdot)$. Setting this $\rho(\cdot)$ into the expressions (\ref{C3}), (\ref{D3}) we obtain a solution $\xi(y,t), \ y\ge\ve_0, \ 0\le t\le T,$ to the initial value problem. We see from (\ref{D3}), using the boundedness of $h(\cdot)$,  that $\sup_{y\ge\ve_0}\xi(y,t)<\infty$. We can obtain a formula for $D\xi(y,t)$ from the first variation equation for (\ref{C3}). Thus letting $Y(s,y), \ s\le t,$ denote the solution to (\ref{C3}) so as to indicate the dependence on the variable $y$, we  have that $DY(s,y)$ satisfies the linear variation equation
\be \label{T3}
\frac{d}{ds}DY(s,y) \ = \ -[h'(y(s))+\rho(s)]DY(s,y) \  , \quad 0\le s\le t, \ DY(t,y) \ = \ 1 \ .
\ee
Integrating (\ref{T3}) we obtain the formula
\be \label{U3}
DY(s,y) \ = \ \exp\left[\int_s^t \{h'(y(s'))+\rho(s')\}  \ ds' \right] \ .
\ee
Differentiating (\ref{D3}) we have from (\ref{U3}) the expression
\begin{multline} \label{V3}
D\xi(y,t) \ = \ \exp\left[\int_0^t h'(y(s))  \ ds \right]D\xi(y(0),0) \\
+\int_0^t ds \  h'(y(s))\exp\left[\int_s^t h'(y(s')) \ ds'\right]  \\
= \  \exp\left[\int_0^t h'(y(s))  \ ds \right]\left\{1+D\xi(y(0),0)\right\}-1 \ .
\end{multline}
Arguing as for the estimate (\ref{Q*3}), we see that the integral of $h'(\cdot)$ on the RHS of (\ref{V3}) is bounded uniformly in $y>\ve_0, \ 0<t\le T$.  We conclude then from (\ref{O3}) and the first expression on the RHS of (\ref{V3}) that $\sup_{y\ge\ve_0}y|D\xi(y,t)|<\infty$ for $0\le t\le T$.  We have therefore shown  that the solution to the initial value problem satisfies  $\|\xi(\cdot,t)\|_{1,\infty}<\infty$  when $0<t\le T$. We see from the second expression on the RHS of (\ref{V3}) that $1+D\xi(\cdot,t)\ge0$ for $0< t\le T$.  
\end{proof}
The local existence result of Lemma 3.1 can be extended to global existence by using Lemma 2.2.
\begin{proposition}
Let $h:[\ve_0,\infty)\ra\R$ be a  continuous positive decreasing function with $\lim_{y\ra\infty}h(y)=h_\infty>0$, which is $C^1$ on $(\ve_0,\infty)$ and  $\sup_{y\ge\ve_0+1}y|h'(y)|<\infty$.
Assume  that $\beta(\cdot,0), \eta(\cdot), \xi(\cdot,0)$ satisfy the conditions of Lemma 3.1, and in addition that $\inf[1+D\xi(\cdot,0)]\ge m$ for some $m>0$. Then with $I(\zeta(\cdot),\eta)$ given by (\ref{Y2}) and $\rho(\zeta(\cdot),\eta)$ by (\ref{T2}),  there exists for $0<t<\infty$ a unique non-negative solution $\xi(y,t), \ y\ge\ve_0,$ to the initial value problem for (\ref{A3}) with $\rho(t)=\rho(\xi(\cdot,t),\eta(t))$ such that $\|\xi(\cdot,t)\|_{1,\infty}<\infty$ and $\inf D\xi(\cdot,t)\ge -1$.

The solution $\xi(\cdot,t), \ t>0,$ has the property
\be \label{V*3}
\sup_{t\ge 0}\left\{\|\xi(\cdot,t)\|_\infty+\|D\xi(\cdot,t)\|_\infty\right\}<\infty, \quad \inf_{ y\ge \ve_0, \ t>0}[1+D\xi(y,t)] \ > \ 0 \ .
\ee
Furthermore there exist constants 
$\del,M>0$, depending only on $m$ and $\|\xi(\cdot,0)\|_{1,\infty}$, such that
\be \label{Y3}
\del-\frac{h(\ve_0)}{\ve_0} \ \le \ \rho(\xi(\cdot,t),\eta(t)) \ \le \ M  \quad {\rm for \ } t>0 \ .
\ee
\end{proposition}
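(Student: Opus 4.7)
The strategy is to extend the local solution from Lemma~3.1 to all $t>0$ by establishing the a priori bounds (\ref{V*3}) and (\ref{Y3}); these provide uniform control of $\|\xi(\cdot,t)\|_{1,\infty}$ and $\inf[1+D\xi(\cdot,t)]$ while the solution exists, so the length of the local interval given by Lemma~3.1 is bounded below uniformly in the starting time and iteration produces a solution on $[0,\infty)$.

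The upper bound on $\sup_y D\xi(y,t)$ is read off directly from the representation (\ref{V3}). Since $h$ is decreasing one has $h'\le 0$, so $1+D\xi(y,t)=\exp\left[\int_0^t h'(y(s))\,ds\right](1+D\xi(y(0),0))\le 1+\|D\xi(\cdot,0)\|_\infty=:1+M_0$ uniformly in $t$. Applying (\ref{AS2}) of Lemma~2.2 with $M=M_0$ then yields $\rho(\xi(\cdot,t),\eta(t))\ge \delta_{M_0}-h(\ve_0)/\ve_0$, which is the lower bound in (\ref{Y3}).

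The crux of the argument is the uniform upper bound on $\|\xi(\cdot,t)\|_\infty$. Combining (\ref{D3}) with (\ref{N3}) gives the clean identity $y+\xi(y,t)=(y(0)+\xi(y(0),0))\exp[-R(t)]$ along characteristics, where $R(t)=\int_0^t\rho(s)\,ds$. I would couple this with (\ref{AT2}) of Lemma~2.2, which states that once $\inf_y\xi(y,t)\ge K_{M_0}$ one has $\rho(t)\ge\delta_{M_0}>0$; this forces $R(t)$ to grow and $y+\xi$ to contract along characteristics. A continuation/Gronwall argument built around this self-correcting feedback yields a uniform bound $\|\xi(\cdot,t)\|_\infty\le N$ depending only on the initial data and on the Lemma~2.2 constants. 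This step is the main obstacle, since the naive Gronwall estimate on $dZ/ds=h(y(s))-\rho(s)Z(s)$ (with $Z(s)=\xi(y(s),s)$) yields only exponentially growing bounds; the argument must make essential use of (\ref{AT2}) to convert the qualitative statement ``large $\xi$ forces positive $\rho$'' into a quantitative upper bound.

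For the positive lower bound on $1+D\xi$ I return to (\ref{V3}): $1+D\xi(y,t)\ge m\exp\left[-\int_0^t|h'(y(s))|\,ds\right]$, with $m=\inf[1+D\xi(\cdot,0)]$. The hypotheses on $h'$ (bounded by $C/y$ for $y\ge\ve_0+1$ and integrable on $[\ve_0,\ve_0+1]$ by continuity), together with the change of variables $ds=-dy/(h(y)+\rho(s)y)$ along the characteristic and the uniform control of $\rho$, give a uniform bound on $\int_0^t|h'(y(s))|\,ds$ and hence a strictly positive lower bound on $1+D\xi$. Finally (\ref{AU2}) of Lemma~2.2 applied with this $m$ and with $M=N$ from the previous step produces the upper bound on $\rho$ in (\ref{Y3}), completing the a priori estimates needed to iterate Lemma~3.1 and hence obtain the global solution.
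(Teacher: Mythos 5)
Your plan correctly identifies the paper's main tools: the explicit characteristic representations (\ref{D3}), (\ref{N3}), (\ref{V3}), and the three bounds (\ref{AS2}), (\ref{AT2}), (\ref{AU2}) from Lemma~2.2. The upper bound on $1+D\xi$ via $h'\le 0$, the lower bound in (\ref{Y3}) from (\ref{AS2}), and the final application of (\ref{AU2}) for the upper bound in (\ref{Y3}) all match the paper. The identity $y+\xi(y,t)=e^{-R(t)}[y(0)+\xi(y(0),0)]$ is correct and is an equivalent rephrasing of (\ref{D3}) combined with (\ref{N3}).

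However, there is a genuine gap in the central step: the uniform bound on $\|\xi(\cdot,t)\|_\infty$. You correctly observe that (\ref{AT2}) is the device that should give the self-correcting feedback, but (\ref{AT2}) requires a lower bound $\inf_y\xi(y,t)\ge K_M$, whereas a Gronwall/continuation argument based on the characteristic representation naturally gives control only on $\sup_y\xi(y,t)$. Without a way to pass from ``$\sup\xi$ large'' to ``$\inf\xi$ large'', the feedback does not close. The paper supplies exactly this bridge through the Harnack-type inequality (\ref{W3}): $\sup_y\xi(y,t)\le C_\nu\inf_y\xi(y,t)$ for $\nu<t<T$, which follows from (\ref{D3}) upon noting that the second term $\int_0^t h(y(s))e^{-\int_s^t\rho}\,ds$ is pinched between $h_\infty$ and $h(\ve_0)$ times a quantity independent of $y$, while the first term $e^{-\int_0^t\rho}\xi(y(0),0)$ is controlled uniformly once $t\ge\nu$ using the already-established lower bound on $\rho$. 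With (\ref{W3}) in hand, exceeding $\sup\xi> C_\nu K$ forces $\inf\xi> K$, hence $\rho\ge\del$, and then (\ref{D3}) ((\ref{X3}) in the paper) shows $\sup\xi$ contracts toward $h(\ve_0)/\del$, closing the continuation argument. Your write-up never identifies this Harnack step, and the phrase ``a continuation/Gronwall argument built around this self-correcting feedback yields a uniform bound'' is where the missing idea lives.

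A secondary (less serious) gap: for the strict positivity of $1+D\xi$ you propose to bound $\int_0^t|h'(y(s))|\,ds$ by changing variables $ds=-dy/(h(y)+\rho(s)y)$, but near $y=\ve_0$ the denominator $h(y)+\rho(s)y$ is only guaranteed to be bounded below by $\ve_0\del$ \emph{after} the lower bound in (\ref{Y3}) is available, and for $y$ away from $\ve_0$ one must additionally bound $\int_0^t ds/y(s)$; the paper does the latter by deriving $\int_0^t ds/y(s)\le \xi(y,t)/(yh_\infty)$ (inequality (\ref{AD3})), which crucially uses the uniform $\|\xi\|_\infty$ bound just established. So the ordering of the steps matters and should be made explicit; your sketch leaves this dependence implicit.
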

\begin{proof}
Note that $h(\cdot)$ satisfies the assumptions of Lemma 3.1. To prove existence and uniqueness of the solution $\xi(\cdot,t)$ for all $t>0$ it is sufficient to show for any $T>0$ there exists $A(T)>0$, depending only on $\|\xi(\cdot,0)\|_{1,\infty}, \ m$ and $T$, such that $\|\xi(\cdot,t)\|_{1,\infty}\le A(T)$ for all $0<t<T$. If that is the case then by Lemma 3.1 we may extend the solution of the initial value problem beyond $T$, whence the solution exists globally in time. 

We see from (\ref{V3}) that $\inf[1+D\xi(\cdot,t)]\ge 0$ for $0<t<T$ and there exists $A_1>0$, depending only on $\|D\xi(\cdot,0)\|_\infty$, such that $\sup_{0<t<T}\|D\xi(\cdot,t)\|_\infty\le A_1$.  The lower bound in (\ref{Y3})  follows from this and (\ref{AS2})  of Lemma 2.2. It also follows from the lower bound that for any $T>0$ there exists $A(T)$ such that  $\sup_{y\ge \ve_0}|yD\xi(y,t)|\le A(T)$ if $0<t<T$.  We see this by using (\ref{N3}) and the first expression on the RHS of (\ref{V3}). 

Next observe from (\ref{D3}) that for any positive $\nu<\min\{1,T\}$, there exists a constant $C_\nu>0$, depending only on $\nu$ and  $\|\xi(\cdot,0)\|_\infty \ $, such that
\be \label{W3}
0 \ < \ \sup_{y\ge \ve_0} \xi(y,t) \ \le \ C_\nu\inf_{y\ge \ve_0} \xi(y,t) \ , \quad \nu<t<T \ .
\ee
From (\ref{AT2}) of Lemma 2.2 there exists $\del,K>0$, depending only on   $\|D\xi(\cdot,0)\|_\infty$,  such that for $0<t<T$,  if $\inf\xi(\cdot,t)\ge K$ then $\rho(\xi(\cdot,t),\eta(t))\ge \del$.  We choose now $K$ sufficiently large so  there exists $T_\nu>\nu>0$ such that $\sup_{y\ge\ve_0}\xi(y,t)< C_\nu K$ for $0<t<T_\nu$. Letting $[0,T_\nu)$ be the maximal such interval, we either have that $T_\nu=\infty$ or 
$\sup_{y\ge\ve_0}\xi(y,t)=C_\nu K$. Let us suppose that $T_\nu<\infty$ and $\sup_{y\ge\ve_0}\xi(y,t)>C_\nu K$ for $T_\nu<t<T'_\nu$.  From (\ref{W3}) we have that $\inf_{y\ge\ve_0}\xi(y,t)>K$ for $T_\nu<t<T'_\nu$. It follows now from (\ref{D3}) that
\be \label{X3}
\sup_{y\ge\ve_0}\xi(y,t) \ \le \ e^{-\del (t-T_\nu)}C_\nu K+
\frac{1- e^{-\del (t-T_\nu)}}{\del}\|h(\cdot)\|_\infty \ , \quad T_\nu<t<T'_\nu \ .
\ee
Evidently the RHS of (\ref{X3}) is bounded independent of $T'_\nu>T_\nu$. Iterating this argument we conclude there exists $A_2>0$, depending only on $\|\xi(\cdot,0)\|_{1,\infty} \ $, such that $\sup_{0<t<T}\|\xi(\cdot,t)\|_\infty\le A_2$.   We have established the first inequality of (\ref{V*3}). 

To prove the second inequality of (\ref{V*3}), let $y_\del>\ve_0$ be such that
\be \label{AB3}
y\left[\del-\frac{h(\ve_0)}{\ve_0}\right]+h(y) \ \ge\  \frac{\ve_0\del}{2} \quad {\rm for \ }\ve_0\le y\le y_\del \ .
\ee
From (\ref{C3}), the lower bound in (\ref{Y3}), and (\ref{AB3}) it follows that the characteristic  $y(s), \ 0\le s\le  t,$ with $y(t)=y$ satisfies $y(s)\ge y_\del, \ 0\le s\le t,$ if $y\ge y_\del$.  If $\ve_0\le y<y_\del$ then $y'(s)\le-\ve_0\del/2$ when $y(s)<y_\del$. It follows that
\be \label{AC3}
\int_0^t |h'(y(s))| \ ds \ \le \ \frac{2}{\ve_0\del}\int_{\ve_0}^{y_\del}|h'(y)| \ dy + 
\sup_{y\ge y_\del}[y|h'(y)|]\int_0^t \frac{ds}{y(s)} \ .
\ee
From (\ref{D3}), (\ref{N3}) we have that
\be \label{AD3}
\int_0^t \frac{ds}{y(s)}  \
\le \  \frac{1}{y}\int_0^t ds \  \exp\left[-\int_s^t \rho(s') \ ds'\right] \ \le
\  \frac{1}{yh_\infty}\xi(y,t) \ .
\ee
Now the first inequality of (\ref{V*3}) and (\ref{AC3}), (\ref{AD3}) imply there is a constant $C$ such that
\be \label{AE3}
\int_0^t |h'(y(s))| \ ds \ \le  \ C \quad {\rm for \ all \ } t>0, \ y(t)=y\ge\ve_0 \ .
\ee
We conclude from (\ref{V3}), (\ref{AE3}) that $\inf[1+D\xi(\cdot,t)]\ge e^{-C}m$ for $t>0$, whence the second inequality of (\ref{V*3}) holds. 
The upper bound in (\ref{Y3}) follows from (\ref{V*3}) and 
(\ref{AU2}) of Lemma 2.2. 
\end{proof}
In order to show the solution $\xi(\cdot,t)$ of Proposition 3.1 satisfies $\sup_{t>0}\|\xi(\cdot,t)\|_{1,\infty}<\infty$,   we need a strict positivity assumption on the function $\rho(\cdot)$ at large time.  
\begin{lem}
Let $\rho:[0,\infty)\ra\R$ be a continuous function satisfying $\inf \rho(\cdot)>-h(\ve_0)/\ve_0$ i.e. a strict version of (\ref{B3}), and $\del_0,\tau_0>0$ have the property that
\be \label{Q4}
\int_s^t\rho(s') \ ds' \ \ge \ \del_0 (t-s)\quad {\rm for \ } t\ge \tau_0, \ 0\le s\le t-\tau_0 \ .
\ee
Assume the function $h:[\ve_0,\infty)\ra\R$ is positive continuous decreasing,   $C^1$ on $(\ve_0,\infty)$ and  satisfies the inequality 
\be \label{R4}
\|h(\cdot)\|_\infty + \sup_{y\ge \ve_0+1}y|h'(y)| \ <  \ \infty \ . 
\ee
Then there are constants $C_1,C_2$, independent of $\xi(\cdot,0)$,  such that the solution $\xi(\cdot,t), \ t\ge 0,$ to the initial value problem for (\ref{A3})  satisfies the inequality
\be \label{S4}
\|\xi(\cdot,t)\|_{1,\infty} \ \le \ C_1e^{-\del_0 t}\|\xi(\cdot,0)\|_{1,\infty}+C_2 \ , \quad t\ge 0 \ .
\ee
Assume in addition to (\ref{Q4}) that $h(\cdot)$ is convex, $C^2$ on $(\ve_0,\infty)$ and
\be \label{T4}
\sup_{y\ge \ve_0+1}y^2h''(y) \ < \ \infty \ .
\ee
If $\xi(\cdot,0)$ is $C^2$ on $(\ve_0,\infty)$ then $\xi(\cdot,t)$ is $C^2$ on $(\ve_0,\infty)$, and there are positive constants $\nu_0,C_3,C_4$, independent of $\xi(\cdot,0)$,  such that 
\begin{multline} \label{U4}
\int_{\ve_0}^{\ve_0+\nu_0}|D^2\xi(y,t)|  \ dy +\sup_{y\ge\ve_0+\nu_0}y^2|D^2\xi(y,t)| \ \le \\
 C_3e^{-\del_0 t}\left\{ \|\xi(\cdot,0)\|_{1,\infty}+\int_{\ve_0}^{\ve_0+\nu_0}|D^2\xi(y,0)|  \ dy +\sup_{y\ge\ve_0+\nu_0}y^2|D^2\xi(y,0)| \ \right\}
 +C_4 \ , \quad t\ge 0 \ .
\end{multline}
\end{lem}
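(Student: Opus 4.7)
The proof hinges on the explicit characteristic representations (\ref{D3}) for $\xi$, (\ref{V3}) for $D\xi$, and the first variation identity (\ref{U3}) for $DY(s,y)$. Because $\rho(\cdot)$ is a prescribed function (not $\rho(\xi(\cdot,t),\eta(t))$), the characteristic (\ref{C3}) depends only on $\rho$ and $h$, and everything reduces to controlling explicit integrals along the characteristic. The first step is to extract from (\ref{Q4}) and the strict positivity $\inf \rho>-h(\ve_0)/\ve_0$, via (\ref{N3}), the following uniform estimates: (i) $A(t):=\exp[-\int_0^t\rho(s)\,ds]\le Ce^{-\del_0 t}$; (ii) $\int_0^t \exp[-\int_s^t\rho(s')\,ds']\,ds\le C$; (iii) $y(0)\ge c\,e^{\del_0 t}y$ for $t\ge\tau_0$ and $y(s)\ge c'y$ for all $s\in[0,t]$; (iv) $\int_0^t|h'(y(s))|\,ds\le C$, exactly as in (\ref{AC3}). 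The unifying mechanism is to split $[0,t]=[0,t-\tau_0]\cup[t-\tau_0,t]$: on the long part (\ref{Q4}) gives the exponential rates, and on the short part the strict positivity of $\rho$ gives length-independent bounds.

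For (\ref{S4}), applying (i) and (ii) to (\ref{D3}) immediately yields $\|\xi(\cdot,t)\|_\infty\le C_1e^{-\del_0 t}\|\xi(\cdot,0)\|_\infty+C_2\|h\|_\infty$. For the weighted-derivative part, write (\ref{V3}) as
\[
D\xi(y,t)=\bigl(e^{\int_0^t h'(y(s))ds}-1\bigr)+e^{\int_0^t h'(y(s))ds}D\xi(y(0),0).
\]
The second (initial-data) piece contributes $\le (y/y(0))\,y(0)|D\xi(y(0),0)|\le C e^{-\del_0 t}\|\xi(\cdot,0)\|_{1,\infty}$ by (iii) and $e^{\int h'}\le 1$. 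The first piece is bounded by $y\int_0^t|h'(y(s))|\,ds$; splitting the integration interval by whether $y(s)\ge y_\delta$ or not and using $y|h'(y(s))|\le Cy/y(s)\le C/c'$ on the first region (from (iii)) plus the integrability of $h'$ near $\ve_0$ on the second (giving bounded contribution via (iv)) yields a uniform-in-$t$ bound, finishing (\ref{S4}).

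For (\ref{U4}), differentiating (\ref{V3}) once more in $y$ and using (\ref{U3}) gives
\[
D^2\xi(y,t)=e^{\int_0^t h'(y(s))ds}\Bigl[\bigl(1+D\xi(y(0),0)\bigr)\int_0^t h''(y(s))DY(s,y)\,ds + D^2\xi(y(0),0)\,DY(0,y)\Bigr].
\]
The key pointwise estimate is $h''(y(s))DY(s,y)\le (C/y^2)\exp[-\int_s^t\rho(s')\,ds']$, which follows from $y^2h''(y)\le C$ at infinity, the lower bound $y(s)\ge y\exp[\int_s^t\rho]$ from (\ref{N3}), the explicit form of $DY$, and $h'\le 0$. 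Integrated and multiplied by $y^2$, this and (ii) show $y^2\int_0^t h''(y(s))DY(s,y)\,ds$ is uniformly bounded. For the weighted $L^\infty$ norm on $[\ve_0+\nu_0,\infty)$, the initial-data term is controlled via $y^2/y(0)^2\le A(t)^2$, giving $Ce^{-\del_0 t}\sup_{y'}(y')^2|D^2\xi(y',0)|$. For the $L^1$ norm on $[\ve_0,\ve_0+\nu_0]$, I change variables $y\mapsto y(0)=Y(0,y)$, with Jacobian $DY(0,y)$; this transfers the initial-data integrand to $[Y(0,\ve_0),Y(0,\ve_0+\nu_0)]$, which for $t\ge\tau_0$ lies in $\{y(0)\ge\ve_0+\nu_0\}$ where the weighted $L^\infty$ bound furnishes the decay. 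The $h''$-integral piece is handled by Fubini and a further change of variables $y\mapsto y(s)$, at which point $\int_{Y(s,\ve_0)}^{Y(s,\ve_0+\nu_0)} h''(y')\,dy'=h'(Y(s,\ve_0+\nu_0))-h'(Y(s,\ve_0))$, whose time integral is bounded by twice (iv); combining with the decaying bound $|D\xi(y(0),0)|\le Ce^{-\del_0 t}\|\xi(\cdot,0)\|_{1,\infty}/\ve_0$ on $[\ve_0,\ve_0+\nu_0]$ yields the $C_3e^{-\del_0 t}\|\xi(\cdot,0)\|_{1,\infty}+C_4$ structure.

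The main obstacle is the potential divergence of $h'$ and $h''$ as $y\to\ve_0$, together with the fact that characteristics starting at small $y$ may spend nonzero time close to the boundary. This is precisely what dictates the mixed $L^1$-on-$[\ve_0,\ve_0+\nu_0]$ plus weighted-$L^\infty$-on-$[\ve_0+\nu_0,\infty)$ formulation of (\ref{U4}): the $L^1$ norm absorbs any singular boundary behavior while the weighted $L^\infty$ captures the regular decay far from $\ve_0$. Controlling the two relevant kernel integrals uniformly in $(y,t)$ via the arguments parallel to (\ref{AC3}) in the proof of Proposition 3.1 is the technical core of the lemma.
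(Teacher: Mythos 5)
Your proof follows essentially the same approach as the paper -- characteristic representations (\ref{D3}), (\ref{V3}), (\ref{U3}), (\ref{W4}), the split $[0,t-\tau_0]\cup[t-\tau_0,t]$, a choice of $\nu_0$ so characteristics starting at $y\ge\ve_0+\nu_0$ never descend below $\ve_0+\nu_0$, and the change of variables $y\mapsto y(s)$ to integrate out $h''$ on $[\ve_0,\ve_0+\nu_0]$ -- so there is no structural divergence. The arguments for (\ref{U4}) and for the initial-data part of (\ref{S4}) are correct.

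However, there is a genuine gap in your bound for the first ($\,e^{\int h'}-1\,$) piece of $D\xi$ in (\ref{S4}). You estimate $y\bigl|e^{\int_0^t h'(y(s))\,ds}-1\bigr|\le y\int_0^t|h'(y(s))|\,ds$, and on the region $y(s)\ge y_\del$ you coarsen $y|h'(y(s))|\le C\,y/y(s)\le C/c'$ using only the crude lower bound $y(s)\ge c'y$ from your step (iii). Integrating a constant over $[0,t]$ gives $\lesssim t$, not a $t$-uniform constant; likewise multiplying your step (iv) bound $\int_0^t|h'(y(s))|\,ds\le C$ by $y$ does not help, since that product is unbounded in $y$. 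The necessary ingredient you dropped is the sharper characteristic lower bound $y(s)\ge y\exp\bigl[\int_s^t\rho(s')\,ds'\bigr]$ from (\ref{N3}), which gives the pointwise decay $y|h'(y(s))|\le C\exp\bigl[-\int_s^t\rho(s')\,ds'\bigr]$ on $\{y(s)\ge y_\del\}$, so that the time integral converges by your step (ii). This is precisely what the paper's inequality (\ref{V4}) encodes by multiplying (\ref{V3}) by $y$ and keeping the $\exp[-\int_s^t\rho]$ factor in the integrand rather than discarding it. With that one correction your argument closes and is equivalent to the paper's.
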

\begin{proof}
Multiplying (\ref{V3}) by $y$ we have from (\ref{N3}), upon using the assumption that $h(\cdot)$ is decreasing,  the inequality
\begin{multline} \label{V4}
y|D\xi(y,t)| \ \le \ \exp\left[-\int_0^t \rho(s) \ ds\right]y(0)|D\xi(y(0),0)| \\
+\int_0^t ds \  y(s)|h'(y(s))|\exp\left[-\int_s^t \rho(s') \ ds'\right]  \ , \quad y\ge \ve_0 \ .
\end{multline}
The bound (\ref{S4}) follows now from (\ref{Q4}) by using (\ref{A3}) to estimate $\|\xi(\cdot,t)\|_\infty$ and (\ref{V4}) to estimate $\sup_{y\ge\ve_0}y|D\xi(y,t)|$.  Note that we need to use the  assumption $\inf\rho(\cdot)>-h(\ve_0)/\ve_0$ so as to estimate the second term on the RHS of (\ref{V4}) when $y$ is close to $\ve_0$. 

Differentiating (\ref{V3}) with respect to $y$, we have  from (\ref{U3})  that
\begin{multline} \label{W4}
D^2\xi(y,t) \ = \ \exp\left[\int_0^t \{\rho(s)+2h'(y(s))\} \ ds\right]D^2\xi(y(0),0) \\
+\exp\left[\int_0^th'(y(s)) \ ds\right]\left\{1+D\xi(y(0),0)\right\} \times \\
\int_0^t ds \ h''(y(s))\exp\left[\int_s^t \{\rho(s')+h'(y(s'))\} \ ds'\right]  \ .  
\end{multline}
Multiplying (\ref{W4}) by $y^2$ we have from (\ref{N3}) the inequality
\begin{multline} \label{X4}
y^2|D^2\xi(y,t)| \ \le \ \exp\left[-\int_0^t \rho(s) \ ds\right]y(0)^2|D^2\xi(y(0),0)| \\
+\left\{1+|D\xi(y(0),0)|\right\}\int_0^t ds \ y(s)^2h''(y(s))\exp\left[-\int_s^t \rho(s') \ ds'\right]   \ .
\end{multline}
We choose $\nu_0>0$ so that for some $\del>0$, 
\be \label{Y4}
\inf\rho(\cdot) \ \ge \ - \frac{h(\ve_0+\nu_0)}{\ve_0+\nu_0}+\del \ .
\ee
We see from (\ref{C3}), (\ref{Y4}) that
\begin{multline} \label{Z4}
\frac{dy(s)}{ds}\le  - \min\{\del\ve_0, \ h(\ve_0+\nu_0)\} \ {\rm if \ \ } y(s)\le \ve_0+\nu_0 \ , \\
y(s)\ge \ve_0+\nu_0  \ \ {\rm for \ } s\le t \ \ 
 \ {\rm if \ \ } y(t)=y\ge \ve_0+\nu_0 \ .
\end{multline}
The inequality (\ref{U4}) follows from (\ref{Q4}), (\ref{X4}), (\ref{Z4}) by observing that
\be \label{AA4}
\int_{\{\ve_0<y<y'<\ve_0+1\}} h''(y') \ dy'dy \ < \ \infty \ .
\ee
\end{proof}
\begin{theorem}
In addition to the assumptions of Proposition 3.1, suppose the function $\eta:[0,\infty)\ra(0,1]$ is $C^1$ and satisfies 
\be \label{Z3}
|\eta'(t)|\le C\eta(t),   \quad \lim_{t\ra\infty}\eta(t)=0, 
\ee
for some positive constant $C$. Let $\xi(\cdot,t), \ t>0$, be the solution to the initial value problem for (\ref{A3}) with $\rho(t)=\rho(\xi(\cdot,t),\eta(t)), \ t\ge 0$.  Then one has
\be \label{AA3}
\sup_{t>0}\|\xi(\cdot,t)\|_{1,\infty}<\infty, \quad \lim_{T\ra\infty} \frac{1}{T}\int_0^T\rho(\xi(\cdot,t),\eta(t)) \ dt  \ = \ \frac{1}{p} \ .
\ee
\end{theorem}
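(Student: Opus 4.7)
The plan is to use the conservation identity coming from (\ref{R2}) to express the cumulative integral of $\rho$ as a logarithmic increment of $I$, then control that increment by Lemma 2.5 together with the hypothesis $|\eta'|\le C\eta$. This produces the cumulative lower bound $\int_s^t\rho\,ds'\ge\del_0(t-s)$ required by Lemma 3.2, which will give the uniform $C^1$ bound; the same increment control, read with $\ga$ small, will then give the averaged limit.

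Concretely, (\ref{R2}) is equivalent to $\frac{1}{p}\frac{d}{dt}\log I(\xi(\cdot,t),\eta(t))=\rho(\xi(\cdot,t),\eta(t))-\frac{1}{p}$, so for $0\le s\le t$,
\[
\int_s^t \rho(\xi(\cdot,s'),\eta(s'))\,ds' \;=\; \frac{t-s}{p}+\frac{1}{p}\log\frac{I(\xi(\cdot,t),\eta(t))}{I(\xi(\cdot,s),\eta(s))}.
\]
Proposition 3.1 provides a uniform sup bound $\sup_t\|\xi(\cdot,t)\|_\infty\le M$, and the hypothesis $|\eta'(t)|\le C\eta(t)$ implies $|\log\eta(t)-\log\eta(s)|\le C|t-s|$. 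Applied to $\zeta_j=\xi(\cdot,\cdot)$, Lemma 2.5 then furnishes, for each $\ga>0$, a constant $C_\ga$ with
\[
\bigl|\log I(\xi(\cdot,t),\eta(t))-\log I(\xi(\cdot,s),\eta(s))\bigr|\;\le\;\ga C|t-s|+C_\ga .
\]
Fixing $\ga>0$ so small that $\del_0:=1/p-\ga C/p>0$ and choosing $\tau_0$ large compared to $C_\ga/(p\del_0)$, I obtain $\int_s^t\rho\,ds'\ge \del_0(t-s)$ whenever $t\ge\tau_0$ and $0\le s\le t-\tau_0$, which is precisely hypothesis (\ref{Q4}) of Lemma 3.2.

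The lower bound on $\rho$ from (\ref{Y3}) supplies the strict version of (\ref{B3}), so together with the assumptions imposed on $h$ in Proposition 3.1 all hypotheses of Lemma 3.2 are verified; its bound (\ref{S4}) immediately yields $\sup_{t\ge 0}\|\xi(\cdot,t)\|_{1,\infty}<\infty$, which is the first assertion of (\ref{AA3}). For the averaged limit, taking $s=0$ in the integrated identity and dividing by $T$ gives
\[
\Bigl|\frac{1}{T}\int_0^T\rho(\xi(\cdot,t),\eta(t))\,dt-\frac{1}{p}\Bigr|\;\le\;\frac{\ga C}{p}+\frac{C_\ga+|\log I(\xi(\cdot,0),\eta(0))|}{pT} ;
\]
letting $T\to\infty$ and then $\ga\downarrow 0$ delivers the averaged limit $1/p$.

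The main subtlety I expect is the interplay between the $\ga$ of Lemma 2.5 and the $\del_0$ demanded by Lemma 3.2: I need the equilibrium rate $1/p$ to strictly dominate the logarithmic error $\ga C$, so the choice $\ga<1/C$ is critical and uses essentially that $p<\infty$. The argument also relies in a serious way on the uniform $L^\infty$ bound for $\xi(\cdot,t)$ from Proposition 3.1 to make the constants in Lemma 2.5 independent of $t$; without that, the prefactor $\ga$ and the additive constant $C_\ga$ in the logarithmic bound would depend on $t$ and the averaging argument would collapse.
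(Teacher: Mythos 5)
Your overall architecture is right: integrate the logarithmic derivative of $I$, control the increment of $\log I$ via Lemma 2.5 and the bound $|\eta'|\le C\eta$, feed the resulting cumulative lower bound on $\int_s^t\rho$ into Lemma 3.2 to get the uniform $\|\cdot\|_{1,\infty}$ bound, and divide by $T$ for the averaged limit. All of that is the paper's strategy. But there is a genuine gap at the very first step: your claimed identity
\[
\frac{1}{p}\frac{d}{dt}\log I(\xi(\cdot,t),\eta(t))=\rho(\xi(\cdot,t),\eta(t))-\frac{1}{p}
\]
is not valid here. Equation (\ref{R2}) was derived by differentiating the conservation law (\ref{Q2}), which presupposes $\eta(t)=u(t)^{-1}$ with $u$ from (\ref{B2}); equivalently, it requires $\eta'(t)+\rho(t)\eta(t)=0$. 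In Theorem 3.1, $\eta(\cdot)$ is \emph{any} function obeying (\ref{Z3}), so that relation fails in general. The correct identity is (\ref{AF3}):
\[
\frac{1}{p}\frac{d}{dt}\log I(\xi(\cdot,t),\eta(t))=\rho(t)-\frac{1}{p}
+\frac{1}{p}\big[\eta'(t)+\rho(t)\eta(t)\big]\frac{1}{I}\frac{\pa I}{\pa\eta}\,,
\]
and the last term does not vanish. Your bound on $\log I(\xi(t),\eta(t))-\log I(\xi(s),\eta(s))$ via Lemma 2.5 is fine, but it no longer controls $p\int_s^t(\rho-1/p)$ directly: you must also show that the integral of the extra term over $[s,t]$ is $o(t-s)$ as $t\to\infty$.

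To close this, you need the content of (\ref{AG3}): $\lim_{\eta\to0}\sup_{\zeta\ge0}\,\eta\,|\pa_\eta I(\zeta,\eta)|/I(\zeta,\eta)=0$, which follows from (\ref{X2}), (\ref{Y2}), (\ref{AN2}), (\ref{AP2}). Combined with the boundedness of $\rho$ from (\ref{Y3}) and $|\eta'/\eta|\le C$ from (\ref{Z3}), the factor $(\rho\eta+\eta')\pa_\eta I/I=(\rho+\eta'/\eta)\cdot\eta\,\pa_\eta I/I$ is bounded uniformly (via (\ref{H3})) and, crucially, tends to zero as $t\to\infty$ because $\eta(t)\to0$. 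With that addition the argument works: for the averaged limit you get an error $\le\nu T+O(1)$ for any $\nu>0$ once $T$ is large, and for (\ref{Q4}) you split $[s,t]$ at a time $T_\nu$ past which $\eta$ is small enough that the extra contribution is $\le\nu(t-s)$, absorbing the finite piece $[s,T_\nu]$ into the constant in (\ref{Q4}). Without this step, the proposal as written conflates the special $\eta=u^{-1}$ case with the general one and is not complete.
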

\begin{proof}
We observe from (\ref{T2}), \ (\ref{A3}) that
\begin{multline} \label{AF3}
\frac{1}{p}\frac{d}{dt}\log I(\xi(\cdot,t),\eta(t)) \ = \ \rho(t)-\frac{1}{p} \\
+\frac{1}{p}\left[\frac{d\eta(t)}{dt}+\rho(t)\eta(t)\right]\frac{1}{I(\xi(\cdot,t),\eta(t))}\frac{\pa I(\xi(\cdot,t),\eta(t))}{\pa\eta} \ , \quad {\rm where \ } \rho(t) \ =  \ \rho(\xi(\cdot,t),\eta(t)) \ .
\end{multline}
We have now that 
\be \label{AG3}
\lim_{\eta\ra 0} \sup_{\zeta(\cdot)\ge 0}\frac{1}{I(\zeta(\cdot),\eta)}\eta\left|\frac{\pa I(\zeta(\cdot),\eta)}{\pa \eta}\right| \ = \ 0 \ .
\ee
The limit  (\ref{AG3}) follows from (\ref{X2}), (\ref{Y2}), (\ref{AN2}), (\ref{AP2}). The limit in (\ref{AA3}) can now be obtained as a consequence of Lemma 2.1 and (\ref{Y3}), (\ref{Z3}), (\ref{AG3}) by integrating (\ref{AF3}). In fact from  (\ref{Z3}) there is a constant $c>0$ such that $1\ge\eta(t)\ge ce^{-Ct}, \ t\ge 0$. In view of (\ref{V*3}) we may apply Lemma 2.1 to conclude that for any $\nu>0$ the integral of the LHS of (\ref{AF3}) over the interval $[0,T]$ is bounded by $\nu T$ at large $T$. The inequality in (\ref{AA3}) follows from (\ref{S4}) of Lemma 3.2 once we show that (\ref{Q4}) holds.  We prove (\ref{Q4}) in the same way we proved the limit in (\ref{AA3}) by integrating (\ref{AF3}) over the interval $[s,t]$. The inequality follows upon using the differential inequality in (\ref{Z3}) and Lemma 2.5.
\end{proof}
\begin{proof}[Proof of Theorem 1.2]
We first show that the function $u(\cdot)$ defined by (\ref{B2}) satisfies $\lim_{t\ra\infty} u(t)^{-1}=0$. To see this we use the identity from  (2.2) of \cite{cn} that $u(t)^{-1}=\pa F(1,t)/\pa x$. Now the function $x\ra F(x,t), \ 0\le x<1,$ is increasing and convex for all $t>0$.  From (2.2) and (2.44) of \cite{cn} we see there exists a constant $C$ such that
\be \label{AH3}
\frac{\pa F(1,t)}{\pa x} \ \le \  C\frac{\pa F(0,t)}{\pa x} \ ,  \quad t\ge 0 \  .
\ee
We also have from Lemma 2.1 of \cite{cn} that $\lim_{t\ra\infty} F(0,t)=1=F(1,t)$, whence (\ref{AH3}) implies that $\lim_{t\ra\infty}\pa F(1,t)/\pa x=0$.  We may now choose $T_0>0$ such that the function $\eta(t)=u(t)^{-1}$ satisfies $0<\eta(t)\le 1$ for $t\ge T_0$ and $\lim_{t\ra\infty}\eta(t)=0$. Since $\lim_{t\ra\infty}F(0,t)=1$, we may also choose $T_0$ sufficiently large so that the function $x\ra\beta(x,T_0), \ 0\le x<1,$ satisfies (\ref{U2}) and $\lim_{x\ra1}\beta(x,T_0)=p/(p+1)$. 

Next we show that the function $\xi(\cdot,T_0)$ defined by (\ref{H2})  satisfies the assumptions of Theorem 3.1 for the initial data of $\xi(\cdot,t)$. Since $\phi(\cdot), \ \psi(\cdot)$ are assumed to satisfy (\ref{D1}), (\ref{E1}), (\ref{I1}) it follows from Lemma 6.1 of \cite{cn}  that the function $h:[\ve_0,\infty)\ra\R$ defined by (\ref{I2})  is continuous, positive decreasing, $\lim_{y\ra\infty}h(y)=h_\infty=1$ and 
$\sup_{y>\ve_0+1}y|h'(y)|<\infty$. Furthermore, the function $g(\cdot,\cdot)$ in (\ref{D2}) is related to $h(\cdot)$ by $g(z,u)=-\al_0uh(z/\al_0u)$. We then infer from (\ref{E2}) that
\be \label{AI3}
z+\al_0h_\infty\int_0^tu(s) \ ds \  \le \ \hat{F}(z,t) \ \le \ z+\al_0h(\ve_0)\int_0^tu(s) \ ds \ .
\ee
We conclude from (\ref{H2}) and (\ref{AI3}) that
\be \label{AJ3}
\frac{h_\infty}{u(T_0)}\int_0^{T_0}u(s) \ ds  \ \le  \ \inf\xi(\cdot,T_0) \ \le \sup\xi(\cdot,T_0) \ \le \ \frac{h(\ve_0)}{u(T_0)}\int_0^{T_0}u(s) \ ds \  .
\ee
To obtain bounds on $D\xi(\cdot,T_0)$ we differentiate (\ref{H2}) to obtain the identity
\be \label{AK3}
1+D\xi\left(\frac{z}{\al_0u(t)},t\right) \ = \ \frac{\pa \hat{F}(z,t)}{\pa z} \ .
\ee
Differentiating (\ref{F2}) we have that
\be \label{AL3}
\frac{\pa \hat{F}(f(x)u(t),t)}{\pa z} \ = \ \frac{f(F(x,t))\psi(x)}{f(x)\psi(F(x,t))u(t)}\frac{\pa F(x,t)}{\pa x} \ .
\ee
From (2.2) of \cite{cn} we see there are positive constants  $c_0,C_0$ such that
\be \label{AM3}
c_0 \ \le \ \frac{\pa F(x,T_0)}{\pa x} \ \le \ C_0 \quad {\rm for \ } 0\le x\le 1 \ .
\ee
We conclude from (\ref{AL3}), (\ref{AM3}) there are positive constants $m_0,M_0$ such that
\be \label{AN3}
m_0 \ \le \ 1+\inf D\xi(\cdot,T_0) \ \le \ 1+\sup D\xi(\cdot,T_0) \ \le  \ M_0 \ .
\ee
To show that $\sup_{y\ge\ve_0}|yD\xi(y,T_0)|<\infty$ we use the representation  (6.23) from \cite{cn},
\begin{multline} \label{AN*3}
\frac{\pa\hat{F}(z,t)}{\pa z} \ = \ \exp\left[-\int_0^t \frac{\pa g(z(s),u(s))}{\pa z} \ ds \ \right] \\
= \  \exp\left[\int_0^th'(y(s)) \ ds \ \right] \ , \quad {\rm where \ } z \ = \ \al_0u(t)y \ .
\end{multline}
Hence we have from (\ref{AK3}), (\ref{AN*3}) that
\be \label{AO*3}
D\xi(y,t) \ = \ \exp\left[\int_0^th'(y(s)) \ ds \ \right] -1 \ ,
\ee
which is the same formula as (\ref{V3}) in the case $\xi(\cdot,0)\equiv 0$.  Applying Taylor's theorem to (\ref{AO*3}), we conclude from (\ref{N3})  and the inequality $\sup_{y\ge \ve_0+1}y|h'(y)|<\infty$ that
$\sup_{y\ge\ve_0}|yD\xi(y,t)|<\infty$ for all $t\ge 0$. 

To obtain Theorem 1.2 as a consequence of Theorem 3.1 we observe from (\ref{B2}), (\ref{I2}) that
\be \label{AO3}
\rho(\xi(\cdot,t),\eta(t)) \ = \ \phi'(1)-\psi'(1)\kappa(t) \ , \quad t\ge T_0 \ .
\ee 
From (\ref{I2}) we see that $h(\ve_0)/\ve_0=-\phi'(1)$, whence (\ref{Y3}) implies that 
$\kappa(t)\ge\del/|\psi'(1)|>0$ for $t\ge T_0$. The upper bound in (\ref{Y3}) implies that $\kappa(t)\le[M-\phi'(1)]/|\psi'(1)|, \ t\ge T_0$.  
The limit (\ref{J1}) is a consequence of (\ref{AA3}) upon using the relations (\ref{AO3}) and  $\beta_0=p/(p+1)$.  Note for $u(\cdot)$ given by (\ref{B2}) that $\eta(t)=u(t)^{-1}$ satisfies 
\be \label{AP3}
\frac{d\eta(t)}{dt}+\rho(t)\eta(t) \ = \ 0 \ , \quad \rho(t) \ = \ \rho(\xi(\cdot,t),\eta(t)) \ .
\ee
It follows from the already established properties of $\eta(\cdot)$ and (\ref{Y3}), (\ref{AP3}) that (\ref{Z3}) holds. 
\end{proof}
\begin{rem}
Observe that Theorem 3.1 yields a reduction of dimension in dynamics from the original LSW problem,  since the function $\eta(\cdot)$ is only required to satisfy (\ref{Z3}).  The system (\ref{A3}), (\ref{AP3}) with $\rho(t)=\rho(\xi(\cdot,t),\eta(t))$ given by (\ref{T2}) and $I(\zeta(\cdot),\eta)$ by (\ref{Y2})  is equivalent to the LSW dynamics (\ref{A1}), (\ref{B1}). 
\end{rem}

\vspace{.1in} 

\section{Local Asymptotic Stability}
The limit (\ref{AA3}) of Theorem 3.1 is a {\it weak global} asymptotic stability result for solutions to the PDE (\ref{A3}) with 
$\rho(t)=\rho(\xi(\cdot,t),\eta(t)), \ t\ge 0,$ where $\rho(\zeta(\cdot),\eta)$ is defined by (\ref{T2}) and $I(\zeta(\cdot),\eta)$ by (\ref{Y2}). In this section we will prove a {\it strong local} asymptotic result, showing that $\xi(\cdot,t)$ converges as $t\ra\infty$  to the equilibrium solution $\xi_p(\cdot)$ defined in (\ref{P1}).  In order to do this we shall need to impose further assumptions on the function $h(\cdot)$, beyond those required for Theorem 3.1. 

We proceed in parallel to the argument followed in $\S3$ of \cite{cd}.  We first linearize (\ref{A3}) with $\rho(\zeta(\cdot),\eta)$ given by (\ref{T2}), (\ref{Y2}) about the equilibrium $\xi_p(\cdot)$ and study its stability.   To do this we denote by $A,B$ the operators
\be \label{A4}
A\zeta(y) \ = \ \zeta(y)-y\frac{d\zeta(y)}{dy} \ , \quad B\zeta(y) \ = \ \frac{1}{p}\zeta(y)-\left[h(y)+\frac{y}{p}\right]\frac{d\zeta(y)}{dy} \ . 
\ee
Observe now that the functional $\rho(\cdot)$ of (\ref{T1}) satisfies the identity
\be \label{B4}
\rho(\zeta(\cdot))-\frac{1}{p} \ = \ -\frac{\left[dI(\zeta(\cdot)), \ B\{\zeta(\cdot)-\xi_p(\cdot)\}\right]}{pI(\zeta(\cdot))+\left[dI(\zeta(\cdot)),  \ A\zeta(\cdot)\right]} \ .
\ee
Let $\xi(\cdot,t), \ t\ge 0,$ be the solution of  (\ref{A3}) with 
$\rho(t)=\rho(\xi(\cdot,t),\eta(t)), \ t\ge 0,$ where $\rho(\zeta(\cdot),\eta)$ is defined by (\ref{T2}), (\ref{Y2}), which is constructed in Proposition 3.1. We denote by $\ga:[0,\infty)\ra \R$  the function
\be \label{C4}
\ga(t) \ = \ \rho(\xi(\cdot,t),\eta(t))-\rho_p(\xi(\cdot,t))  \ , \quad  t\ge 0\  ,
\ee
where $\rho_p(\zeta(\cdot))$ is given by the RHS of (\ref{T1}) with  $I(\cdot)$ as in (\ref{L2}), (\ref{M2}).
We shall regard $\ga(\cdot)$ as a {\it given} function, for which we can derive some properties using Lemma 2.4 and Theorem 3.1.
We may rewrite (\ref{A3}) with $\rho(t)=\rho(\xi(\cdot,t),\eta(t))$  as
\begin{multline} \label{D4}
\frac{\pa \xi(\cdot,t)}{\pa t}+[B+\ga(t)A]\{\xi(\cdot,t) -\xi_p(\cdot)\} \\
-\frac{\left[dI_p(\xi(\cdot,t)), \ B\{\xi(\cdot,t)-\xi_p(\cdot)\}\right]}{pI_p(\xi(\cdot,t)) 
+\left[dI_p(\xi(\cdot,t),  \ A\xi(\cdot,t)\right]}A\xi(\cdot,t)  
+\ga(t)A\xi_p(\cdot) \ = \ 0 \ ,
\end{multline} 
where $I_p(\cdot)$ is the functional $I(\cdot)$ defined by (\ref{L2}), (\ref{M2}). 
Setting $\tilde{\xi}(\cdot,t)=\xi(\cdot,t)-\xi_p(\cdot)$, we see that the linearization of (\ref{D4}) about $\xi_p(\cdot)$   is given by 
\be \label{E4}
\frac{d\tilde{\xi}(t)}{dt} +[B+\ga(t)A]\tilde{\xi}(t) -[dI_p(\xi_p),B\tilde{\xi}(t)]\frac{A\xi_p}{pI_p(\xi_p)+[dI_p(\xi_p), \ A\xi_p]}+\ga(t)A\xi_p
 \ = \ 0 \ .
\ee

If we set $\ga(\cdot)\equiv 0$ in (\ref{E4}) then the solution satisfies the equation
\be \label{F4}
\tilde{\xi}(t) \ = \ e^{-Bt}\tilde{\xi}(0) 
+\int_0^t ds \ [dI_p(\xi_p),B\tilde{\xi}(s)]\frac{e^{-B(t-s)}A\xi_p}{pI_p(\xi_p)+[dI_p(\xi_p), \ A\xi_p]}
 \ .
\ee
Letting $u(t)=[dI_p(\xi_p),B\tilde{\xi}(t)]$, we see from  (\ref{F4}) that $u(\cdot)$ is the solution to the Volterra integral equation 
\be \label{G4}
u(t)+\int_0^t  K(t-s)u(s) \ ds \ = \ g(t)   \ , \quad t>0,
\ee
where the functions $K,g$ are given by
\begin{multline} \label{H4}
K(t) \   = \  -\frac{[dI_p(\xi_p),e^{-Bt}BA\xi_p]}{pI_p(\xi_p)+[dI_p(\xi_p), \ A\xi_p]}  \ , \ 
g(t) \ = \ [dI_p(\xi_p),e^{-Bt}B\tilde{\xi}(0)] \quad t\ge 0.
\end{multline}
We proceed now as in \cite{cd} to obtain properties of the function $h(\cdot)$, which will imply that the solution to (\ref{G4}) satisfies $\lim_{t\ra\infty}u(t)=0$. 
\begin{lem}
 Assume that  the function $h:[\ve_0,\infty)\ra\R$ is positive, continuous, decreasing, convex, and  $C^2$ on $(\ve_0,\infty)$. Assume further that $h(\cdot)$ satisfies the inequalities 
\be \label{I4}
\sup_{y\ge\ve_0+1} \{y|h'(y)|\}<\infty, \quad  \left[\frac{y}{p}+h(y)\right]yh''(y)+h'(y)[h(y)-yh'(y)]  \ \ge  \ 0, \quad y> \ve_0 \ .
\ee
Then $K(\cdot)$ defined by (\ref{H4}) has the property that the function $t\ra e^{t/p}K(t), \ t\ge0,$ is positive and decreasing.  
\end{lem}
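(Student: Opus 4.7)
My plan is to exploit the first-order structure of $B$ together with the ODE (\ref{N1}) for $\xi_p$ so as to reduce $BA\xi_p$ to a closed-form pointwise expression, and then push it forward along the characteristics of $e^{-Bt}$. First I would recast (\ref{N1}) as $\xi_p(y)=ph(y)+(ph(y)+y)\xi_p'(y)$, which instantly yields $A\xi_p = ph(1+\xi_p')$ and $1+\xi_p'(y) = (y+\xi_p(y))/(ph(y)+y)$. Differentiating (\ref{N1}) once more in $y$ yields the additional identity $(ph(y)+y)\xi_p''(y) = -ph'(y)(1+\xi_p'(y))$. Substituting these into $B[ph(1+\xi_p')]$ and simplifying, the $\xi_p''$ contribution cancels and one is left with the key algebraic identity
\[
 BA\xi_p(y) \;=\; (1+\xi_p'(y))\bigl[h(y)-yh'(y)\bigr] \;=\; \frac{(y+\xi_p(y))(h(y)-yh'(y))}{ph(y)+y}.
\]

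Each factor on the right is positive, since $h>0$, $h'\le 0$ force $h-yh'>0$, and $y+\xi_p>0$; hence $BA\xi_p$ is positive. To see that $BA\xi_p$ is nonincreasing in $y$ I would differentiate and substitute the formula for $\xi_p''$ a second time, obtaining
\[
 \frac{d}{dy}(BA\xi_p)(y) \;=\; -\frac{1+\xi_p'(y)}{ph(y)+y}\Bigl\{p h'(y)[h(y)-yh'(y)] + (ph(y)+y)\,y h''(y)\Bigr\}.
\]
The expression in braces is precisely $p$ times the LHS of the second inequality in (\ref{I4}), hence is nonnegative; therefore $BA\xi_p$ is nonincreasing in $y$. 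This is the step where the hypothesis (\ref{I4}) enters, and the matching of the two expressions is what makes it the natural condition on $h$.

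Since $B=\tfrac{1}{p}I - (h+y/p)\partial_y$ is first order, the method of characteristics gives $(e^{-Bt}f)(y) = e^{-t/p}f(\psi_t(y))$, where $\psi_t(y)$ is the forward flow of $\dot y = h(y)+y/p$ starting at $y\ge\ve_0$; in particular $\psi_t(y)\ge y$ is strictly increasing in $t$. Combining this with Step~1,
\[
 e^{t/p}K(t) \;=\; -\frac{1}{D}\int_{\ve_0}^\infty dI_p(\xi_p;y)\,(BA\xi_p)(\psi_t(y))\,dy,
\]
where $D := pI_p(\xi_p)+[dI_p(\xi_p),A\xi_p]$ is $t$-independent. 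The denominator is positive: the $\eta\to 0$ limit of (\ref{AQ2}) yields $D = -[dI_p(\xi_p),y(1+\xi_p')]$, and $dI_p(\xi_p;\cdot)$ is a negative function by the $\eta\to 0$ limit of (\ref{AL2}).

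To conclude, $(BA\xi_p)(\psi_t(y))$ is a positive function of $y$ which, for each fixed $y$, is nonincreasing in $t$, because $\psi_t(y)$ increases in $t$ and $BA\xi_p$ decreases in $y$. Since also $dI_p(\xi_p;y)<0$, the integrand above is negative with $t\mapsto |\,{\cdot}\,|$ nonincreasing, so $e^{t/p}K(t)$ is positive and nonincreasing in $t$. The main obstacle in the argument is the algebraic collapse in the first step: one must verify both that the $\xi_p''$ contribution in $BA\xi_p$ cancels exactly and that, upon differentiating the residual product, the assumption (\ref{I4}) appears as the exact factor controlling the sign of $\tfrac{d}{dy}(BA\xi_p)$.
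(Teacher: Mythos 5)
Your argument is correct and follows essentially the same path as the paper's: you derive the same closed form $BA\xi_p=(1+\xi_p')(h-yh')$ (the paper gets this via the commutation relation (\ref{L4}) rather than by direct substitution from (\ref{N1}), but the identity is the same), and your condition that $BA\xi_p$ be nonincreasing in $y$ is literally equivalent to the paper's condition $(B-\tfrac1p)BA\xi_p\ge0$, since $(B-\tfrac1p)\zeta=-(h+y/p)\zeta'$. The remaining steps — pushing $BA\xi_p$ forward along the characteristics of $e^{-Bt}$, using the negativity of $dI_p(\xi_p;\cdot)$ and positivity of the denominator via (\ref{AQ2}), (\ref{K4}) — match the paper's proof.
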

\begin{proof}
We first show that the denominator in the formula (\ref{H4}) for $K(\cdot)$ is positive. To see this we use the identity 
\be \label{J4}
pI_p(\xi_p)+[dI_p(\xi_p), \ A\xi_p] \ = \ -[dI_p(\xi_p), \ y(1+D\xi_p)] \ ,
\ee
which is a particular case of (\ref{AQ2}).  Next we have from (\ref{N1}) that
\be \label{K4}
1+D\xi_p(y) \ = \  \frac{\xi_p(y)+y}{ph(y)+y} \ , \quad y\ge \ve_0 \ .
\ee
Since $ph_\infty\le \xi_p(y)\le ph(y), \ y\ge \ve_0,$ it follows from (\ref{K4}) that there are positive upper and lower bounds on $1+D\xi_p(y)$, which are uniform for $y\ge \ve_0$.  Now from (\ref{J4}) we infer that the denominator is finite and positive. 

Next we show that $K(\cdot)$ is a positive function. It is evident from (\ref{N1}), (\ref{K4}) that $A\xi_p(\cdot)$ is positive and $\|A\xi_p\|_\infty<\infty$. We can also see that the function $BA\xi_p(\cdot)$ is positive. To show this we use the commutation relation
\be \label{L4}
BA-AB \ =  \ \left[h(y)-yh'(y)\right]\frac{d}{dy} \ .
\ee
From (\ref{N1}), (\ref{L4}) we have that
\begin{multline} \label{M4}
BA\xi_p(y) \ = \ Ah(y) +\left[h(y)-yh'(y)\right]\frac{d\xi_p(y)}{dy} \\
= \ \left[h(y)-yh'(y)\right][1+D\xi_p(y)] \ ,
\end{multline}
whence the positivity of $BA\xi_p(\cdot)$ follows from (\ref{K4}), (\ref{M4}). Since $dI_p(\xi_p)$ is a negative function, the positivity of  $BA\xi_p(\cdot)$ implies the positivity of $K(\cdot)$.  Note that the first inequality of (\ref{I4}) is needed in order to guarantee that $K(t)$ is finite for $t\ge 0$. 

To show that the function $t\ra e^{t/p}K(t)$ is decreasing we  need to show that
\begin{multline} \label{N4}
(B-1/p)BA\xi_p(y) \ = \ B\left[\frac{1}{p}A\xi_p(y)-yh'(y)\frac{d\xi_p(y)}{dy}-yh'(y)\right]-\frac{1}{p}BA\xi_p(y) \\
= \ \left[\frac{y}{p}+h(y)\right] \left[yh''(y)+h'(y)\right]\frac{d\xi_p(y)}{dy}-yh'(y)B\frac{d\xi_p(y)}{dy}-B[yh'(y)]  \ .
\end{multline}
is positive. Using the commutator relation
\be \label{O4}
B\frac{d}{dy}-\frac{d}{dy}B \ = \ \left[\frac{1}{p}+h'(y)\right]\frac{d}{dy} \ ,
\ee
we have from (\ref{N4}) that
\begin{multline} \label{P4}
(B-1/p)BA\xi_p(y) \ = \ \left\{\left[\frac{y}{p}+h(y)\right] \left[yh''(y)+h'(y)\right]-yh'(y)\left[\frac{1}{p}+h'(y)\right]\right\} \frac{d\xi_p(y)}{dy} \\
  -yh'(y)^2-B[yh'(y)]  \\
  = \  \left\{\left[\frac{y}{p}+h(y)\right]yh''(y)+h'(y)[h(y)-yh'(y)]\right\} \left[\frac{d\xi_p(y)}{dy}+1\right] \ .
\end{multline}
Now the second inequality of (\ref{I4}) and (\ref{K4}) imply that the expression in (\ref{P4}) is non-negative. 
\end{proof}
Similarly to Proposition 3.1 of \cite{cd} we have the following:
\begin{proposition}
Assume $h(\cdot)$ satisfies the conditions of Lemma 4.1.  Then the linear evolution equation (\ref{F4}) is asymptotically stable in the following sense: Let the initial data $\tilde{\xi}_0:[\ve_0,\infty)\ra\R$ satisfy $\|\tilde{\xi}_0(\cdot)\|_{1,\infty}<\infty$. Then there is for any $q>p$ a constant $C_q$ depending on $q$ such that
\be \label{AB4}
\|\tilde{\xi}(\cdot,t)\|_{1,\infty} \ \le C_qe^{-t/q}\|\tilde{\xi}_0(\cdot)\|_{1,\infty} \quad {\rm when \ } t\ge 0 \ .
\ee
\end{proposition}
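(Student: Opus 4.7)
The plan is to follow the template of Proposition~3.1 of \cite{cd}, reducing (\ref{F4}) to the scalar Volterra equation (\ref{G4}) for $u(t)=[dI_p(\xi_p),B\tilde\xi(t)]$ and then applying Laplace/Paley--Wiener methods to the kernel $K$.

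First I would analyze the transport semigroup $e^{-Bt}$. Solving $\pa w/\pa t+Bw=0$ along the characteristics $\dot y(s)=-[h(y(s))+y(s)/p]$ yields the closed form $e^{-Bt}\zeta(y)=e^{-t/p}\zeta(Y(y,t))$, where the back-characteristic satisfies $Y(y,t)\ge ye^{t/p}\ge\ve_0$ because the drift is strictly negative. Differentiating the flow gives $\pa_y Y(y,t)=\exp\left[t/p+\int_0^t h'(y(s))\,ds\right]\le e^{t/p}$ (since $h'\le 0$), and combined with $y/Y(y,t)\le e^{-t/p}$ this produces the sharp semigroup bound $\|e^{-Bt}\zeta\|_{1,\infty}\le Ce^{-t/p}\|\zeta\|_{1,\infty}$. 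Together with the finiteness of $\int_{\ve_0}^\infty|dI_p(\xi_p)(y)|\,dy$ (a consequence of the decay $a(y)\sim 1/(\al_0 y^2)$ as $y\to\infty$ noted below (\ref{M2}), and the boundedness $ph_\infty\le\xi_p(y)\le ph(y)$) this gives $|g(t)|\le Ce^{-t/p}\|\tilde\xi(\cdot,0)\|_{1,\infty}$ and $|K(t)|\le Ce^{-t/p}$, so in particular $g,K\in L^1(\R^+)$.

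The heart of the argument is the spectral analysis of $1+\hat K(s)$. Lemma~4.1 supplies the crucial structural fact that $K_1(t):=e^{t/p}K(t)$ is positive and monotone decreasing on $[0,\infty)$. Using formula (\ref{M4}) together with $h(y)-yh'(y)\to h_\infty$ and $1+D\xi_p(y)\to 1$ as $y\to\infty$ (from (\ref{K4})), one sees that $K_1$ has a strictly positive limit at infinity, so $K_1\notin L^1$ and $\hat K$ has a simple-pole singularity at $s=-1/p$. Proceeding exactly as in Proposition~3.1 of \cite{cd}, positivity and monotonicity of $K_1$ force $1+\hat K(s)\ne 0$ throughout the open half-plane $\Re s>-1/p$: non-vanishing on $\Re s\ge 0$ comes from the strict inequality $|\hat K_1(i\omega)|<\hat K_1(0)$ for $\omega\ne 0$, which rules out $\hat K(i\omega)=-1$, while $\omega=0$ is excluded because $\hat K(0)>0$; extension to the strip $-1/p<\Re s<0$ follows from analyticity of $\hat K$ on this half-plane combined with a continuation argument exploiting the blow-up of $\hat K$ at the boundary point $s=-1/p$.

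Once this non-vanishing is in hand, the Paley--Wiener theorem applied to the shifted equation for $e^{t/q}u(t)$ yields, for every $q>p$, a constant $C_q$ with $|u(t)|\le C_q e^{-t/q}\|\tilde\xi(\cdot,0)\|_{1,\infty}$. Substituting this back into the representation (\ref{F4}) and using the semigroup estimate together with the boundedness of $A\xi_p$ and $DA\xi_p=-yD^2\xi_p$ (which follow from differentiating (\ref{K4}) and using the hypotheses on $h$) produces (\ref{AB4}). I expect the main obstacle to be the spectral step: because $K_1\notin L^1$, Paley--Wiener cannot be applied at the critical rate $1/p$ itself, which is precisely why the statement is formulated with arbitrary $q>p$ rather than $q=p$. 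The delicate inequality (\ref{I4}) in Lemma~4.1 is exactly the structural input needed to enforce the monotonicity of $K_1$, and this monotonicity is what excludes the boundary zeros of $1+\hat K$ that would otherwise obstruct the contour-shift.
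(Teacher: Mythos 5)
Your outline matches the paper's: you derive the sharp semigroup bound $\|e^{-Bt}\zeta\|_{1,\infty}\le e^{-t/p}\|\zeta\|_{1,\infty}$ from the characteristic flow (this is precisely (\ref{AC4})--(\ref{AE4})), reduce to the Volterra equation (\ref{G4}) for $u(t)=[dI_p(\xi_p),B\tilde\xi(t)]$, invoke the monotonicity of $K_1(t)=e^{t/p}K(t)$ supplied by Lemma~4.1, deduce $|u(t)|\le C_qe^{-t/q}$, and back-substitute into (\ref{F4}) and its $y$-derivative (\ref{AP4}) using the bound (\ref{AR4}) on $DA\xi_p$. The one place where the paper is terse --- it simply says ``from Lemma~4.1 and results on Volterra integral equations \cite{grip}'' --- is exactly where your write-up has a gap.

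Your spectral argument for the non-vanishing of $1+\hat K$ does not close. Since $K(t)=e^{-t/p}K_1(t)$, we have $\hat K(i\omega)=\hat K_1(1/p+i\omega)$, and the strict inequality $|\hat K_1(1/p+i\omega)|<\hat K_1(1/p)=\hat K(0)$ that you cite rules out $\hat K(i\omega)=-1$ only if $\hat K(0)\le 1$. But $\hat K(0)=\int_0^\infty K(t)\,dt=-[dI_p(\xi_p),A\xi_p]/\{pI_p(\xi_p)+[dI_p(\xi_p),A\xi_p]\}$, and nothing in the hypotheses forces this to be $\le 1$; it can exceed $1$ when $|[dI_p(\xi_p),A\xi_p]|$ is close to $pI_p(\xi_p)$. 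Your proposed ``continuation argument exploiting the blow-up at $s=-1/p$'' to cover the strip $-1/p<\Re s<0$ is also unsubstantiated. The route the paper actually takes sidesteps Paley--Wiener entirely: for any $q>p$ set $v(t)=e^{t/q}u(t)$, $\tilde K(t)=e^{t/q}K(t)=e^{-t(1/p-1/q)}K_1(t)$, $\tilde g(t)=e^{t/q}g(t)$; then $v+\tilde K*v=\tilde g$ with $\tilde K$ nonnegative, nonincreasing (product of two such functions) and integrable, and $\tilde g$ bounded. The monotonicity-based Volterra bound of Gripenberg--Londen--Staffans (Theorem~9.1 of Chapter~9 of \cite{grip}, referenced in Remark~2 following Lemma~7.2 of this paper) then gives $\|v\|_\infty\le C\|\tilde g\|_\infty$ with no spectral non-vanishing hypothesis at all, which is exactly where the positivity and monotonicity furnished by Lemma~4.1 are being used. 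You should replace the Paley--Wiener step with this direct application of the monotone-kernel resolvent bound.
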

\begin{proof}
Observe that for a function $\zeta:[\ve_0,\infty)\ra\R$ we have
\be \label{AC4}
e^{-Bt}\zeta(y) \ = \ e^{-t/p}\zeta(y_p(0)), \ \quad y\ge \ve_0,
\ee
where $y_p(\cdot)$ is the solution of (\ref{C3}) with $\rho(\cdot)\equiv 1/p$.  Evidently we have from (\ref{AC4}) that $\|e^{-Bt}\zeta(\cdot)\|_\infty\le e^{-t/p}\|\zeta(\cdot)\|_\infty$.  From (\ref{U3}) we have that
\be \label{AD4}
\frac{\pa}{\pa y} e^{-Bt}\zeta(y) \ = \ \exp\left[\int_0^th'(y_p(s)) \ ds\right]D\zeta(y_p(0)) \ .
\ee
From (\ref{N3}) we see that $y_p(0)\ge e^{t/p}y, \ y\ge \ve_0$, whence we conclude from (\ref{AD4}) that
\be \label{AE4}
\sup_{y\ge\ve_0}\left| y\frac{\pa}{\pa y} e^{-Bt}\zeta(y) \right| \ \le \ e^{-t/p}\sup_{y\ge \ve_0}\left|yD\zeta(y)\right| \ .
\ee
It follows now from (\ref{AC4}), (\ref{AE4}) that $\|e^{-Bt}\zeta(\cdot)\|_{1,\infty}\le e^{-t/p}\|\zeta(\cdot)\|_{1,\infty}$ for $t\ge 0$.  We conclude that the formula (\ref{H4}) for $g(t)$ is bounded as
\be \label{AF4}
\left|[dI_p(\xi_p),e^{-Bt}B\tilde{\xi}(0)]\right| \ \le \  Ce^{-t/p}\|\tilde{\xi}_0(\cdot)\|_{1,\infty}  \quad {\rm for \ } t \ge 0,
\ee
where $C$ is a constant. 

From Lemma 4.1 and results on Volterra integral equations \cite{grip},  we  see that the solution $u(t), \ t\ge 0,$ of (\ref{G4}) with $K(\cdot),  \ g(\cdot)$ given by (\ref{H4})   has the property that the function $t\ra e^{t/q}u(t)$ is bounded by a constant times $\|\tilde{\xi}_0(\cdot)\|_{1,\infty}$ for any $q>p$. 
 It follows now, upon using (\ref{AC4}) to estimate the RHS of (\ref{F4}), that $\|\tilde{\xi}(\cdot,t)\|_\infty\le C_qe^{-t/q}\|\tilde{\xi}_0(\cdot)\|_\infty$ for some constant $C_q$ depending on $q>p$. 
Differentiating (\ref{F4}) with respect to the space variable,  we have from (\ref{AD4}) that
\begin{multline} \label{AP4}
D\tilde{\xi}(y,t) \ = \ \exp\left[\int_0^th'(y_p(s)) \  ds \right]D\tilde{\xi}_0(y_p(0))+\\
\int_0^t ds \ \exp\left[\int_s^th'(y_p(s')) \ ds'\right][dI_p(\xi_p),B\tilde{\xi}(s)]\frac{DA\xi_p(y_p(s))}{pI_p(\xi_p)+[dI_p(\xi_p), \ A\xi_p]}    \ .
\end{multline}
To estimate the RHS of (\ref{AP4})  we note on differentiating (\ref{K4}) that
\begin{multline} \label{AQ4}
yDA\xi_p(y) \ = \ -y^2D^2\xi_p(y) \\
 = \ \frac{[ph'(y)+1]y^2\xi_p(y)}{[ph(y)+y]^2}-\frac{y^2D\xi_p(y)}{ph(y)+y}- \frac{py^2[h(y)-yh'(y)]}{[ph(y)+y]^2} \ .
\end{multline}
We may bound the RHS of (\ref{AQ4}) using (\ref{R4}) to obtain the inequality
\be \label{AR4}
\int_{\ve_0}^{\ve_0+1}|yDA\xi_p(y)|  \ dy +\sup_{y\ge \ve_0+1} |yDA\xi_p(y)| \ < \  \infty \ .
\ee
Using (\ref{AR4}) to bound the RHS of (\ref{AP4}), we conclude that  
\be \label{AS4}
\sup_{y\ge\ve_0}|yD\tilde{\xi}(y,t)| \ \le \ C_qe^{-t/q}\|\tilde{\xi}_0(\cdot)\|_\infty \quad {\rm for\ } t\ge 0 \ ,
\ee
where $C_q$ is a constant depending only on $q>p$. 
\end{proof}
We generalize the result of Proposition 4.1 to apply to the non-linear PDE (\ref{D4}) by considering (\ref{D4}) as a perturbation of (\ref{E4}) with $\ga(\cdot)\equiv 0$  of the form
\begin{multline} \label{AT4}
\frac{d\tilde{\xi}(t)}{dt} +[B+\{\ga(t)+\del_1(\tilde{\xi}(t))\}A]\tilde{\xi}(t) \\
+\left\{\ga(t)-\frac{[dI(\xi_p),B\tilde{\xi}(t)]+\del_2(\tilde{\xi}(t))}{pI(\xi_p)+[dI(\xi_p), \ A\xi_p]}\right\}A\xi_p
 \ = \ 0 \ ,
\end{multline}
where $\del_1(\cdot), \ \del_2(\cdot)$ are real valued functionals of $C^1$ functions $\tilde{\zeta}:[\ve_0,\infty)\ra\R$. If we take
\be \label{AU4}
\del_1(\tilde{\zeta}(\cdot)) \ = \ -\frac{\left[dI_p(\xi_p+\tilde{\zeta}), \ B\tilde{\zeta}\right]}{pI_p(\xi_p+\tilde{\zeta})+\left[dI_p(\xi_p
+\tilde{\zeta}),  \ A\{\xi_p+\tilde{\zeta}\}\right]} \  ,
\ee 
and
 \begin{multline} \label{AV4}
\del_2(\tilde{\zeta}(\cdot)) \ = \  \\
 \left[dI_p(\xi_p+\tilde{\zeta}), \ B\tilde{\zeta}\right]
\frac{pI_p(\xi_p)+\left[dI_p(\xi_p
),  \ A\xi_p\right]}
{pI_p(\xi_p+\tilde{\zeta})+\left[dI_p(\xi_p
+\tilde{\zeta}),  \ A\{\xi_p+\tilde{\zeta}\}\right]}- \left[dI_p(\xi_p), \ B\tilde{\zeta}\right] \  ,
\end{multline} 
then (\ref{D4}), (\ref{AT4}) are equivalent. 
\begin{lem}
Assume $h(\cdot)$ satisfies (\ref{M1}). Then the functionals $\del_1(\cdot), \ \del_2(\cdot)$ defined by (\ref{AU4}), (\ref{AV4})  are Lipschitz continuous close to $\zeta(\cdot)\equiv 0$ in the $m=1$ norm (\ref{E3}).  In particular,  there exist constants $C,\nu>0$ such that
\begin{eqnarray}  \label{AW4}
|\del_1(\tilde{\zeta}_1)-\del_1(\tilde{\zeta}_2)| \ &\le& \  C\|\tilde{\zeta}_1-\tilde{\zeta}_2\|_{1,\infty} \ , \\
|\del_2(\tilde{\zeta}_1)-\del_2(\tilde{\zeta}_2)| \ &\le& \  
C\{\|\tilde{\zeta}_1\|_{1,\infty}+\|\tilde{\zeta}_2\|_{1,\infty}\}\|\tilde{\zeta}_1-\tilde{\zeta}_2\|_{1,\infty} \ , \nonumber
\end{eqnarray}
provided  $\|\tilde{\zeta}_j\|_{1,\infty}<\nu, \ j=1,2$.
\end{lem}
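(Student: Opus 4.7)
The plan is to decompose both $\delta_1(\cdot)$ and $\delta_2(\cdot)$ as rational expressions whose numerators and denominators are smooth in $\tilde{\zeta}$ with respect to $\|\cdot\|_{1,\infty}$, and then apply elementary quotient-rule bookkeeping. Three preparatory ingredients will do the whole job. First, $A,B$ of (\ref{A4}) are bounded from $\|\cdot\|_{1,\infty}$ to $L^\infty([\ve_0,\infty))$: the estimate $\|A\tilde{\zeta}\|_\infty\le 2\|\tilde{\zeta}\|_{1,\infty}$ is immediate, and since (\ref{M1}) together with $y\ge\ve_0>0$ gives $\sup_{y\ge\ve_0}h(y)/y<\infty$, also $\|B\tilde{\zeta}\|_\infty\le C\|\tilde{\zeta}\|_{1,\infty}$. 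Second, differentiating (\ref{L2}) yields $dI_p(\zeta)(y)=-pK_p a(y)/\{\al_0^p[b(y)+\zeta(y)]^{p+1}\}$, and since $\xi_p(y)+y$ stays bounded below by a positive constant, there exists $\nu>0$ such that for $\|\tilde{\zeta}_j\|_\infty<\nu$,
\begin{equation*}
|dI_p(\xi_p+\tilde{\zeta})(y)|\le Ca(y),\quad |dI_p(\xi_p+\tilde{\zeta}_1)(y)-dI_p(\xi_p+\tilde{\zeta}_2)(y)|\le Ca(y)|\tilde{\zeta}_1(y)-\tilde{\zeta}_2(y)|,
\end{equation*}
with $a\in L^1$ by (\ref{M2}) and $y^2a(y)\to 1/\al_0$. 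Third, identity (\ref{J4}) expresses the denominator $D(\tilde{\zeta}):=pI_p(\xi_p+\tilde{\zeta})+[dI_p(\xi_p+\tilde{\zeta}),A(\xi_p+\tilde{\zeta})]$ at $\tilde{\zeta}=0$ as $D(0)=-[dI_p(\xi_p),y(1+D\xi_p)]$, which is strictly positive by the negativity of $dI_p(\xi_p)$ and the positivity of $1+D\xi_p$ from (\ref{K4}); combined with the Lipschitz continuity of $D(\cdot)$ in $\|\cdot\|_{1,\infty}$ (immediate from the preceding bounds), this gives $D(\tilde{\zeta})\ge D(0)/2$ whenever $\nu$ is sufficiently small.

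With these in hand the bound on $\delta_1$ is straightforward. Writing $\delta_1(\tilde{\zeta})=N(\tilde{\zeta})/D(\tilde{\zeta})$ with $N(\tilde{\zeta})=-[dI_p(\xi_p+\tilde{\zeta}),B\tilde{\zeta}]$, one has $|N(\tilde{\zeta})|\le C\|\tilde{\zeta}\|_{1,\infty}$, and the splitting
\begin{equation*}
N(\tilde{\zeta}_1)-N(\tilde{\zeta}_2)=-[dI_p(\xi_p+\tilde{\zeta}_1)-dI_p(\xi_p+\tilde{\zeta}_2),B\tilde{\zeta}_1]-[dI_p(\xi_p+\tilde{\zeta}_2),B(\tilde{\zeta}_1-\tilde{\zeta}_2)]
\end{equation*}
together with the preparatory estimates yields $|N(\tilde{\zeta}_1)-N(\tilde{\zeta}_2)|\le C\|\tilde{\zeta}_1-\tilde{\zeta}_2\|_{1,\infty}$. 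The quotient identity $\delta_1(\tilde{\zeta}_1)-\delta_1(\tilde{\zeta}_2)=[N(\tilde{\zeta}_1)-N(\tilde{\zeta}_2)]/D(\tilde{\zeta}_2)+N(\tilde{\zeta}_1)[D(\tilde{\zeta}_2)-D(\tilde{\zeta}_1)]/[D(\tilde{\zeta}_1)D(\tilde{\zeta}_2)]$ then delivers the first inequality of (\ref{AW4}).

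For $\delta_2$ the crucial observation is that the subtracted term $[dI_p(\xi_p),B\tilde{\zeta}]$ in (\ref{AV4}) cancels the linear-in-$\tilde{\zeta}$ part of the main fraction. Elementary algebra rewrites (\ref{AV4}) as
\begin{equation*}
\delta_2(\tilde{\zeta})=\frac{D(0)}{D(\tilde{\zeta})}[dI_p(\xi_p+\tilde{\zeta})-dI_p(\xi_p),B\tilde{\zeta}]+\frac{D(0)-D(\tilde{\zeta})}{D(\tilde{\zeta})}[dI_p(\xi_p),B\tilde{\zeta}].
\end{equation*}
In each summand one factor is $O(\|\tilde{\zeta}\|_{1,\infty})$ by the preparatory bounds and the other is also $O(\|\tilde{\zeta}\|_{1,\infty})$ (either $B\tilde{\zeta}$ or $D(0)-D(\tilde{\zeta})$), so each summand is $O(\|\tilde{\zeta}\|_{1,\infty}^2)$. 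Expanding the difference of products via $a_1b_1-a_2b_2=(a_1-a_2)b_1+a_2(b_1-b_2)$, every resulting piece is bounded by $C(\|\tilde{\zeta}_1\|_{1,\infty}+\|\tilde{\zeta}_2\|_{1,\infty})\|\tilde{\zeta}_1-\tilde{\zeta}_2\|_{1,\infty}$, where in the ``$a_2(b_1-b_2)$'' term the extra $\|\tilde{\zeta}\|_{1,\infty}$ factor coming from $|a_2|=O(\|\tilde{\zeta}_2\|_{1,\infty}^2)$ is absorbed by the smallness $\|\tilde{\zeta}_2\|_{1,\infty}<\nu$.

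The only step requiring real care, and therefore the main obstacle, is the uniform-in-$y$ pointwise Lipschitz estimate for $dI_p$ recorded above: one must verify that $b(y)+\xi_p(y)+\tilde{\zeta}(y)$ stays bounded below by a positive constant uniformly on $[\ve_0,\infty)$, which follows from (\ref{N1}), the positivity of $h(\cdot)$ and the bound $\xi_p\ge ph_\infty>0$, together with the restriction $\|\tilde{\zeta}\|_\infty<\nu$. Once this uniformity is secured, everything else reduces to the product-rule bookkeeping indicated above.
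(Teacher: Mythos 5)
Your proof is correct and follows essentially the same route as the paper's: the key ingredients are the Lipschitz continuity of $I_p$ and $dI_p$ in $\|\cdot\|_\infty$ (the paper's inequality (\ref{AX4})), the operator bound on $B$, and quotient-rule bookkeeping together with the lower bound on the denominator. The paper states these ingredients and asserts "the result follows," while you usefully make explicit the algebraic rewriting of $\delta_2$ that exhibits the cancellation of the linear part; that is the right way to fill in the omitted step, and your minor detour through the bound $\xi_p\ge ph_\infty$ (which strictly would need $h$ decreasing) is harmless since $y+\xi_p(y)\ge\ve_0>0$ already suffices.
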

\begin{proof}
The function $I_p(\zeta(\cdot))$ defined by (\ref{L2})  is infinitely differentiable with respect to $\zeta(\cdot)$, and has the property  there exist constants $C_1,\nu_1>0$  such that
\begin{multline} \label{AX4}
\left|I_p(\xi_p+\tilde{\zeta}_1)-I_p(\xi_p+\tilde{\zeta}_2) \right| +\|dI_p(\xi_p+\tilde{\zeta}_1)-dI_p(\xi_p+\tilde{\zeta}_2) \|_{L^1(\ve_0,\infty)} \\
 \le \  C_1\|\tilde{\zeta}_1-\tilde{\zeta}_2\|_\infty \quad {\rm for \ } \|\tilde{\zeta}_j\|_\infty <\nu_1, \ j=1,2.
\end{multline}
The result follows from (\ref{AX4}) and the inequality \\
$\sup_{y>\ve_0} |B\zeta(y)|\le \left[1/p+\sup_{y\ge\ve_0}h(y)/y\right]\|\zeta(\cdot)\|_{1,\infty}$.
\end{proof}
Let $\del:[0,\infty)\ra\R$ be a continuous function and consider the linear PDE
\be \label{AY4}
\frac{d\tilde{\xi}(t)}{dt} +[B+\del(t)A]\tilde{\xi}(t) -[dI_p(\xi_p),B\tilde{\xi}(t)]\frac{A\xi_p}{pI_p(\xi_p)+[dI_p(\xi_p), \ A\xi_p]}
 \ = \ 0 \ .
\ee
The results of Proposition 4.1 extend to solutions of (\ref{AY4}) provided $\|\del(\cdot)\|_\infty$ is sufficiently small. Parallel to Lemma 3.3 of \cite{cd} we have the following:
\begin{lem}
Assume $h(\cdot)$ satisfies the conditions of Lemma 4.1. Assume further that $\del:[0,\infty)\ra\R$ is a continuous function and $\|\del(\cdot)\|_\infty\le1/p$. Then the linear evolution equation (\ref{AY4}) with initial data $\tilde{\xi}_0:[\ve_0,\infty)\ra\R$ satisfying $\|\tilde{\xi}_0(\cdot)\|_{1,\infty}<\infty$ has a unique solution globally in time, $\tilde{\xi}(y,t;\del(\cdot)), \ y\ge \ve_0,t\ge0,$ which has $\|\tilde{\xi}(\cdot,t;\del(\cdot))\|_{1,\infty}<\infty$ for all $t\ge 0$.  For any $q>p$ there exists $C_q,\ve_q>0$ such that if $\|\del(\cdot)\|_\infty<\ve_q$ then
\be \label{AZ4}
\|\tilde{\xi}(\cdot,t;\del(\cdot)\|_{1,\infty} \ \le C_qe^{-t/q}
\|\tilde{\xi}_0(\cdot)\|_{1,\infty} \quad {\rm when \ \ } t\ge0 \ .
\ee
\end{lem}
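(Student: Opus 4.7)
The plan is to mimic the proof of Proposition 4.1, replacing the one-parameter semigroup $e^{-Bt}$ by the two-parameter propagator $S_\delta(t,s)$ associated to the time-dependent operator $B+\delta(t)A$, and then treating the extra $\delta(t)A$ term as a perturbation of the Volterra problem analysed in Lemma 4.1.

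First I would construct $S_\delta(t,s)$ by the method of characteristics. Since
$$[B+\delta(t)A]\zeta(y) \ = \ [1/p+\delta(t)]\zeta(y)-[h(y)+(1/p+\delta(t))y]\zeta'(y),$$
the associated characteristic ODE is $dY/d\tau = -h(Y(\tau))-[1/p+\delta(\tau)]Y(\tau)$ with $Y(t)=y$, and the condition $\|\delta(\cdot)\|_\infty\le 1/p$ guarantees $1/p+\delta(\tau)\ge 0$, so the backward characteristic increases and in particular stays in $[\ve_0,\infty)$. The propagator then acts by
$$(S_\delta(t,s)\zeta)(y) \ = \ \exp\!\Bigl[-\!\int_s^t\!(1/p+\delta(\tau))\,d\tau\Bigr]\,\zeta(Y(s;y,t,\delta)),$$
and from (\ref{N3}) one sees $Y(s;y,t,\delta)\ge \exp\bigl[\int_s^t(1/p+\delta(\tau))d\tau\bigr]y$. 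As in (\ref{AC4})--(\ref{AE4}) this yields $\|S_\delta(t,s)\zeta\|_{1,\infty}\le \exp[-(t-s)/p+\int_s^t|\delta|]\|\zeta\|_{1,\infty}$; for $\|\delta\|_\infty<\ve_q$ small this is dominated by $e^{-(t-s)/q}\|\zeta\|_{1,\infty}$ for any $q>p$. Global existence and uniqueness of the $C^1$ solution $\tilde{\xi}(\cdot,t;\delta(\cdot))$ with $\|\tilde{\xi}(\cdot,t)\|_{1,\infty}<\infty$ for all $t$ then follow from a standard fixed-point argument applied to the Duhamel identity below (cf. Lemma 3.1).

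Next I would rewrite (\ref{AY4}) in Duhamel form as
$$\tilde{\xi}(t) \ = \ S_\delta(t,0)\tilde{\xi}_0 + \int_0^t S_\delta(t,s)\,[dI_p(\xi_p),B\tilde{\xi}(s)]\,\frac{A\xi_p}{pI_p(\xi_p)+[dI_p(\xi_p),A\xi_p]}\,ds,$$
and, applying the functional $[dI_p(\xi_p),B\,\cdot\,]$ to both sides, obtain a scalar Volterra equation
$$u(t)+\int_0^t K_\delta(t,s)\,u(s)\,ds \ = \ g_\delta(t),\qquad u(t):=[dI_p(\xi_p),B\tilde{\xi}(t)],$$
with kernel and forcing
$$K_\delta(t,s) \ = \ -\frac{[dI_p(\xi_p),\,BS_\delta(t,s)A\xi_p]}{pI_p(\xi_p)+[dI_p(\xi_p),A\xi_p]},\qquad g_\delta(t) \ = \ [dI_p(\xi_p),\,BS_\delta(t,0)\tilde{\xi}_0].$$
The bound on $S_\delta$ gives $|g_\delta(t)|\le C\,e^{-t/q}\|\tilde{\xi}_0(\cdot)\|_{1,\infty}$, generalising (\ref{AF4}). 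The kernel $K_\delta$ reduces, when $\delta\equiv 0$, to the convolution kernel $K(t-s)$ of (\ref{H4}), for which Lemma 4.1 shows that $e^{t/p}K(t)$ is positive and decreasing; writing $K_\delta(t,s)=K(t-s)+R_\delta(t,s)$, the remainder $R_\delta$ is linear in $\delta$ to leading order and satisfies $|R_\delta(t,s)|\le C\|\delta\|_\infty\,e^{-(t-s)/q}$, uniformly in $s\le t$, by differentiating $S_\delta$ in $\delta$ along the characteristic flow.

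At this point the Volterra theory of \cite{grip}, combined with the decay $e^{-t/p}$ of the resolvent of $K$ guaranteed by Lemma 4.1, yields a resolvent for $K_\delta$ decaying like $e^{-t/q}$ whenever $\|\delta\|_\infty<\ve_q$; inverting gives $|u(t)|\le C_q\,e^{-t/q}\|\tilde{\xi}_0(\cdot)\|_{1,\infty}$. Plugging this back into the Duhamel formula and estimating via the $S_\delta$ bound gives $\|\tilde{\xi}(\cdot,t;\delta(\cdot))\|_\infty\le C_qe^{-t/q}\|\tilde{\xi}_0(\cdot)\|_\infty$; differentiating Duhamel in $y$ and using the analogue of (\ref{AD4}) together with (\ref{AR4}) upgrades this to the full $\|\cdot\|_{1,\infty}$ bound (\ref{AZ4}).

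\emph{Main obstacle.} The critical step is transferring the exponential decay of the resolvent from the unperturbed convolution kernel $K(\cdot)$ to the time-inhomogeneous kernel $K_\delta(t,s)$. Lemma 4.1 relied on the pointwise monotonicity of $e^{t/p}K(t)$, which fails to be stable under perturbation of $\delta$. The way around is not to ask $K_\delta$ to retain this structure but instead to control the Neumann series of $R_\delta$ in the exponentially weighted norm $\|f\|_q:=\sup_{t\ge0}e^{t/q}|f(t)|$; smallness of $\|\delta\|_\infty$ translates into smallness of the operator norm of convolution against $R_\delta$ on this weighted space, and hence into convergence of the perturbed Neumann series relative to the already-controlled resolvent of $K$.
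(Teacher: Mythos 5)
The paper states Lemma 4.3 without proof, citing only Lemma 3.3 of \cite{cd}, so there is no paper proof to compare against directly. Your proposal extends the proof of Proposition 4.1 to the time-dependent generator $B+\del(t)A$ in the natural way and the structure is sound: construct the propagator $S_\del(t,s)$ by characteristics, contract in $\|\cdot\|_{1,\infty}$ at rate $\exp\bigl[-(t-s)/p+\int_s^t|\del|\bigr]$, derive a scalar Volterra equation for $u(t)=[dI_p(\xi_p),B\tilde{\xi}(t)]$, and treat $R_\del=K_\del-K$ perturbatively around the already-controlled resolvent of $K$. The obstacle you identify---the positivity and monotonicity of $e^{t/p}K(t)$ exploited in Lemma 4.1 are not stable under the time-dependent perturbation---is real, and passing to an exponentially weighted resolvent norm (so that smallness of $\|\del\|_\infty$ makes $(I+K)^{-1}R_\del$ a contraction) is the right fix.

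Two places deserve more care than the proposal gives them. First, the claimed bound $|R_\del(t,s)|\le C\|\del\|_\infty e^{-(t-s)/q}$ cannot be obtained from a $\|\cdot\|_{1,\infty}$ operator estimate alone, because $BA\xi_p(y)=[h(y)-yh'(y)][1+D\xi_p(y)]$ and $yDA\xi_p(y)$ (see (\ref{AQ4})) both may blow up as $y\to\ve_0$ when $h'$ does; so $\|A\xi_p\|_{1,\infty}=\infty$ in general. The rescue is exactly the one used in Proposition 4.1: since $1/p+\del(\cdot)\ge 0>-h(\ve_0)/\ve_0$, the characteristics $Y_\del(s;y,t)$ exit any fixed neighbourhood of $\ve_0$ at a uniform rate, so the pairing with $dI_p(\xi_p)\in L^1$ only sees the singular region for an integrable amount of time, as in the use of (\ref{AR4}) to bound the right-hand side of (\ref{AP4}). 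You should spell this out rather than appeal vaguely to ``differentiating $S_\del$ in $\del$ along the flow.'' Second, the difference of exponential prefactors $\exp[-\int_s^t(1/p+\del)]-e^{-(t-s)/p}$ is of size $(t-s)\|\del\|_\infty e^{-(t-s)/p}$, not $\|\del\|_\infty e^{-(t-s)/q}$; this is harmless because the extra $(t-s)$ can be absorbed by a small reduction in the exponential rate, but the final Duhamel step then needs two distinct exponents $p<q_1<q$ so that $\int_0^t e^{-(t-s)/q}e^{-s/q_1}\,ds\le C e^{-t/q}$ rather than the vacuous $te^{-t/q}$.
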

We apply Lemma 4.3 as in \cite{cd} to obtain bounds on solutions to the non-linear PDE (\ref{AT4}).  
\begin{theorem}
Assume $h(\cdot)$ satisfies the conditions of Lemma 4.1 and $\ga:[0,\infty)\ra\R$ is a continuous function satisfying $\lim_{t\ra\infty}\ga(t)=0$. Let $\del_1(\cdot), \ \del_2(\cdot)$ be real valued functionals of $C^1$ functions $\tilde{\zeta}:[\ve_0,\infty)\ra\R$, which satisfy $\del_1(0)=\del_2(0)=0$ and the local Lipschitz conditions (\ref{AW4}). Then there exists $\ve>0$ such that the solution  $\tilde{\xi}(\cdot,t), \ t\ge 0,$ to the nonlinear evolution equation (\ref{AT4}) with initial data $\tilde{\xi}_0:[\ve_0,\infty)\ra\R$ satisfying $\|\tilde{\xi}_0(\cdot)\|_{1,\infty}+\|\ga(\cdot)\|_\infty<\ve$  has the property 
$\lim_{t\ra\infty}\|\tilde{\xi}(\cdot,t)\|_{1,\infty}=0$. If $|\ga(t)|\le Ce^{-\nu t}, \ t\ge 0,$ for some constants $C,\nu>0$ with $\nu<1/p$ then $\ve=\ve_\nu$ can be chosen depending on $\nu$ so that
$\|\tilde{\xi}(\cdot,t)\|_{1,\infty}\le C_\nu e^{-\nu t}, \ t\ge 0,$ for some constant $C_\nu$. 
\end{theorem}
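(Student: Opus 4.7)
The plan is to treat (\ref{AT4}) as a perturbation of the linear equation (\ref{AY4}) with coefficient $\del(t)=\ga(t)+\del_1(\tilde{\xi}(\cdot,t))$, and to close a bootstrap/fixed-point argument driven by the exponential decay provided by Lemma 4.3. Let $S_\del(t,s)$ denote the evolution operator for (\ref{AY4}) on $[s,t]$. By Duhamel's formula any solution of (\ref{AT4}) satisfies
\begin{equation*}
\tilde{\xi}(\cdot,t) \ = \ S_\del(t,0)\tilde{\xi}_0-\int_0^t S_\del(t,s)\left\{\ga(s)-\frac{\del_2(\tilde{\xi}(\cdot,s))}{pI_p(\xi_p)+[dI_p(\xi_p),A\xi_p]}\right\}A\xi_p\,ds.
\end{equation*}
Local-in-time existence of $\tilde{\xi}(\cdot,\cdot)$ in the ball $\|\tilde{\xi}(\cdot,t)\|_{1,\infty}<\nu$ from Lemma 4.2 follows from a routine contraction argument on a short time interval, using (\ref{AW4}) together with the regularity estimates developed in the proof of Proposition 3.1.

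The core of the argument is a bootstrap. Fix $q>p$ and let $\ve_q>0$ be as in Lemma 4.3. For $\ve>0$ small, suppose that $\|\tilde{\xi}_0\|_{1,\infty}+\|\ga(\cdot)\|_\infty<\ve$, and let $T^\ast$ be the supremum of $T>0$ on which $\|\del(\cdot)\|_{\infty,[0,T]}\le\ve_q$; continuity gives $T^\ast>0$. On $[0,T^\ast]$ Lemma 4.3 furnishes $\|S_\del(t,s)\zeta\|_{1,\infty}\le C_q e^{-(t-s)/q}\|\zeta\|_{1,\infty}$. Since $\del_1(0)=\del_2(0)=0$, the Lipschitz bounds (\ref{AW4}) specialize to $|\del_1(\tilde{\zeta})|\le C\|\tilde{\zeta}\|_{1,\infty}$ and $|\del_2(\tilde{\zeta})|\le C\|\tilde{\zeta}\|_{1,\infty}^2$ for $\|\tilde{\zeta}\|_{1,\infty}<\nu$. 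Substituting these into the Duhamel identity yields
\begin{equation*}
\|\tilde{\xi}(\cdot,t)\|_{1,\infty}\ \le\ C_q e^{-t/q}\|\tilde{\xi}_0\|_{1,\infty}+C\int_0^t e^{-(t-s)/q}\bigl\{|\ga(s)|+\|\tilde{\xi}(\cdot,s)\|_{1,\infty}^2\bigr\}\,ds.
\end{equation*}
A Gronwall-type argument then keeps $\|\del(\cdot)\|_\infty$ strictly below $\ve_q$ for $\ve$ small, forcing $T^\ast=\infty$. The qualitative limit $\lim_{t\ra\infty}\|\tilde{\xi}(\cdot,t)\|_{1,\infty}=0$ follows by splitting the $\ga$-integral at a time $T_0$ past which $|\ga(s)|$ is arbitrarily small, bounding the first piece by exponential decay and the second by smallness of $\ga$.

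For the quantitative statement under $|\ga(t)|\le Ce^{-\nu t}$ with $\nu<1/p$, I would pick $q>p$ with $1/q>\nu$, so that $\int_0^t e^{-(t-s)/q}e^{-\nu s}\,ds\le (1/q-\nu)^{-1}e^{-\nu t}$. Working in the weighted norm $\|\tilde{\xi}\|_\nu=\sup_{t\ge 0}e^{\nu t}\|\tilde{\xi}(\cdot,t)\|_{1,\infty}$, the Duhamel identity becomes a contraction on a ball of radius $O(\ve_\nu)$: the initial term contributes $O(\|\tilde{\xi}_0\|_{1,\infty})$, the $\ga$-source contributes $O(\|\ga\|_\infty)$, and the quadratic $\del_2$-term contributes $O((1/q-\nu)^{-1}\|\tilde{\xi}\|_\nu^2)$, which is absorbed when $\ve_\nu$ is small. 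The main obstacle is the circularity intrinsic to the bootstrap: Lemma 4.3 applies only once $\|\del(\cdot)\|_\infty<\ve_q$, yet the latter depends through $\del_1$ on the decay of $\tilde{\xi}$ itself. Unwinding this self-consistency is what forces the smallness hypothesis to bound $\|\tilde{\xi}_0\|_{1,\infty}+\|\ga(\cdot)\|_\infty$ jointly.
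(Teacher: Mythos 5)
Your proposal is correct and follows essentially the same route as the paper: write the solution of (\ref{AT4}) via Duhamel's formula with the Green's function for (\ref{AY4}), invoke Lemma 4.3 for the exponential bound on that Green's function, derive the Lipschitz/quadratic bounds on $\del_1,\del_2$ from (\ref{AW4}) and $\del_1(0)=\del_2(0)=0$, and close a bootstrap in which smallness of $\|\tilde{\xi}_0\|_{1,\infty}+\|\ga(\cdot)\|_\infty$ keeps $\|\ga(\cdot)+\del_1(\tilde{\xi}(\cdot))\|_\infty$ below $\ve_q$. The paper compresses these steps to a one-line reference to the estimate in \cite{cd}; your write-up just makes the deferred bookkeeping explicit, including the correct choice $1/q>\nu$ for the weighted-norm contraction in the quantitative case.
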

\begin{proof}
We define the Green's function for  (\ref{AY4}) as $G(t,s;\del(\cdot)), \ 0\le s\le t,$ so $\tilde{\xi}(t)=G(t,s;\del(\cdot))\tilde{\xi}_0, \ t\ge s,$ is the solution to (\ref{AY4}) with initial data $\tilde{\xi}(s)=\tilde{\xi}_0$.  The solution to (\ref{AT4}) then satisfies the identity
\begin{multline} \label{BA4}
\tilde{\xi}(t) \ = \ G(t,0;\ga(\cdot)+\del_1(\cdot))\tilde{\xi}_0+ \\
\int_0^t\left[\frac{\del_2(\tilde{\xi}(s))}{pI(\xi_p)+[dI(\xi_p),A\xi_p]}-\ga(s)\right ]G(t,s;\ga(\cdot)+\del_1(\cdot))A\xi_p \ .
\end{multline}
We  estimate $\|\tilde{\xi}(t)\|_{1,\infty}$ just as in the proof of Theorem 3.1 of \cite{cd}. 
\end{proof}

\vspace{.1in}

\section{A Differential Delay Equation}
We shall formulate the problem of solving the initial value problem for (\ref{A3}), with $\rho(t)=\rho(\xi(\cdot,t),\eta(t))$ given by (\ref{T2}), as an initial value problem for a differential delay equation (DDE). To do this we define a function $I:[0,\infty)\ra\R^+$ by integration of the function $\rho(\cdot)$. Thus solving the initial value problem for the equation
\be \label{A5}
\frac{1}{p}\frac{d}{dt}\log I(t) \ = \ \rho(t)-\frac{1}{p} \ , \quad t>0, \ I(0)>0,
\ee
we obtain a formula for $I(\cdot)$ in terms of $\rho(\cdot)$ as
\be \label{B5}
\exp\left[\int_s^t \rho(s') \ ds'\right]  \ = \ e^{(t-s)/p}\left(\frac{I(t)}{I(s)}\right)^{1/p} \  .
\ee
We rewrite the characteristic equation (\ref{C3}) using the function $I(\cdot)$ by setting
\be \label{C5}
y(s) \ = \ \frac{z(s)}{v_t(s)} \ , \quad v_t(s) \ = \ \left(\frac{I(s)}{I(t)}\right)^{1/p} \ , \ 0\le s\le t.
\ee
From (\ref{A5}), (\ref{C5}) we see that (\ref{C3}) is equivalent to
\be \label{D5}
\frac{dz(s)}{ds} \ = \ -\frac{z(s)}{p}-v_t(s)h\left(\frac{z(s)}{v_t(s)}\right) \ ,  \quad s<t, \ \ z(t)=y \ .
\ee
The DDE is now obtained by rewriting (\ref{A5}) with $\rho(t)=\rho(\xi(\cdot,t),\eta(t))$ in the form
\be \label{E5}
\frac{1}{p}\frac{d}{dt}\log I(t) \ = \ \rho_p(\xi(\cdot,t))-\frac{1}{p}+\ga(t) \ , 
\ee
where $\ga(t)=\rho(\xi(\cdot,t),\eta(t))-\rho_p(\xi(\cdot,t)), \ t\ge 0,$ is assumed to be a known continuous function satisfying $\lim_{t\ra\infty}\ga(t)=0$.  As with (\ref{D4}), equation (\ref{E5}) can be seen to be equivalent to
\be \label{F5}
\frac{1}{p}\frac{d}{dt}\log I(t) \ = \ -\frac{\left[dI_p(\xi(\cdot,t)), \ B\{\xi(\cdot,t)-\xi_p(\cdot)\}\right]}{pI_p(\xi(\cdot,t)) 
+\left[dI_p(\xi(\cdot,t),  \ A\xi(\cdot,t)\right]}+\ga(t) \ . 
\ee
We see from the representations (\ref{D3}) for $\xi(\cdot,t)$ and (\ref{V3}) for $D\xi(\cdot,t)$ that the first term on the RHS of (\ref{F5}) is a function of $v_t(\cdot)$, whence (\ref{F5}) is a DDE. 

We define a function $F(t,y,v_t(\cdot))$ by
\begin{multline} \label{G5}
F(t,y,v_t(\cdot)) \ = \ \frac{1}{p}\int_0^t h\left(\frac{z(s)}{v_t(s)}\right) e^{-(t-s)/p}v_t(s) \ ds \\
-\left[ h(y)+\frac{y}{p}\right]\left\{\exp\left[\int_0^t  h'\left(\frac{z(s)}{v_t(s)}\right) \ ds\right]-1\right\} \ .
\end{multline}
We wish to estimate $F(t,y,1(\cdot))-h(y)$. In order to do this we denote by $z_p(s), \ s\le t,$ the solution to (\ref{D5}) when $v_t(\cdot)\equiv 1$.  From (\ref{U3}) we see that $F(t,y,1(\cdot))=B\tilde{F}(t,y)$ where
\be \label{H5}
\tilde{F}(t,y) \ = \ \int_0^t ds \  h\left(  z_p(s)  \right)  e^{-(t-s)/p} \ .
\ee
We compare $\tilde{F}(t,y)$ to the formula (\ref{P1}) for $\xi_p(y)$ by making the change of variable $y'=z_p(s)$.  Then (\ref{H5}) is the same as
\be \label{I5}
\tilde{F}(t,y) \ = \ \int_y^{z_p(0)} dy' \  \frac{ph(y')}{ph(y')+y'}  \ e^{-(t-s)/p} \ .
\ee
Next from (\ref{O1}) and (\ref{D5}) with $v_t(\cdot)\equiv 1$ we have that
\be \label{J5}
e^{-(t-s)/p} \ = \ \frac{\tilde{h}(y)}{\tilde{h}(y')} \ .
\ee
We conclude from (\ref{P1}), (\ref{I5}), (\ref{J5}) that
\be \label{K5}
\tilde{F}(t,y)-\xi_p(y) \ = \ -\tilde{h}(y)\int_{z_p(0)}^\infty \frac{ph(y')}{\left(ph(y')+y'\right)\tilde{h}(y')} \ dy' \  .
\ee
Applying $B$ to (\ref{K5}) we obtain the formula
\begin{multline} \label{L5}
F(t,y,1(\cdot))-h(y) \ = \ -\frac{e^{t/p}[ph(y)+y]\tilde{h}(y)h(z_p(0))}{\left[ph(z_p(0))+z_p(0)\right]\tilde{h}(z_p(0))} \exp\left[\int_0^t   h'(z_p(s)) \ ds\right] \\
= \ -\frac{[ph(y)+y]h(z_p(0))}{\left[ph(z_p(0))+z_p(0)\right]} \exp\left[\int_0^t   h'(z_p(s)) \ ds\right] \ .
\end{multline}
Since $z_p(0)\simeq e^{t/p}$ at large $t$, it is clear from (\ref{L5}) that $F(t,y,1(\cdot))-h(y)$ decays like $e^{-t/p}$ as $t\ra\infty$. We conclude from (\ref{D3}), (\ref{V3}) (\ref{G5}), (\ref{L5})  that
\be \label{M5}
B\xi(y,t)-B\xi_p(y) \ = \ F(t,y,v_t(\cdot))-F(t,y,1(\cdot)) + G(t,y,v_t(\cdot)) \ ,
\ee
where $G$ is given by the formula 
\begin{multline} \label{N5}
G(t,y,v_t(\cdot)) \ = \  \frac{1}{p}e^{-t/p}v_t(0)\xi\left( \frac{z(0)}{v_t(0)},0\right) \\
- \left(h(y)+\frac{y}{p}\right)\exp\left[\int_0^t   h'\left(z(s)/v_t(s)\right) \ ds\right]D\xi\left(\frac{z(0)}{v_t(0)},0\right) \\
-\frac{[ph(y)+y]h(z_p(0))}{\left[ph(z_p(0))+z_p(0)\right]} \exp\left[\int_0^t   h'(z_p(s)) \ ds\right] \ .
\end{multline}
From (\ref{M5})  we may rewrite the DDE equation (\ref{F5}) as
\be \label{O5}
\frac{1}{p}\frac{d}{dt}\log I(t) + f(t,v_t(\cdot)) \ = \ g(t,v_t(\cdot))+\ga(t) \ ,
\ee
where the functions $f,g$ are given by the formulae
\begin{eqnarray} \label{P5}
f(t,v_t(\cdot)) \ &=& \ \frac{\left[dI_p(\xi(\cdot,t)), \ F(t,\cdot,v_t(\cdot))-F(t,\cdot,1(\cdot))\right]}{pI_p(\xi(\cdot,t))+\left[dI_p(\xi(\cdot,t)),  \ A\xi(\cdot,t)\right]} \ , \\
g(t,v_t(\cdot)) \ &=& \ -\frac{\left[dI_p(\xi(\cdot,t)), G(t,\cdot,v_t(\cdot)) \ \right]}{pI_p(\xi(\cdot,t))+\left[dI_p(\xi(\cdot,t)),  \ A\xi(\cdot,t)\right]} \ . \nonumber
\end{eqnarray}

In the proof of global asymptotic stability for the DDE (\ref{O5}) the key property which needs to be established is monotonicity of the function $f(t,v_t(\cdot))$ in the following sense: 
$f(t,v_t(\cdot))\ge 0$ on the set $0<v_t(\cdot)\le 1$, and 
$f(t,v_t(\cdot))\le 0$ on the set $v_t(\cdot)\ge 1$. This property of 
$f(t,v_t(\cdot))$  holds to first order in $v_t(\cdot)-1$  provided the gradient $dF(t,y, v_t(\cdot);\tau), \ 0<\tau<t,$ of $F(t,y,v_t(\cdot))$ with respect to $v_t(\cdot)$ at $v_t(\cdot)\equiv 1$ is non-negative for $y\ge \ve_0$. It holds to all orders if we can show that
\begin{eqnarray} \label{Q5}
\sup_{0<v_t(\cdot)<1} F(t,y,v_t(\cdot)) \ &=& \ F(t,y,1(\cdot)) \quad {\rm for \ } y\ge \ve_0 \ , \\
\inf_{1<v_t(\cdot)<\infty} F(t,y,v_t(\cdot)) \ &=& \ F(t,y,1(\cdot)) \quad {\rm for \ } y\ge \ve_0 \  . \label{R5}
\end{eqnarray}
Using the results of $\S6$ we have the following:
\begin{proposition}
Assume the function $h(\cdot)$ satisfies the assumptions of Lemma 4.1, in particular that (\ref{I4}) holds. Then $dF(t,y, 1(\cdot);\cdot)$ is a non-negative function for all $t>0,  \ y\ge \ve_0$. If in addition the second inequality of (\ref{I4}) holds for all $p>0$ then (\ref{Q5}) is true. If the function
\be \label{S5}
y\ra\frac{y^2h''(y)}{h(y)-yh'(y)}  \quad {\rm decreases  \ for \ }y>\ve_0
\ee
then (\ref{R5}) is true. 
\end{proposition}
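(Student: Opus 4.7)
The plan is to study the map $v_t(\cdot)\mapsto F(t,y,v_t(\cdot))$ defined by (\ref{G5}) by first-variation methods applied to the characteristic system (\ref{D5}), and then to propagate the infinitesimal statement globally using rescalings of the characteristic. Throughout, the key observation is that the quantity appearing in (\ref{P4}) reappears as the coefficient controlling the sign of the first variation of $F$.

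For the infinitesimal assertion $dF(t,y,1(\cdot);\tau)\ge 0$, I would linearize (\ref{D5}) about $v_t\equiv 1$: writing $z=z_p+\varepsilon\zeta$ with $\phi(s)=\delta v_t(s)$, and noting that $vh(z/v)$ has partial derivatives $h'(z/v)$ in $z$ and $h(z/v)-(z/v)h'(z/v)$ in $v$, the sensitivity satisfies the linear terminal-value problem
\[
\frac{d\zeta(s)}{ds}+\left[\tfrac{1}{p}+h'(z_p(s))\right]\zeta(s)=-\bigl[h(z_p(s))-z_p(s)h'(z_p(s))\bigr]\phi(s),\quad \zeta(t)=0.
\]
Inserting $z_p+\varepsilon\zeta$ into (\ref{G5}), differentiating the integral and the exponential at $\varepsilon=0$, and integrating by parts in $s$ using $\dot z_p=-z_p/p-h(z_p)$ reduces the computation of $dF(t,y,1(\cdot);\tau)$ to integrating
\[
\Psi(y')=\left[\tfrac{y'}{p}+h(y')\right]y'h''(y')+h'(y')\bigl[h(y')-y'h'(y')\bigr]
\]
evaluated on the characteristic $z_p(\cdot)$ against a positive kernel. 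Since $\Psi$ is precisely the expression that appears in braces on the RHS of (\ref{P4}), the second inequality of (\ref{I4}) immediately yields $dF(t,y,1(\cdot);\tau)\ge 0$, as required.

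To obtain (\ref{Q5}) I would upgrade this infinitesimal statement to the pointwise bound $dF(t,y,v_t(\cdot);\tau)\ge 0$ for every control with $0<v_t(\cdot)\le 1$, and then connect $v_t$ to $1(\cdot)$ by the homotopy $v_t^{\lambda}=1-\lambda(1-v_t)$, which remains in $(0,1]$ for $\lambda\in[0,1]$; integrating in $\lambda$ then yields $F(t,y,v_t(\cdot))\le F(t,y,1(\cdot))$. The extension of the sign of $dF$ off $v_t\equiv 1$ relies on a rescaling of (\ref{D5}): after a time-dependent change of variable, the perturbed characteristic behaves like an unperturbed one with a shifted effective value of $p$ along each instant, and the hypothesis that the second inequality of (\ref{I4}) holds for every $p>0$ guarantees $\Psi\ge 0$ along each such shifted characteristic. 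The results of $\S 6$ alluded to in the statement should formalize precisely this rescaling-and-comparison step.

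The derivation of (\ref{R5}) runs in parallel, with the homotopy pushing outward into $v_t\ge 1$, but the main obstacle is that for $v_t>1$ the characteristic drifts into regions where the sign of the first variation is no longer controlled by (\ref{I4}) alone. The extra hypothesis (\ref{S5}) that $y^2h''(y)/[h(y)-yh'(y)]$ is decreasing in $y$ is exactly what is needed: as $v_t$ increases past $1$ the relevant characteristic moves outward, and (\ref{S5}) ensures that the ratio of the driving term $yh''$ in $\Psi$ to its damping partner $h-yh'$ can only decrease along this drift, which is the precise monotonicity needed to maintain the correct sign of $dF$ and hence the infimum principle for $F$. I expect this monotone-comparison step --- converting the pointwise inequality (\ref{S5}) into a global infimum statement for $F$ --- to be the main obstacle and the part that genuinely relies on the machinery of $\S 6$; the first two assertions, by contrast, follow rather directly from the linearization together with the identities already established in the proofs of Lemmas 4.1 and 4.2.
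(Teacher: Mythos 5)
Your identification of $\Psi(y)=\left[\frac{y}{p}+h(y)\right]yh''(y)+h'(y)\left[h(y)-yh'(y)\right]$ as the controlling quantity, and your recognition that it coincides with the bracketed expression in (\ref{P4}), is correct, and the conclusion that the second inequality of (\ref{I4}) gives $dF(t,y,1(\cdot);\cdot)\ge 0$ is right in spirit. The paper gets there more cleanly: $F$ in (\ref{G5}) is split into its integral term and its exponential term, the first matching (\ref{D6}) with $g\equiv h/p$ (for which (\ref{M6}) reads $g'h-gh'\equiv 0$ and is trivial), the second matching (\ref{AC6}) with $g\equiv -h'$ (for which (\ref{AG6}) becomes exactly $\Psi\ge 0$). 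Your version lumps the two together and mischaracterizes the first piece as also governed by $\Psi$, but the sign conclusion at $v\equiv 1$ still comes out right.

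The real problem is in how you propose to pass from the infinitesimal statement at $v\equiv 1$ to the global extremal statements (\ref{Q5}), (\ref{R5}). You connect $v_t$ to $1(\cdot)$ by a homotopy and assert that $dF(t,y,v_t^\lambda(\cdot);\tau)\ge 0$ all along the path, justified by a ``rescaling'' in which the perturbed characteristic is an unperturbed one at a shifted $p$. That reduction is only valid for \emph{constant} $v$. For non-constant $v$, setting $y(s)=z(s)/v_t(s)$ in (\ref{D5}) gives the characteristic equation (\ref{C3}) with a \emph{time-dependent} rate $\rho(s)=\frac{1}{p}+\frac{v_t'(s)}{v_t(s)}$, which is not a constant $1/\tilde p$ and can be negative. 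The hypothesis ``second inequality of (\ref{I4}) holds for all $p>0$'' therefore does not reach the required configurations, and indeed the paper explicitly observes after (\ref{I6}) that the sign of the gradient at general $v$ is not controlled term by term. The actual proof in $\S 6$ is not a rescaling-and-comparison: it constructs explicit $C^1$ solutions (\ref{P6}), (\ref{AI6}), (\ref{BA6}) of the Hamilton--Jacobi equations (\ref{N6}), (\ref{AH6}), (\ref{AN6}) using bang-bang controls and the key differential inequalities (\ref{W6}), (\ref{AL6}), and then extracts (\ref{Q5}), (\ref{R5}) from the remarks following Propositions 6.1--6.3 via $\partial q/\partial x\ge 0$ and the reachable-set ordering (\ref{C6}), (\ref{Y6}). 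The global maximality of $v\equiv 1$ comes from the value function inequality, not from pointwise nonnegativity of $dF$ along a homotopy. So the ``monotone-comparison step'' you flag as the main obstacle is genuinely a gap in your argument, and the machinery of $\S 6$ does not formalize the route you propose; it replaces it with dynamic programming.
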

\begin{proof}
We see that the first integral in the formula (\ref{G5}) for $F(t,y,v_t(\cdot))$ is the same as (\ref{D6}) with $g(\cdot)\equiv h(\cdot)/p$. The condition (\ref{M6}) for non-negativity of the gradient is trivial in this case. To show non-negativity of the gradient of the second integral in (\ref{G5})  at $v_t(\cdot)\equiv 1$, one considers the functional (\ref{AC6}) with $g(\cdot)\equiv -h'(\cdot)$. In this case the second inequality of (\ref{I4}) and (\ref{AG6}) are equivalent.  Hence $dF(t,y, 1(\cdot);\cdot)$ is non-negative  if (\ref{I4}) holds.

The identity (\ref{Q5}) follows from propositions 6.1,6.2 and the associated remarks. The identity (\ref{R5}) is a consequence of Proposition 6.3 and associated remark.  Note that the function $h(\cdot)$ has domain $[\ve_0,\infty)$ and we wish to allow $h'(y)$ to diverge as $y\ra\ve_0$. We can still  apply propositions 6.1-6.3 in this case by  approximating $h(\cdot)$ with functions which have domain $(0,\infty)$, and also preserve the monotonicity and convexity properties required for the propositions. 
\end{proof}

\vspace{.1in}

\section{Some Optimal Control Problems}
Let $y>0, T\in\R$, and consider the dynamics
\be \label{A6}
\frac{dx(s)}{ds} \ = \ -\frac{1}{p}x(s)-v(s)h\left(\frac{x(s)}{v(s)}\right) \ , \ \ s<T, \quad x(T)=y \ ,
\ee
with terminal condition $x(T)=y$ and controller $v(\cdot)$. The function $h(\cdot)$ is assumed to be positive and decreasing. Let $g:(0,\infty)\ra\R^+$ be a positive decreasing function and for $y>0,t<T$ define the function
\be \label{B6}
q(x,y,t,T) \ = \ \max_{0<v(\cdot)\le 1}\left[\int_t^T ds \ g\left(\frac{x(s)}{v(s)}\right)e^{-(T-s)/p}v(s) \ \Bigg| \ x(t)=x\right] \ .
\ee
The reachable set for the control problem (\ref{B6}) is the set of $(x,t)$ which satisfy
\be \label{C6}
e^{(T-t)/p}y \ < \ x \ < \ x_p(t) \ , \quad t<T \ .
\ee
where $x_p(\cdot)$ is the solution to (\ref{A6}) when $v(\cdot)\equiv 1$.  Letting
\be \label{D6}
q(y,v(\cdot),t,T) \ = \ \int_t^T ds \ g\left(\frac{x(s)}{v(s)}\right)e^{-(T-s)/p}v(s) \ ,
\ee
we have that the gradient $dq$ of $q$ with respect to $v(\cdot)$ is given by
\begin{multline} \label{E6}
e^{(T-\tau)/p}dq(y,v(\cdot),t,T;\tau) \ = \ -\frac{x(\tau)}{v(\tau)}g'\left(\frac{x(\tau)}{v(\tau)}\right)
+g\left(\frac{x(\tau)}{v(\tau)}\right) \\
+ \int_t^\tau ds\  g'\left(\frac{x(s)}{v(s)}\right)e^{-(\tau-s)/p}dx(s)(v(\cdot);\tau) \ ,
\end{multline}
where $dx(s)(v(\cdot);\tau), \ s<\tau<T$, is the gradient of $x(s)$ with respect to $v(\cdot)$.  We obtain a formula for $dx(s)(v(\cdot);\cdot)$ by observing that for any function $\phi:(-\infty,T)$ the inner product
\be \label{F6}
u_\phi(s) \ = \ [dx(s)(v(\cdot)),\phi] \ = \ \int_{-\infty}^T dx(s)(v(\cdot);\tau)\phi(\tau) \ d\tau \ ,
\ee
is a solution to the terminal value problem,
\begin{multline} \label{G6}
\frac{du_\phi(s)}{ds} \ = \ -\frac{u_\phi(s)}{p}-\phi(s)h\left(\frac{x(s)}{v(s)}\right) \\
-v(s)h'\left(\frac{x(s)}{v(s)}\right)\frac{u_\phi(s)v(s)-x(s)\phi(s)}{v(s)^2} \ ,  \ \ s<T, \quad u_\phi(T)=0 \ .
\end{multline}
Integrating (\ref{G6}) we obtain the formula
\begin{multline} \label{H6}
u_\phi(s) \ = \ \int_s^T d\tau \exp\left[(\tau-s)/p+\int_s^\tau h'(x(s')/v(s')) \ ds'\right] \times \\
\left\{h\left(\frac{x(\tau)}{v(\tau)}\right)-\frac{x(\tau)}{v(\tau)}h'\left(\frac{x(\tau)}{v(\tau)}\right)\right\}\phi(\tau) \ .
\end{multline}
Evidently $dx(s)(v(\cdot);\tau)$ is the coefficient of $\phi(\tau)$ in the integral on the RHS of (\ref{H6}). We conclude then from (\ref{E6}), (\ref{H6}) that
\begin{multline} \label{I6}
e^{(T-\tau)/p}dq(y,v(\cdot),t,T;\tau) \ = \ -\frac{x(\tau)}{v(\tau)}g'\left(\frac{x(\tau)}{v(\tau)}\right)
+g\left(\frac{x(\tau)}{v(\tau)}\right) \\
+ \int_t^\tau ds\  g'\left(\frac{x(s)}{v(s)}\right)\exp\left[\int_s^\tau h'(x(s')/v(s')) \ ds'\right] 
\left\{h\left(\frac{x(\tau)}{v(\tau)}\right)-\frac{x(\tau)}{v(\tau)}h'\left(\frac{x(\tau)}{v(\tau)}\right)\right\} \ .
\end{multline}
The first two terms on the RHS of (\ref{I6}) are positive but the third term is negative.  Setting $v(\cdot)\equiv 1$ in (\ref{I6}) we have that
\begin{multline} \label{J6}
e^{(T-\tau)/p}dq(y,1(\cdot),t,T;\tau) \ = \ -x_p(\tau)g'\left(x_p(\tau)\right)
+g\left(x_p(\tau)\right) \\
+ \int_t^\tau ds\  g'\left(x_p(s)\right)\exp\left[\int_s^\tau h'(x_p(s')) \ ds'\right] 
\left\{h\left(x_p(\tau)\right)-x_p(\tau)h'\left(x_p(\tau)\right)\right\} \ .
\end{multline}
We have now on integration by parts and using the ODE (\ref{A6}) with $v(\cdot)\equiv 1$ which $x_p(\cdot)$ satisfies that
\begin{multline} \label{K6}
 \int_t^\tau ds\  g'\left(x_p(s)\right)\exp\left[\int_s^\tau h'(x_p(s')) \ ds'\right] \ = \\
  \frac{p}{x_p(t)+ph(x_p(t))}\exp\left[\int_t^\tau h'(x_p(s')) \ ds'\right]g(x_p(t)) - 
   \frac{p}{x_p(\tau)+ph(x_p(\tau))}g(x_p(\tau))    \\
+  \int_t^\tau  \frac{ds}{x_p(s)+ph(x_p(s))} \  g\left(x_p(s)\right)\exp\left[\int_s^\tau h'(x_p(s')) \ ds'\right]  \ .
\end{multline}
It follows that $dq(y,1(\cdot),t,T;\cdot)$ is non-negative provided
\be \label{L6}
-xg'(x)+g(x)-\frac{g(x)}{x/p+h(x)}\left[h(x)-xh'(x)\right] \ \ge \ 0 \quad {\rm for \ } x>0 \ .
\ee
Evidently (\ref{L6}) holds for all $p>0$ provided
\be \label{M6}
g'(x)h(x)-g(x)h'(x) \ \le \ \ 0 \quad {\rm for \ } x>0 \ .
\ee

The Hamilton-Jacobi (HJ) equation associated with (\ref{B6}) is given by
\begin{multline} \label{N6}
\frac{\pa q(x,y,t,T)}{\pa t}-\frac{x}{p} \frac{\pa q(x,y,t,T)}{\pa x} \\
+\sup_{0<v<1}\left[ -vh\left(\frac{x}{v}\right)\frac{\pa q(x,y,t,T)}{\pa x}   +e^{-(T-t)/p}g\left(\frac{x}{v}\right)v\right] \ = \ 0 \ .
\end{multline} 
We shall obtain the solution to the variational problem (\ref{B6}) by producing a $C^1$ solution to the HJ equation (\ref{N6}).  The solution is obtained  by using bang-bang control settings. 
Thus for $(x,t)$ in the reachable set (\ref{C6}) we define $\tau_{x,t}$ as the time at which the trajectory for (\ref{A6}) with $v(\cdot)\equiv 0$ and $x(t)=x$ reaches the curve $x_p(\cdot)$.  Hence $\tau_{x,t}$  satisfies the identity
\be \label{O6}
e^{-(\tau_{x,t}-t)/p}x \ = \ x_p(\tau_{x,t}) \ .
\ee
Then we set
\be \label{P6}
q(x,y,t,T) \ = \ \int_{\tau_{x,t}}^T ds  \ g(x_p(s))e^{-(T-s)/p} \ ds \ .
 \ee
 \begin{proposition}
 Assume $g(\cdot), \ h(\cdot)$ are $C^1$ non-negative decreasing functions such that (\ref{M6}) holds.  For any $y>0, \ t<T,$ let $(x,t)$ satisfy (\ref{C6}) and $\tau_{x,t}$ be defined by (\ref{O6}). Then $t<\tau_{x,t}<T$ and the function $q(x,y,t,T)$ of (\ref{B6}) is given by the formula (\ref{P6}). 
 \end{proposition}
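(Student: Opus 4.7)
The plan is a standard Hamilton--Jacobi verification argument. Let $V(x,t)$ denote the right-hand side of (\ref{P6}). I will first show that $\tau_{x,t}$ is well-defined, that $V$ is $C^1$ on the reachable set (\ref{C6}), and that $V$ satisfies the HJ equation (\ref{N6}); the equality $V=q$ then follows by the usual verification/approximation argument. To see well-definedness, note that the map $\tau\mapsto e^{-(\tau-t)/p}x-x_p(\tau)$ is negative at $\tau=t$ (since $x<x_p(t)$ on the reachable set) and positive at $\tau=T$ (since $x>e^{(T-t)/p}y=e^{(T-t)/p}x_p(T)$), and at any zero of this map its $\tau$-derivative equals $h(x_p(\tau))>0$. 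So by the implicit function theorem $\tau_{x,t}\in(t,T)$ exists, is unique, and is $C^1$ in $(x,t)$ with
\[
\partial_t\tau_{x,t}=-\frac{x_p(\tau)}{p\,h(x_p(\tau))},\qquad \partial_x\tau_{x,t}=-\frac{x_p(\tau)}{x\,h(x_p(\tau))}.
\]

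Using these formulas and the chain rule on (\ref{P6}) one obtains directly
\[
\partial_tV-\frac{x}{p}\partial_xV=0,\qquad \partial_xV=\frac{g(x_p(\tau))\,e^{-(T-\tau)/p}\,x_p(\tau)}{x\,h(x_p(\tau))}>0.
\]
Thus the HJ equation (\ref{N6}) reduces to the inequality $-vh(x/v)\partial_xV+e^{-(T-t)/p}g(x/v)v\le0$ for every $v\in(0,1]$, with equality in the supremum. Setting $\xi=x/v\ge x$ and using the identity $e^{-(\tau-t)/p}x=x_p(\tau)$ from (\ref{O6}), this inequality collapses (after clearing common factors) to
\[
\frac{g(\xi)}{h(\xi)}\ \le\ \frac{g(x_p(\tau))}{h(x_p(\tau))}\qquad\text{for all }\xi\ge x>x_p(\tau),
\]
which is exactly the statement that $g/h$ is non-increasing on $(x_p(\tau),\infty)$. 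But $(g/h)'=(g'h-gh')/h^2$, so this is equivalent to hypothesis (\ref{M6}). The supremum in (\ref{N6}) equals $0$ and is approached as $v\to0^+$, so the HJ equation holds on the reachable set.

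For the verification step, fix any admissible control $v(\cdot)$ with corresponding trajectory $x(\cdot)$ satisfying $x(T)=y$ and $x(t)=x$. Then along this trajectory
\[
\frac{d}{ds}V(x(s),s)=\partial_tV+\Bigl[-\tfrac{x(s)}{p}-v(s)h(x(s)/v(s))\Bigr]\partial_xV\ \le\ -e^{-(T-s)/p}g(x(s)/v(s))v(s)
\]
by the HJ inequality. Since $\tau_{y,T}=T$ gives $V(y,T)=0$, integration from $t$ to $T$ yields $V(x,t)\ge\int_t^T g(x(s)/v(s))e^{-(T-s)/p}v(s)\,ds$, and taking the supremum over $v(\cdot)$ gives $V(x,t)\ge q(x,y,t,T)$. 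The reverse inequality is obtained by constructing near-bang-bang controls $v_\epsilon$ equal to $\epsilon$ on $[t,\tau_\epsilon]$ and to $1$ on $[\tau_\epsilon,T]$, where $\tau_\epsilon$ is chosen so that $x_\epsilon(t)=x$ (existence of $\tau_\epsilon$ for small $\epsilon$ is a standard continuity/monotonicity argument, and $\tau_\epsilon\to\tau_{x,t}$ as $\epsilon\to0$). On $[\tau_\epsilon,T]$ the trajectory coincides with $x_p$ and contributes the integral in (\ref{P6}); on $[t,\tau_\epsilon]$ the integrand $v_\epsilon g(x_\epsilon/v_\epsilon)$ is bounded by $\epsilon\sup_{\xi\ge x_p(\tau_{x,t})}g(\xi)$, which vanishes in the limit because $g$ is bounded at infinity. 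Hence $q(x,y,t,T)\ge V(x,t)$.

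The main obstacle I anticipate is the algebraic reduction of the HJ inequality to the scalar condition $g/h$ decreasing: one must track several cancellations between $e^{-(\tau-t)/p}$, $x_p(\tau)/x$, and the corresponding factors of $h$, and a sign error at any stage would obscure the role of (\ref{M6}). A secondary care point is justifying the $v\to0^+$ limit in the approximating controls, where one must verify both that the approximating trajectories converge uniformly to the bang-bang trajectory (requiring $vh(x/v)\to0$ as $v\to0$, which follows from the boundedness assumption on $h$) and that the running cost on $[t,\tau_\epsilon]$ genuinely vanishes (which uses only the existence of a finite limit of $g$ at infinity, i.e.\ the fact that $g$ is decreasing and non-negative).
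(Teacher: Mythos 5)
Your proof is correct, and up to the Hamilton--Jacobi inequality it runs in parallel to the paper's, but at that step you take a genuinely simpler route. Both you and the paper reduce (\ref{N6}) to showing $u(x,t)=\partial_xq-e^{-(T-t)/p}g(x)/h(x)\ge0$, using (\ref{M6}) to pass from $\xi=x/v\ge x$ to the single point $x$. The paper then establishes $u\ge0$ by noting $u=0$ on the curve $x=x_p(t)$, deriving the transport equation (\ref{W6}) for $u$, and running a characteristics/Gronwall estimate along the rays $x=Ce^{-t/p}$. You instead exploit the algebraic cancellation $e^{-(T-\tau_{x,t})/p}\,x_p(\tau_{x,t})/x=e^{-(T-t)/p}$ coming from (\ref{O6}), which yields the closed-form identity
\[
\partial_x q(x,y,t,T)\;=\;e^{-(T-t)/p}\,\frac{g(x_p(\tau_{x,t}))}{h(x_p(\tau_{x,t}))}\,,
\]
so that $u(x,t)=e^{-(T-t)/p}\bigl[g(x_p(\tau_{x,t}))/h(x_p(\tau_{x,t}))-g(x)/h(x)\bigr]\ge0$ follows immediately from $x_p(\tau_{x,t})<x$ and $g/h$ non-increasing, with no PDE argument at all. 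This is cleaner and makes the role of hypothesis (\ref{M6}) fully transparent; the paper's method of characteristics accomplishes the same thing but obscures the fact that $\partial_xq$ admits this explicit formula. You are also more careful than the paper in spelling out the verification step (the paper simply asserts that a $C^1$ HJ solution on the reachable set equals the value function) and the well-posedness of $\tau_{x,t}$. One minor point to tidy: your verification invokes $V(y,T)=0$, but $(y,T)$ sits at the degenerate corner of the reachable set (\ref{C6}); the clean fix is to integrate the dynamic-programming inequality over $[t,T-\epsilon]$ and send $\epsilon\to0$, which costs nothing.
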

 \begin{proof}
 Differentiating (\ref{O6}) with respect to $x$ we have that
 \be \label{Q6}
 \frac{\pa \tau_{x,t}}{\pa x} \ = \ -\frac{x_p(\tau_{x,t})}{xh(x_p(\tau_{x,t}))} \ , \quad
  \frac{\pa \tau_{x,t}}{\pa t} \ = \ -\frac{x_p(\tau_{x,t})}{ph(x_p(\tau_{x,t}))} \ .
 \ee
We have from (\ref{P6}) that
\be \label{R6}
\frac{\pa q(x,y,t,T)}{\pa x} \ = \ -\exp\left[-(T-\tau_{x,t})/p\right]g(x_p(\tau_{x,t})) \frac{\pa \tau_{x,t}}{\pa x} \ , 
\ee
and similarly that
\be \label{S6}
\frac{\pa q(x,y,t,T)}{\pa t} \ = \ -\exp\left[-(T-\tau_{x,t})/p\right]g(x_p(\tau_{x,t})) \frac{\pa \tau_{x,t}}{\pa t} \ .
\ee
It follows from (\ref{Q6})-(\ref{S6}) that $q$ is a solution to the PDE
\be \label{T6}
\frac{\pa q(x,y,t,T)}{\pa t}-\frac{x}{p} \frac{\pa q(x,y,t,T)}{\pa x}  \ = \ 0 \ .
\ee
Since $q$  is a solution to (\ref{T6}) we need only show that
\be \label{U6}
e^{-(T-t)/p}g\left(\frac{x}{v}\right) \ \le \ h\left(\frac{x}{v}\right)\frac{\pa q(x,y,t,T)}{\pa x}  \quad {\rm for \ } 0<v<1, \ x>0  ,
\ee
in order to prove that $q$ is a solution to the HJ equation (\ref{N6}). Observe now that since (\ref{M6}) holds  the function $x\ra g(x)/h(x), \ x>0,$ is decreasing. Hence  to prove (\ref{U6}) it is sufficient to show that 
\be \label{V6}
u(x,t) \ = \ \frac{\pa q(x,y,t,T)}{\pa x} -e^{-(T-t)/p}\frac{g(x)}{h(x)} \ \ge \ 0 \quad x>0 \ .
\ee
Observe next from (\ref{Q6}), (\ref{R6}) that $\pa q(x,y,t,T)/\pa x$ approaches $e^{-(T-t)/p}g(x_p(t))/h(x_p(t))$ as $x\ra x_p(t)$. It follows from (\ref{V6}) that $u(x,t)=0$ if $x=x_p(t)$.  We have now on differentiating (\ref{T6}) with respect to $x$ that
\be \label{W6}
 \frac{\pa u(x,t)}{\pa t}-\frac{x}{p} \frac{\pa u(x,t)}{\pa x}-\frac{1}{p}u(x,t) \ = \frac{x}{p}e^{-(T-t)/p}\frac{d}{dx}\frac{g(x)}{h(x)} \ \le \ 0 \ .
\ee
It follows by the method of characteristics that $u(x,t)\ge 0$ for all $(x,t)$ satisfying (\ref{C6}). Hence the function $q$ of (\ref{P6}) is a solution to the HJ equation (\ref{N6}). Since $q$ is a $C^1$ solution to the HJ equation for $(x,t)$ in the reachable set we see that 
the solution to the variational problem (\ref{B6}) is given by (\ref{P6}). 
\end{proof}
\begin{rem}
Since the function (\ref{P6}) satisfies $\pa q(x,y,t,T)/\pa x\ge 0$, it follows that the solution of the variational problem  $\max_{0<v(\cdot)<1} q(y,v(\cdot),t,T)$, with $q$ as in (\ref{D6}), is given by $v(\cdot)\equiv 1$. 
\end{rem}
Next we consider the variational problem analogous to (\ref{B6}) given by
\be \label{X6}
q(x,y,t,T) \ = \ \min_{1\le v(\cdot)<\infty}\left[\int_t^T ds \ g\left(\frac{x(s)}{v(s)}\right)e^{-(T-s)/p}v(s) \ \Bigg| \ x(t)=x\right] \ .
\ee
The reachable set for the control problem (\ref{X6}) is the set of $(x,t)$ which satisfy
\be \label{Y6}
x_p(t)<x<\infty \ , \quad t<T \ .
\ee
The HJ equation corresponding to (\ref{X6}) is given by
\begin{multline} \label{Z6}
\frac{\pa q(x,y,t,T)}{\pa t}-\frac{x}{p} \frac{\pa q(x,y,t,T)}{\pa x} \\
+\inf_{1<v<\infty}\left[ -vh\left(\frac{x}{v}\right)\frac{\pa q(x,y,t,T)}{\pa x}   +e^{-(T-t)/p}g\left(\frac{x}{v}\right)v\right] \ = \ 0 \ .
\end{multline} 
The minimization problem (\ref{X6}) is trivial in the case $g(\cdot)\equiv C_0h(\cdot)$ for some constant $C_0$,  since then the function $q(y,v(\cdot),t,T)$ of (\ref{D6}) is independent of $v(\cdot)$. In fact if $g(\cdot)\equiv h(\cdot)$ we have from (\ref{D6}) that
\be \label{AA6}
q(y,v(\cdot),t,T) \ = \ -\int_t^T ds \ e^{-(T-s)/p}\left[\frac{dx(s)}{ds}+\frac{x(s)}{p}\right] \ .
\ee
Evaluating the integral on the RHS of (\ref{AA6}), we conclude that $q(x,y,t,T)=e^{-(T-t)/p}x-y$ for $(x,t)$ in the reachable set (\ref{Y6}). Note that since $\pa q(x,y,t,T)/\pa x=e^{-(T-t)/p}$, the infimum in (\ref{Z6}) is now simply zero. 

Next we consider the optimization problem 
\be \label{AB6}
q(x,y,t,T) \ = \ \max_{0<v(\cdot)\le 1}\left[\int_t^T ds \ g\left(\frac{x(s)}{v(s)}\right) \ \Bigg| \ x(t)=x\right] \ ,
\ee
where $x(\cdot)$ has the dynamics (\ref{A6}) and $g(\cdot)$ is assumed positive decreasing with $\lim_{x\ra\infty}g(x)=0$. The reachable set for the control problem (\ref{AB6}) is given by (\ref{C6}). 
Letting
\be \label{AC6}
q(y,v(\cdot),t,T) \ = \ \int_t^T g\left(\frac{x(s)}{v(s)}\right) \ ds \ ,
\ee
we have that $dq$ is given by the formula
\begin{multline} \label{AD6}
dq(y,v(\cdot),t,T;\tau) \ = \\
 -\frac{x(\tau)}{v(\tau)^2}g'\left(\frac{x(\tau)}{v(\tau)}\right) 
+ \int_t^\tau ds\  g'\left(\frac{x(s)}{v(s)}\right) \frac{dx(s)(v(\cdot);\tau)}{v(s)}  \ .
\end{multline}
Observe now that similarly to (\ref{K6}) we have
\begin{multline} \label{AE6}
\int_t^\tau ds\  g'\left(x_p(s)\right)
\exp\left[(\tau-s)/p+\int_s^\tau h'(x_p(s')) \ ds'\right]  \\
= \ \frac{g\left(x_p(t)\right)}{x_p(t)/p+h(x_p(t))}
\exp\left[(\tau-t)/p+\int_t^\tau h'(x_p(s')) \ ds'\right] \\
-\frac{g\left(x_p(\tau)\right)}{x_p(\tau)/p+h(x_p(\tau))}  \ .
\end{multline}
From (\ref{H6}), (\ref{AE6}) we then see on setting $v(\cdot)\equiv 1$ in  (\ref{AD6}) that 
\begin{multline} \label{AF6}
dq(y,1(\cdot),t,T;\tau) \ \ge \\
 -x_p(\tau)g'\left(x_p(\tau)\right) 
-\frac{g\left(x_p(\tau)\right)}{x_p(\tau)/p+h(x_p(\tau))}\left\{h\left(x_p(\tau)\right)-x_p(\tau)h'\left(x_p(\tau)\right)\right\} \   \ .
\end{multline}
It follows from (\ref{AF6}) that $dq(y,1(\cdot),t,T;\cdot)$ is positive provided
\be \label{AG6}
\left[\frac{x}{p}+h(x)\right]xg'(x)+g(x)[h(x)-xh'(x)] \ \le  \ 0 \quad {\rm for\  } x>0 \ .
\ee

The HJ equation for  the variational problem (\ref{AB6}) is given by
\begin{multline} \label{AH6}
\frac{\pa q(x,y,t,T)}{\pa t}-\frac{x}{p} \frac{\pa q(x,y,t,T)}{\pa x} \\
+\sup_{0<v<1}\left[ -vh\left(\frac{x}{v}\right)\frac{\pa q(x,y,t,T)}{\pa x}   +g\left(\frac{x}{v}\right)\right] \ = \ 0 \ .
\end{multline} 
We seek a solution to (\ref{AH6})  by using bang-bang control. Thus we consider similarly to (\ref{P6}) the function
\be \label{AI6}
q(x,y,t,T) \ = \ \int_{\tau_{x,t}}^T ds  \ g(x_p(s)) \ ds \ .
 \ee
\begin{proposition}
 Assume $g(\cdot), \ h(\cdot)$ are $C^1$ non-negative decreasing functions  such that (\ref{AG6}) holds for all $p>0$.  For any $y>0, \ t<T,$ let $(x,t)$ satisfy (\ref{C6}) and $\tau_{x,t}$ be defined by (\ref{O6}). Then $t<\tau_{x,t}<T$ and the function $q(x,y,t,T)$ of (\ref{AB6}) is given by the formula (\ref{AI6}). 
 \end{proposition}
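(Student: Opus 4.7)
The approach parallels the proof of Proposition 6.1: I construct a $C^1$ classical solution of the Hamilton--Jacobi equation (\ref{AH6}) and invoke a verification argument. The ansatz (\ref{AI6}) is precisely the value realized by the bang-bang policy $v\equiv 0$ on $[t,\tau_{x,t}]$ followed by $v\equiv 1$ on $[\tau_{x,t},T]$, which is admissible for the problem (\ref{AB6}).

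The first step mirrors that of Proposition 6.1: verify $t<\tau_{x,t}<T$ on the reachable set (\ref{C6}), and differentiate (\ref{AI6}) using (\ref{Q6}) to obtain $\pa q/\pa x=g(x_p(\tau))\,x_p(\tau)/[x\,h(x_p(\tau))]$ and $\pa q/\pa t=g(x_p(\tau))\,x_p(\tau)/[p\,h(x_p(\tau))]$ with $\tau=\tau_{x,t}$. In particular, $\pa q/\pa t-(x/p)\pa q/\pa x=0$, the same transport equation as in (\ref{T6}), so the characteristics are the $v\equiv 0$ trajectories of (\ref{A6}).

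The crux is verifying the HJ equation (\ref{AH6}) in the interior of the reachable set. Since the transport part vanishes, it suffices to show $\sup_{0<v\le 1}\bigl[g(x/v)-vh(x/v)\pa_x q\bigr]=0$. Setting $w=x/v\ge x$ and abbreviating $W(z)\DEF zg(z)/h(z)$, a direct computation identifies the bracket with $(h(w)/w)\bigl[W(w)-x\pa_x q\bigr]$, and the formula for $\pa_x q$ above gives $x\pa_x q=W(x_p(\tau))$. Hence the bracket is non-positive for every admissible $v$ if and only if $W(w)\le W(x_p(\tau))$ for all $w\ge x$, which (since $x_p(\tau)\le x\le w$) follows from $W$ being non-increasing on $(0,\infty)$. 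I would establish that monotonicity by algebraically rearranging (\ref{AG6}) as $(x^2/p)\,g'(x)+h(x)^2 W'(x)\le 0$; because $g$ is decreasing the $p$-dependent term is non-positive, so the strictest constraint is obtained as $p\to\infty$, producing $W'(x)\le 0$. The converse is trivial, so the hypothesis ``(\ref{AG6}) for all $p>0$'' is exactly equivalent to $W$ non-increasing.

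To close the HJ verification I would note that as $v\to 0^+$ the bracket tends to $0$, using that $h$ is bounded and $g(z)\to 0$ as $z\to\infty$ (which is implicit in the setup of (\ref{AB6})); thus the supremum equals $0$ and $q$ is a $C^1$ classical solution of (\ref{AH6}) on the reachable set, with the correct terminal value $q(x,y,T,T)=0$. Since the bang-bang policy is admissible and realizes $q$, the standard HJ verification argument identifies $q$ with the value function of (\ref{AB6}). The main obstacle is the algebraic identification of ``(\ref{AG6}) for all $p>0$'' with the monotonicity of $W(z)=zg(z)/h(z)$; once that is in hand, the remainder of the argument is a clean transcription of the Proposition 6.1 proof, with $e^{-(T-t)/p}g(x)/h(x)$ replaced by $W(x)/x$.
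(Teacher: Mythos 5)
Your proposal is correct and follows the paper's overall strategy: produce a $C^1$ classical solution of the Hamilton--Jacobi equation (\ref{AH6}) via bang-bang control, verify (\ref{T6}) from (\ref{Q6}), and invoke verification. You also correctly recognize that ``(\ref{AG6}) holds for all $p>0$'' is equivalent to $W(z)=zg(z)/h(z)$ non-increasing (by rewriting (\ref{AG6}) as $(x^2/p)g'(x)+h(x)^2W'(x)\le 0$ and using $g'\le 0$); the paper only records the forward direction. The one place your route differs from the paper's is the check that the supremum in (\ref{AH6}) equals zero. The paper introduces $u(x,t)=\partial q/\partial x-g(x)/h(x)$, shows it vanishes on $\{x=x_p(t)\}$, derives the transport equation (\ref{AL6}) with non-positive source, and propagates $u\ge 0$ by characteristics. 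You instead compute $x\,\partial q/\partial x = W(x_p(\tau_{x,t}))$ directly from (\ref{Q6}) and (\ref{AI6}), so that the bracket becomes $(h(w)/w)\,[W(w)-W(x_p(\tau_{x,t}))]$ with $w=x/v\ge x>x_p(\tau_{x,t})$, and non-positivity is immediate from $W$ non-increasing. These are two presentations of the same calculation -- integrating (\ref{AL6}) along the $v\equiv 0$ characteristic through $(x,t)$ gives exactly $u(x,t)=[W(x_p(\tau_{x,t}))-W(x)]/x$ -- but your direct computation is more economical, bypassing the auxiliary transport PDE. One small imprecision: $v\equiv 0$ is not admissible under the constraint $0<v\le 1$, so the bang-bang policy realizes (\ref{AI6}) only as a limit; the HJ verification theorem identifies (\ref{AI6}) with the supremum regardless of attainment.
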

 \begin{proof} 
 From (\ref{Q6}) we see that the function (\ref{AI6}) satisfies (\ref{T6}). 
Hence $q$  is a solution to (\ref{AH6}) provided
\be \label{AJ6}
g\left(\frac{x}{v}\right) \ \le \ vh\left(\frac{x}{v}\right)\frac{\pa q(x,y,t,T)}{\pa x}  \quad {\rm for \ } 0<v<1, \ x>0 \ .
\ee
Letting $p\ra\infty$ in (\ref{AG6}) we see that the function $x\ra xg(x)/h(x)$ is decreasing, (whence $\lim_{x\ra\infty}g(x)=0$). The inequality (\ref{AJ6}) holds therefore if
\be \label{AK6}
u(x,t) \ = \ \frac{\pa q(x,y,t,T)}{\pa x} -\frac{g(x)}{h(x)} \ \ge \ 0 \quad {\rm for \ } x>0 \ .
\ee
Observe next from (\ref{Q6}), on differentiating (\ref{AI6}) with respect to $x$, that $\pa q(x,y,t,T)/\pa x$ approaches $g(x_p(t))/h(x_p(t))$ as $x\ra x_p(t)$. It follows from (\ref{AK6}) that $u(x,t)=0$ if $x=x_p(t)$.  We see now on differentiating (\ref{T6}) with respect to $x$ that
\be \label{AL6}
 \frac{\pa u(x,t)}{\pa t}-\frac{x}{p} \frac{\pa u(x,t)}{\pa x}-\frac{1}{p}u(x,t) \ = \frac{1}{p}\frac{d}{dx}\left[\frac{xg(x)}{h(x)}\right] \ \le \ 0 \ .
\ee
It follows by the method of characteristics that $u(x,t)\ge 0$ for all $(x,t)$ satisfying (\ref{C6}). Hence the function $q$ of (\ref{AI6}) is a solution to the HJ equation (\ref{AH6}). Since $q$ is also a $C^1$ solution to (\ref{AH6}),   the solution to the variational problem (\ref{AB6}) is  given by (\ref{AI6}).
\end{proof}
\begin{rem}
Since the function (\ref{AI6}) satisfies $\pa q(x,y,t,T)/\pa x\ge 0$, it follows that the solution of the variational problem  $\max_{0<v(\cdot)<1} q(y,v(\cdot),t,T)$, with $q$ as in (\ref{AC6}), is given by $v(\cdot)\equiv 1$. 
\end{rem}

Next  we  consider the optimization problem
\be \label{AM6}
q(x,y,t,T) \ = \ \min_{1<v(\cdot)<\infty}\left[\int_t^T ds \ g\left(\frac{x(s)}{v(s)}\right) \ \Bigg| \ x(t)=x\right] \ ,
\ee
where $x(\cdot)$ has the dynamics (\ref{A6}) and $(x,t)$ belongs to the reachable set (\ref{Y6}) of the control system.  The HJ equation for (\ref{AM6}) is given by
\begin{multline} \label{AN6}
\frac{\pa q(x,y,t,T)}{\pa t}-\frac{x}{p} \frac{\pa q(x,y,t,T)}{\pa x} \\
+\inf_{1<v<\infty}\left[ -vh\left(\frac{x}{v}\right)\frac{\pa q(x,y,t,T)}{\pa x}   +g\left(\frac{x}{v}\right)\right] \ = \ 0 \ .
\end{multline} 
Letting $G(x,\xi,v),  \ \tilde{G}(x,\xi)$ be defined by
\be \label{AO6}
G(x,\xi,v) \ = \ -vh\left(\frac{x}{v}\right)\xi   +g\left(\frac{x}{v}\right) \ , \quad \tilde{G}(x,\xi)=\inf_{1<v<\infty} G(x,\xi,v) \ ,
\ee
we see that the function $\xi\ra \tilde{G}(x,\xi)$ is concave. We can make a change of variable $v\ra w=vh(x/v)$, so
\be \label{AP6}
\frac{dw}{dv} \ = \ h\left(\frac{x}{v}\right)-\frac{x}{v}h'\left(\frac{x}{v}\right) \ > \ 0 \ .
\ee 
Hence the function $w\ra G(x,\xi,v(w))$ is convex provided
\be \label{AQ6}
\frac{d}{dw} g\left(\frac{x}{v}\right) \ = \ \frac{dv}{dw} \left[-\frac{x}{v^2}g'\left(\frac{x}{v}\right)\right]
\ee 
increases as a function of $w$. From (\ref{AP6}) we see that this is equivalent to the function
\be \label{AR6}
x\ra -\frac{x^2g'(x)}{h(x)-xh'(x)}  \ = \ m(x)\quad {\rm decreases  } .
\ee
Observe that if (\ref{AG6}) holds for $p=\infty$ then
\be \label{AS6}
-\frac{x^2g'(x)}{h(x)-xh'(x)} \ \ge \  \frac{xg(x)}{h(x)} \ , \quad x>0 \ .
\ee
By (\ref{AG6}) with $p=\infty$ the function on the RHS of (\ref{AS6}) is decreasing, so  (\ref{AR6}) is an extra condition that the function on the LHS of (\ref{AS6}) also decreases. 

We assume now that (\ref{AR6}) holds. Then the minimum of $G(x,\xi,v)$ on the interval $1<v<\infty$ is attained at  $v=1$ if $\xi\le -xg'(x)/[h(x)-xh'(x)]$. If $\xi>-xg'(x)/[h(x)-xh'(x)]$  the minimizer of $\min_{v\ge 1} G(x,\xi,v)$ is the solution to the equation
\be \label{AT6}
-\left[    h\left(\frac{x}{v}\right)-\frac{x}{v}h'\left(\frac{x}{v}\right)       \right]\xi-\frac{x}{v^2}g'\left(\frac{x}{v}\right) \ =  \ 0 \ ,  \ \  {\rm whence}  \ m\left(\frac{x}{v}\right) \ = \ x\xi \ = \zeta \ .
\ee
A solution to (\ref{AT6}) exists for all $\zeta>m(x)$ provided $\lim_{z\ra 0}m(z)=\infty$.  From (\ref{AT6}) it follows that the minimizing $v=v_{\rm min}(x,\xi)=x/m^{-1}(\zeta)$, where $m^{-1}(\cdot)$ is the inverse function for $m$. The corresponding HJ equation has therefore the form
\be \label{AU6}
\frac{\pa q(x,y,t,T)}{\pa t}+ H\left(x\frac{\pa q(x,y,t,T)}{\pa x}\right) \ = \ 0 \ ,
\ee
where
\be \label{AV6}
H(\zeta) \ = \ -\frac{\zeta}{p}-\frac{\zeta h(m^{-1}(\zeta))}{m^{-1}(\zeta)}+g\left(m^{-1}(\zeta)\right) \ .
\ee
Note that $\zeta=x\pa q(x,y,t,T)/\pa x$ is constant along characteristics for the HJ equation (\ref{AU6}), whence  it follows from (\ref{AT6}) that $x(\cdot)/v(\cdot)$ is also constant along characteristics.

The considerations of the previous paragraph lead us to propose a solution to (\ref{AN6}). For $s<t<T$ let $x_p(s,t)$ be the solution to the terminal value problem
\be \label{AW6}
\frac{dx_p(s,t)}{ds} \ = \ -\left[\frac{1}{p}+\frac{h(x_p(t))}{x_p(t)}\right]x_p(s,t) \ ,  \ s<t<T, \quad x_p(t,t) \ = \ x_p(t) \   .
\ee
Setting $x(s)=x_p(s,t)-x_p(s)$, we see from (\ref{A6}) with $v(\cdot)\equiv 1$ and (\ref{AW6}) that
\begin{multline} \label{AX6}
\frac{dx(s)}{ds} \ = \ -\left[\frac{1}{p}+\frac{h(x_p(t))}{x_p(t)}\right]x(s) \\
-x_p(s)\left[\frac{h(x_p(t))}{x_p(t)}-\frac{h(x_p(s))}{x_p(s)}\right] \ ,  \ s<t<T, \quad x(t) \ = \ 0 \   .
\end{multline}
Observe that the function $x\ra h(x)/x$ is decreasing and also  the function $s\ra x_p(s)$.  It follows then from (\ref{AX6}) that $x(s)>0$ for $s<t$.  Hence the trajectory $x_p(s,t), \ s<t,$ lies in the reachable set (\ref{Y6}) for the variational problem (\ref{AM6}).  We can show similarly that the trajectories $x_p(\cdot,t), \ t<T$, do not intersect. Thus for $t_1<t_2<T$ let $x(s)=x_p(s,t_2)-x_p(s,t_1), \ s<t_1$. We have already seen that $x(t_1)>0$, and from (\ref{AW6}) we also have that
\be \label{AY6}
\frac{dx(s)}{ds} \ = \ -\left[\frac{1}{p}+\frac{h(x_p(t_1))}{x_p(t_1)}\right]x(s)-x_p(s,t_2)\left[\frac{h(x_p(t_2))}{x_p(t_2)}-\frac{h(x_p(t_1))}{x_p(t_1)}\right] \ ,  \ s<t_1 \ .
\ee
Since $x_p(t_1)>x_p(t_2)$ we conclude from (\ref{AY6}) that $x_p(s,t_2)>x_p(s,t_1),\ s<t_1$.  

Since the trajectories $x_p(\cdot,t), \ t<T,$ do not entirely cover the reachable set we complement them with a set of trajectories with terminal point $y$ at time $T$. Thus for $s<T,  0<\la<y$ we define $y_p(s,\la)$ as the solution to
\be \label{AZ6}
\frac{dy_p(s,\la)}{ds} \ = \ -\left[\frac{1}{p}+\frac{h(\la)}{\la}\right]y_p(s,\la) \ ,  \ s<T, \quad y_p(T,\la) \ = \ y \   .
\ee
If $t<T$ and $x_p(t)<x<x_p(t,T)$ then there exists unique $\tau=\tau_{x,t}$ such that $t<\tau<T$ and $x_p(t,\tau)=x$.  If $x>x_p(t,T)$ then there exists unique $\la=\la_{x,t}$ such that $0<\la<y$ and $y_p(t,\la)=x$.  We define now a function $q(x,y,t,T)$ for $t<T$ and $(x,t)$ satisfying  (\ref{Y6}) by
\begin{multline} \label{BA6}
q(x,y,t,T) \ = \ (\tau_{x,t}-t)g(x_p(\tau_{x,t}))+\int_{\tau_{x,t}}^T g(x_p(s)) \ ds \quad {\rm if \ } x_p(t)<x<x_p(t,T) \ , \\
q(x,y,t,T) \ = \ (T-t)g(\la_{x,t}) \quad {\rm if \ }  x>x_p(t,T) \ .
\end{multline}
\begin{proposition}
Assume $g(\cdot), \ h(\cdot)$ are $C^1$ non-negative decreasing, and  also that (\ref{AR6}) holds. For any $y>0, \ t<T,$ let $(x,t)$ satisfy (\ref{Y6}). Then  the function $q(x,y,t,T)$ of (\ref{AM6}) is given by the formula (\ref{BA6}).
\end{proposition}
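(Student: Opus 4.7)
The strategy I would follow is a verification-theorem argument based on the Hamilton--Jacobi equation (\ref{AN6}). I would show that the candidate function (\ref{BA6}) is $C^1$ on the reachable set (\ref{Y6}), satisfies (\ref{AN6}) there, and is attained by an admissible control $v(\cdot)\ge 1$; this identifies (\ref{BA6}) with the value function (\ref{AM6}), mirroring the structure of the proofs of Propositions 6.1 and 6.2. The feature distinguishing the present proof from those two is that, under hypothesis (\ref{AR6}), the infimum of the Hamiltonian $G(x,\xi,v)$ in (\ref{AO6}) can be attained at an interior point $v>1$ whenever $\xi$ exceeds the threshold $-xg'(x)/[h(x)-xh'(x)]$, which is what forces the two-region structure of (\ref{BA6}).

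The heart of the argument is the verification of (\ref{AN6}) in each sub-region by direct calculation. In region 2, $x>x_p(t,T)$, the defining relation $y_p(t,\la_{x,t})=x$ together with (\ref{AZ6}) gives $\log(x/y)=(T-t)[1/p+h(\la)/\la]$ with $\la=\la_{x,t}$; implicit differentiation produces $\partial q/\partial x = g'(\la)/[x\,(h(\la)/\la)']$, so that $x\,\partial q/\partial x=g'(\la)/(h(\la)/\la)'=m(\la)$, and by (\ref{AT6}) the infimum in (\ref{AN6}) is attained at $v=x/\la$. The HJ equation then holds by direct substitution. In region 1, $x_p(t)<x<x_p(t,T)$, the relation $x_p(t,\tau_{x,t})=x$ combined with (\ref{AW6}) plays the analogous role: setting $\la=x_p(\tau)$ and $\al(\tau)=1/p+h(\la)/\la$, one finds $x_p(t,\tau)=\la\exp[(\tau-t)\al(\tau)]$, and from $x_p'(\tau)=-\al(\tau)\la$ one computes $\al'(\tau)=-\al(\tau)\la\,(h(\la)/\la)'$, which recovers the same expression $\partial q/\partial x=g'(\la)/[x\,(h(\la)/\la)']$. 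Hence the interior minimizer in (\ref{AN6}) is again $v=x/\la$ and the HJ equation is verified.

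Gluing is routine: at $x=x_p(t,T)$ one checks $\tau_{x,t}=T$ and $\la_{x,t}=y=x_p(T)$, so both branches of (\ref{BA6}) reduce to $(T-t)g(y)$ and the two expressions for $\partial q/\partial x$ coincide, yielding a $C^1$ function on the full reachable set. Admissibility $v\ge 1$ of the candidate controller, given by $v(s)=x_p(s,\tau_{x,t})/x_p(\tau_{x,t})$ on $[t,\tau_{x,t}]$ followed by $v\equiv 1$ on $[\tau_{x,t},T]$ in region 1, and by $v(s)=y_p(s,\la_{x,t})/\la_{x,t}$ throughout $[t,T]$ in region 2, follows from the monotonicity of $s\mapsto x_p(s,\tau)$ and $s\mapsto y_p(s,\la)$ already used in the discussion surrounding (\ref{AX6})--(\ref{AY6}). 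The main technical obstacle I anticipate is ensuring that the inverse $m^{-1}$ used to define the interior minimizer in (\ref{AT6}) is defined on the entire range of $x\,\partial q/\partial x$ that appears in the reachable set; this requires not only (\ref{AR6}) but also the divergence $\lim_{z\to 0}m(z)=\infty$, and in the application to functions $h$ for which $h'$ may blow up at $\ve_0$, an approximation argument of the type noted at the end of the proof of Proposition 5.1 may be needed.
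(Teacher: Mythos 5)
Your proposal is correct and follows the same verification-theorem route that the paper takes: compute $\partial q/\partial x$ and $\partial q/\partial t$ in each of the two sub-regions by implicit differentiation of the defining relations for $\la_{x,t}$ and $\tau_{x,t}$, observe that $x\,\partial q/\partial x = m(\la)$ with $\la<x$ so the first-order condition (\ref{BF6}) holds at $v=x/\la>1$, invoke the convexity from (\ref{AR6}) to promote this interior critical point to the actual minimizer, and conclude that the $C^1$ function (\ref{BA6}) solves the HJ equation (\ref{AN6}) and hence equals the value function. Your implicit-differentiation bookkeeping in region $1$ (using the cancellation of the $g(\la)$ terms and the identity $\al'(\tau)=-\al\la\,(h(\la)/\la)'$) reproduces (\ref{BJ6})--(\ref{BK6}) in a slightly streamlined form, and you add two things the paper leaves implicit, namely the $C^1$ gluing check at $x=x_p(t,T)$ and the admissibility $v(\cdot)\ge 1$ of the bang-bang controller. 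Two minor remarks: the concern about $m^{-1}$ being globally defined is not actually needed inside this proof, because the curves $x_p(\cdot,\tau)$ and $y_p(\cdot,\la)$ hand you $\la<x$ directly, and then monotonicity of $m$ plus convexity of $w\mapsto G(x,\xi,v(w))$ settles that $v=x/\la$ is the minimizer without inverting $m$; and the approximation argument for $h'$ blowing up at $\ve_0$ belongs to the application in Proposition 5.1 rather than to Proposition 6.3 itself, whose hypotheses already assume $h$ is $C^1$.
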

\begin{proof}
We first consider the case $x>x_p(t,T)$. The partial derivatives of $\la_{x,t}$ can be computed by using the formula
\be \label{BB6}
\exp\left[\left(\frac{1}{p}+\frac{h(\la_{x,t})}{\la_{x,t}}\right)(T-t)\right]y \ = \ x \  .
\ee
Thus we have that
\begin{multline} \label{BC6}
\frac{\pa \la_{x,t}}{\pa x} \ = \ -\frac{\la_{x,t}^2}{(T-t)x[h(\la_{x,t})-\la_{x,t}h'(\la_{x,t})]} \ , \\
 \frac{\pa \la_{x,t}}{\pa t} \ = \ -\frac{\la_{x,t}^2}{(T-t)[h(\la_{x,t})-\la_{x,t}h'(\la_{x,t})]}\left(\frac{1}{p}+\frac{h(\la_{x,t})}{\la_{x,t}}\right) \ .
\end{multline}
It follows from (\ref{BA6}), (\ref{BC6}) that
\begin{multline} \label{BD6}
x\frac{\pa q(x,y,t,T)}{\pa x} \ = \ -\frac{g'(\la_{x,t})\la_{x,t}^2}{[h(\la_{x,t})-\la_{x,t}h'(\la_{x,t})]} \ , \\
\frac{\pa q(x,y,t,T)}{\pa t} \ = \ -g(\la_{x,t})- \frac{g'(\la_{x,t})\la_{x,t}^2}{[h(\la_{x,t})-\la_{x,t}h'(\la_{x,t})]}\left(\frac{1}{p}+\frac{h(\la_{x,t})}{\la_{x,t}}\right) \ . 
\end{multline}
Hence  $q$ is a solution to the PDE
\begin{multline} \label{BE6}
\frac{\pa q(x,y,t,T)}{\pa t}-\frac{x}{p} \frac{\pa q(x,y,t,T)}{\pa x} -v(x,t)h\left(\frac{x}{v(x,t)}\right) \frac{\pa q(x,y,t,T)}{\pa x} +g\left(\frac{x}{v(x,t)}\right) \ = \ 0 \ , \\
{\rm where \ \ } \frac{x}{v(x,t)} \ = \ \la_{x,t} \ .
\end{multline}
Note that $v(x,t)>1$ since $\la_{x,t}<y<x$.  We also have that
\be \label{BF6}
\frac{\pa}{\pa v}\left[-vh\left(\frac{x}{v}\right)\frac{\pa q(x,y,t,T)}{\pa x}+g\left(\frac{x}{v}\right) \right] \ = \ 0 \quad {\rm at \ } v=v(x,t) \ .
\ee
Hence, in view of (\ref{AR6}), we conclude that $q(x,y,t,T)$ satisfies the HJ equation (\ref{AN6}) in the region $\{(x,t): \ t<T, \ x>x_p(t,T)\}$. 

Next we consider the region $\{(x,t): \ t<T, \ x_p(t)<x<x_p(t,T)\}$.  In that case we have
\be \label{BG6}
\exp\left[\left\{\frac{1}{p}+\frac{h(x_p(\tau_{x,t}))}{x_p(\tau_{x,t})}\right\}(\tau_{x,t}-t)\right]x_p(\tau_{x,t}) \ = \ x \ .
\ee
Differentiating (\ref{BG6}) with respect to $x$ gives
\be \label{BH6}
\frac{\pa \tau_{x,t}}{\pa x} \ = \ \frac{x_p(\tau_{x,t})^2}{[x_p(\tau_{x,t})/p+h(x_p(\tau_{x,t}))](\tau_{x,t}-t)x[h(x_p(\tau_{x,t}))-x_p(\tau_{x,t})h'(x_p(\tau_{x,t}))]} \ .
\ee
Similarly we have that
\be \label{BI6}
\frac{\pa \tau_{x,t}}{\pa t} \ = \ \frac{x_p(\tau_{x,t})}{(\tau_{x,t}-t)[h(x_p(\tau_{x,t}))-x_p(\tau_{x,t})h'(x_p(\tau_{x,t}))]} \ .
\ee
From (\ref{BA6}), (\ref{BH6}) we have that
\begin{multline} \label{BJ6}
x\frac{\pa q(x,y,t,T)}{\pa x} \ = \ -(\tau_{x,t}-t)g'(x_p(\tau_{x,t}))\left[\frac{x_p(\tau_{x,t})}{p}+h(x_p(\tau_{x,t}))\right]x\frac{\pa \tau_{x,t}}{\pa x} \\
= \ -\frac{x_p(\tau_{x,t})^2g'(x_p(\tau_{x,t}))}{ [h(x_p(\tau_{x,t}))-x_p(\tau_{x,t})h'(x_p(\tau_{x,t}))]} \ ,
\end{multline}
and also from (\ref{BI6}) that 
\begin{multline} \label{BK6}
\frac{\pa q(x,y,t,T)}{\pa t} \ = \ -g(x_p(\tau_{x,t}))-(\tau_{x,t}-t)g'(x_p(\tau_{x,t}))\left[\frac{x_p(\tau_{x,t})}{p}+h(x_p(\tau_{x,t}))\right]\frac{\pa \tau_{x,t}}{\pa t}  \\
= \ -g(x_p(\tau_{x,t}))-\frac{x_p(\tau_{x,t})g'(x_p(\tau_{x,t}))}{ [h(x_p(\tau_{x,t}))-x_p(\tau_{x,t})h'(x_p(\tau_{x,t}))]}\left[\frac{x_p(\tau_{x,t})}{p}+h(x_p(\tau_{x,t}))\right] \ .
\end{multline}
It follows from (\ref{BJ6}), (\ref{BK6}) that  $q$ is a solution to the PDE
\begin{multline} \label{BL6}
\frac{\pa q(x,y,t,T)}{\pa t}-\frac{x}{p} \frac{\pa q(x,y,t,T)}{\pa x} -v(x,t)h\left(\frac{x}{v(x,t)}\right)\frac{\pa q(x,y,t,T)}{\pa x} +g\left(\frac{x}{v(x,t)}\right) \ = \ 0 \ , \\
{\rm where \ \ } \frac{x}{v(x,t)} \ = \ x_p(\tau_{x,t}) \ .
\end{multline}
Note that since $x>x_p(\tau_{x,t})$ we have $v(x,t)>1$ in (\ref{BL6}).  Furthermore, the identity (\ref{BF6}) also holds. We therefore conclude that $q$ is a solution  to the HJ equation (\ref{AN6}).  Since $q$ is a $C^1$ solution to the HJ equation for $(x,t)$ in the reachable set (\ref{Y6}) it follows that
the solution to the variational problem (\ref{AM6}) is given by (\ref{BA6}). 
\end{proof}
\begin{rem}
Since the function (\ref{BA6}) satisfies $\pa q(x,y,t,T)/\pa x\ge 0$, it follows  that the solution of the variational problem  $\min_{1<v(\cdot)<\infty} q(y,v(\cdot),t,T)$ is given by $v(\cdot)\equiv 1$. 
\end{rem}
We wish to relate the condition (\ref{AG6}) with $p=\infty$, which insures a local extremum at $v(\cdot)\equiv1$, to the condition (\ref{AR6}).   We have  already observed that  (\ref{AG6}) with $p=\infty$ is equivalent to the function on the RHS of (\ref{AS6}) decreasing. Our goal is to show that (\ref{AR6}) is a convexity condition on this function.  To see this we set $z(x)=x/h(x)$, whence (\ref{AG6}) with $p=\infty$ implies that
\be \label{BM6}
\frac{d}{dz}\left[zg(x)\right] \ = \ g(x)+z\frac{dx}{dz}g'(x) \ \le  \ 0 \ .
\ee
We also have that the function $m(\cdot)$ of (\ref{AR6}) is given by
\be \label{BN6}
m(x) \ = \  g'(x)\Big/\frac{d}{dx}\left[\frac{h(x)}{x}\right] \ =  \ -z^2 \frac{dx}{dz}g'(x) \ .
\ee
Hence the condition $m(\cdot)$ decreasing is equivalent to
\be \label{BO6}
\frac{d}{dz}\left[z^2 \frac{dx}{dz}g'(x)\right] \ \ge \ 0 \ .
\ee
Observe from (\ref{BM6}) that (\ref{BO6}) is the same as 
\be \label{BP6}
\frac{d^2}{dz^2}\left[zg(x)\right] \ = 
 \left[2\frac{dx}{dz}+z\frac{d^2x}{dz^2}\right]g'(x)+ z\left(\frac{dx}{dz}\right)^2g''(x) \ \ge \ 0 \ .
\ee
Hence (\ref{AG6}) is equivalent to the function $z\ra zg(x(z))$ decreasing, while (\ref{AR6}) is equivalent to convexity of the function $z\ra zg(x(z))$. Note that convexity of a function implies that it is 
decreasing, provided the function is bounded at infinity.

\vspace{.1in}

\section{Global Asymptotic Stability}
In this section we prove a global asymptotic stability result for solutions of (\ref{A3}) with $\rho(t)=\rho(\xi(\cdot,t),\eta(t))$ given by (\ref{T2}), which extends the local asymptotic stability result Theorem 4.1. As in \cite{cd}, the key to proving this is to establish global asymptotic stability for the DDE (\ref{O5}), (\ref{P5}) using the monotonicity properties of the function $f(t,v_t(\cdot))$ implied by Proposition 5.1. Adapting the argument of Proposition 8.1 of \cite{cd}, we obtain the following:
\begin{proposition}
Let $\xi(\cdot,t), \ t>0,$ be the solution of  (\ref{A3}) with $\rho(t)=\rho(\xi(\cdot,t),\eta(t))$, which is considered in Proposition 3.1. In addition to the assumptions (\ref{R4}), (\ref{T4}) on $h(\cdot)$ required for Lemma 3.2, assume $h(\cdot)$ satisfies (\ref{I4}) for all $p>0$ and also that (\ref{S5}) holds. Let  $\eta(\cdot)$ satisfy (\ref{Z3}) and the inequality $\eta(t)\le Ce^{-\del t}, \ t>0,$ for some constants $C,\del>0$. Then if the function $x\ra\beta(x,0)$  is H\"{o}lder continuous at $x=1$, there exists $I_\infty>0$ such that the function $I(\cdot)$ defined by (\ref{A5}) satisfies
$\lim_{t\ra\infty}I(t)=I_\infty$. 
\end{proposition}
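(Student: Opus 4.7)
First, I would show that $0<c\le I(t)\le C<\infty$ for all $t\ge 0$. Because $\beta(\cdot,0)$ is H\"older continuous at $x=1$, Lemma 2.1 applies with the improved exponent $\gamma=0$, so on the set $\{0\le \zeta(\cdot)\le M,\ 0<\eta\le 1\}$ the functional $I(\zeta(\cdot),\eta)$ is bounded above and below by positive constants depending only on $M$. Proposition 3.1 supplies $\sup_{t\ge 0}\|\xi(\cdot,t)\|_\infty<\infty$, and since the $I(t)$ of (\ref{A5}) differs from $I(\xi(\cdot,t),\eta(t))$ only by a multiplicative constant (compare (\ref{A5}) with (\ref{R2})), we obtain the claimed uniform bounds. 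In particular the delay ratios $v_t(s)=(I(s)/I(t))^{1/p}$ take values in a fixed compact subset of $(0,\infty)$, uniformly in $0\le s\le t$.

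Second, I would show that the forcing $g+\gamma$ in the DDE (\ref{O5}),
\[
\frac{1}{p}\frac{d}{dt}\log I(t) \; = \; g(t,v_t(\cdot))-f(t,v_t(\cdot))+\gamma(t),
\]
is exponentially small. By the H\"older estimate (\ref{BF2}) of Lemma 2.4, combined with the hypothesis $\eta(t)\le Ce^{-\delta t}$, we obtain $|\gamma(t)|\le Ce^{-\nu t}$ for some $\nu>0$. For $g$, the explicit formula (\ref{N5}) together with $z_p(0)\sim e^{t/p}$, the convergence $h(\cdot)\to h_\infty>0$, and the bounds on $\xi(\cdot,0), D\xi(\cdot,0)$ furnished by Proposition 3.1 at the base time, yield $|g(t,v_t(\cdot))|\le Ce^{-t/p}$ uniformly in $v_t(\cdot)$ on the compact set from Step 1. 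Consequently $g+\gamma\in L^1(0,\infty)$, and its tail can be made arbitrarily small.

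Third, I would invoke Proposition 5.1: $f(t,v_t(\cdot))\ge 0$ whenever $v_t(\cdot)\le 1$ pointwise on $[0,t]$ (equivalently, $I(t)=\sup_{0\le s\le t}I(s)$), and $f(t,v_t(\cdot))\le 0$ whenever $v_t(\cdot)\ge 1$ pointwise (running minimum). Following the pattern of Proposition 8.1 of \cite{cd}, suppose for contradiction that $M_\infty := \limsup_{t\to\infty}I(t)>\liminf_{t\to\infty}I(t)=:m_\infty$. For small $\epsilon>0$, choose $T_0$ so large that $\int_{T_0}^\infty|g+\gamma|\,ds<\epsilon$, then pick $t^-<t^+$ in $[T_0,\infty)$ with $I(t^-)<m_\infty+\epsilon$ and $I(t^+)>M_\infty-\epsilon$. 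Integrating the DDE across $[t^-,t^+]$ and exploiting the sign information on $f$ gleaned from the running-extremum structure of $I$ on this interval, one would conclude $\log I(t^+)-\log I(t^-)\le O(\epsilon)$, forcing $M_\infty\le m_\infty$, the desired contradiction.

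The main obstacle is precisely this last step: the sign of $f(t,v_t(\cdot))$ is determined by the \emph{entire} past profile $v_t|_{[0,t]}$ rather than by the local behaviour of $I$ near $t$, so there is no immediate critical-point argument. One must partition $[t^-,t^+]$ into subintervals ending at running maxima (where $f\ge 0$) and running minima (where $f\le 0$) of $I$, and show that the residual ``mixed'' subintervals -- on which $v_{t}(\cdot)$ crosses $1$ somewhere in $[0,t]$ so that the sign of $F(t,\cdot,v_t(\cdot))-F(t,\cdot,1(\cdot))$ is not definite -- contribute $O(\epsilon)$ once the $L^1$ tail of $g+\gamma$ is absorbed. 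Arranging this partition carefully and controlling the residual terms is the technical heart of the proof, exactly as in \cite{cd}, and it uses the full strength of the DDE structure together with the uniform boundedness from Step 1.
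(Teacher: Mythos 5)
Your Steps 1 and 2 track the paper's preliminary estimates faithfully: the boundedness $c\le I(t)\le C$ comes from Lemma 2.1 with $\ga=0$ together with the uniform $\sup_t\|\xi(\cdot,t)\|_\infty<\infty$ of Proposition 3.1 and the conservation identity, and the exponential decay of $\ga(\cdot)$ and of $g(t,v_t(\cdot))$ come from (\ref{BF2}), (\ref{N5}), and the bound $z_p(0)\simeq e^{t/p}$, just as in the paper.

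The gap is in Step 3, which you correctly identify as the heart of the argument but whose sketch does not match a mechanism that would close. Two essential devices from the paper's proof are absent. The first is the \emph{window truncation} of the delay profile: the paper replaces $v_t(\cdot)$ by a modified $\tilde v_t(\cdot)$ that is frozen equal to $v_t(T_0)$ on the early part of the history and agrees with $v_t$ on the recent window $[T_0,t]$, so that the one-sidedness required by Proposition 5.1 (namely $\tilde v_t\le 1$ \emph{on the entire integration window}) becomes a statement about $I$ on $[T_0,t]$ alone. The error $f_1-\tilde f_1$ is then controlled by the oscillation of $I$ in the truncation window (the Taylor-type estimates (\ref{G7})--(\ref{N7})); without truncation, the ``mixed'' subintervals you mention --- where $v_t$ crosses $1$ somewhere in the far past --- cannot be shown to contribute $O(\ve)$, and your plan of partitioning $[t^-,t^+]$ by running extrema of $I$ does not produce a definite sign for $f$ because the sign depends on the \emph{global} past, not on whether $t$ is a local max. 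The second missing piece is the \emph{dichotomy}: the window-truncation argument requires the existence of arbitrarily late windows of prescribed width in which $I^{1/p}$ has small oscillation; if no such windows exist (i.e.\ the oscillation of $I^{1/p}$ on every length-$\tau_0$ window exceeds $\ve_0$ eventually), the paper runs a separate contradiction via the differential inequality (\ref{AD7})--(\ref{AE7}). Your contradiction setup (``suppose $M_\infty>m_\infty$'') does not distinguish these two regimes, and neither branch is carried out. So the proposal is a correct scaffold for Steps 1--2 but is genuinely incomplete at the crucial third step, and the sketch there does not align with an argument that would succeed.
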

\begin{proof}
It follows from (\ref{AQ2}) and (\ref{V*3}) of Proposition 3.1 that
\be \label{B7}
\inf_{t>0} \left\{pI_p(\xi(\cdot,t))+\left[dI_p(\xi(\cdot,t)),  \ A\xi(\cdot,t)\right] \right\} \  > \ 0 \ .
\ee
We note that the function $I(\cdot)$ defined by (\ref{A5}) has the property
\be \label{C7}
c_0I(0) \ \le \ I(t) \ \le \ C_0 I(0), \ \ t>0, \quad {\rm for \ some \ constants \ } \ C_0,c_0>0 \ .
\ee
This is a consequence of the H\"{o}lder assumption on $\beta(\cdot,0)$ upon using Lemma 2.1, Lemma 2.2, the identity (\ref{AF3}), and our assumptions on the function $\eta(\cdot)$. 
It further follows from  the inequality in (\ref{AA3}), (\ref{D5}) and (\ref{C7}) that the function $G$ of (\ref{N5}) satisfies an inequality $|G(t,y,v_t(\cdot))|\le Ce^{-t/p}, \ y\ge \ve_0, \ t\ge 0,$ for some constant $C$.  We conclude then from (\ref{B7}) that the function g of (\ref{P5}) satisfies an inequality $|g(t,v_t(\cdot))|\le Ce^{-t/p}$ for some constant $C$.  We observe also from (\ref{BF2}) of Lemma 2.4 that the function $\ga(\cdot)$ on the RHS of (\ref{O5}) satisfies $|\ga(t)|\le C_1\e^{-\del_1 t}, \ t>0,$ for some constants $C_1,\del_1>0$. 

To prove convergence of $I(t)$ as $t\ra\infty$, we first assume that for any $\ve,\tau>0,\  \tau'>\tau$ and $\ve<1/2$, there exists $T_{\ve,\tau,\tau'}>\tau'$ such that $|I(t)^{1/p}/I(s)^{1/p}-1|<\ve$ for $t,s\in[T_{\ve,\tau,\tau'}-\tau, T_{\ve,\tau,\tau'}]$.  For $t>T_{\ve,\tau,\tau'}$ we set $I_{\rm max}(t)=\sup_{T_{\ve,\tau,\tau'}<s<t}I(s)$ and consider $T>T_{\ve,\tau,\tau'}$ such that $I(T)=I_{\rm max}(T)$.  Integrating (\ref{O5}) and using the exponential decay of the RHS of (\ref{O5}) we have that
\begin{multline} \label{D7}
\frac{1}{p}\log\left[\frac{I(T)}{I(T_{\ve,\tau,\tau'})}\right] + \\
\int_{(T_{\ve,\tau,\tau'},T)-\{T_{\ve,\tau,\tau'}<t<T:I_{\rm max}(t)>I(t)\}} f(t,v_t(\cdot)) \ dt \ \le C_2e^{-\del_2 T_{\ve,\tau,\tau'}} \ ,
\end{multline}
for some constants $C_2,\del_2>0$. 
We can estimate the second term on the LHS of (\ref{D7}) by using Proposition 5.1.  First we write the function $F$ of (\ref{G5}) as $F=F_1+F_2$, where 
\begin{multline} \label{E7}
F_1(t,y,v_t(\cdot)) \ = \ \frac{1}{p}\int_{T_{\ve,\tau,\tau'}-\tau}^t  h\left(   \frac{z(s)}{v_t(s)}  \right) e^{-(t-s)/p}v_t(s) \ ds \\
- \left(h(y)+\frac{y}{p}\right) \left\{\exp\left[\int_{T_{\ve,\tau,\tau'}-\tau}^t h'\left(    \frac{z(s)}{v_t(s)}  \right) \ ds\right]-1\right\} \ .
\end{multline}
Note that from (\ref{D5}) the first term on the RHS of (\ref{E7}) has the simplification
\begin{multline} \label{E*7}
 \frac{1}{p}\int_{T_{\ve,\tau,\tau'}-\tau}^t  h\left(   \frac{z(s)}{v_t(s)}  \right) e^{-(t-s)/p}v_t(s) \ ds 
  \\ = \ \frac{1}{p}\left[\exp\left\{-(t+\tau-T_{\ve,\tau,\tau'})/p\right\}z(T_{\ve,\tau,\tau'}-\tau)-y\right] \ .
\end{multline}
We see from (\ref{D5}), (\ref{C7}) and the assumptions on $h(\cdot)$ that \\
$|F_2(t,y,v_t(\cdot))|\le C_3e^{-\tau/p}e^{-(t-T_{\ve,\tau,\tau'})/p}, \ y\ge\ve_0,  \ t\ge T_{\ve,\tau,\tau'}, $ for some constant $C_3$. Next we define the function $\tilde{v}_t(\cdot)$ as
\begin {eqnarray} \label{F7}
\tilde{v}_t(s) \ &=& \ v_t(T_{\ve,\tau,\tau'}), \quad T_{\ve,\tau,\tau'}-\tau<s<T_{\ve,\tau,\tau'} \ , \\
\tilde{v}_t(s) \ &=& \ v_t(s), \quad T_{\ve,\tau,\tau'}<s<t \ .  \nonumber 
\end{eqnarray}
We define also the function $\tilde{z}(s), \  \quad T_{\ve,\tau,\tau'}-\tau<s<t,$ as the solution to (\ref{D5}) with $\tilde{v}_t(\cdot)$ in place of $v_t(\cdot)$. Evidently $z(s)=\tilde{z}(s)$ for $s\in[T_{\ve,\tau,\tau'},t]$.  For $s\in [T_{\ve,\tau,\tau'}-\tau, T_{\ve,\tau,\tau'}]$ we note that $\tilde{z}(s)/\tilde{v}_t(s)=\tilde{z}(s)/v_t(T_{\ve,\tau,\tau'})\ge z(T_{\ve,\tau,\tau'})/v_t(T_{\ve,\tau,\tau'})\ge\ve_0$. Hence the equation (\ref{D5}) is well-defined for $\tilde{v}_t(\cdot), \ \tilde{z}(\cdot)$.  We define  the function $\tilde{F}_1$ by (\ref{E7}) with $\tilde{v}_t(\cdot), \ \tilde{z}(\cdot)$ in place of $v_t(\cdot), \ z(\cdot)$. We see from (\ref{E7}), (\ref{E*7}) that
 \begin{multline} \label{G7}
\left| F_1(t,y,v_t(\cdot))- \tilde{F}_1(t,y,\tilde{v}_t(\cdot))\right| \ \le \\
\frac{1}{p}\exp\left\{-(t+\tau-T_{\ve,\tau,\tau'})/p\right\}\left|z(T_{\ve,\tau,\tau'}-\tau)
-\tilde{z}(T_{\ve,\tau,\tau'}-\tau)\right|
\\
- \left(h(y)+\frac{y}{p}\right) \int_{T_{\ve,\tau,\tau'}-\tau}^{T_{\ve,\tau,\tau'}} ds  \   \left| h'\left(   \frac{z(s)}{v_t(s)}  \right)-  h'\left(   \frac{\tilde{z}(s)}{\tilde{v}_t(s)}  \right)\right|  \ .
\end{multline}

Since we have by assumption that the fluctuation of $I(\cdot)$ in the interval $[T_{\ve,\tau,\tau'}-\tau, T_{\ve,\tau,\tau'}]$ is small, we should be able to estimate the RHS of (\ref{G7}) by a small constant. However since $h'(y)$ may diverge as $y\ra\ve_0$ we cannot simply apply Taylor's theorem to estimate the RHS of (\ref{G7}).  We can directly apply Taylor's theorem if $t\ge T_{\ve,\tau,\tau'}, \ y\ge\ve_0+\nu_0$  or if $t\ge T_{\ve,\tau,\tau'}+\nu_0, \ y\ge \ve_0,$ for any $\nu_0>0$. To do this we first observe by integrating (\ref{D5}) that
\be \label{H7}
z(s) \ = \ e^{(T_{\ve,\tau,\tau'}-s)/p}z\left(T_{\ve,\tau,\tau'}\right)+\int_s^{T_{\ve,\tau,\tau'}} ds' \ e^{(s'-s)/p}v_t(s')h\left(\frac{z(s')}{v_t(s')}\right) \ , \quad s<T_{\ve,\tau,\tau'} \ ,
\ee
with a similar formula for $\tilde{z}(s)$. Since $z\left(T_{\ve,\tau,\tau'}\right)=\tilde{z}\left(T_{\ve,\tau,\tau'}\right)$ we have from (\ref{H7}) that
\be \label{I7}
\frac{1}{(1+\ve)}\frac{h_\infty}{h(\ve_0)} \ \le \ \frac{\tilde{z}(s)}{z(s)} \ \le \  (1+\ve)\frac{h(\ve_0)}{h_\infty} \ , \quad T_{\ve,\tau,\tau'}-\tau< s<T_{\ve,\tau,\tau'} \ .
\ee
Taylor's theorem implies then upon using (\ref{I7}) and our assumptions on derivatives of the function $h(\cdot)$,  there is a constant $C_4$ such that
 \begin{multline} \label{J7}
|F_1(t,y,v_t(\cdot))- \tilde{F}_1(t,y,\tilde{v}_t(\cdot))| \ \le \\
C_4\tau e^{-(t-T_{\ve,\tau,\tau'})/p}\sup_{T_{\ve,\tau,\tau'}-\tau<s<T_{\ve,\tau,\tau'}}\left[ \frac{|v_t(s)-\tilde{v}_t(s)|}{v_t(s)}+ 
\frac{v_t(s)}{z(s)}\left| \frac{z(s)}{v_t(s)} -  \frac{\tilde{z}(s)}{\tilde{v}_t(s)} \right|\right] \ .
\end{multline}
From our assumptions on $I(\cdot)$ in the interval  $[T_{\ve,\tau,\tau'}-\tau,T_{\ve,\tau,\tau'}]$, we see that the first term in the supremum on the RHS of (\ref{J7}) is bounded  above by $\ve$.  In order to estimate $|1-\tilde{z}(s)/z(s)|$ for $s\in [T_{\ve,\tau,\tau'}-\tau,T_{\ve,\tau,\tau'}]$, we observe that the function $w(s)=z(s)-\tilde{z}(s)$ is a solution to the terminal value problem
\be \label{K7}
\frac{dw(s)}{ds} \ = \ - a(s)w(s)-b(s)\left[v_t(s)-\tilde{v}_t(s)\right] \ ,  \quad  s<T_{\ve,\tau,\tau'} \ , \ \ w(T_{\ve,\tau,\tau'})=0 \ ,
\ee
where the functions $a(\cdot),b(\cdot)$ are given by the formulae
\begin{multline} \label{L7}
a(s)\ = \ \frac{1}{p}+  \int_0^1 d\mu \  h'\left(\mu\frac{z(s)}{v_t(s)}+(1-\mu)\frac{\tilde{z}(s)}{\tilde{v}_t(s)}\right) \ , \\
b(s) \ = \ h\left(\frac{\tilde{z}(s)}{\tilde{v}_t(s)}\right)-\frac{\tilde{z}(s)}{\tilde{v}_t(s)}\int_0^1 d\mu \  h'\left(\mu\frac{z(s)}{v_t(s)}+(1-\mu)\frac{\tilde{z}(s)}{\tilde{v}_t(s)}\right) \ .
\end{multline}
Integrating (\ref{K7}) we have that
\be \label{M7}
w(s) \ = \ \int_s^{T_{\ve,\tau,\tau'}} ds' \ \exp\left[\int_s^{s'} a(s'') \ ds'' \right]b(s')\left[v_t(s')-\tilde{v}_t(s')\right] \ .
\ee
Observe now that $a(\cdot)\le 1/p$, and in view of (\ref{I7}) that $b(\cdot)$ is bounded in the interval 
$[T_{\ve,\tau,\tau'}-\tau,T_{\ve,\tau,\tau'}]$. We conclude from (\ref{H7}), (\ref{M7}) that $|w(s)/z(s)|\le C_4\ve$ for $s\in [T_{\ve,\tau,\tau'}-\tau,T_{\ve,\tau,\tau'}]$, where $C_4$ is independent of $\ve,\tau,\tau'$. We have shown that if $t\ge T_{\ve,\tau,\tau'}, \ y\ge\ve_0+\nu_0$  or if $t\ge T_{\ve,\tau,\tau'}+\nu_0, \ y\ge \ve_0,$ then 
\be \label{N7}
|F_1(t,y,v_t(\cdot))- \tilde{F}_1(t,y,\tilde{v}_t(\cdot))| \ \le \\
C_5\ve \tau e^{-(t-T_{\ve,\tau,\tau'})/p}\
\ee
for some constant $C_5$ independent of $\ve,\tau,\tau'$. 

We estimate the expression on the RHS of (\ref{G7}) when  $T_{\ve,\tau,\tau'}\le t\le T_{\ve,\tau,\tau'}+\nu_0, \ \ve_0\le y\le \ve_0+\nu_0$. Since we can apply the argument of the previous paragraph to integration on the RHS of (\ref{G7}) over the interval $[T_{\ve,\tau,\tau'}-\tau,T_{\ve,\tau,\tau'}-\nu_0]$, we restrict ourselves to the integral over the interval $[T_{\ve,\tau,\tau'}-\nu_0,T_{\ve,\tau,\tau'}]$. Observe from (\ref{C3}), (\ref{C5}), (\ref{D5}) that 
\begin{multline} \label{O7}
{\rm setting} \ y_\mu(s) \ = \ \mu\frac{z(s)}{v_t(s)}+(1-\mu)\frac{\tilde{z}(s)}{\tilde{v}_t(s)} \ , s<T_{\ve,\tau,\tau'}, \quad {\rm then} \\
\frac{dy_\mu(s)}{ds} \ = \ -\mu\left[\rho(s)y_1(s)+h\left(y_1(s)\right)\right]
-(1-\mu)\left[ \frac{y_0(s)}{p}+h\left(y_0(s)\right)\right] \ .
\end{multline}
Upon choosing $\nu_0>0$ sufficiently small, we see from (\ref{O7}) and the inequality  $\inf \rho(\cdot)>-h(\ve_0)/\ve_0$ there exists $c_0>0$ such that $y'_\mu(s)\le -c_0$ for all $0<\mu<1$ and $s\in [T_{\ve,\tau,\tau'}-\nu_0,T_{\ve,\tau,\tau'}]$. It follows that the function $b(\cdot)$ of (\ref{L7}) satisfies the inequality
\be \label{P7}
\int_{T_{\ve,\tau,\tau'}-\nu_0}^{T_{\ve,\tau,\tau'}} |b(s)| \ ds \ \le \ C_0 \ ,
\ee
where the constant $C_0$ is inversely proportional to $c_0$. We conclude from (\ref{H7}), (\ref{P7}) that the function $w(\cdot)$ of (\ref{M7}) satisfies the inequality $|w(s)/z(s)|\le C_5\ve, \  s\in [T_{\ve,\tau,\tau'}-\nu_0,T_{\ve,\tau,\tau'}]$,  for some constant $C_5$. It follows there are constants $C_6,C'_6$ such that
\begin{multline} \label{Q*7}
\left|  \frac{z(s)}{v_t(s)} -   \frac{\tilde{z}(s)}{\tilde{v}_t(s)}\right|  \ \le \ C_6\ve \quad {\rm for \ } T_{\ve,\tau,\tau'}-\nu_0<s<T_{\ve,\tau,\tau'} \ , \\
 \min\left[\frac{z(s)}{v_t(s)}, \    \frac{\tilde{z}(s)}{\tilde{v}_t(s)}\right] \ \ge \ \ve_0+2C_6\ve 
 \quad {\rm for \ } T_{\ve,\tau,\tau'}-\nu_0<s<T_{\ve,\tau,\tau'}-C'_6\ve \ .
\end{multline}
We estimate now
\be \label{Q7}
\int_{T_{\ve,\tau,\tau'}-\nu_0}^{T_{\ve,\tau,\tau'}} ds  \   \left| h'\left(   \frac{z(s)}{v_t(s)}  \right)-  h'\left(   \frac{\tilde{z}(s)}{\tilde{v}_t(s)}  \right)\right|  \ \le \int_{T_{\ve,\tau,\tau'}-C'_6\ve}^{T_{\ve,\tau,\tau'}}+
\int_{T_{\ve,\tau,\tau'}-\nu_0}^{T_{\ve,\tau,\tau'}-C'_6\ve} \ .
\ee
Since $h'(\cdot)$ is integrable the first integral on the right of (\ref{Q7}) converges to $0$ as $\ve\ra0$. Using (\ref{Q*7}) we see that the second integral on the RHS of (\ref{Q7}) also converges to $0$ as $\ve\ra0$. We have therefore shown that the  integral on the RHS of (\ref{G7}) converges to $0$ as $\ve\ra0$.   Evidently the first term on the RHS of (\ref{G7}) also converges to $0$.  

To estimate the second term on the LHS of (\ref{O5}) we write $f(t,v_t(\cdot))=f_1(t,v_t(\cdot))+f_2(t,v_t(\cdot))$, corresponding to the decomposition $F=F_1+F_2$. 
From our bound on $F_2$ we see there is a constant $C_7$ such that
\be \label{R7}
\int_{T_{\ve,\tau,\tau'}}^\infty |f_2(t,v_t(\cdot))| \ dt \ \le \  C_7e^{-\tau/p} \ .
\ee
Letting $\tilde{f}_1(t,\tilde{v}_t(\cdot))$ be the function (\ref{P5}) corresponding to $\tilde{F}_1$ in place of $F$, we have from (\ref{N7}) and the argument of the previous paragraph that
\be \label{S7}
\int_{T_{\ve,\tau,\tau'}}^\infty |f_1(t,v_t(\cdot))-\tilde{f}_1(t,\tilde{v}_t(\cdot))| \ dt \ \le \  C_8\tau\ve+C_9(\ve) \ ,
\ee
for a constant $C_8$ independent of $\ve$, and a constant $C_9(\ve)$ which has the property $\lim_{\ve\ra0}C_9(\ve)=0$. Observe next that by Proposition 5.1 one has $\tilde{F}_1(t,y,\tilde{v}_t(\cdot))\le \tilde{F}_1(t,y,1(\cdot))$ for $t>T_{\ve,\tau,\tau'}$ such that $I(t)=I_{\rm max}(t)$, whence $\tilde{f}_1(t,\tilde{v}_t(\cdot))\ge 0$. We note also from  (\ref{BF2}) of Lemma 2.4 that the function $\ga(\cdot)$ on the RHS of (\ref{O5})  satisfies an inequality $|\ga(t)|\le Ce^{-\del_1t}, \ t\ge 0, $ for some constants $C,\del_1>0$. We conclude then from (\ref{O5}), (\ref{B7}), (\ref{R7}), (\ref{S7}) and the bounds we have on the functions $(t,y)\ra G(t,y,v_t(\cdot))$  and $t\ra\ga(t)$ that
\be \label{T7}
\frac{1}{p}\log\left[\frac{I(T)}{I(T_{\ve,\tau,\tau'})}\right]  \ \le \  C_{10}\left[e^{-\tau'/p}+e^{-\del_1\tau'}\right]+C_7e^{-\tau/p}+C_8\tau\ve+C_9(\ve) \ .
\ee
 Since the constants $C_7, C_8,C_9,C_{10}$ in (\ref{T7}) are independent of $\ve,\tau,\tau'$, we conclude that for any $\del>0$ there exists $T_\del>0$ such that $\sup_{t>T_\del}[I(t)/I(T_\del)-1]<\del$. Since we can make an exactly analogous argument with the function $I_{\rm min}(t)=\inf_{T_{\ve,\ga,\tau}<s<t}I(s)$, we conclude that $\lim_{t\ra\infty}I(t)=I_\infty>0$ exists. 
 
 Alternatively there exists $\ve_0,\tau_0>0, \ \tau_1>\tau_0$ such that \\ $\sup_{s,t\in[T-\tau_0,T]}|I(t)^{1/p}/I(s)^{1/p}-1|\ge \ve_0$ for all $T\ge \tau_1$. Letting $I^+_\infty=\limsup_{t\ra\infty}I(t)$, there exists  for any $\del>0, \ N=1,2,..,$ a  time $T_{\del,N}>\max[\tau_1,N]$  such that $I(T_{\del,N})\ge I_\infty^+-\del$ and $I(t)\le I_\infty^++\del$ for $T_{\del,N}-N\le t\le T_{\del,N}$.  Since the oscillation of $I(\cdot)$ in the interval $[T_{\del,N}-\tau_0,T_{\del,N}]$ exceeds $\ve_0$,  there exists $\tau_{\del,N}\in[T_{\del,N}-\tau_0,T_{\del,N}]$ such that $I(\tau_{\del,N})^{1/p}\le (I_\infty^++\del)^{1/p}/(1+\ve_0)$.  We proceed similarly to before by writing the function $F$ of (\ref{G5}) as $F=F_1+F_2$, where $F_1$ is given by (\ref{E7}), but with the interval of integration now $[T_{\del,N}-N,t]$ in place of $[T_{\ve,\tau,\tau'}-\tau,t]$. As previously, one has the bound $|F_2(t,y,v_t(\cdot))|\le C_3e^{(\tau_0-N)/p}, \ y\ge\ve_0,  \ t\in[T_{\del,N}-\tau_0,T_{\del,N}]$. Evidently $F_1(t,y,v_t(\cdot))$ depends only on the values of $I(s)$ for $s\in [T_{\del,N}-N,t]$. We define $\tilde{v}_t(s)=I(s)^{1/p}/(I^+_\infty+\del)^{1/p}$ for  $s\in [T_{\del,N}-N,t]$, 
 and $\tilde{z}(s), \ s\in [T_{\del,N}-N,t]$ as the solution to (\ref{D5}) with $\tilde{v}_t(\cdot)$ replacing $v_t(\cdot)$. Since $\tilde{v}_t(\cdot)\le1$ we have that  $\tilde{z}(s)/\tilde{v}_t(s)\ge \tilde{z}(s)\ge \tilde{z}(t)\ge\ve_0$ for $s\in [T_{\del,N}-N,t]$. Hence (\ref{D5}) is well-defined for $\tilde{v}_t(\cdot), \ \tilde{z}(\cdot)$.  Similarly to (\ref{I7}), it is easy to see there are positive constants independent of $\del,N,t$ such that
 \be \label{U7}
 c_{12} \ \le \ \frac{\tilde{z}(s)}{z(s)} \ \le  C_{12} \  \quad  {\rm for \ }  \ T_{\del,N}-N<s<t \ .
 \ee
 Setting $w(s)=z(s)-\tilde{z}(s)$ we see using a representation analogous to (\ref{M7}), that for some constant $C_{13}$ one has $|w(s)/z(s)|\le C_{13} J(t),$ where $J(t)=\log\left[(I^+_\infty+\del)^{1/p}/I(t)^{1/p}\right]\ge 0$.

 Let $\tilde{F}_1(t,y,\tilde{v}_t(\cdot))$ be defined in the same way as $F_1(t,y,v_t(\cdot))$, but with $\tilde{v}_t(\cdot), \ \tilde{z}(\cdot)$ replacing $v_t(\cdot), \ z(\cdot)$. The difference $|F_1(t,y,v_t(\cdot))-\tilde{F}_1(t,y,\tilde{v}_t(\cdot))|$ is bounded by the RHS of (\ref{G7}), with the interval of integration now $[T_{\del,N}-N,t]$ in place of $[T_{\ve,\tau,\tau'}-\tau,T_{\ve,\tau,\tau'}]$.  Instead of (\ref{J7}) we have if $y\ge\ve_0+\nu_0$   the estimate
 \begin{multline} \label{V7}
|F_1(t,y,v_t(\cdot))- \tilde{F}_1(t,y,\tilde{v}_t(\cdot))| \ \le \\
C_{14}\int_{T_{\del,N}-N}^t ds \ e^{-(t-s)/p}\left[ \frac{|v_t(s)-\tilde{v}_t(s)|}{v_t(s)}+ 
\frac{v_t(s)}{z(s)}\left| \frac{z(s)}{v_t(s)} -  \frac{\tilde{z}(s)}{\tilde{v}_t(s)} \right|\right] \ ,
\end{multline}
for some constant $C_{14}$. It follows from (\ref{V7})  there is a constant  $C_{15}$ such that $|F_1(t,y,v_t(\cdot))- \tilde{F}_1(t,y,\tilde{v}_t(\cdot))| \le C_{15}J(t)$ for $t\in [T_{\del,N}-\tau_0,T_{\del,N}]$, provided $y\ge\ve_0+\nu_0$. For the case $\ve_0\le y\le \ve_0+\nu_0$ we need only estimate the RHS of (\ref{G7}) with the interval of integration $[t-\nu_0,t]$, since the previous argument applies to the integral over the interval $[T_{\del,N}-N,t-\nu_0]$.  

We shall show that
\be \label{W7}
\int_{\ve_0}^{\ve_0+\nu_0} dy\int_{t-\nu_0}^t ds  \   \left| h'\left(   \frac{z(s,y)}{v_t(s)}  \right)-  h'\left(   \frac{\tilde{z}(s,y)}{\tilde{v}_t(s)}  \right)\right|  \ \le \ C_{15}J(t)   
\ee
for some constant $C_{15}$. First observe that since the LHS of (\ref{W7}) is bounded, we need only consider the situation when $J(t)<<1$.  We proceed similarly to the method used in (\ref{Q7}).  Thus we first observe there are constants $C_{16},C'_{16}$ such that
\begin{multline} \label{X7}
\left|  \frac{z(s,y)}{v_t(s)} -   \frac{\tilde{z}(s,y)}{\tilde{v}_t(s)}\right|  \ \le \ C_{16}J(t) \quad {\rm for \ } 
t-\nu_0<s<t \ ,  \ \ve_0<y<\ve_0+\nu_0, \\
 \min\left[\frac{z(s,y)}{v_t(s)}, \    \frac{\tilde{z}(s,y)}{\tilde{v}_t(s)}\right] \ \ge \ y+2C_{16}J(t) 
 \quad {\rm for \ } t-\nu_0<s<t-C'_{16}J(t) \ .
\end{multline} 
We have now that
\begin{multline} \label{Y7}
\int_{t-C'_{16}J(t)}^t  \left| h'\left(   \frac{z(s,y)}{v_t(s)}  \right)\right|  \ ds \ \le \ 
C_{17}\int^{y+C_{18}J(t)}_y |h'(y')|  \ dy' \\
= \ C_{17}[h(y)-h(y+C_{18}J(t))] \quad {\rm for \ some \ constants} \ C_{17},C_{18} \ .
\end{multline}
It follows from (\ref{Y7}) that
\be \label{Z7}
\int_{\ve_0}^{\ve_0+\nu_0} dy\int_{t-C'_{16}J(t)}^t  ds \ \left| h'\left(   \frac{z(s,y)}{v_t(s)}  \right)\right|  \ 
\le \ C_{17}C_{18}h(\ve_0)J(t) \ .
\ee
Since we can obtain a similar estimate to (\ref{Z7}) when $z(s,y)/v_t(s)$ is replaced by $\tilde{z}(s,y)/\tilde{v}_t(s)$, we need only estimate the integral in (\ref{W7}) for $t-\nu_0<s<t-C'_{16}J(t)$.  To do this we note from the convexity of the function $h(\cdot)$  and (\ref{X7}) the inequality
\begin{multline} \label{AA7}
\left| h'\left(   \frac{z(s,y)}{v_t(s)}  \right)-  h'\left(   \frac{\tilde{z}(s,y)}{\tilde{v}_t(s)}  \right)\right| \ \le
\ h'(y'+C_{16}J(t))-h'(y'-C_{16}J(t)) \ , \\
{\rm where \ } y'=\frac{z(s,y)}{v_t(s)} \ \ {\rm and \ } t-\nu_0<s<t-C'_{16}J(t) \ .
\end{multline}
Integrating (\ref{AA7}) we obtain the inequality
\begin{multline} \label{AB7}
\int_{t-\nu_0}^{ t-C'_{16}J(t)} ds \ \left| h'\left(   \frac{z(s,y)}{v_t(s)}  \right)-  h'\left(   \frac{\tilde{z}(s,y)}{\tilde{v}_t(s)}  \right)\right| \\
 \le \ C_{19}[h(y+C_{16}J(t))-h(y+3C_{16}J(t))] \quad {\rm for \  some \  constant \ } C_{19} \ .
\end{multline}
 Integrating (\ref{AB7}) with respect to $y$ then yields the inequality 
\be \label{AC7}
\int_{\ve_0}^{\ve_0+\nu_0} dy\int_{t-\nu_0}^{t-C'_{16}J(t)} ds  \   \left| h'\left(   \frac{z(s,y)}{v_t(s)}  \right)-  h'\left(   \frac{\tilde{z}(s,y)}{\tilde{v}_t(s)}  \right)\right|  \ \le \ 2C_{19}C_{16}h(\ve_0)J(t)  \ .  
\ee
Now (\ref{W7}) follows from (\ref{Z7}), (\ref{AC7}). 

We estimate the second term on the LHS of (\ref{O5}) by  writing $f(t,v_t(\cdot))=f_1(t,v_t(\cdot))+f_2(t,v_t(\cdot))$, corresponding to the decomposition $F=F_1+F_2$. 
From our bound on $F_2$ we see there is a constant $C_{20}$ such that $|f_2(t,v_t(\cdot))|\le C_{20}e^{-N/p}$ for $t\in [T_{\del,N}-\tau_0,T_{\del,N}]$.
Letting $\tilde{f}_1(t,\tilde{v}_t(\cdot))$ be the function (\ref{P5}) corresponding to $\tilde{F}_1$ in place of $F$, we also have from the previous paragraph  that
$ |f_1(t,v_t(\cdot))-\tilde{f}_1(t,\tilde{v}_t(\cdot))|\le C_{21}J(t)$  for some constant $C_{21} $ if $t\in [T_{\del,N}-\tau_0,T_{\del,N}]$. Furthermore,  Proposition 5.1 implies that $\tilde{F}_1(t,y,\tilde{v}_t(\cdot))\le \tilde{F}_1(t,y,1(\cdot))$ for $t\in [T_{\del,N}-\tau_0,T_{\del,N}]$, whence $\tilde{f}_1(t,\tilde{v}_t(\cdot))\ge 0$ if $t\in [T_{\del,N}-\tau_0,T_{\del,N}]$. It follows now from (\ref{O5})   that
\be \label{AD7}
\frac{dJ(t)}{dt}+C_{22}J(t) \ \ge \ -C_{20}e^{-N/p}-C_{23}[e^{-t/p}+e^{-\del_1 t}] \ , \quad t\in[T_{\del,N}-\tau_0,T_{\del,N}] \ ,
\ee
for some positive constants $C_{22},C_{23}$.  In deriving (\ref{AD7}) we have used the fact that the function $t\ra J(t)$ is non-negative.  Integrating (\ref{AD7}) over the interval $[\tau_{\del,N},T_{\del,N}]$, we obtain the inequality
\be \label{AE7}
J(T_{\del,N} )\ \ge e^{-C_{22}\tau_0}J(\tau_{\del,N})-C_{20}\tau_0e^{-N/p}-C_{23}\tau_0[ e^{-(T_{\del,N}-\tau_0)/p}+e^{-\del_1(T_{\del,N}-\tau_0)}] \ .
\ee
Observe  that $J(T_{\del,N})\le 2\del/p(I^+_\infty-\del)$ and $J(\tau_{\del,N})\ge\log(1+\ve_0)$.  Since $T_{\del,N}\ge N$, the inequality (\ref{AE7}) yields a contradiction if $\del>0$ is sufficiently small and $N$ sufficiently large. 
\end{proof}
\begin{lem}
Assume $\rho(\cdot), \ h(\cdot)$ satisfy the assumptions of Lemma 3.2,  and let $\xi(\cdot,t), \ t>0$ be the solution to (\ref{A3}) with initial condition $\xi(\cdot,0)$ satisfying $\|\xi(\cdot,0)\|_{1,\infty}<\infty$. Assume further that
\be \label{AF7}
\lim_{t\ra\infty} \sup_{t-\tau<s<t}\left|\int_s^t\left[\rho(s')-\frac{1}{p}\right] \ ds'\right| \ = \ 0 \quad {\rm for \ all \ } \tau>0 \ .
\ee
Then $\lim_{t\ra\infty}\|\xi(\cdot,t)-\xi_p(\cdot)\|_{1,\infty}=0$. 
\end{lem}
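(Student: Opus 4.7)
The plan is to subtract the characteristic representations of $\xi(\cdot,t)$ and $\xi_p(\cdot)$ and show both pieces vanish as $t\to\infty$. From (\ref{D3}),
\[
\xi(y,t) = \exp\left[-\int_0^t\rho(s)\,ds\right]\xi(y(0),0) + \int_0^t h(y(s))\exp\left[-\int_s^t\rho(s')\,ds'\right]\,ds,
\]
where $y(\cdot)$ is the characteristic (\ref{C3}) with $y(t)=y$. Since $\xi_p$ is stationary for (\ref{A3}) at $\rho\equiv 1/p$, the same formula applied with $\rho\equiv 1/p$ and initial datum $\xi_p$ reproduces $\xi_p$:
\[
\xi_p(y) = e^{-t/p}\xi_p(y_p(0)) + \int_0^t h(y_p(s))e^{-(t-s)/p}\,ds,
\]
with $y_p(\cdot)$ the characteristic for $\rho\equiv 1/p$. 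I would subtract these and split $[0,t]=[0,t-\tau]\cup[t-\tau,t]$, establishing pointwise convergence via exponential decay on the distant past and via (\ref{AF7}) on the recent past.

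Hypothesis (\ref{AF7}) supplies, for $t$ large, the long-time version of (\ref{Q4}): $\int_s^t\rho(s')\,ds'\ge\del_0(t-s)$ for all $s\in[0,t-\tau_0]$ and some $\del_0>0$. Combined with $\|\xi(\cdot,t)\|_{1,\infty}\le C$ from (\ref{S4}) of Lemma 3.2, boundedness of $h$, and $\|\xi_p\|_\infty\le ph(\ve_0)$, this bounds the $[0,t-\tau]$ contributions (and the two initial-data terms) by $Ce^{-\del_0\tau}$ uniformly in $y\ge\ve_0$. On $[t-\tau,t]$ I would compare characteristics: setting $w(s)=y(s)-y_p(s)$ with $w(t)=0$, the variation equation is linear in $w$ and forced by $[\rho(s)-1/p]y_p(s)$, so a Gr\"onwall argument gives
\[
\sup_{t-\tau\le s\le t}|w(s)|\ \le\ C_\tau\sup_{t-\tau\le s\le t}\left|\int_s^t[\rho(s')-1/p]\,ds'\right|\ \longrightarrow\ 0
\]
as $t\to\infty$ by (\ref{AF7}). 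Together with $\int_s^t\rho\to(t-s)/p$ uniformly on $[t-\tau,t]$ and continuity of $h$, this yields
\[
\int_{t-\tau}^t\bigl[h(y(s))e^{-\int_s^t\rho}-h(y_p(s))e^{-(t-s)/p}\bigr]\,ds\ \longrightarrow\ 0,
\]
so $\xi(y,t)\to\xi_p(y)$ pointwise. Uniformity in $y$ follows from equicontinuity supplied by the $\sup_y y|D\xi(y,t)|$ bound in (\ref{S4}) together with the common asymptote $\xi(y,t),\xi_p(y)\to ph_\infty$ as $y\to\infty$.

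For the derivative component of $\|\cdot\|_{1,\infty}$, the analogous strategy applies to (\ref{V3}): $1+D\xi(y,t)=\exp[\int_0^t h'(y(s))\,ds]\,(1+D\xi(y(0),0))$, and the corresponding expression for $1+D\xi_p$ arises in the $t\to\infty$ limit under $\rho\equiv 1/p$, with $1+D\xi_p(y_p(0))\to 1$ because $y_p(0)\to\infty$ (cf.~(\ref{K4})). The main obstacle is that $h'$ may diverge as $y\to\ve_0$, so the Gr\"onwall comparison and the control of $\int_0^t h'(y(s))\,ds$ must be handled carefully near the boundary. This is exactly the mechanism already used in the proof of Lemma 3.2 (see (\ref{AE3})): the strict inequality $\inf\rho(\cdot)>-h(\ve_0)/\ve_0$ --- which (\ref{AF7}) furnishes at large time --- forces each characteristic to spend only bounded total time in a neighborhood of $\ve_0$, and the integrability of $|h'|$ near $\ve_0$ built into the Lemma 3.2 hypotheses supplies the required uniform bounds.
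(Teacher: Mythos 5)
Your outline follows the paper's proof essentially step for step: write $\xi$ and $\xi_p$ via the characteristic formulas (\ref{D3}), (\ref{V3}) and (\ref{AU7}); truncate the time integral to $[t-\tau,t]$, using (\ref{Q4}) (derived from (\ref{AF7})) and the bounds of Lemma 3.2 to discard the distant past at cost $O(e^{-\delta_0\tau})$; compare the characteristics $y(\cdot)$ and $y_p(\cdot)$ on the recent window; and handle the derivative and the boundary singularity of $h'$ via the mechanism of (\ref{AE3}). That is exactly the structure of (\ref{AG7})--(\ref{AU7}). Two technical points in the paper deserve a mention because your sketch glosses over them. First, the characteristic-comparison step is not a bare Gr\"onwall estimate: the forcing $[\rho(s)-1/p]\,y(s)$ does not directly yield a bound in terms of $\sup\big|\int_s^t[\rho-1/p]\big|$; the paper reformulates the difference as the Volterra equation (\ref{AI7}) with the already-integrated quantity $g(s)$ built from $\int_s^t[\rho-1/p]$, differentiates to get (\ref{AK7}), and then exploits the sign $K(\cdot)\ge 0$ to obtain the clean bound (\ref{AM7}) without an exponential factor -- you need this reformulation (or an explicit integration by parts) to make ``Gr\"onwall gives $C_\tau\sup|\int[\rho-1/p]|$'' literally true. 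Second, for the $D\xi$ component near $y=\ve_0$ the paper does not try to run the uniform Taylor estimate all the way down; it isolates the short final piece of the characteristic and appeals to dominated convergence, as in (\ref{AS7}), which is a cleaner and more defensible route than invoking equicontinuity on top of pointwise convergence. With those two adjustments, the argument you propose is the paper's argument.
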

\begin{proof}
Let $\xi_0(\cdot,\cdot)$ be defined by
\be \label{AG7}
\xi_0(y,t) \ = \ \int_0^tds \ h(y(s))\exp\left[-\int_s^t\rho(s') \ ds'\right] \ , \quad y\ge \ve_0,t>0 \ ,
\ee
where $y(\cdot)$ is the solution to (\ref{C3}).  We see from (\ref{N3}), (\ref{Q4}) upon using the representations (\ref{D3}), (\ref{V3})  for $\xi(\cdot,t), \ D\xi(\cdot,t)$ that $\|\xi(\cdot,t)-\xi_0(\cdot,t)\|_{1,\infty}<C_0e^{-\del_0 t}, \ t>0$ for some constant $C_0$. For any $\tau,\nu>0$ with $0<\nu\le 1$, let $T_{\tau,\nu}>\tau$ have the property that
\be \label{AH7}
\left|\int_s^t\left[\rho(s')-\frac{1}{p}\right] \ ds'\right| \ \le \ \nu \quad {\rm for  \ }t-\tau<s<t, \ t>T_{\tau,\nu} \ .
\ee
Letting $\xi_{0,\tau}(\cdot,t)$ be defined as in (\ref{AG7}), but with the interval of integration $[t-\tau,t]$ instead of $[0,t]$, then we have that $\|\xi_0(\cdot,t)-\xi_{0,\tau}(\cdot,t)\|_{1,\infty}\le Ce^{-\del_0\tau}$ for some constant $C$ independent of $\tau>0$. 

Let $\tilde{\xi}_{0,\tau}(\cdot,t)$ be the function $\xi_{0,\tau}(\cdot,t)$ in the case $\rho(\cdot)\equiv 1/p$.  We wish  to estimate $\|\xi_{0,\tau}(\cdot,t)-\tilde{\xi}_{0,\tau}(\cdot,t)\|_{1,\infty}$.  In order to do this we first need an estimate on the difference $y(s)-y_p(s), \ t-\tau<s<t,$ where $y_p(\cdot)$ is the solution to (\ref{C3}) with $\rho(\cdot)\equiv 1/p$.  Observe from (\ref{N3}) that $u(s)=e^{-(t-s)/p}[y(s)-y_p(s)]$ satisfies the integral equation
\be \label{AI7}
u(s)+\int_s^t K(s')u(s') \ ds' \ = \ g(s) \ , \ s<t \ , 
\ee
where
\begin{multline} \label{AJ7}
K(s) \ = \    -\int_0^1 h'(\la y(s)+(1-\la)y_p(s)) \ ds \ \ge \ 0  \ , \\
g(s) \ = \ \left\{\exp\left[\int_s^t\rho(s')-\frac{1}{p} \ ds'\right]-1\right\}y + \\
\int_s^t ds'  \ e^{-(t-s')/p}\left\{\exp\left[\int_s^{s'}\rho(s'')-\frac{1}{p} \ ds''\right]-1\right\} h(y(s')) \ .
\end{multline}
Observing that $g(t)=0$, we see on differentiating (\ref{AI7}) that $u(s), \ s<t,$ is the solution to the terminal value problem
\be \label{AK7}
\frac{du(s)}{ds}-K(s)u(s) \ = \ g'(s) \ , \quad s<t, \ \ u(t)=0 \ .
\ee
The integral representation for the solution to (\ref{AK7}) is given by
\begin{multline} \label{AL7}
u(s) \ = \ -\int_s^t\exp\left[-\int_{s}^{s'} K(s'') \ ds''\right]g'(s') \ ds' \\
=  \ g(s)-\int_s^t\exp\left[-\int_{s}^{s'} K(s'') \ ds''\right]K(s')g(s') \ ds'  \ .
\end{multline}
Since $K(\cdot)$ is non-negative we conclude from (\ref{AL7}) that
\be \label{AM7}
|u(s)| \ \le \  2\sup_{s\le s'\le t}|g(s')| \ , \quad s<t \  .
\ee
It follows from (\ref{AH7}), (\ref{AJ7}) that
\be \label{AN7}
|g(s)| \ \le [e^{\nu}-1]y+p[e^{2\nu}-1]h(\ve_0) \ , \quad t-\tau<s<t, \ t>T_{\tau,\nu} \ .
\ee
We conclude from (\ref{N3}), (\ref{AM7}), (\ref{AN7}) there is a constant $C$ such that
\be \label{AO7}
|y(s)-y_p(s)| \  \le \ C\nu y_p(s) \quad 0<\nu\le 1, \ \ t-\tau<s<t, \ t>T_{\tau,\nu} \ .
\ee 
It follows easily from (\ref{R4}), (\ref{AO7}) that $\|\xi_{0,\tau}(\cdot,t)-\tilde{\xi}_{0,\tau}(\cdot,t)\|_\infty\le C\nu$ for some constant $C$. To bound the derivative  $D\tilde{\xi}_{0,\tau}(\cdot,t)-D\xi_{0,\tau}(\cdot,t)$ we observe as in (\ref{V3}) that
\be \label{AP7}
D\xi_{0,\tau}(y,t) \ = \ \exp\left[\int_{t-\tau}^th'(y(s)) \ ds\right]-1 \ ,
\ee
with a similar representation for $D\tilde{\xi}_{0,\tau}(\cdot,t)$. Hence using the negativity of $h'(\cdot)$ we have from (\ref{AP7}) and Taylor's theorem that
\be \label{AQ7}
|D\xi_{0,\tau}(y,t)-D\tilde{\xi}_{0,\tau}(y,t)| \ \le \ \left|\int_{t-\tau}^t[h'(y(s))-h'(y_p(s))] \ ds \ \right| \ .
\ee
We can estimate the RHS of (\ref{AQ7}) by applying Taylor's theorem and using (\ref{N3}), (\ref{T4}) to conclude that for any $\nu_0>0$,
\be \label{AR7}
\sup_{y\ge \ve_0+\nu_0} y|D\xi_{0,\tau}(y,t)-D\tilde{\xi}_{0,\tau}(y,t)|  \ \le C(\nu_0)\nu \ , \quad t>T_{\tau,\nu} \ ,
\ee
where the constant $C(\nu_0)$ may depend on $\nu_0>0$. We could extend the estimate of (\ref{AR7}) to the supremum over $y\ge\ve_0$ if we were to replace the interval of integration $[t-\tau,t]$ on the RHS of (\ref{AQ7}) by $[t-\tau,t-\nu_0]$ for any $\nu_0>0$.  Hence if we can show that
\be \label{AS7}
\lim_{t\ra\infty} \sup_{\ve_0<y<\ve_0+\nu_0}\left|\int_{t-\nu_0}^t[h'(y(s))-h'(y_p(s))] \ ds \ \right| \ = \ 0 \ ,
\ee
it follows that $\lim_{t\ra\infty}\|\xi_{0,\tau}(\cdot,t)-\tilde{\xi}_{0,\tau}(\cdot,t)\|_{1,\infty}=0$. The limit (\ref{AS7}) is a consequence of the dominated convergence theorem. 

To complete the proof we observe there is a constant $C$ such that 
\be \label{AT7}
\|\tilde{\xi}_{0,\tau}(\cdot,t)-\xi_p(\cdot)\|_{1,\infty}\le Ce^{-\tau/p} \quad {\rm for \ } 0<\tau<t \ .
\ee
This follows from the identity
\be \label{AU7}
\xi_p(y) \ = \ e^{-t/p}\xi_p(y_p(0))+\int_0^t e^{-(t-s)/p}h(y_p(s)) \ ds  \ .
\ee
 \end{proof}
Under the assumptions of Proposition 7.1, we see from  (\ref{B5}) that the condition (\ref{AF7}) of Lemma 7.1 holds. Thus we obtain a global asymptotic stability theorem in the case when the function $x\ra\beta(x,0)$ is H\"{o}lder continuous at $x=1$. To prove global asymptotic stability under just  a continuity assumption on the function $x\ra\beta(x,0)$  at $x=1$, we need to proceed somewhat differently. 

Recall that Proposition 8.1 of \cite{cd} is a non-linear generalization of Proposition 6.2 of \cite{cd}. This is a stability result for solutions to the linear differential delay equation (DDE)
\be \label{AV7}
\frac{dI(t)}{dt}+\int_0^tk(t,s)[I(t)-I(s)] \ ds \ = \ f(t) \ , \quad t>0 \ ,
\ee
where $k(\cdot,\cdot)$ is non-negative and $f\in L^1(\R^+)$.  We first prove a result for  solutions to (\ref{AV7}) when  $f\in L^\infty(\R^+)$, and then generalize it to the non-linear case.
\begin{lem}
Assume the function $k(\cdot,\cdot)$ of (\ref{AV7}) is non-negative, and the function $b(t)=\int_0^tk(t,s) \ ds, \ t\ge 0,$ is bounded. Assume further  there exists $\tau>0$ such that $k(t,s)=0$ for $t-s>\tau$. Then there exists a constant $C$ such that the solution to (\ref{AV7}) satisfies the inequality $\|I'(\cdot)\|_\infty\le C\|f(\cdot)\|_\infty$. 
\end{lem}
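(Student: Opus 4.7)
The plan is to reformulate the DDE (\ref{AV7}) as a linear Volterra integral equation of the second kind for the unknown $J := I'$, and then invoke the classical resolvent theory for Volterra equations (as in \cite{grip}). Using the identity $I(t) - I(s) = \int_s^t J(u)\, du$ together with Fubini to swap the order of integration converts (\ref{AV7}) into
\begin{equation*}
J(t) + \int_{(t-\tau)\vee 0}^{t} L(t, u)\, J(u)\, du \,=\, f(t), \qquad
L(t, u) \,:=\, \int_{(t-\tau)\vee 0}^{u} k(t, s)\, ds.
\end{equation*}
The kernel $L$ inherits three essential properties from $k$: non-negativity, the pointwise bound $0 \le L(t,u) \le b(t) \le b_\infty := \|b\|_\infty$, and compact shift-support (in particular $L(t,u)=0$ for $u \le t-\tau$ with the boundary value $L(t,t-\tau)=0$).

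Once the equation is in this form, I would apply classical resolvent theory to produce a kernel $R(t,u)$, with $R(t,u)=0$ for $u>t$, giving the explicit representation
\begin{equation*}
J(t) \,=\, f(t) - \int_0^t R(t, u)\, f(u)\, du.
\end{equation*}
The desired conclusion $\|I'\|_\infty \le C\|f\|_\infty$ then reduces immediately to a uniform estimate
\begin{equation*}
\sup_{t \ge 0}\int_0^t |R(t, u)|\, du \,\le\, C'(b_\infty, \tau).
\end{equation*}

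The main obstacle is precisely this uniform $L^1$ bound on $R(t, \cdot)$. When $b_\infty \tau < 1$ the Neumann expansion $R = \sum_{n \ge 1} (-1)^{n+1} L^{\ast n}$ converges in operator norm on $L^\infty$ and one concludes directly. Without a smallness assumption, the naive bound $\|L^{\ast n}\|_{L^\infty \to L^\infty} \le (b_\infty \tau)^n$ is insufficient, and one must leverage both the positivity $L \ge 0$ and the finite-memory property in tandem: the key point is that $L$ is a Volterra kernel with compact shift-support (of ``type $B^\infty$'' in the terminology of \cite{grip}), for which the resolvent is not only defined locally but is also uniformly integrable in its second argument, with integral bound depending only on $b_\infty$ and $\tau$. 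Making this uniformity rigorous under just the two standing hypotheses is the crux of the argument; once it is in place, the identity above and the $L^\infty$ bound on $f$ close the proof.
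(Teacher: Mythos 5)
Your reformulation of (\ref{AV7}) as a Volterra equation $J(t)+\int_0^t L(t,u)J(u)\,du = f(t)$ for $J=I'$, with $L(t,u)=\int_0^u k(t,s)\,ds$, is correct: the kernel $L$ is non-negative, bounded by $b_\infty$, vanishes for $u\le t-\tau$, and is increasing in its second argument. This observation in fact appears in the Remark immediately following Lemma 7.2, where the Volterra consequence of the lemma is stated under exactly these hypotheses. The gap is in the next step. There is no classical resolvent theorem that delivers a uniform bound $\sup_t\int_0^t|R(t,u)|\,du\le C'(b_\infty,\tau)$ under only these hypotheses: the Gripenberg result (Theorem 9.1, Chapter 9 of \cite{grip}) requires the \emph{opposite} monotonicity, namely $t\mapsto K(t,s)$ decreasing on $[s,\infty)$, which your $L$ need not satisfy, and the paper explicitly presents the two monotonicity conditions as complementary, deriving the one you need \emph{from} Lemma 7.2 rather than citing it. You correctly observe that the Neumann series only works when $b_\infty\tau<1$, and you concede that ``making this uniformity rigorous \ldots is the crux of the argument,'' but that crux is precisely the content of the lemma, and your proposal leaves it unproved.

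The paper's argument avoids the resolvent and works with $I$ rather than $J=I'$. Integrating (\ref{AV7}) gives the Duhamel formula (\ref{AW7}), which is rearranged into (\ref{AX7}): the difference $I(T_2)-I(T_1)$ equals a forcing integral plus a factor $\delta(T_1,T_2)<1$ multiplying a $k$-weighted average of past differences $I(\cdot)-I(T_1)$. The non-negativity of $k$ is what makes this an average, and the boundedness of $b(\cdot)$ gives $\delta<1$ uniformly. Taking $T_1=n\tau$ and choosing $T_2$ to realize the extremes of $I$ on $[n\tau,(n+1)\tau]$, then splitting the average between $[(n-1)\tau,n\tau]$ and $[n\tau,(n+1)\tau]$, produces the contraction inequality (\ref{BE7}) for the oscillation of $I$ on successive $\tau$-blocks. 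Iterating yields the uniform oscillation bound (\ref{BH7}), which substituted back into (\ref{AV7}) gives (\ref{BI7}), the stated bound on $\|I'\|_\infty$. To salvage your approach you would need to prove the uniform $L^1$ resolvent bound from scratch, which would essentially amount to reproducing this block iteration at the level of the Volterra kernel.
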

\begin{proof}
Assuming $0\le T_1<T_2$, we integrate (\ref{AV7}) to obtain the formula
\begin{multline} \label{AW7}
I(T_2) \ = \ I(T_1)\exp\left[-\int_{T_1}^{T_2} b(t) \ dt\right] \\
+\int_{T_1}^{T_2}\exp\left[-\int_{t}^{T_2} b(s) \ ds\right] \left\{f(t)+\int_0^tk(t,s)I(s) \ ds\right\} \ dt \ .
\end{multline}
We rewrite (\ref{AW7}) as
\be \label{AX7}
I(T_2)-I(T_1) \ = \ \int_{T_1}^{T_2}\exp\left[-\int_{t}^{T_2} b(s) \ ds\right] f(t)  \ dt
+\del(T_1,T_2)E[I(\mathcal{T})-I(T_1)] \ ,
\ee
where
\be \label{AY7}
\del(T_1,T_2)  \ = \ 1-\exp\left[-\int_{T_1}^{T_2} b(t) \ dt\right] \ ,
\ee
and $\mathcal{T}$ is a random variable with distribution in the interval $[T_1-\ga,T_2]$. 

For $n=1,2,\dots,$ we may use (\ref{AX7}) to estimate the oscillation of $I(\cdot)$ on the interval $[n\tau,(n+1)\tau]$ in terms of the oscillation of $I(\cdot)$ on the interval $[(n-1)\tau,n\tau]$ and  $\sup|f(\cdot)|$ on  $[n\tau,(n+1)\tau]$. We set $T_1=n\tau$ and  choose $T_2\in[n\tau,(n+1)\tau]$ such that
$ I(T_2)=\sup_{n\tau<t<(n+1)\tau}I(t)$. Then (\ref{AX7}) yields the inequality
\begin{multline} \label{BA7}
\sup_{n\tau<t<(n+1)\tau}[I(t)-I(n\tau)] \ \le \ \tau\sup_{n\tau<t<(n+1)\tau}|f(t)| \\
+\del\al\left\{\sup_{n\tau<t<(n+1)\tau}[I(t)-I(n\tau)] \right\}  
+\del(1-\al)\left\{\sup_{(n-1)\tau<t<n\tau}[I(t)-I(n\tau)] \right\} \ ,
\end{multline}
where $\del,\al$ are given by
\be \label{BB7}
\del \ = \ 1-\exp\left[-\tau\sup b(\cdot)\right] \ , \quad \al \ = \ P(\mathcal{T}>n\tau) \ .
\ee
It follows from (\ref{BA7}) that
\begin{multline} \label{BC7}
\sup_{n\tau<t<(n+1)\tau}[I(t)-I(n\tau)] \ \le \ \frac{\tau}{1-\del\al}\sup_{n\tau<t<(n+1)\tau}|f(t)| \\
+\frac{\del(1-\al)}{1-\del\al}\left\{\sup_{(n-1)\tau<t<n\tau}[I(t)-I(n\tau)] \right\} \ .
\end{multline}
We may apply a similar argument using (\ref{AX7}) with $T_1=n\tau$ and $T_2\in[n\tau,(n+1)\tau]$ such that $ I(T_2)=\inf_{n\tau<t<(n+1)\tau}I(t)$.  This yields the inequality
\begin{multline} \label{BD7}
\sup_{n\tau<t<(n+1)\tau}[I(n\tau)-I(t)] \ \le \ \frac{\tau}{1-\del\al}\sup_{n\tau<t<(n+1)\tau}|f(t)| \\
+\frac{\del(1-\al)}{1-\del\al}\left\{\sup_{(n-1)\tau<t<n\tau}[I(n\tau)-I(t)] \right\} \ .
\end{multline}
Adding (\ref{BC7}) and (\ref{BD7}) we obtain the estimate
\begin{multline} \label{BE7}
\sup_{n\tau<s,t<(n+1)\tau}|I(t)-I(s)| \ \le \ \frac{2\tau}{1-\del}\sup_{n\tau<t<(n+1)\tau}|f(t)| \\
+\del\left\{\sup_{(n-1)\tau<s,t<n\tau}|I(t)-I(s)| \right\} \ .
\end{multline}
We conclude from (\ref{BE7}) that
\begin{multline} \label{BF7}
\sup_{n\tau<s,t<(n+1)\tau}|I(t)-I(s)| \ \le \  \frac{2\tau(1-\del^n)}{(1-\del)^2}\|f(\cdot)\|_\infty \\
+ \del^n \sup_{0<s,t<\tau}|I(t)-I(s)|  \ , \quad {\rm for \ } n=1,2,...
\end{multline}
Following the same argument as before, we have from (\ref{AX7}) that
\be \label{BG7}
\sup_{0<s,t<\tau}|I(t)-I(s)|   \ \le \  \frac{2\tau}{(1-\del)^2}\|f(\cdot)\|_\infty  \ .
\ee

We conclude from  (\ref{BF7}), (\ref{BG7}) that
\be \label{BH7}
\sup_{n\tau<s,t<(n+1)\tau}|I(t)-I(s)|   \ \le \  \frac{2\tau}{(1-\del)^2}\|f(\cdot)\|_\infty  \quad {\rm for \ } n=0,1,2,...
\ee
It follows now from (\ref{AV7}), (\ref{BH7}) that
\be \label{BI7}
\|I'(\cdot)\|_\infty \ \le \ \left[ \frac{4\tau\sup b(\cdot)}{(1-\del)^2}+1\right]\|f(\cdot)\|_\infty \ .
\ee
\end{proof}
\begin{rem}
Lemma 7.2 implies a result for Volterra integral equations. Thus consider the integral equation
\be \label{BJ7}
u(t)+\int_0^tK(t,s)u(s) \ ds \ = \ f(t) \ , \quad t>0 \ ,
\ee
with continuous kernel $K(t,s), \ 0\le s\le t<\infty$. 
Assume the functions $s\ra K(t,s), \ 0\le s\le t$, are increasing for all $t>0$,  there exists $\tau>0$ such that $K(t,s)=0$ for $t-s>\tau$,  and that $\sup_{t>0} K(t,t)<\infty$. Then there is a constant $C$ such that the solution to (\ref{BJ7}) satisfies $\|u(\cdot)\|_\infty\le C \|f(\cdot)\|_\infty$.  

One should compare this result to the analogous result of Gripenberg (Theorem 9.1 of Chapter 9 of \cite{grip}), given as Proposition 6.1 of \cite{cd}. The monotonicity assumption on $K(\cdot,\cdot)$ in this case is that the functions $t\ra K(t,s)$ are decreasing on $[s,\infty)$ for all $s\ge 0$. 
\end{rem}
\begin{proposition}
Let $h(\cdot), \xi(\cdot,\cdot)$ satisfy the assumptions of Proposition 7.1,  $\eta(\cdot)$ satisfy (\ref{Z3}), and  the function $x\ra\beta(x,0)$ be continuous at $x=1$. Then (\ref{AF7}) holds.  
\end{proposition}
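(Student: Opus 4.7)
By the identity (\ref{B5}), condition (\ref{AF7}) is equivalent to showing that $J(t):=\log I(t)$ has vanishing oscillation on fixed-length windows, $\sup_{s\in[t-\tau,t]}|J(t)-J(s)|\to 0$ as $t\to\infty$ for every $\tau>0$. My strategy is to recast the DDE (\ref{O5}) as a nonlinear perturbation of a linear DDE of the form (\ref{AV7}), exploit the monotonicity of $f$ supplied by Proposition 5.1, and apply the oscillation estimate extracted from the proof of Lemma 7.2.

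First I would verify that the two driving terms on the RHS of (\ref{O5}) vanish at infinity. The term $g(t,v_t(\cdot))$ satisfies $|g(t,v_t(\cdot))|\le C e^{-t/p}$ uniformly in $v_t(\cdot)$ by direct inspection of (\ref{N5}), using the a priori bounds of Proposition 3.1 and the hypothesis $\eta(t)\le Ce^{-\delta t}$. The term $\gamma(t)=\rho(\xi(\cdot,t),\eta(t))-\rho_p(\xi(\cdot,t))$ tends to $0$ pointwise by (\ref{BE2}) of Lemma 2.4, applied to the family $\{\xi(\cdot,t):t>0\}\subset S_{m,M}$ from Proposition 3.1; only continuity, not H\"older continuity, of $\beta(\cdot,0)$ at $x=1$ is needed.

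Next I would linearize $f(t,v_t(\cdot))$ about $v_t\equiv 1$. By the first assertion of Proposition 5.1 the gradient $dF(t,y,1(\cdot);\cdot)$ is pointwise non-negative; combined with the negativity of $d_\zeta I_p$ (see the remark after (\ref{AM2})), this produces a representation
\[ f(t,v_t(\cdot)) \;=\; -\int_0^t k(t,s)\bigl[v_t(s)-1\bigr]\,ds \;+\; \mathcal{R}(t,v_t(\cdot)), \]
with $k(t,s)\ge 0$, $\sup_t\int_0^t k(t,s)\,ds<\infty$, $k$ decaying exponentially in $t-s$ (so that one may truncate to $t-s\le\tau_1$ at the cost of an $O(e^{-\tau_1/p})$ error), and a remainder $\mathcal{R}$ that is quadratic in $v_t-1$ on bounded sets. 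Substituting $v_t(s)-1=p^{-1}[J(s)-J(t)]+O((J(s)-J(t))^2)$ converts (\ref{O5}) into the shape of (\ref{AV7}),
\[ J'(t)+\int_0^t \tilde k(t,s)\bigl[J(t)-J(s)\bigr]\,ds \;=\; F(t),\qquad \tilde k\ge 0,\ \tilde k(t,s)=0\ \text{for}\ t-s>\tau_1, \]
with $F(t)$ absorbing $p\gamma(t)$, $pg(t,v_t(\cdot))$, the truncation error, and the quadratic remainder. Applying the oscillation bound (\ref{BH7}) from the proof of Lemma 7.2, after shifting the initial time to a large $T$, yields a self-referential inequality
\[ \mathop{\mathrm{osc}}_{[t-\tau_1,t]}J \;\le\; C\Bigl[\sup_{s\ge T}|\gamma(s)|+\sup_{s\ge T}|g(s,v_s(\cdot))|+\bigl(\mathop{\mathrm{osc}}_{[T,t]}J\bigr)^2\Bigr]+\delta^{(t-T)/\tau_1}\mathop{\mathrm{osc}}_{[T,T+\tau_1]}J. \]
Proposition 3.1 supplies an a priori uniform-in-$t$ bound on the oscillation via $|J'|\le p\|\rho(\cdot)\|_\infty\le pM$, so the quadratic term is initially controlled; iterating with $T\to\infty$ the two supremum terms vanish, the $\delta^{(t-T)/\tau_1}$ contribution is absorbed, and the quadratic self-referential term is bootstrapped to zero, giving (\ref{AF7}).

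\textbf{Main obstacle.} The principal technicality is controlling the quadratic remainder $\mathcal{R}$ uniformly, which requires bounded second variations of $F(t,y,\cdot)$ with respect to $v_t(\cdot)$ at $v_t\equiv 1$; these rest on the convexity hypotheses on $h(\cdot)$ already enforced by Lemma 4.1 and Proposition 5.1. A further subtle point is that $d_\zeta I_p$ and $I_p$ themselves can blow up as $\eta\to 0$, but the ratios in (\ref{T2}) defining $k(t,s)$ remain bounded by Proposition 3.1 together with the denominator bound (\ref{B7}); the linearization therefore has to be carried out directly on these ratios rather than on numerator and denominator separately.
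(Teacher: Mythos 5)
Your proposal correctly identifies several ingredients of the argument — equivalence of (\ref{AF7}) with small oscillation of $\log I(\cdot)$ on fixed windows, the role of Proposition 5.1, the DDE structure, and the Gronwall/iteration scheme of Lemma 7.2 — but it breaks on its central step: you linearize $f(t,v_t(\cdot))$ about the constant function $v_t\equiv 1$ and then must control a remainder quadratic in $v_t-1\sim p^{-1}[J(s)-J(t)]$. That remainder cannot be bootstrapped away. The only a priori bound available on the window-oscillation is the one coming from $|J'|\le p\,\|\rho\|_\infty\le pM$ (or from (\ref{BK7})), which yields $\mathrm{osc}_{[t-\tau,t]}J\le pM\tau$, a quantity bounded but by no means small. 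Your self-referential inequality therefore reads roughly $\mathrm{osc}\le C\epsilon + C\,\mathrm{osc}^2+\delta\,\mathrm{osc}$, and when $C\cdot pM\tau\ge 1$ it gives no information: smallness is exactly what one is trying to prove, so the bootstrap does not close. (There is also a more minor misstep: you invoke ``the hypothesis $\eta(t)\le Ce^{-\delta t}$'', which is a hypothesis of Proposition 7.1 in the H\"older case, not of Proposition 7.2; here one only has (\ref{Z3}), and (\ref{C7}) must be replaced by (\ref{BK7}) to obtain the sub-exponential bound $|g(t,v_t(\cdot))|\le C_\alpha e^{-t(1/p-\alpha)}$.)

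The paper sidesteps the quadratic remainder entirely by choosing a different comparison object. Instead of comparing $v_t(\cdot)$ to the constant $1$, it compares $v_t(\cdot)$ to the \emph{rescaled copy} $\tilde v_t(s)=I(s)^{1/p}/(I^*)^{1/p}$ where $I^*$ is a constant (chosen via (\ref{BR7}), (\ref{BU7}) so that $J(t)=\log[(I^*)^{1/p}/I(t)^{1/p}]$ has a definite sign on the window). The ratio $v_t(s)/\tilde v_t(s)=e^{J(t)}$ is then constant in $s$, so the relevant difference is controlled \emph{linearly} by $J(t)$ itself, not by its oscillation squared; this is the bound (\ref{BO7}), $|f_1(t,v_t(\cdot))-\tilde f_1(t,\tilde v_t(\cdot))|\le C(\tau)J(t)$. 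Proposition 5.1 is used to give $\tilde f_1(t,\tilde v_t(\cdot))$ a sign because $\tilde v_t\le 1$ (resp.\ $\ge 1$) on the window, which replaces the first-order linearization; the Gronwall step (\ref{BP7}) and the iteration (\ref{BW7})--(\ref{BY7}) then close with no quadratic loss. Without this re-centering, the argument as you have written it has a genuine gap.
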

\begin{proof}
We first observe that (\ref{B7}) holds, but not (\ref{C7}) in general.  We replace (\ref{C7}) by the inequality
\be \label{BK7}
c_\al e^{-\al(t-s)} \ \le \ \left[ \frac{I(t)}{I(s)}\right]^{1/p} \ \le C_\al e^{\al(t-s)}  \quad 0\le s\le t<\infty \ ,
\ee
which is valid for any $\al>0$, where $C_\al,c_\al$ are  positive constants depending on $\al$.  This follows from (\ref{Z3}), (\ref{AG3}) and (\ref{BQ2}) of Lemma 2.5 upon integrating (\ref{AF3}) over the interval $[s,t]$. It follows from (\ref{BK7}) that the function $G$ of (\ref{N5}) satisfies an inequality
$|G(t,y,v_t(\cdot))|\le C_\al e^{-t(1/p-\al)}, \ y\ge\ve_0, \ t\ge 0$, and hence that   $|g(t,v_t(\cdot))|\le C_\al e^{-t(1/p-\al)}, \ t\ge 0$, where $\al>0$ can be arbitrarily small, and the constant $C_\al$ depends on $\al>0$.  We also have from (\ref{BE2}) of Lemma 2.4 that the function $\ga(\cdot)$ on the RHS of (\ref{O5}) is bounded and $\lim_{t\ra\infty}\ga(t)=0$. 

We proceed now as in the proof of Proposition 7.1 by writing the function $F$ of (\ref{G5}) as $F=F_1+F_2$, where $F_1$ is given by (\ref{E7}), but with the interval of integration $[\max\{t-\tau,0\},t]$ in place of $[T_{\ve,\tau,\tau'}-\tau,t]$. From (\ref{BK7}) we see that $|F_2(t,y,v_t(\cdot))|\le C_\al e^{-\tau(1/p-\al)}, \ y\ge\ve_0, \ t\ge \tau$, where $\al>0$ can be arbitrarily small.  Evidently $F_1(t,y,v_t(\cdot))$ depends only on the values of $I(s)$ for $s\in [t-\tau,t]$.  Let $I^*>0$ be a constant and define $\tilde{v}_t(s)=I(s)^{1/p}/(I^*)^{1/p}$ for $\max\{t-\tau,0\}<s<t$, and $\tilde{z}(s), \ \ s\in[\max\{t-\tau,0\},t]$ as the solution to (\ref{D5}) with $\tilde{v}_t(\cdot)$ replacing $v_t(\cdot)$.  Let
$\tilde{F}_1(t,y,\tilde{v}_t(\cdot))$ be defined in the same way as $F_1(t,y,v_t(\cdot))$, but with $\tilde{v}_t(\cdot), \ \tilde{z}(\cdot)$ replacing $v_t(\cdot), \ z(\cdot)$. We write the second term on the LHS of (\ref{O5}) as $f(t,v_t(\cdot))=f_1(t,v_t(\cdot))+f_2(t,v_t(\cdot))$, corresponding to the decomposition $F=F_1+F_2$.  We also denote by $\tilde{f}_1(t,\tilde{v}_t(\cdot))$ the function (\ref{P5}) corresponding to $\tilde{F}_1$ in place of $F$. On setting $J(t)=\log\left[(I^*)^{1/p}/I(t)^{1/p}\right]$, we see that (\ref{O5}) is equivalent to the equation
\begin{multline} \label{BL7}
\frac{dJ(t)}{dt}+[\tilde{f}_1(t,\tilde{v}_t(\cdot))-f_1(t,v_t(\cdot))] 
 = \ \tilde{f}_1(t,\tilde{v}_t(\cdot))+ \Ga(t) \ , \\
 {\rm where \ } \quad \Ga(t) \ = \ f_2(t,v_t(\cdot))-g(t,v_t(\cdot))-\ga(t) \ .
\end{multline}

Assume now that for some given $T_1\ge\tau$, the constant $I^*$ has been chosen so that $J(\cdot)$ is non-negative in the interval $[T_1-\tau,T_1]$.  Let $T_2>T_1$ be such that $J(s)\ge 0$ also for $T_1\le s\le T_2$. Then $\tilde{f}_1(t,\tilde{v}_t(\cdot))\ge 0$ for $T_1\le s\le T_2$. Note from (\ref{BK7}) that
\be \label{BM7}
\frac{1}{C_\al}e^{-\al\tau} \ \le \ v_t(s) \ \le \ \frac{1}{c_\al}e^{\al\tau} \ , \quad \max\{t-\tau,0\}\le s\le t \ .
\ee
We also have that $\tilde{v}_t(\cdot)\le 1$, so $\tilde{z}(s)/\tilde{v}_t(s)>\ve_0$ for $s<t$. Hence we may argue as in the proof of Proposition 7.1 that 
\be \label{BO7}
|f_1(t,v_t(\cdot))-\tilde{f}_1(t,\tilde{v}_t(\cdot))|\le C(\tau)J(t) \ , \quad T_1<t<T_2 \ ,
\ee
where the constant $C(\tau)$ depends on $\tau$ since we need to use the inequality (\ref{BM7}). 
Integrating (\ref{BL7}), using (\ref{BO7}) and the non-negativity of  $\tilde{f}_1$, we obtain the inequality
\be \label{BP7}
J(T_2) \ \ge \ \exp\left[-C(\tau)\{T_2-T_1\}\right]J(T_1)-\int_{T_1}^{T_2}
\exp\left[-C(\tau)\{T_2-t\}\right]|\Ga(t)| \ dt \ .
\ee

We can use (\ref{BP7}) to estimate the oscillation of $I(\cdot)$ in the interval $[n\tau,(n+1)\tau]$ in terms of the oscillation of $I(\cdot)$ in the interval $[(n-1)\tau,n\tau], \ n=1,2,...$.  We set $T_1=n\tau$ and define $I^*$ in such a way that the RHS of (\ref{BP7}) is non-negative for $T_1\le t\le T_2\le (n+1)\tau$. Thus we require $I^*$ to satisfy the inequality
\be \label{BQ7}
\log\left[\left(\frac{I^*}{I(n\tau)}\right)^{1/p}\right] \ \ge \ C(\tau)^{-1}\left[e^{\tau C(\tau)}-1\right]\sup_{n\tau<t<(n+1)\tau} |\Ga(t)| \ .
\ee
We also require $I^*\ge \sup_{(n-1)\tau<t<n\tau}I(t)$ so that $J(t)\ge 0$ for $(n-1)\tau<t<n\tau$.  Hence we define $I^*$ as
\be \label{BR7}
\log\left[\left(\frac{I^*}{\sup_{(n-1)\tau<t<n\tau}I(t)}\right)^{1/p}\right] \ = \ C(\tau)^{-1}\left[e^{\tau C(\tau)}-1\right]\sup_{n\tau<t<(n+1)\tau} |\Ga(t)| \ .
\ee
In that case (\ref{BP7}) holds for all $T_2$ such that $n\tau<T_2<(n+1)\tau$, whence we obtain the inequality
\begin{multline} \label{BS7}
\log\left[\left(\frac{I^*}{I(t)}\right)^{1/p}\right] \ \ge \ e^{-\tau C(\tau)}\log\left[\left(\frac{I^*}{I(n\tau)}\right)^{1/p}\right] \\
 -C(\tau)^{-1}[1-e^{-\tau C(\tau}]\sup_{n\tau<t<(n+1)\tau} |\Ga(t)| \quad {\rm for \ } n\tau<t<(n+1)\tau \ .
\end{multline}

Analogously to the previous paragraphs, we let $I_*$ be a constant and set $J(t)= \log\left[I(t)^{1/p}/I_*^{1/p}\right]$. Defining $\tilde{v}_t(s)=I(s)^{1/p}/I_*^{1/p}$, we see as in (\ref{BL7}) that $J(t)$ is a solution to the equation
\begin{multline} \label{BT7}
\frac{dJ(t)}{dt}+[f_1(t,v_t(\cdot))-\tilde{f}_1(t,\tilde{v}_t(\cdot))] 
 = \ -\tilde{f}_1(t,\tilde{v}_t(\cdot))- \Ga(t) \ , \\
 {\rm where \ } \quad \Ga(t) \ = \ f_2(t,v_t(\cdot))-g(t,v_t(\cdot))-\ga(t) \ .
\end{multline}
Assume now that $I_*$ has been chosen so that $J(\cdot)$ is non-negative in the interval $[T_1-\tau,T_1]$.  If $T_2>T_1$ is such that $J(t)\ge 0$ for $T_1<t<T_2$, then  the function $\tilde{f}_1(t,\tilde{v}_t(\cdot))$ is  negative for $T_1<t<T_2$.  Hence if (\ref{BO7}) holds, we obtain upon integrating (\ref{BT7}) the inequality (\ref{BP7}) again.  We choose $T_1=n\tau$ and $I_*$ to be given by
\be \label{BU7}
\log\left[\left(\frac{\inf_{(n-1)\tau<t<n\tau}I(t)}{I_*}\right)^{1/p}\right] \ = \ C(\tau)^{-1}\left[e^{\tau C(\tau)}-1\right]\sup_{n\tau<t<(n+1)\tau} |\Ga(t)| \ .
\ee
In that case (\ref{BP7}) holds for all $T_2$ such that $n\tau<T_2<(n+1)\tau$, whence we obtain the inequality
\begin{multline} \label{BV7}
\log\left[\left(\frac{I(t)}{I_*}\right)^{1/p}\right] \ \ge \ e^{-\tau C(\tau)}\log\left[\left(\frac{I(n\tau)}{I_*}\right)^{1/p}\right] \\
 -C(\tau)^{-1}[1-e^{-\tau C(\tau}]\sup_{n\tau<t<(n+1)\tau} |\Ga(t)| \quad {\rm for \ } n\tau<t<(n+1)\tau \ .
\end{multline}
Adding (\ref{BS7}) to (\ref{BV7}) we have that
\begin{multline} \label{BW7}
\log\left[\left(\frac{I(s)}{I(t)}\right)^{1/p}\right] \ \ge \ \left[e^{-\tau C(\tau)}-1\right]\log\left[\left(\frac{I^*}{I_*}\right)^{1/p}\right] \\
 -2C(\tau)^{-1}[1-e^{-\tau C(\tau}]\sup_{n\tau<t<(n+1)\tau} |\Ga(t)| \quad {\rm for \ } n\tau<s,t<(n+1)\tau \ .
\end{multline}

Upon taking the infimum of the LHS of (\ref{BW7}) over $n\tau<s,t<(n+1)\tau$, and using the formulae (\ref{BR7}), (\ref{BU7}) for $I^*,I_*$, we obtain the estimate 
\begin{multline} \label{BX7}
\log\left[\left(\frac{\sup_{n\tau<t<(n+1)\tau}I(t)}{\inf_{n\tau<t<(n+1)\tau}I(t)}\right)^{1/p}\right]  \ \le \\
[1-e^{-\tau C(\tau}]\left\{
\log\left[\left(\frac{\sup_{(n-1)\tau<t<n\tau}I(t)}{\inf_{(n-1)\tau<t<n\tau}I(t)}\right)^{1/p}\right]+
2\frac{\exp[\tau C(\tau)]}{C(\tau)}\sup_{n\tau<t<(n+1)\tau}|\Ga(t)|\right\} \ .
\end{multline}
Arguing now as in the proof of Lemma 7.2 we see from (\ref{BX7}) that for any integers $n\ge  N\ge 1$ there is the inequality
\begin{multline} \label{BY7}
\log\left[\left(\frac{\sup_{n\tau<t<(n+1)\tau}I(t)}{\inf_{n\tau<t<(n+1)\tau}I(t)}\right)^{1/p}\right]  \ \le \\
  [1-e^{-\tau C(\tau}]^{n+1-N}
\log\left[\left(\frac{\sup_{(N-1)\tau<t<N\tau}I(t)}{\inf_{(N-1)\tau<t<N\tau}I(t)}\right)^{1/p}\right] 
+
2\frac{\exp[2\tau C(\tau)]}{C(\tau)}\sup_{t>N\tau}|\Ga(t)| \ .
\end{multline}
The property (\ref{AF7})  follows now from (\ref{BK7}), (\ref{BY7}) upon using the fact that  $\lim_{t\ra\infty}\Ga(t)=0$.

We are left to establish that (\ref{BO7}) holds with $\tilde{v}_t(s)=I(s)^{1/p}/I_*^{1/p}$, where $I^*$ is given by (\ref{BU7}).  To see this we first observe that $y(s)=\tilde{z}(s)/\tilde{v}_t(s), \ s<t,$ is a solution to (\ref{C3}) with terminal data $y(t)=\tilde{z}(t)/\tilde{v}_t(t)\le z(t)=y$, where $z(\cdot)$ is the solution to the terminal value problem (\ref{D5}).  If $y(t)\ge \ve_0$ we can argue as previously. However if $y(t)<\ve_0$ then the RHS of (\ref{C3}) is not necessarily defined. To circumvent this problem we simply define $\tilde{f}_1(t,\tilde{v}_t(\cdot))$ as given by  the function (\ref{P5}) corresponding to $\tilde{F}_1$ in place of $F$, but with the modification that $[dI_p(\xi(\cdot,t)),\tilde{F}_1(t,\cdot,\tilde{v}_t(\cdot))-\tilde{F}_1(t,\cdot,1(\cdot))]$ is replaced by 
\be \label{BZ7}
\int_{\ve_0\tilde{v}_t(t)}^\infty dy \ dI_p(\xi(\cdot,t);y)\{\tilde{F}_1(t,y,\tilde{v}_t(\cdot))-\tilde{F}_1(t,y,1(\cdot))\} \ .
\ee
To compare this modified $\tilde{f}_1(t,\tilde{v}_t(\cdot))$ to $f_1(t,v_t(\cdot))$ we write $f_1(t,v_t(\cdot))$  as a sum of two parts, the first part corresponding to integration with respect to $y$ in the region $\ve_0\tilde{v}_t(t)<y<\infty$.  We can then estimate the difference between it and 
$\tilde{f}_1(t,\tilde{v}_t(\cdot))$ similarly as before to obtain an upper bound which is a constant times $J(t)$. The second part of $f_1(t,v_t(\cdot))$ is an integral over the interval $\ve_0<y<\ve_0\tilde{v}_t(t)=\ve_0\exp[J(t)]$. Since $F_1(t,y,v_t(\cdot))$ is uniformly bounded, we conclude that this integral is also bounded by a constant times $J(t)$. 
\end{proof}
 \begin{theorem}
 Let $h(\cdot)$ satisfy the assumptions of Lemma 7.1 and  $\xi(\cdot,t), \ t>0,$ be the solution of  (\ref{A3}) with $\rho(t)=\rho(\xi(\cdot,t),\eta(t))$ constructed in Proposition 3.1.  If  $\eta(\cdot)$ satisfies (\ref{Z3})  then
  \be \label{CA7}
\lim_{t\ra\infty} \|\xi(\cdot,t)-\xi_p(\cdot)\|_{1,\infty} \ = \ 0, \quad  \lim_{t\ra\infty}\left[\rho(\xi(\cdot,t),\eta(t))-\frac{1}{p}\right] \ = \ 0  \ .
 \ee
 Suppose in addition the inequality $\eta(t)\le C_1e^{-\nu_1 t}, \ t>0,$ holds for some constants $C_1,\nu_1>0$, and the function $x\ra\beta(x,0)$  is H\"{o}lder continuous at $x=1$. Then there are constants $C_2>0$ and $\nu_2,\  0<\nu_2\le1/p,$ such that 
 \be \label{CB7}
 \|\xi(\cdot,t)-\xi_p(\cdot)\|_{1,\infty}\le C_2e^{-\nu_2 t}, \quad  \left|\rho(\xi(\cdot,t),\eta(t))-\frac{1}{p}\right| \ \le \ C_2e^{-\nu_2t} \ , \ \  t\ge 0  \ .
 \ee
  \end{theorem}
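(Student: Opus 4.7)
The plan is to deduce (\ref{CA7}) from Proposition 7.2 together with Lemma 7.1, and then to upgrade this qualitative convergence to the exponential estimate (\ref{CB7}) by feeding it back into the local asymptotic stability result Theorem 4.1.

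For (\ref{CA7}), the hypotheses on $h(\cdot)$ and on $\eta(\cdot)$ are exactly those required by Proposition 7.2, which delivers (\ref{AF7}):
\[
\lim_{t\to\infty}\sup_{t-\tau<s<t}\Big|\int_s^t\big[\rho(s')-1/p\big]\,ds'\Big|=0\quad\text{for every }\tau>0,
\]
where $\rho(s)=\rho(\xi(\cdot,s),\eta(s))$. Proposition 3.1 places $\xi(\cdot,t)$ in the class of $C^1$ solutions treated in Lemma 7.1, and the lower bound in (\ref{Y3}) provides the strict inequality $\inf\rho(\cdot)>-h(\ve_0)/\ve_0$ that Lemma 7.1 requires. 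Lemma 7.1 then gives $\|\xi(\cdot,t)-\xi_p(\cdot)\|_{1,\infty}\to 0$. For the convergence of $\rho$ itself I would write
\[
\rho(\xi(\cdot,t),\eta(t))-\tfrac{1}{p}=\big[\rho(\xi(\cdot,t),\eta(t))-\rho_p(\xi(\cdot,t))\big]+\big[\rho_p(\xi(\cdot,t))-\rho_p(\xi_p)\big].
\]
The orbit $\{\xi(\cdot,t):t\ge 0\}$ lies in a single set $S_{m,M}$ by (\ref{V*3}), so the first bracket tends to zero by (\ref{BE2}) of Lemma 2.4 together with $\eta(t)\to 0$, while the second tends to zero by the local Lipschitz continuity of $\rho_p$ near $\xi_p$ furnished by Lemma 4.2 applied to the representation (\ref{B4}), combined with $\xi(\cdot,t)\to\xi_p(\cdot)$ and $\rho_p(\xi_p)=1/p$ (which is just (\ref{N1})).

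For (\ref{CB7}), assume $\beta(\cdot,0)$ is H\"older continuous at $x=1$ and $\eta(t)\le C_1 e^{-\nu_1 t}$. By (\ref{BF2}) of Lemma 2.4 and (\ref{V*3}), the defect
\[
\gamma(t):=\rho(\xi(\cdot,t),\eta(t))-\rho_p(\xi(\cdot,t))
\]
satisfies $|\gamma(t)|\le C e^{-\nu_\gamma t}$ for some $\nu_\gamma>0$. Given any $\varepsilon$ smaller than the smallness threshold in Theorem 4.1, use the already established (\ref{CA7}) to pick $T_0$ so large that $\|\xi(\cdot,T_0)-\xi_p(\cdot)\|_{1,\infty}+\sup_{t\ge T_0}|\gamma(t)|<\varepsilon$. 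Shifting the time origin to $T_0$, the equation (\ref{AT4}) for $\tilde\xi(\cdot,s)=\xi(\cdot,T_0+s)-\xi_p(\cdot)$ has initial data of size less than $\varepsilon$ and driving term $\gamma(T_0+\cdot)$ still obeying an exponential bound. Since the functionals $\delta_1,\delta_2$ satisfy the local Lipschitz hypothesis (\ref{AW4}) by Lemma 4.2, Theorem 4.1 yields $\|\tilde\xi(\cdot,s)\|_{1,\infty}\le C_\nu e^{-\nu s}$ for any $0<\nu<\min(\nu_\gamma,1/p)$, which is the first half of (\ref{CB7}) after absorbing the time shift into the constant. The second half follows from the triangle inequality $|\rho(\xi(\cdot,t),\eta(t))-1/p|\le|\gamma(t)|+|\rho_p(\xi(\cdot,t))-\rho_p(\xi_p)|$ combined with the Lipschitz bound on $\rho_p$.

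The principal obstacle is concealed inside Proposition 7.2, which extracts (\ref{AF7}) under only continuity of $\beta(\cdot,0)$ at $x=1$; this is the delicate argument, resting on the monotonicity of $f(t,v_t(\cdot))$ from Proposition 5.1 and the delay estimate (\ref{BX7}) modeled on Lemma 7.2. Once (\ref{AF7}) is in hand, the deduction of (\ref{CA7}) through Lemma 7.1 is routine, and the upgrade to (\ref{CB7}) is bookkeeping; the only point that needs attention is the translation invariance used when invoking Theorem 4.1 at time $T_0$, which is legitimate because (\ref{AT4}) is autonomous in $\tilde\xi$ apart from the exogenous driver $\gamma(\cdot)$ and the smallness requirements are verified tail-wise.
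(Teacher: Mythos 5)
Your derivation of (\ref{CA7}) is essentially the paper's: Proposition 7.2 supplies (\ref{AF7}), Lemma 7.1 (whose remaining hypotheses on $\rho(\cdot)$---strict (\ref{B3}) via (\ref{Y3}) and (\ref{Q4}) via the argument in the proof of Theorem 3.1---are in force) gives the convergence of $\xi(\cdot,t)$, and the decomposition $\rho = \rho_p + \gamma$ gives the convergence of $\rho$ to $1/p$. Your unpacking of the second limit via the Lipschitz continuity of $\rho_p$ at $\xi_p$, extracted from (\ref{B4}) and (\ref{AX4}), is a slightly more explicit version of what the paper compresses into a single sentence citing (\ref{B4}).

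For (\ref{CB7}) you take a genuinely different route. The paper states that the exponential rate ``can already be obtained from Proposition 7.1,'' meaning that under the H\"older hypothesis the contraction embedded in the proof of that proposition delivers an exponential rate for $I(t)$; this is left largely implicit. You instead run a standard two-step argument: use the already established qualitative limit $\|\xi(\cdot,t)-\xi_p(\cdot)\|_{1,\infty}\to 0$ together with the exponential bound $|\gamma(t)|\le Ce^{-\nu_\gamma t}$ from (\ref{BF2}) to enter, at some finite $T_0$, the small-data regime of the local stability Theorem 4.1; then apply Theorem 4.1 to the shifted problem, absorbing the finite interval $[0,T_0]$ into the constant via the uniform bounds of Proposition 3.1. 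This is cleaner and more explicit than what the paper writes. One thing to note: Theorem 4.1 requires the conditions of Lemma 4.1 on $h(\cdot)$, in particular both inequalities of (\ref{I4}), which are not literally part of ``the assumptions of Lemma 7.1'' cited in the statement of Theorem 7.1. They are available in practice because Proposition 7.2---which both you and the paper must invoke to get (\ref{AF7})---is stated under the hypotheses of Proposition 7.1, which include (\ref{I4}) and (\ref{S5}). So your appeal to Theorem 4.1 adds no hypotheses beyond those already implicitly required by the paper's own proof; it is worth acknowledging this explicitly, as the statement of Theorem 7.1 as written is slightly understated on this point.
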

  \begin{proof}
  From (\ref{B5}) and Proposition 7.2 we see that the condition (\ref{AF7}) of Lemma 7.1 holds. The  convergence of $\xi(\cdot,t)$ to $\xi_p(\cdot)$ then follows from Lemma 7.1. The convergence of $\rho(\xi(\cdot,t),\eta(t))$  follows now from (\ref{B4}) upon writing
  $\rho(\xi(\cdot,t),\eta(t))=\rho_p(\xi(\cdot,t))+\ga(t)$, noting  we have already shown  that $\ga(t)$ converges  to $0$. The exponential convergence (\ref{CB7}) can already be obtained from Proposition 7.1 since the exponential decay of $\eta(\cdot)$ implies the exponential decay of  $\ga(\cdot)$.
  \end{proof}
  
 \vspace{.1in}
 \section{Asymptotic Stability for the LSW Model}
Just as we obtained the proof of Theorem 1.2 from Theorem 3.1, we show here that Theorem 7.1 enables us to generalize Theorem 1.3 beyond the case of quadratic $\phi(\cdot)$ and  $\psi(\cdot)$. 
\begin{theorem} Let $w(x,t), \ x,t\ge 0$, be the solution to (\ref{A1}), (\ref{B1}) with coefficients satisfying (\ref{D1}), (\ref{E1}), (\ref{I1}) and assume that the initial data $w(\cdot,0)$ has beta function $\beta(\cdot,0)$ satisfying (\ref{H1}) with $0<\beta_0<1$.  Assume also that the function $h:[\ve_0,\infty)\ra\R$ defined by (\ref{I2})  is  convex, that (\ref{I4}) holds for all  $p>0$ and also  (\ref{S5}). 
  Then setting $\kappa=[1/\beta_0-\phi'(1)-1]/|\psi'(1)|$ one has  
\be \label{A8}
\lim_{t\ra\infty} \kappa(t ) =   \kappa, \quad \lim_{t\ra\infty} \|\beta(\cdot,t)-\beta_\kappa(\cdot)\|_\infty=0,
\ee
where $\beta_\kappa(\cdot)$ is the beta function of the time independent solution $w_\kappa(\cdot)$ of (\ref{A1}).  If the function $x\ra\beta(x,0)$  is H\"{o}lder continuous at $x=1$ then the convergence in (\ref{A8}) is exponential:
\be \label{A*8}
|\kappa(t)-\kappa| \ \le \ Ce^{-\nu t} \ , \quad \|\beta(\cdot,t)-\beta_\kappa(\cdot)\|_\infty \ \le Ce^{-\nu t} \ ,  \ \ t\ge 0 ,
\ee
for some constants $C,\nu>0$.
\end{theorem}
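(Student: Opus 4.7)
The plan is to derive Theorem 8.1 from Theorem 7.1 in exactly the same manner that Theorem 1.2 was derived from Theorem 3.1 at the end of $\S 3$. First I would invoke the transformation of $\S 2$: choose $T_0>0$ large enough that $\inf_{t\ge T_0}u(t)\ge 1$, that $\beta(\cdot,T_0)$ satisfies (\ref{U2}) and $\lim_{x\ra 1}\beta(x,T_0)=p/(p+1)$, and that $\eta(t)=u(t)^{-1}$ takes values in $(0,1]$ with $\lim_{t\ra\infty}\eta(t)=0$. The proof of Theorem 1.2 already verified that the corresponding $\xi(\cdot,T_0)$ defined by (\ref{H2}) satisfies the hypotheses of Proposition 3.1: namely $\|\xi(\cdot,T_0)\|_{1,\infty}<\infty$ (via (\ref{AJ3})--(\ref{AN3}) and the representation (\ref{AO*3})), $\inf[1+D\xi(\cdot,T_0)]\ge m_0>0$, and $\eta(\cdot)$ satisfies (\ref{Z3}) via (\ref{AP3}) combined with (\ref{Y3}).

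Next I would check the additional hypotheses required for Theorem 7.1. Lemma 6.1 of \cite{cn}  yields the basic regularity of $h(\cdot)$ defined by (\ref{I2}), and the convexity of $h(\cdot)$, the condition (\ref{I4}) for all $p>0$, and (\ref{S5}) are all imposed as assumptions of Theorem 8.1. Therefore Theorem 7.1 applies with $p=\beta_0/(1-\beta_0)$ and gives
\[
\lim_{t\ra\infty}\|\xi(\cdot,t)-\xi_p(\cdot)\|_{1,\infty} \ = \ 0, \qquad \lim_{t\ra\infty}\rho(\xi(\cdot,t),\eta(t)) \ = \ \frac{1}{p} \ .
\]

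To finish, I would translate these PDE limits back to the LSW variables. The identity (\ref{AO3}), $\rho(\xi(\cdot,t),\eta(t))=\phi'(1)-\psi'(1)\kappa(t)$, immediately gives $\kappa(t)\ra[1/p-\phi'(1)]/|\psi'(1)|=\kappa$. For the beta-function convergence one unwinds (\ref{F2}), (\ref{H2}): convergence of $\xi(\cdot,t)$ and $D\xi(\cdot,t)$ translates via (\ref{AK3})--(\ref{AL3}) into uniform convergence of $\pa F(\cdot,t)/\pa x$ (after the natural rescaling that brings the moving support back to $[0,1]$), and hence of $w(\cdot,t)=e^tw(F(\cdot,t),0)$ together with its $x$-derivative, to the time-independent solution $w_\kappa(\cdot)$. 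Since $\beta(\cdot,t)$ is a continuous function of $w(\cdot,t)$ and its first derivative through (\ref{F1}), (\ref{G1}), (\ref{AI2}), this gives $\|\beta(\cdot,t)-\beta_\kappa(\cdot)\|_\infty\ra 0$. I expect the translation step to be the main obstacle: one must verify that the $\|\cdot\|_{1,\infty}$ convergence in $y$-space delivers $C^1$-type convergence of the density on $[0,1]$ uniformly up to the endpoint $x=1$, where the change of variables (\ref{C2}) is singular; the key point is that the equilibrium $\xi_p(\cdot)$ already encodes the correct power-law behavior at $y\ra\infty$ (equivalently $x\ra 1$) via (\ref{P1}).

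For the Hölder case, the already-established convergence $\rho(\xi(\cdot,t),\eta(t))\ra 1/p>0$ combined with (\ref{AP3}) forces $\eta(t)\le C_1 e^{-\nu_1 t}$ for some $\nu_1>0$ at large time. Hence the second half of Theorem 7.1 applies and yields constants $C,\nu>0$ such that $\|\xi(\cdot,t)-\xi_p(\cdot)\|_{1,\infty}+|\rho(\xi(\cdot,t),\eta(t))-1/p|\le Ce^{-\nu t}$. Since the translation of the previous paragraph is quantitative and Lipschitz in the $\|\cdot\|_{1,\infty}$ norm on the relevant sets, these rates descend to $|\kappa(t)-\kappa|\le Ce^{-\nu t}$ and $\|\beta(\cdot,t)-\beta_\kappa(\cdot)\|_\infty\le Ce^{-\nu t}$, which is (\ref{A*8}).
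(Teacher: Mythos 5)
Your overall strategy matches the paper's: reduce to Theorem~7.1 via the transformation of $\S2$, read off $\kappa(t)\ra\kappa$ from (\ref{AO3}), and then unwind the change of variables to get convergence of $\beta(\cdot,t)$, with the exponential rate coming from the second half of Theorem~7.1 once the decay of $\eta(\cdot)$ and $\ga(\cdot)$ is noted. However, there are two concrete gaps that you would need to fill.

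First, invoking Theorem~7.1 (through Lemma~7.1 and Proposition~7.1, which rely on Lemma~3.2) requires the second-derivative bound (\ref{T4}), $\sup_{y\ge\ve_0+1}y^2h''(y)<\infty$, plus convexity of $h$. Lemma~6.1 of \cite{cn} only controls $h$ and $yh'$; it does not give (\ref{T4}). The paper proves (\ref{T4}) explicitly by differentiating the formula (\ref{B8}) for $h'$, arriving at (\ref{C8}) and the limit (\ref{D8}) $\lim_{y\ra\infty}y^3h''(y)=[\phi'''(1)\psi'(1)-\phi'(1)\psi'''(1)]/(3\al_0^2\psi'(1))$, which is finite (and nonnegative under (\ref{I1})). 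Your appeal to ``basic regularity'' from \cite{cn} silently skips this verification, which is one of the genuinely new computations in the proof.

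Second, the passage from $\|\xi(\cdot,t)-\xi_p(\cdot)\|_{1,\infty}\ra0$ to $\|\beta(\cdot,t)-\beta_\kappa(\cdot)\|_\infty\ra0$ is not simply a matter of ``continuous dependence of $\beta$ on $w$ and $\pa_x w$.'' Near $x=1$, $c$, $w$, and $h$ all vanish, so uniform convergence of $w(\cdot,t)$ and $\pa_x w(\cdot,t)$ says nothing directly about the ratio (\ref{G1}). You flag the endpoint singularity as the main obstacle but do not resolve it. The paper's resolution is to work with $g(x,t)=w(x,t)/w_\kappa(x)$ and prove the weighted bound (\ref{E8}), $|(1-x)\pa_x\log g(x,t)|\le\del(t)$ with $\del(t)\ra0$. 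Establishing (\ref{E8}) requires the identity (\ref{F8}) for $\pa_x\log g$, the asymptotics (\ref{G8})--(\ref{H8}) handling the diverging prefactor $f(F(x,t))\psi(x)u(t)/[f(x)\psi(F(x,t))]$ that appears in (\ref{AL3}) (which your sketch does not address), and crucially the equilibrium relation (\ref{J8}), $(y+\xi_p(y))/p(1+D\xi_p(y))=h(y)+y/p$, which makes the limit of (\ref{I8}) equal exactly $1/[\kappa-\phi(x)/\psi(x)]$. Only then does Proposition~3.1 of \cite{cn} deliver $\|\beta(\cdot,t)-\beta_\kappa(\cdot)\|_\infty\ra0$. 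Without (\ref{E8}) and the identity (\ref{J8}), the argument does not close uniformly up to $x=1$.
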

\begin{proof}
We show that (\ref{T4}) holds for the function $h(\cdot)$ defined by (\ref{I2}). Given the further properties of $h(\cdot)$  established in the proof of Theorem 1.2  in $\S3$, we conclude that $h(\cdot)$ satisfies all the assumptions of Theorem 7.1. To prove (\ref{T4}) we use the formula
\be \label{B8}
h'(y) \ = \ \frac{\phi(x)}{\psi(x)}[\psi'(x)+\psi'(1)]-[\phi'(x)+\phi'(1)] \  ,
\ee
which is equivalent to (6.9) of \cite{cn}. 
One easily sees from (\ref{C2}), (\ref{B8}) that $\lim_{y\ra\infty}yh'(y)=0$. It follows from this,  upon using Taylor's theorem about $y=\infty$ or $x=1$, that $\lim_{y\ra\infty}y^2h''(y)=0$.
We can obtain more precise information on the behavior of $h''(y)$ as $y\ra\infty$ by  differentiating (\ref{B8}) with respect to $x$. Upon using (\ref{C2}) we then obtain the formula
\be \label{C8}
yh''(y) \ = \ \frac{\phi(x)\psi'(x)-\phi'(x)\psi(x)}{\psi'(1)\psi(x)}\left[\psi'(x)+\psi'(1)\right] +\frac{\phi''(x)\psi(x)-\phi(x)\psi''(x)}{\psi'(1)} \ .
\ee
We see from (\ref{C2}), (\ref{C8}) that 
\be \label{D8}
\lim_{y\ra\infty}y^3h''(y) \ = \ \frac{1}{3\al_0^2\psi'(1)}\left[\phi'''(1)\psi'(1)-\phi'(1)\psi'''(1)\right] \ .
\ee
Note that (\ref{D1}), (\ref{E1}), (\ref{I1})  imply the RHS of (\ref{D8}) is non-negative, whence the function $y\ra h(y)$ is convex for large $y$. 

In order to apply Theorem 7.1 we need to show that 
$\|\xi(\cdot,T_0)\|_{1,\infty}<\infty$ for all sufficiently large $T_0$. We have already shown this in the proof of Theorem 1.2 in $\S3$.  Applying now Theorem 7.1 and using the identity (\ref{AO3})  we have that $\lim_{t\ra\infty}\kappa(t)=\kappa$. To show that $\beta(\cdot,t)$ converges to $\beta_\kappa(\cdot)$ we argue as in the proof of Proposition 3.1 of \cite{cn}.  Thus it is sufficient to show that the function  $g(x,t)=w(x,t)/w_\kappa(x)$ satisfies an inequality
\be \label{E8}
\left|(1-x)\frac{\pa}{\pa x}\log g(x,t)\right| \   \le \ \del(t) \ , \quad 0\le x<1, \ t\ge 0 \ ,
\ee
where $\lim_{t\ra\infty}\del(t)=0$. In the case of $\beta(x,0)$ being H\"{o}lder continuous at $x=1$ then $|\del(t)|\le Ce^{-\nu t}, \ t\ge 0,$ for some positive constants $C,\nu$.  To prove (\ref{E8}) we observe from (\ref{A1}), (\ref{AI2}), upon using the identity $w(x,t)=e^tw(F(x,t),0)$, that
\be \label{F8}
\frac{\pa}{\pa x} \log g(x,t) \ = \ \frac{1}{\kappa\psi(x)-\phi(x)}-\frac{\pa F(x,t)}{\pa x}\frac{\beta(F(x,t),0)}{\int_{F(x,t)}^11-\beta(x',0) \ dx'} \ .
\ee
From (\ref{AK3}), (\ref{AL3}) we have that
\be \label{G8}
\frac{1}{1-F(x,t)}\frac{\pa F(x,t)}{\pa x} \ = \ \frac{f(x)\psi(F(x,t))u(t)}{[1-F(x,t)]\psi(x)f(F(x,t))}\left[1+D\xi(f(x)/\al_0,t)\right] \ .  
\ee
Since $\lim_{t\ra\infty}F(0,t)=1$ it follows from (\ref{C2}) that $\lim_{t\ra\infty} [1-F(x,t)]f(F(x,t))=1$, uniformly for $0\le x<1$.  We also have from (\ref{C2}), (\ref{F2}), (\ref{H2}) that
\begin{multline} \label{H8}
\lim_{t\ra\infty}\psi(F(x,t))u(t) \ = \ -\psi'(1)\lim_{t\ra\infty}\frac{1}{f(F(x,t))}u(t)  \\
= \ -\psi'(1)\lim_{t\ra\infty}\frac{1}{f(x)+\al_0\xi(f(x)/\al_0,t)} \ , \quad {\rm uniformly \ for \ } 0\le x<1 \ .
\end{multline}
We conclude from (\ref{H1}), (\ref{G8}), (\ref{H8}) and Theorem 7.1 that
\begin{multline} \label{I8}
\lim_{t\ra\infty} \psi(x)\frac{\pa F(x,t)}{\pa x}\frac{\beta(F(x,t),0)}{\int_{F(x,t)}^11-\beta(x',0) \ dx'}  \\
 = \ \frac{p|\psi'(1)|y[1+D\xi_p(y)]}{[y+\xi_p(y)]} \ {\rm where} \ y=f(x)/\al_0 \ , \quad {\rm uniformly \ for \ } 0\le x<1 \ .
\end{multline}
Observe now from (\ref{N1}) that
\be \label{J8}
\frac{y+\xi_p(y)}{p[1+D\xi_p(y)]} \ = \ h(y)+\frac{y}{p} \ .
\ee
It follows from (\ref{I2}), (\ref{J8}) that the expression on the RHS of (\ref{I8}) is the same as $1/[\kappa-\phi(x)/\psi(x)]$. Hence the inequality (\ref{E8})  with $\lim_{t\ra\infty}\del(t)=0$ holds.  It was already shown in the proof of Proposition 3.1 of \cite{cn} that (\ref{E8}) implies  $\lim_{t\ra\infty} \|\beta(\cdot,t)-\beta_\kappa(\cdot)\|_\infty=0$. We have established (\ref{A8}).  A similar argument gives (\ref{A*8}) in the case when the function $x\ra\beta(x,0)$ is H\"{o}lder continuous at $x=1$. 
\end{proof}
\begin{rem}
Theorem 3.1 of  \cite{nv1} establishes a necessary condition for local asymptotic stability of the LSW model  in the case when the functions $\phi(\cdot), \ \psi(\cdot)$ are given by  (\ref{C1}) with $\al=1/3$. 
The condition is given in terms of the function $S_0:[0,\infty)\ra\R$, where $w(x,0)=S_0(z)e^{-z}$ and
\begin{multline} \label{AO8}
z \ = \ \int_0^x \frac{dx'}{x'-x'^{1/3}+\kappa[1-x'^{1/3}]} \ = \\
|\psi'(1)| \int_0^x\frac{y \ dx'}{\psi(x')[h(y)+y/p]} \ , \quad {\rm with \ } y=y(x') \ .
\end{multline}
From (\ref{C2}), (\ref{AO8}) we have that
\be \label{AQ8}
\frac{dy}{dx} \ = \ -\frac{y\psi'(1)}{\psi(x)} \ ,  \quad \frac{dz}{dx} \ = \ = \frac{1}{\kappa\psi(x)-\phi(x)} \ =  \ \frac{|\psi'(1)|y}{\psi(x)[h(y)+y/p]} \ .
\ee
The necessary condition on $S_0(\cdot)$ is that
\be \label{AU8}
\lim_{z\ra\infty}\sup_{z\le y\le z+1}\frac{|S_0(z)-S_0(y)|}{S_0(z)} \ = \ 0 \ .
\ee
Evidently (\ref{AU8}) holds if $S_0(\cdot)$ is $C^1$ and
\be \label{AV8}
\lim_{z\ra\infty}\frac{S'_0(z)}{S_0(z)} \ = \ \lim_{z\ra\infty}\frac{d}{dz}\log S_0(z) \ = \  0 \  .
\ee
From (\ref{AI2}), (\ref{AQ8}) we have that
\be \label{AW8}
-\frac{d}{dz}\log w(x,0) \ = \ \frac{\psi(x)}{|\psi'(1)|}\left[\frac{h(y)}{y}+\frac{1}{p}\right]\beta(x,0)\Big/\int_x^1[1-\beta(x',0)] \ dx' \ .
\ee
Taking $z\ra\infty$ in (\ref{AW8}) and using (\ref{H1}) we have that
\be \label{AX8}
\lim_{z\ra\infty} \frac{d}{dz}\log w(x,0) \ = \ -\frac{\beta_0}{p(1-\beta_0)} \ = \ -1 \ .
\ee
Evidently (\ref{AX8}) implies (\ref{AV8}). 

Theorem 3.2 of \cite{nv1} proves local asymptotic stability. The condition on the initial data is that
\be \label{AU*8}
\sup_{z\ge 0}\sup_{z\le y\le z+1}\frac{|S_0(z)-S_0(y)|}{S_0(z)} \ < \ \ve \quad {\rm for \ sufficiently  \ small \ } \ve\  ,
\ee
and also that (\ref{AU8}) holds. Observe that if we set $\beta(\cdot,0)\equiv\beta_\kappa(\cdot)$ on the RHS of (\ref{AW8}) then the LHS is equal to $-1$. It follows there exists $\del>0$, depending on $\ve$, such that if $\|\beta(\cdot,0)-\beta_\kappa(\cdot)\|_\infty<\del$ then (\ref{AU*8}) holds. 
\end{rem}
Next we obtain conditions on the functions $\phi(\cdot), \psi(\cdot)$ which imply that (\ref{I4}) and (\ref{S5}) hold.
\begin{lem}
Assume that $\phi(\cdot) \ \psi(\cdot)$ satisfy (\ref{D1}), (\ref{E1}), (\ref{I1}) and $h(\cdot)$ is defined by (\ref{I2}). Then (\ref{I4}) holds for all $p>0$ if and only if the function
\be \label{K8}
x\ra \frac{\{\phi'(x)+\phi'(1)\}\psi(x)-\{\psi'(x)+\psi'(1)\}\phi(x)}{\psi'(1)\phi(x)-\phi'(1)\psi(x)} \quad {\rm is  \ decreasing.} 
\ee 
The inequality (\ref{S5}) holds if and only if the function
\be \label{L8}
x\ra\frac{\psi(x)[\phi(x)\psi''(x)-\phi''(x)\psi(x)]}{\phi'(x)\psi(x)-\psi'(x)\phi(x)} 
+[\psi'(x)+\psi'(1)] \quad {\rm decreases .}
\ee
\end{lem}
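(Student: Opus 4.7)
The plan is to rewrite both conditions (I4) and (S5) as explicit algebraic conditions on $\phi,\psi$ via the formulas $h(y)=(f(x)/\alpha_0)[\psi'(1)\phi(x)/\psi(x)-\phi'(1)]$, the expression (B8) for $h'(y)$, and the expression (C8) for $yh''(y)$, together with the fact that $y=f(x)/\alpha_0$ is strictly increasing in $x\in[0,1)$. Since the bound $\sup_{y\ge \ve_0+1}y|h'(y)|<\infty$ in (I4) is already guaranteed by our standing assumptions on $\phi,\psi$ (cf.\ Lemma 6.1 of \cite{cn}), the only part of (I4) that depends on $p$ is the second inequality, which I write as a linear function of $1/p$:
\begin{equation*}
[y/p+h(y)]yh''(y)+h'(y)[h(y)-yh'(y)]\;=\;\frac{y^2h''(y)}{p}+\Bigl\{h(y)yh''(y)+h'(y)[h(y)-yh'(y)]\Bigr\}.
\end{equation*}
For this to be non-negative for every $p>0$, letting $p\to 0^+$ and $p\to\infty$, both the coefficient of $1/p$ and the constant term must be non-negative.

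A direct computation shows that the constant term equals $h(y)^2\,\frac{d}{dy}\bigl[yh'(y)/h(y)\bigr]$, so the second condition is exactly that $yh'(y)/h(y)$ be increasing. I claim that this monotonicity already forces $h''(y)\ge 0$, so that the "$p\to 0^+$" requirement is automatic. Indeed, $h>0$, $h'\le 0$ and $h(y)-yh'(y)>0$, so $h'(y)[yh'(y)-h(y)]\ge 0$, and the inequality $yh(y)h''(y)\ge h'(y)[yh'(y)-h(y)]$ obtained from expanding $(yh'/h)'\ge 0$ forces $h''(y)\ge 0$ (strictly where $h'<0$, and everywhere by continuity). Consequently "(I4) for all $p>0$" is equivalent to the single condition that $yh'(y)/h(y)$ be increasing in $y$. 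Substituting from (I2) and (B8),
\begin{equation*}
\frac{yh'(y)}{h(y)}\;=\;\frac{\phi(x)[\psi'(x)+\psi'(1)]-\psi(x)[\phi'(x)+\phi'(1)]}{\psi'(1)\phi(x)-\phi'(1)\psi(x)}\;=\;-\,\frac{\{\phi'(x)+\phi'(1)\}\psi(x)-\{\psi'(x)+\psi'(1)\}\phi(x)}{\psi'(1)\phi(x)-\phi'(1)\psi(x)}\,,
\end{equation*}
and since $x\mapsto y$ is increasing, $yh'/h$ increasing in $y$ is equivalent to the function in (K8) being decreasing in $x$, as claimed.

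For the second assertion, I evaluate $y^2h''(y)/[h(y)-yh'(y)]$ directly. From the formulas $h(y)=y[\psi'(1)\phi(x)/\psi(x)-\phi'(1)]$ and $yh'(y)=y[\phi(x)(\psi'(x)+\psi'(1))/\psi(x)-(\phi'(x)+\phi'(1))]$ I get
\begin{equation*}
h(y)-yh'(y)\;=\;y\cdot\frac{\phi'(x)\psi(x)-\phi(x)\psi'(x)}{\psi(x)}\,.
\end{equation*}
Dividing $y\cdot yh''(y)$ (with $yh''(y)$ given by (C8)) by this quantity and simplifying the resulting cancellation in the first summand of (C8), I obtain
\begin{equation*}
\frac{y^2h''(y)}{h(y)-yh'(y)}\;=\;\frac{1}{|\psi'(1)|}\left\{\frac{\psi(x)[\phi(x)\psi''(x)-\phi''(x)\psi(x)]}{\phi'(x)\psi(x)-\psi'(x)\phi(x)}+[\psi'(x)+\psi'(1)]\right\}\,,
\end{equation*}
using $\phi(x)\psi''(x)-\phi''(x)\psi(x)=-[\phi''(x)\psi(x)-\phi(x)\psi''(x)]$ and $-1/\psi'(1)=1/|\psi'(1)|$. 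Since the prefactor $1/|\psi'(1)|$ is a positive constant and $y$ is strictly increasing in $x$, the monotonicity (S5) of the left-hand side as a function of $y$ is equivalent to the function displayed in (L8) being decreasing in $x$, which is exactly the stated condition.

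The substantive content is the identification of the constant term with $h^2(yh'/h)'$ (used to decouple the two $p$-regimes in the first part) and the two algebraic simplifications of $h-yh'$ and of $y^2h''/(h-yh')$ using (B8), (C8). These are the main potential obstacles: in both simplifications one must verify that several $\psi(x)$ and $\phi'\psi-\phi\psi'$ factors cancel correctly and that the signs coming from $\psi'(1)<0$ are tracked consistently. Once the two key identities are in place, the equivalences in both parts are immediate from the monotonicity of the change of variable $x\mapsto y=f(x)/\alpha_0$.
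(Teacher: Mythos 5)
Your proof is correct and follows essentially the same route as the paper's own argument: both reduce the condition (\ref{I4}) to the case $p=\infty$, pass to the variable $x$ via the increasing change of variable $y=f(x)/\alpha_0$, and identify the resulting algebraic conditions with (\ref{K8}) and (\ref{L8}) using the formulas (\ref{I2}), (\ref{B8}), (\ref{C8}). The paper organizes the algebra through the substitution $z=\log y$, $G(z)=h(y)$, so that (\ref{I4}) at $p=\infty$ becomes log-convexity of $G$ and $y^2h''/(h-yh')=(G''-G')/(G-G')$; your direct computation of $yh'/h$ and of $h-yh'$ produces the same identities without this intermediate notation.

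The genuinely valuable addition in your write-up is the step that the paper leaves implicit: the paper simply says ``we first reformulate the condition (\ref{I4}) at $p=\infty$'' and never returns to finite $p$. You decompose the left side of the second inequality in (\ref{I4}) as a linear function of $1/p$, observe that nonnegativity for all $p>0$ forces both the $1/p$-coefficient $y^2h''(y)$ and the constant term $h^2(yh'/h)'$ to be nonnegative, and then show that the constant-term condition by itself already forces $h''\ge 0$ (using $h>0$, $h'\le 0$, $h-yh'>0$). This makes the ``$p\to 0^+$'' constraint automatic, so the all-$p$ condition collapses to the $p=\infty$ condition and hence to (\ref{K8}). This is the correct justification for the paper's restriction to $p=\infty$, and it does not require assuming convexity of $h$ as a separate hypothesis. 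One small imprecision: your parenthetical ``strictly where $h'<0$, and everywhere by continuity'' is garbled, since what you actually obtain (and all that is needed) is the non-strict inequality $h''\ge 0$; the continuity remark is unnecessary.
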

\begin{proof}
We first reformulate the condition (\ref{I4}) at $p=\infty$. Setting $z=\log y, \ y\ge \ve_0,$ and $G(z)=h(y)$, then we have that
\be \label{M8}
G'(z)=yh'(y), \quad G''(z)=y^2h''(y)+yh'(y) \ .
\ee
From (\ref{M8}) we see that (\ref{I4}) at $p=\infty$ is equivalent to the inequality
\be \label{N8}
G(z)G''(z) \ \ge \ G'(z)^2 \quad {\rm or \ } \frac{d^2}{dz^2}\log G(z) \ \ge \ 0 \ .
\ee
Since $z$ is an increasing function of $x$ the inequality (\ref{N8}) is equivalent to showing that
\be \label{O8}
\frac{d}{dx} \frac{G'(z)}{G(z)} \ \ge \ 0 \ .
\ee
From (\ref{I2}) we have that
\be \label{P8}
G(z) \ = \ \frac{f(x)}{\al_0\psi(x)}\left[\psi'(1)\phi(x)-\phi'(1)\psi(x)\right] \ .
\ee
Recalling that $y=f(x)/\al_0$ we have upon differentiating (\ref{P8}) using (\ref{C2}) that
\be \label{Q8}
G'(z) \ = \ \frac{f(x)}{\al_0\psi(x)}\left[\{\psi'(x)+\psi'(1)\}\phi(x)-\{\phi'(x)+\phi'(1)\}\psi(x)\right] \ .
\ee
It follows from (\ref{P8}), (\ref{Q8}) that (\ref{O8}) is equivalent to (\ref{K8}). 

To see (\ref{K8}) we observe from (\ref{M8}) that
\be \label{R8}
\frac{y^2h''(y)}{h(y)-yh'(y)} \ = \frac{G''(z)-G'(z)}{G(z)-G'(z)} \ .
\ee
It follows from (\ref{R8}) that (\ref{S5}) is equivalent to
\be \label{S8}
\frac{d}{dx}  \left[\frac{G''(z)-G'(z)}{G(z)-G'(z)}\right]  \ \le \ 0 \  .
\ee
From (\ref{P8}), (\ref{Q8}) we have that
\be \label{T8}
G(z)-G'(z) \ = \ \frac{f(x)}{\al_0\psi(x)}\left[\phi'(x)\psi(x)-\phi(x)\psi'(x)\right] \ .
\ee
Differentiating (\ref{T8}) using (\ref{C2}) again we have that
\begin{multline} \label{U8}
|\psi'(1)|\left[G''(z)-G'(z)\right] \ =  \ \frac{f(x)}{\al_0\psi(x)}\Big[\psi(x)\{\phi(x)\psi''(x)-\phi''(x)\psi(x)\} \\
+\{\phi'(x)\psi(x)-\psi'(x)\phi(x)\}[\psi'(x)+\psi'(1)] \Big]\ .
\end{multline}
One sees from (\ref{T8}), (\ref{U8}) that (\ref{S8}) and (\ref{L8}) are equivalent.
\end{proof}
Unlike (\ref{D1}), (\ref{E1}), (\ref{I1}) the conditions (\ref{K8}), (\ref{L8}) are not immediately checkable for the functions $\phi(\cdot), \ \psi(\cdot)$ of (\ref{C1}).   However they do hold for these functions provided $0<\al<1$. 
\begin{lem}
Let $\phi(\cdot), \ \psi(\cdot)$ be the functions (\ref{C1}) for some $\al$ with $0<\al<1$. Then (\ref{K8}), (\ref{L8}) hold.
\end{lem}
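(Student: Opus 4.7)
The plan is to substitute $\phi(x)=x^{\al}-x$ and $\psi(x)=1-x^{\al}$ directly into (\ref{K8}) and (\ref{L8}) and in each case reduce to a one-variable inequality that is accessible by an elementary argument. Set $\beta=1-\al\in(0,1)$ and introduce the two quantities $W(x)=\phi'\psi-\psi'\phi=\al x^{\al-1}+\beta x^{\al}-1$ and $D(x)=\psi'(1)\phi-\phi'(1)\psi=\beta+\al x-x^{\al}$. Using the identity $\phi+\psi=1-x$, one checks that $(\phi'+\phi'(1))\psi-(\psi'+\psi'(1))\phi=W-D$, so the ratio in (\ref{K8}) equals $W(x)/D(x)-1$. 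Both $W$ and $D$ are positive on $[0,1)$ and share a double zero at $x=1$ with common leading coefficient $\al\beta/2$, so the claim is that $W/D$ is monotone decreasing. Computing $W'D-WD'$ and clearing the factor $x^{2-\al}$ reduces (\ref{K8}) to
\[
x^{\al}+x^{2-\al}\le \beta^{2}(1+x^{2})+2\al(2-\al)x\qquad\text{for }x\in(0,1).
\]
Setting $x=e^{-L}$ with $L>0$ and noting $x^{\al}+x^{2-\al}=2x\cosh(\beta L)$ together with $\beta^{2}(1+x^{2})+2\al(2-\al)x=2x+\beta^{2}(1-x)^{2}$, this collapses to $\sinh(\beta L/2)\le\beta\sinh(L/2)$, an immediate consequence of the convexity of $\sinh$ on $[0,\infty)$ and $\sinh(0)=0$.

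For (\ref{L8}), the identity $\psi''=-\phi''$ (special to this choice of $\phi,\psi$) gives $\phi\psi''-\phi''\psi=-\phi''(\phi+\psi)=-\phi''(1-x)=\al\beta\,x^{\al-2}(1-x)=-W'(x)$. The function in (\ref{L8}) is therefore $F(x)=-\psi W'/W+\psi'+\psi'(1)$. Computing $F'(x)$, multiplying by $W^{2}$, and using the explicit forms of $\psi,\psi',\psi'',W,W',W''$ yields substantial cancellations; I expect the reduction to terminate in
\[
\al\beta\,x^{\al-1}(1-x)^{2}\le W(x)\bigl[(2-\al)+\al x\bigr].
\]
Substituting $t=1-x\in[0,1]$ and using $x^{\beta}W(x)=\al+\beta x-x^{\beta}$ (a direct consequence of the definition of $W$), this becomes $\Phi(t)\ge 0$ with
\[
\Phi(t)=\bigl[1-\beta t-(1-t)^{\beta}\bigr](2-\al t)-\al\beta\,t^{2}.
\]
The binomial series gives $P(t):=1-\beta t-(1-t)^{\beta}=\sum_{k\ge 2}c_{k}t^{k}$ with $c_{k}=\beta\al(1+\al)\cdots(k-2+\al)/k!>0$. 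A short calculation shows that the $t^{2}$ coefficient of $\Phi$ cancels, and for $k\ge 3$ the coefficient of $t^{k}$ is $2c_{k}-\al c_{k-1}=\al\beta(2-\al)(k-2)(\al+1)\cdots(\al+k-3)/k!>0$, so $\Phi\ge 0$ termwise.

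The hard part is not either final elementary inequality but the algebraic reduction leading to it, particularly for (\ref{L8}): a naive expansion of $F'(x)$ produces many terms that simplify only after invoking the specific relations among $W,W',W''$ coming from $\phi+\psi=1-x$ and $\psi''=-\phi''$, and one must carry the Taylor expansion of $\Phi$ far enough to see that its nominally leading $t^{2}$ term actually vanishes, so that $\Phi(t)\sim\al\beta(2-\al)t^{3}/6$ near $t=0$. Once the computation is organized around the three identities $\phi+\psi=1-x$, $\psi''=-\phi''$, and $x^{\beta}W=\al+\beta x-x^{\beta}$, the cancellations become transparent and the two conditions reduce, respectively, to the convexity of $\sinh$ and to the positivity of the coefficients $2c_{k}-\al c_{k-1}$.
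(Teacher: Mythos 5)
Your proposal is correct and takes a genuinely different route from the paper. The paper proves both (\ref{K8}) and (\ref{L8}) by repeated differentiation: it shows the relevant function vanishes at $x=1$, passes to its derivative, shows that vanishes at $x=1$, and so on (three layers for (\ref{K8}), five for (\ref{L8})), with each layer a bare sign-check. You instead collapse each condition to a single closed-form inequality and then prove it by a mechanism that explains \emph{why} the sign works: for (\ref{K8}) the substitution $x=e^{-L}$ reduces everything to $\sinh(\beta L/2)\le\beta\sinh(L/2)$, i.e.\ the convexity of $\sinh$ with $\sinh(0)=0$; for (\ref{L8}) the substitution $t=1-x$ produces a power series $\Phi(t)$ with nonnegative Taylor coefficients. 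I have checked the (\ref{K8}) reduction in full: with $W=\phi'\psi-\psi'\phi$, $D=\psi'(1)\phi-\phi'(1)\psi$ one computes $W'D-WD'=-\al\bigl[\beta^2 x^{\al-2}+2\al(2-\al)x^{\al-1}-x^{2\al-2}+\beta^2 x^{\al}-1\bigr]$, and multiplying by $-x^{2-\al}/\al$ gives exactly your $\beta^2(1+x^2)+2\al(2-\al)x\ge x^{\al}+x^{2-\al}$; the identities $\al(2-\al)=1-\beta^2$ (so the right side is $2x+\beta^2(1-x)^2$) and $x^{\al}+x^{2-\al}=2x\cosh(\beta L)$ then deliver the $\sinh$ inequality as you describe.

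The one soft spot is (\ref{L8}), where you write that you ``expect'' the reduction to terminate in $\al\beta\,x^{\al-1}(1-x)^2\le W(x)[(2-\al)+\al x]$ but do not verify it. It does hold, and the cleanest route uses your own identity $x^{\beta}W(x)=\al+\beta x-x^{\beta}=:V(x)$. Since $W=x^{-\beta}V$, the logarithmic derivative splits as $-\psi W'/W=\beta\psi/x-\psi V'/V$; a short computation gives $\psi V'=\beta(2-x^{-\al}-x^{\al})=-\beta\psi^2 x^{-\al}$, so the function $F$ in (\ref{L8}) simplifies to $F(x)=\beta/x-x^{\al-1}-\al+\beta\psi^2 x^{-\al}/V$, whence $F'=-\beta\psi x^{-2}+R'$ with $R=\beta\psi^2 x^{-\al}/V$. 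Multiplying $F'\le0$ by $V^2x^2/(\beta\psi)>0$ turns it into $\beta\psi^2 x^{2\beta}\le V^2+\al V(x+x^{\beta})$, and on expanding both sides (using $\psi x^{\beta}=x^{\beta}-x$ and $\al+\beta=1$) the difference of the two sides equals $\al$ times the difference in your target inequality $\al\beta(1-x)^2\le V\,[(2-\al)+\al x]$; since $\al>0$ the two are equivalent. After the substitution $t=1-x$ this is your $\Phi(t)\ge0$, and your termwise computation of $2c_k-\al c_{k-1}=\al\beta(2-\al)(k-2)(\al+1)\cdots(\al+k-3)/k!\ge0$ is correct (with the $t^2$ coefficient vanishing because $2c_2=\al\beta$). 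Filling in the paragraph above would make your (\ref{L8}) argument complete; as it stands it is a correct plan with the key reduction asserted rather than derived.
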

\begin{proof}
We have that
\be \label{V8}
\psi'(1)\phi(x)-\phi'(1)\psi(x) \ = \ (1-\al)-x^\al+\al x \ , 
\ee
and also
\begin{multline} \label{W8}
 [\phi'(x)+\phi'(1)]\psi(x)- [\psi'(x)+\psi'(1)]\phi(x)  \\
 = \  \al x^{\al-1}-(2-\al)+(2-\al)x^\al-\al x  \ .
\end{multline}
Hence (\ref{K8}) becomes the function
\be \label{X8}
x\ra\frac{\al x^{\al-1}-(2-\al)+(2-\al)x^\al-\al x}{(1-\al)-x^\al+\al x } 
\quad {\rm is \  decreasing}\ .
\ee
In order to prove (\ref{X8}) we need to show that the function
\begin{multline} \label{Y8}
g(x) \ = \ [\al x^{\al-1}-(2-\al)+(2-\al)x^\al-\al x] [-x^{\al-1}+1] \\
- [(1-\al)-x^\al+\al x] [-(1-\al)x^{\al-2}+(2-\al)x^{\al-1}-1] \quad {\rm is \ positive.}
\end{multline}
We have that
\be \label{Z8}
g(x) \ = \ (1-\al)^2x^{\al-2}-x^{2\al-2}+2\al(2-\al) x^{\al-1}-1+(1-\al)^2x^\al \ .
\ee
Note that $g(1)=0$, whence to prove (\ref{Y8}) it is sufficient to show that $g'(x)$ is negative. We have that $g'(x)=-(1-\al)x^{\al-1}g_1(x)$ where
\be \label{AA8}
g_1(x) \ = \ (1-\al)(2-\al)x^{-2}-2x^{\al-2}+2\al(2-\al)x^{-1} \ -\al(1-\al) \ .
\ee
Since $g_1(1)=0$  it is sufficient for us to show that  $g_1'(x)$ is negative. Setting $g_1'(x)=-2(2-\al)x^{-2}g_2(x)$, we need to show that the function
\be \label{AB8}
g_2(x) \ = \  (1-\al)x^{-1}-x^{\al-1}+\al \ 
\ee
is positive. 
Observe that $g_2(1)=0$, whence it is sufficient for us to show that  $g_2'(x)$ is negative. This is clear since $g_2'(x)=-(1-\al)x^{-2}[1-x^\al]$.

To prove (\ref{L8}) we note that
\begin{multline} \label{AC8}
\phi'(x)\psi(x)-\phi(x)\psi'(x) \ = \ \al x^{\al-1}-1+(1-\al)x^\al \ , \\
\phi(x)\psi''(x)-\phi''(x)\psi(x) \ = \ \al(1-\al)[x^{\al-2}-x^{\al-1}] \ .
\end{multline}
From (\ref{AC8}) we see that the function in (\ref{L8}) is given by
\be \label{AD8}
\al(1-\al)\frac{x^{\al-2}-x^{2\al-2}-x^{\al-1}+x^{2\al-1}}{\al x^{\al-1}-1+(1-\al)x^\al}-\al[x^{\al-1}+1] \ .
\ee
We need therefore to show that the function 
\be \label{AE8}
x\ra (1-\al)\frac{x^{-2}-x^{\al-2}-x^{-1}+x^{\al-1}}{\al x^{-1}-x^{-\al}+(1-\al)}-[x^{\al-1}+1]  \quad {\rm is \ decreasing} \ .
\ee
The derivative of the function (\ref{AE8}) is given by
\be \label{AF8}
(1-\al)\left[\frac{g(x)}{[\al x^{-1}-x^{-\al}+(1-\al)]^2}+x^{\al-2}\right]
\ee
where
\begin{multline} \label{AG8}
g(x) \ = \ -\al x^{-4}-4(1-\al)x^{-3}+(2-3\al)x^{-2}+\al(1-\al)x^{\al-4} \\
+(2-3\al+2\al^2)x^{\al-3}-(1-\al)^2x^{\al-2}+(2-\al)x^{-\al-3}-(1-\al) x^{-\al-2} \ .
\end{multline} 
We have that
\be \label{AH8}
g(x)+x^{\al-2}[\al x^{-1}-x^{-\al}+(1-\al)]^2 \ = \ -x^{-\al-3}g_1(x)  \ ,
\ee
where
\begin{multline} \label{AI8}
g_1(x) \ = \ \al x^{\al-1}+2(2-\al)x^{\al}+\al x^{\al+1}-\al x^{2\al-1} \\
-(2-\al)x^{2\al}-(2-\al)-\al x \ .
\end{multline} 
We need to show that $g_1(\cdot)$ is positive.  Since $g_1(1)=0$ we consider the derivative  $g_1'(x)=-\al g_2(x)$, where
\be \label{AJ8}
g_2(x) \ = \ (1-\al)x^{\al-2}-2(2-\al)x^{\al-1}-(\al+1) x^{\al}+(2\al-1) x^{2\al-2} \\
+2(2-\al)x^{2\al-1}+1 \ .
\ee
It is sufficient then to show that $g_2(\cdot)$ is positive. Since $g_2(1)=0$ we may consider the derivative $g'_2(x)=-x^{\al-2}g_3(x)$, where
\begin{multline} \label{AK8}
g_3(x) \ = \ (1-\al)(2-\al)x^{-1}-2(1-\al)(2-\al)+\al(\al+1) x \\
+2(1-\al)(2\al-1) x^{\al-1} 
-2(2-\al)(2\al-1)x^{\al} \ .
\end{multline}
Again it is sufficient to show that $g_3(\cdot)$ is positive. Since $g_3(1)=0$  we consider $g'_3(x)$ given by the formula
\begin{multline} \label{AL8}
g'_3(x) \ = \ -(1-\al)(2-\al)x^{-2}+\al(\al+1)  \\
-2(1-\al)^2(2\al-1) x^{\al-2} 
-2\al(2-\al)(2\al-1)x^{\al-1} \ .
\end{multline}
 Since $g'_3(1)=0$ we evaluate $g''_3(x)$, which is  given by the formula
\begin{multline} \label{AM8}
g''_3(x) \ = \ 2(1-\al)(2-\al)x^{-3}  \\
+2(2-\al)(1-\al)^2(2\al-1) x^{\al-3} 
+2\al(1-\al)(2-\al)(2\al-1)x^{\al-2} \\
= \ 2(1-\al)(2-\al)\left[x^{-3}  +(2\al-1)\{(1-\al)x^{\al-3}+\al x^{\al-2}\}    \right] \ .
\end{multline}
It is evident the RHS of (\ref{AM8}) is non-negative provided $0<\al<1$, whence we conclude that $g_3(\cdot)$ is positive in this case. 
\end{proof}
\begin{rem}
We wish to relate our computations for local asymptotic stability  to those of \cite{nv1}. They define  a function
\be \label{AN8}
H(z) \ = \ \frac{1-x^{1/3}}{x-x^{1/3}+\kappa(1-x^{1/3})}   \ = \ \frac{|\psi'(1)|y}{h(y)+y/p}\  ,
\ee
where $z$ is given by (\ref{AO8}).
We have that
\be \label{AP8}
H'(z) \ = \ |\psi'(1)|\frac{[h(y)-yh'(y)]}{[h(y)+y/p]^2}\frac{dy}{dz} \ .
\ee
It follows from (\ref{AQ8}) that
\be \label{AR8}
\frac{dy}{dz} \ = \ h(y)+y/p \ .
\ee
We then have from (\ref{AP8}), (\ref{AR8})  that
\be \label{AS8}
H'(z) \ = \ |\psi'(1)|\frac{[h(y)-yh'(y)]}{h(y)+y/p} \ .
\ee
Differentiating (\ref{AS8}) with respect to $y$ yields the formula 
\be \label{AT8}
\frac{d}{dy}H'(z) \ = \ -|\psi'(1)|\frac{[h(y)+y/p]yh''(y)+[h'(y)+1/p][h(y)-yh'(y)]}{[h(y)+y/p]^2} \ .
\ee
Since $y$ is an increasing function of $z$ we conclude from (\ref{AT8}) that $H''(\cdot)< 0$   if (\ref{I4}) holds.   The condition $H''(\cdot)< 0$  is required in the proof of local stability for the LSW model in  $\S3$ of \cite{nv1}.  Note that (\ref{I4}) is a slightly stronger condition than the condition $H''(\cdot)< 0$. This is not  surprising since Lemma 4.1 proves that the kernel of the Volterra integral equation decays exponentially at rate $1/p$.  For asymptotic stability all one needs is some exponential decay of the kernel.  
\end{rem}
\begin{proof}[Proof of Theorem 1.1]
We first recall the transformation which converts the system (\ref{U1}), (\ref{V1}) to the system (\ref{A1}), (\ref{B1}). We define the function $w:[0,\infty)\times\R^+\ra\R$ in terms of the solution $c(\cdot,\cdot)$ to (\ref{U1}), (\ref{V1})  by
\be \label{AY8}
w(x,t) \ = \ \int_x^\infty c(x',t) \ dx' \ ,\quad x,t\ge 0 \ .
\ee
Evidently $w(\cdot,\cdot)$ is the solution to the system
\begin{eqnarray}
\frac{\pa w(x,t)}{\pa t}&=&  \Big( 1- \big( x L^{-1}(t)\big)^{\alpha} \Big) \frac{\pa w(x,t)}{\pa x},  \quad x>0 \ , \label{AZ8}\\
\int_0^\infty  w(x,t) dx&=& 1. \label{BA8} 
\end{eqnarray}
Letting $[0,\Ga(t)]$ be the support of $w(\cdot,t), \ t\ge 0,$ we have from (\ref{AZ8}) that
\be \label{BB8}
\frac{d\Ga(t)}{dt} \ = \ \left(\frac{\Ga(t)}{L(t)}\right)^\al -1\ .
\ee
Note that $\Ga(t)>L(t)$ for all $t\ge 0$,  so the function $t\ra\Ga(t)$ is increasing. 
We normalize the support of $w(\cdot,t)$ to the interval $[0,1]$ by making the variable change
\be \label{BC8}
y \ = \ \frac{x}{\Ga(t)} \ , \quad w(x,t) \ = \ \frac{1}{\Ga(t)}\tilde{w}(y,s(t)) \ ,
\ee
where the function $t\ra s(t)$ is to be determined. 
It follows from (\ref{BA8}) that
\be \label{BD8}
\int_0^1\tilde{w}(y,s) \ dy \ = \ 1 \ .
\ee
From (\ref{BC8}) we have that
\be \label{BE8}
\frac{\pa w(x,t)}{\pa x} \ = \ \frac{1}{\Ga(t)^2}\frac{\pa \tilde{w}(y,s(t))}{\pa y} \ .
\ee
We also have that
\begin{multline} \label{BF8}
\frac{\pa w(x,t)}{\pa t} \ = \ \frac{1}{\Ga(t)}\frac{\pa \tilde{w}(y,s(t))}{\pa t} 
-\frac{1}{\Ga(t)^2}\frac{d\Ga(t)}{dt}\left[y\frac{\pa \tilde{w}(y,s(t))}{\pa y}+\tilde{w}(y,s(t))\right] \\
= \ \frac{\Ga'(t)}{\Ga(t)^2}\left[\frac{\Ga(t)}{\Ga'(t)}\frac{\pa \tilde{w}(y,s(t))}{\pa t} -
y\frac{\pa \tilde{w}(y,s(t))}{\pa y}-\tilde{w}(y,s(t))\right] \ .
\end{multline}
From (\ref{BE8}), (\ref{BF8}) we see that (\ref{AZ8}) becomes
\begin{multline} \label{BG8}
\frac{\Ga(t)}{\Ga'(t)}\frac{\pa \tilde{w}(y,s(t))}{\pa t} -
y\frac{\pa \tilde{w}(y,s(t))}{\pa y}-\tilde{w}(y,s(t)) \\
+\frac{(\Ga(t)y/L(t))^\al-1}{(\Ga(t)/L(t))^\al-1} \  \frac{\pa \tilde{w}(y,s(t))}{\pa y} \ = \ 0 \ .
\end{multline}
If we set 
\be \label{BH8}
\kappa(t) \ = \ \frac{1}{(\Ga(t)/L(t))^\al-1} \ ,
\ee
then (\ref{BG8}) becomes
\be \label{BI8}
\frac{\Ga(t)}{\Ga'(t)}\frac{\pa \tilde{w}(y,s(t))}{\pa t} +[(y^\al-y)-\kappa(t)(1-y^\al)] \frac{\pa \tilde{w}(y,s(t))}{\pa y} \ = \ \tilde{w}(y,s(t)) \ .
\ee
Now by a change of time variable $s(t)=\log\Ga(t)$ we can normalize the coefficient of the time derivative in (\ref{BI8}) to $1$.
Evidently the system (\ref{BD8}), (\ref{BI8}) with the time variable $s$ is the same as (\ref{A1}), (\ref{B1}) with $\phi(\cdot), \ \psi(\cdot)$ given by (\ref{C1}). 

For $s\ge 0$  let $\tilde{\beta}(\cdot,s),$ be the beta function corresponding to $\tilde{w}(\cdot,s)$, as given by the formula (\ref{G1}).  Then from (\ref{BC8}) we have that
\be \label{BJ8}
\beta_{X_t}(x) \ = \ \tilde{\beta}\left(\frac{x}{\Ga(t)},\log\Ga(t)\right) \ , \quad 0\le x<\Ga(t), \ \ t\ge 0 \ .
\ee
It follows from (\ref{X1}), (\ref{BJ8}) that
\be \label{BK8}
\frac{d}{dt}\langle X_t\rangle \ = \   \tilde{\beta}\left(0,\log\Ga(t)\right) \ ,  \quad t\ge 0 \ .
\ee
We conclude from  (\ref{A8}) of Theorem 8.1 that
\be \label{BL8}
\lim_{t\ra\infty} \frac{d}{dt}\langle X_t\rangle \ = \ \beta_{\mathcal{X}_\beta}(0) \ .
\ee
Next observe that
\be \label{BM8}
\frac{\langle X^\al_t\rangle}{\langle X_t\rangle^\al} \ = \ \al\int_0^1 y^{\al-1}\tilde{w}(y,\log\Ga(t)) \ dy \Big/ \tilde{w}(0,\log\Ga(t))^{(1-\al)} \ .
\ee
It follows from (\ref{AI2}), (\ref{A8}), (\ref{BD8}), (\ref{BM8}) that
\be \label{BN8}
\lim_{t\ra\infty} \frac{\langle X^\al_t\rangle}{\langle X_t\rangle^\al} \ = \ \frac{\langle \mathcal{X}^\al_\beta\rangle}{\langle \mathcal{X}_\beta\rangle^\al}  \ .
\ee
We also have from (\ref{A8}), (\ref{BH8}) that
\be \label{BO8}
\lim_{t\ra\infty} \frac{\Ga(t)^\al}{\langle X^\al_t\rangle} \ = \ 1+\frac{1}{\kappa} \ = \ \frac{\|\mathcal{X}_\beta\|_\infty^\al}{\langle \mathcal{X}_\beta^\al\rangle}   \ .
\ee
Evidently (\ref{AI2}), (\ref{A8}) imply that
\be \label{BP8}
\lim_{s\ra\infty}\frac{\tilde{w}(y,s)}{\tilde{w}(0,s)} \ = \  P\left(\frac{\mathcal{X}_\beta}{\|\mathcal{X}_\beta\|_\infty}>y\right) \ , \quad 0\le y<1 \ .
\ee
We also have that
\be \label{BQ8}
P\left(\frac{X_t}{\langle X_t\rangle}>x\right) \ = \ P\left(\frac{X_t}{\Ga(t)}>x\frac{\langle X_t\rangle}{\Ga(t)}\right) \ = \frac{\tilde{w}(x\langle X_t\rangle/\Ga(t),\log\Ga(t))}{\tilde{w}(0,\log\Ga(t))} \ .
\ee
We conclude from (\ref{BN8})-(\ref{BQ8}) that
\be \label{BR8}
\lim_{t\ra\infty} P\left(\frac{X_t}{\langle X_t\rangle}>x\right) \ = \  P(\mathcal{X}_\beta>x) \ , \quad 0\le x<\infty \ .
\ee
Evidently (\ref{AC1}) follows from (\ref{BL8}), (\ref{BR8}). 

We obtain the rate of convergence results in Theorem 1.1 in the case when $x\ra\beta_{X_0}(x)$ is H\"{o}lder continuous at $x=\|X_0\|_\infty$ by applying (\ref{A*8}). From (\ref{A*8}), (\ref{BK8}) we have that
\be \label{BS8}
\left|\frac{d}{dt}\langle X_t\rangle - \beta_{\mathcal{X}_\beta}(0)\right| \ \le \ C\exp[-\nu\log\Ga(t)] \ \le \frac{C'}{1+t^\nu} \ , \quad t\ge 0 \ .
\ee
for some constant $C'$, whence (\ref{AD1}) holds. Similarly we have from (\ref{AI2}), (\ref{A*8}) that for any $\del$ with $0<\del<1$,  there is a constant $C_\del>0$ such that
\begin{multline} \label{BT8}
 P\left(\frac{X_t}{\langle X_t\rangle}>x\right) \ \le \ P\left(\mathcal{X}_\beta>\frac{x}{1+C_\del/(1+ t^\nu)}\right) \ , \\
 {\rm for \ } 0\le x\le (1-\del)\|\mathcal{X}_\beta\|_\infty \ , \ t\ge 0 \ .
 \end{multline}
 Now (\ref{BS8}) and (\ref{BT8}) (with the corresponding lower bound in addition) proves (\ref{AE1}).

\end{proof}

\end{document}